\documentclass[reqno, 10pt]{amsart}
\newcommand{\VE}{ }
\newcommand{\tysup}[1]{^{\scriptscriptstyle #1}}

  \usepackage[margin=1.3in]{geometry}

\usepackage[usenames,dvipsnames]{color}

\usepackage{amsthm,amsfonts,amssymb,amsmath,amsxtra}%,fdsymbol}
\usepackage[all]{xy}
\SelectTips{cm}{}
\usepackage{xr-hyper}
\usepackage[colorlinks=
   citecolor=Black,
   linkcolor=Red,
   urlcolor=Blue%, backref=page
]{hyperref}
\usepackage{verbatim}
 \usepackage{cancel}

\usepackage{mathrsfs}

\RequirePackage{xspace}
\RequirePackage{etoolbox}
\RequirePackage{varwidth}
\RequirePackage{enumitem}
\RequirePackage{tensor}
\RequirePackage{mathtools}
\RequirePackage{longtable}
\RequirePackage{multirow}

\usepackage{scalerel}

\newcommand\reallywidehat[1]{\arraycolsep=0pt\relax%
\begin{array}{c}
\stretchto{
  \scaleto{
    \scalerel*[\widthof{\ensuremath{#1}}]{\kern-.5pt\bigwedge\kern-.5pt}
    {\rule[-\textheight/2]{1ex}{\textheight}} %WIDTH-LIMITED BIG WEDGE
  }{\textheight} % 
}{0.8ex}\\           % THIS SQUEEZES THE WEDGE TO 0.8ex HEIGHT
#1\\                 % THIS STACKS THE WEDGE ATOP THE ARGUMENT
\rule{-1ex}{0ex}
\end{array}
}

\newcommand{\sL}{\ensuremath{\mathscr{L}}\xspace}

\newcommand{\fkp}{\ensuremath{\mathfrak{p}}\xspace}

\newcommand{\fks}{\ensuremath{\mathfrak{s}}\xspace}
\newcommand{\fkt}{\ensuremath{\mathfrak{t}}\xspace}

\newcommand{\fkM}{\ensuremath{\mathfrak{M}}\xspace}

\newcommand{\diam}{{\diamond}}

\newcommand{\BA}{\ensuremath{\mathbb {A}}\xspace}

\newcommand{\BC}{\ensuremath{\mathbb {C}}\xspace}

\newcommand{\BF}{\ensuremath{\mathbb {F}}\xspace}
\newcommand{\BG}{\ensuremath{\mathbb {G}}\xspace}

\newcommand{\BP}{\ensuremath{\mathbb {P}}\xspace}
\newcommand{\BQ}{\ensuremath{\mathbb {Q}}\xspace}
\newcommand{\BR}{\ensuremath{\mathbb {R}}\xspace}
\newcommand{\BS}{\ensuremath{\mathbb {S}}\xspace}

\newcommand{\BX}{\ensuremath{\mathbb {X}}\xspace}

\newcommand{\BZ}{\ensuremath{\mathbb {Z}}\xspace}

\newcommand{\CA}{\ensuremath{\mathcal {A}}\xspace}
\newcommand{\CB}{\ensuremath{\mathcal {B}}\xspace}

\newcommand{\CE}{\ensuremath{\mathcal {E}}\xspace}
\newcommand{\CF}{\ensuremath{\mathcal {F}}\xspace}
\newcommand{\CG}{\ensuremath{\mathcal {G}}\xspace}
\newcommand{\CH}{\ensuremath{\mathcal {H}}\xspace}
\newcommand{\CI}{\ensuremath{\mathcal {I}}\xspace}

\newcommand{\CL}{\ensuremath{\mathcal {L}}\xspace}
\newcommand{\CM}{\ensuremath{\mathcal {M}}\xspace}
\newcommand{\CN}{\ensuremath{\mathcal {N}}\xspace}
\newcommand{\CO}{\ensuremath{\mathcal {O}}\xspace}
\newcommand{\CP}{\ensuremath{\mathcal {P}}\xspace}
\newcommand{\CQ}{\ensuremath{\mathcal {Q}}\xspace}

\newcommand{\CS}{\ensuremath{\mathcal {S}}\xspace}
\newcommand{\CT}{\ensuremath{\mathcal {T}}\xspace}
\newcommand{\CU}{\ensuremath{\mathcal {U}}\xspace}
\newcommand{\CV}{\ensuremath{\mathcal {V}}\xspace}

\newcommand{\RB}{\ensuremath{\mathrm {B}}\xspace}

\newcommand{\RD}{\ensuremath{\mathrm {D}}\xspace}
\newcommand{\RE}{\ensuremath{\mathrm {E}}\xspace}
\newcommand{\RF}{\ensuremath{\mathrm {F}}\xspace}
\newcommand{\RG}{\ensuremath{\mathrm {G}}\xspace}
\newcommand{\RH}{\ensuremath{\mathrm {H}}\xspace}

\newcommand{\RK}{\ensuremath{\mathrm {K}}\xspace}

\newcommand{\RM}{\ensuremath{\mathrm {M}}\xspace}

\newcommand{\RO}{\ensuremath{\mathrm {O}}\xspace}
\newcommand{\RP}{\ensuremath{\mathrm {P}}\xspace}

\newcommand{\RU}{\ensuremath{\mathrm {U}}\xspace}
\newcommand{\RV}{\ensuremath{\mathrm {V}}\xspace}

\newcommand{\RX}{\ensuremath{\mathrm {X}}\xspace}

\newcommand{\ab}{{\mathrm{ab}}}

\newcommand{\ad}{{\mathrm{ad}}}

\DeclareMathOperator{\Aut}{Aut}

\DeclareMathOperator{\diag}{diag}

\renewcommand{\div}{{\mathrm{div}}}

\DeclareMathOperator{\End}{End}

\DeclareMathOperator{\Gal}{Gal}
\newcommand{\GL}{\mathrm{GL}}

\newcommand{\GO}{\mathrm{GO}}
\newcommand{\GSpin}{\mathrm{GSpin}}
\newcommand{\GSp}{\mathrm{GSp}}
\newcommand{\GU}{\mathrm{GU}}

\newcommand{\id}{\ensuremath{\mathrm{id}}\xspace}

\newcommand{\inv}{{\mathrm{inv}}}

\DeclareMathOperator{\Lie}{Lie}

\DeclareMathOperator{\Nm}{Nm}

\newcommand{\PGL}{{\mathrm{PGL}}}
\DeclareMathOperator{\Pic}{Pic}

\newcommand{\red}{\ensuremath{\mathrm{red}}\xspace}

\DeclareMathOperator{\Res}{Res}

\DeclareMathOperator{\Spec}{Spec\,}
\DeclareMathOperator{\Spd}{Spd\,}
\DeclareMathOperator{\Spf}{Spf\,}

\DeclareMathOperator{\tr}{tr}

\newcommand{\U}{\mathrm{U}}

\newcommand{\wt}{\widetilde}
\newcommand{\wh}{\widehat}

\newcommand{\ov}{\overline}

\newcommand{\bs}{\backslash}

\newcommand{\lps}{[\![}
\newcommand{\rps}{]\!]}
\newcommand{\llps}{(\!(}
\newcommand{\lrps}{)\!)}

\newtheorem{theorem}[subsubsection]{Theorem}
\newtheorem{proposition}[subsubsection]{Proposition}
\newtheorem{lemma}[subsubsection]{Lemma}
\newtheorem {conjecture}[subsubsection]{Conjecture}
\newtheorem{corollary}[subsubsection]{Corollary}

\theoremstyle{definition}
\newtheorem{definition}[subsubsection]{Definition}
 \newtheorem{example}[subsubsection]{Example}
\newtheorem{remark}[subsubsection]{Remark}
\newenvironment{altenumerate}
   {\begin{list}
      {\textup{(\theenumi)} }
      {\usecounter{enumi}
       \setlength{\labelwidth}{0pt}
       \setlength{\labelsep}{0pt}
       \setlength{\leftmargin}{0pt}
       \setlength{\itemsep}{\the\smallskipamount}
       \renewcommand{\theenumi}{\roman{enumi}}
      }}
   {\end{list}}
\newenvironment{altitemize}
   {\begin{list}
      {$\bullet$}
      {\setlength{\labelwidth}{0pt}
	   \setlength{\itemindent}{5pt}
       \setlength{\labelsep}{5pt}
       \setlength{\leftmargin}{0pt}
       \setlength{\itemsep}{\the\smallskipamount}
      }}
   {\end{list}}

\numberwithin{equation}{subsection}
\numberwithin{subsubsection}{subsection}

\setitemize[0]{leftmargin=0.3in,itemsep=\the\smallskipamount}
\setenumerate[0]{leftmargin=0.3in,itemsep=\the\smallskipamount}

\renewcommand{\to}{%
   \ifbool{@display}{\longrightarrow}{\rightarrow}%
   }
\let\shortmapsto\mapsto
\renewcommand{\mapsto}{%
   \ifbool{@display}{\shortmapsto}{\shortmapsto}%
   }
\newlength{\olen}
\newlength{\ulen}
\newlength{\xlen}
\newcommand{\xra}[2][]{%
   \ifbool{@display}%
      {\settowidth{\olen}{$\overset{#2}{\longrightarrow}$}%
       \settowidth{\ulen}{$\underset{#1}{\longrightarrow}$}%
       \settowidth{\xlen}{$\xrightarrow[#1]{#2}$}%
       \ifdimgreater{\olen}{\xlen}%
          {\underset{#1}{\overset{#2}{\longrightarrow}}}%
          {\ifdimgreater{\ulen}{\xlen}%
             {\underset{#1}{\overset{#2}{\longrightarrow}}}
             {\xrightarrow[#1]{#2}}}}%
      {\xrightarrow[#1]{#2}}
   }
\makeatother
\newcommand{\xyra}[2][]{%
   \settowidth{\xlen}{$\xrightarrow[#1]{#2}$}%
   \ifbool{@display}%
      {\settowidth{\olen}{$\overset{#2}{\longrightarrow}$}%
       \settowidth{\ulen}{$\underset{#1}{\longrightarrow}$}%
       \ifdimgreater{\olen}{\xlen}%
          {\mathrel{\xymatrix@M=.12ex@C=3.2ex{\ar[r]^-{#2}_-{#1} &}}}%
          {\ifdimgreater{\ulen}{\xlen}%
             {\mathrel{\xymatrix@M=.12ex@C=3.2ex{\ar[r]^-{#2}_-{#1} &}}}
             {\mathrel{\xymatrix@M=.12ex@C=\the\xlen{\ar[r]^-{#2}_-{#1} &}}}}}%
      {\mathrel{\xymatrix@M=.12ex@C=\the\xlen{\ar[r]^-{#2}_-{#1} &}}}%
   }
\makeatletter
\newcommand{\xla}[2][]{%
   \ifbool{@display}%
      {\settowidth{\olen}{$\overset{#2}{\longleftarrow}$}%
       \settowidth{\ulen}{$\underset{#1}{\longleftarrow}$}%
       \settowidth{\xlen}{$\xleftarrow[#1]{#2}$}%
       \ifdimgreater{\olen}{\xlen}%
          {\underset{#1}{\overset{#2}{\longleftarrow}}}%
          {\ifdimgreater{\ulen}{\xlen}%
             {\underset{#1}{\overset{#2}{\longleftarrow}}}
             {\xleftarrow[#1]{#2}}}}%
      {\xleftarrow[#1]{#2}}
   }
\newcommand{\isoarrow}{%
   \ifbool{@display}{\overset{\sim}{\longrightarrow}}{\xrightarrow\sim}%
   }

\newcommand{\quash}[1]{}

 \usepackage{relsize} 
\usepackage[bbgreekl]{mathbbol} 
\usepackage{amsfonts} 
\DeclareSymbolFontAlphabet{\mathbb}{AMSb} %to ensure that the meaning of \mathbb does not change
\DeclareSymbolFontAlphabet{\mathbbl}{bbold}

\newcommand{\und}{\underline}

 \newcommand{\br}{\breve}

\newcommand{\Adm}{{\rm {Adm}}}

 \newcommand{\Mloc}{{\rm M}}

\newcommand{\ti}{\tilde}
\newcommand{\der}{{\rm der}}

\newcommand{\bslash}{\symbol{92}}

\newcommand{\Mlocroot}{{\mathrm M}^{\sqrt{S}}}

\newcommand{\Mlocrooto}{{\mathrm M}^{\sqrt{\phantom{a}}}}

\newcommand{\ga}{\gamma}
\newcommand{\de}{\delta}

\newcommand{\LBS}{\, {\bs}_{\vphantom{A}{L}}\, }

\newcommand{\Rcom}{\wh{R[u]}_{(u(u-p))}}

\begin{document}
%------------------------------------------------------

\setcounter{tocdepth}{1}
%------------------------------------------------------

\title[Toric schemes and integral models for Shimura varieties]{Toric schemes and integral models for Shimura varieties with $\Gamma_1(p)$-type level}
\author[G. Pappas]{Georgios Pappas}
\address{Department of Mathematics, Michigan State University, E. Lansing, MI 48824, USA}
\email{pappasg@msu.edu}

\author[M. Rapoport]{Michael Rapoport}
\address{Mathematisches Institut der Universit\"at Bonn, Endenicher Allee 60, 53115 Bonn, Germany}
\email{rapoport@math.uni-bonn.de}

\date{\today}

\begin{abstract} We propose a conjectural theory of $p$-integral models of Shimura varieties with level structure at $p$ given by a class of normal subgroups of parahoric subgroups with abelian quotient group. The role of the theory of local models  is played in this context by a certain \emph{root stack} over the local model for parahoric level. The construction of this root stack is based on the \emph{divisor theorem} (a foundational fact about local models) and on the theory of toric varieties in this context, both of which are of independent interest. We prove our conjecture in the case of Shimura varieties of PEL type when the parahoric is an Iwahori (under some additional conditions). 
\end{abstract}

\maketitle

\tableofcontents

\date{\today}
\maketitle

\section{Introduction} 

\subsection{}
In the theory of Shimura varieties, the construction and investigation of $p$-adic integral models plays a prominent role. Here $p$ is a prime number that will be fixed throughout the paper. There is one class of level structure where we have, at least conjecturally, a systematic picture: when the $p$-component of the level subgroup is parahoric. In this case, we expect to have a \emph{canonical} integral model \cite{PRg} which is flat over $\BZ_p$ and which should come with  a \emph{local model diagram} that relates the local structure of the integral model of the Shimura variety with the local structure of the corresponding \emph{local model}. This expectation has now been realized for 
essentially all Shimura varieties of abelian type, see \cite{KPZ} and the references there for some of the most recent results. The local model is a projective scheme over the localization at the distinguished place of the ring of integers of the reflex field. The local model has been investigated thoroughly in the last 30 years by several authors: See \cite{PRS} and the references there for the earlier history of the subject, and \cite{PZ},   \cite{SWberkeley}, \cite{HRi}, \cite{AGLR}, for more recent developments. It is known that it is normal and, provided $p$ is odd, Cohen-Macaulay. Furthermore, for Shimura varieties of PEL type (i.e., related to moduli schemes of abelian varieties with endomorphisms and polarization), there is a moduli-theoretic description of the corresponding integral models, at least in ``unramified'' cases.

Beyond the parahoric case, the situation is less clear. Inspired by the dichotomy in the representation theory of $p$-adic Lie groups where representations of depth zero have behavior completely different from representations of positive depth, one may hope that a class of level structure for which one may expect a systematic theory of integral models is given by open compact subgroups that contain the pro-unipotent radical of a parahoric subgroup. The present paper takes a small step in this direction.

The goal of this paper is to develop a theory  of $p$-integral models of Shimura varieties when the $p$-component of the level subgroup is a normal subgroup of a parahoric with ``abelian  reductive quotient''. In general,  if $K=\CG(\BZ_p)$ is parahoric over $\BZ_p$, we set  $K_1\subset K$ for the kernel of
\[
 \CG(\BZ_p)\to \bar\CG(\BF_p)\to (\bar\CG_{\red})_{\rm ab}(\BF_p) ,
\]
where $(\bar\CG_{\red})_\ab$ is the maximal torus quotient of the maximal reductive quotient of the  reduction modulo $p$ of $\CG$. More generally, we can take  as level any subgroup $K'\subset K$ which contains $K_1$ and which is the inverse image of $Q(\BF_p)$ for a subtorus $Q$ of $(\bar\CG_{\red})_\ab$. We set $T=(\bar\CG_{\red})_{\rm ab}/Q$. Let $\RK=K^p K\subset G(\BA_f)$ be an open compact subgroup, where $K^p\subset G(\BA_f^p)$ is sufficiently small, and set $\RK'=K^pK'$.  There is a corresponding \'etale $T(\BF_p)$-cover of Shimura varieties 
\[
{\rm Sh}_{\RK'}(\RG,\RX)\to {\rm Sh}_{\RK}(\RG,\RX)
\]
 defined over the reflex field $\RE$. Let $E=\RE_\nu$ be a $p$-adic completion of $\RE$. We denote by $\CS_\RK$ the $O_E$-integral model for the parahoric $K$ mentioned above. In other words, let us assume that $\CS_\RK$ exists, with its corresponding   local model $\Mloc_{\CG,\mu}$ and its local model diagram.  In the present paper, we are interested in extending the cover ${\rm Sh}_{\RK'}(\RG,\RX)\to {\rm Sh}_{\RK}(\RG,\RX)$ to a $T(\BF_p)$-cover 
 \[
 \CS_{\RK'}\to \CS_\RK
 \]
  where the (\'etale) local structure is modeled on a kind of local model. In addition, in the case of Shimura varieties of PEL type, when often $\CS_\RK$ has a moduli description, we would like our models $\CS_{\RK'}$ also to have some type of a moduli description. For simplicity, we will mostly discuss the case $K'=K_1$, $\RK'=\RK_1=K^pK_1$, here and in the rest of the paper. Before we describe our method, let us discuss the case of the modular curve. 

\subsection{}
In the case of $\GL_2$ and the standard parahoric $K=\Gamma_0(p)$, the integral model  $\CS_\RK$ is the moduli space over $\BZ_p$ of triples $(E, \alpha, G)$, where $(E, \alpha)$ is an elliptic curve with $K^p$-level structure and where $G\subset E$ is a subgroup scheme of order $p$. In this case, the local model $\Mloc_{\CG,\mu}$ is the blow-up of $\BP^1_{\BZ_p}$ in the origin. It represents the functor which associates to a $\BZ_p$-algebra $R$ the set of commutative diagrams
\begin{equation}\label{loG2}
\begin{aligned}
\xymatrix{
R \oplus R \ar[r]^{\begin{tiny} \begin{pmatrix} p & 0 \\ 0 & 1 \end{pmatrix} \end{tiny}} & R \oplus R \ar[r]^{\begin{tiny} \begin{pmatrix} 1 & 0 \\ 0 & p \end{pmatrix} \end{tiny}} & R \oplus R \\
\CF_0 \ar[u] \ar[r] & \CF_1 \ar[u] \ar[r] & \CF_0, \ar[u] 
}
\end{aligned}
\end{equation}
where the vertical arrows are the inclusions of  locally direct summands of rank one.
The subgroup $K_1$ is given as
\[
K_1=\left\{\begin{pmatrix}1 &*\\ 0& 1\end{pmatrix}\,{\rm mod}\ p  \right\}\subset \GL_2(\BZ_p). 
\]
The integral model $\CS_{\RK_1}$ is the moduli space over $\BZ_p$ of tuples $(E, \alpha, G, P_1, P_2)$, where $(E, \alpha, G)$ is as before and where $P_1$ is a \emph{generator} of $G$ and $P_2$ is a generator of $E[p]/G$. Here the notion of a generator of a group scheme of order $p$ is not the naive one (for instance $0$ can be a generator). It can be defined in various ways (using Oort--Tate theory, or via the Katz-Mazur notion of \emph{full set of sections} or via the Kottwitz-Wake notion of \emph{primitive elements}).  Using Oort--Tate theory, one can prove that \'etale-locally around a supersingular point, the map $\CS_{\RK_1}\to \CS_\RK$ is given by
\[
\Spec(\BZ_p[u, v]/(u^{p-1}v^{p-1}-p))\to \Spec(\BZ_p[x, y]/(xy-p)), \quad x\mapsto u^{p-1},\, y\mapsto v^{p-1} .
\]
In particular, the scheme $\CS_{\RK_1}$ is normal and the covering $\CS_{\RK_1}\to \CS_\RK$ is finite and flat. Incidentally, instead of $K_1$ it is common to consider instead the subgroup $K'$ between $K_1$ and $K$ given by 
\[
K'=\left\{\begin{pmatrix}1 &*\\ 0& *\end{pmatrix}\,{\rm mod}\ p  \right\}\subset \GL_2(\BZ_p),
\]
which also falls in our framework. Then $\CS_{\RK'}$ parametrizes triples $(E, \alpha, G, P)$, where $(E, \alpha, G)$ is as before and where $P$ is a generator of $G$. Again, we obtain a normal scheme which is finite flat over $\CS_\RK$ and which is locally of the form
\[
\Spec(\BZ_p[u, v]/(u^{p-1}v-p))\to \Spec(\BZ_p[x, y]/(xy-p)), \quad x\mapsto u^{p-1},\, y\mapsto v .
\]
What is remarkable in both cases is that the covering map is given by monomials in the parameters in the local ring upstairs. This leads us to the idea, underlying our construction, of modeling the searched-for cover $\CS_{\RK_1}\to \CS_\RK$ by using the theory of toric schemes to construct appropriate covers of the local model $\Mloc_{\CG,\mu}$. 

\subsection{} Our construction is based on the following theorem of independent interest, cf. Theorem \ref{tameconj}.

\begin{theorem}\label{divconj-intro}
Let $(G, \{\mu\}, \CG)$ be a local model triple over $\BQ_p$. 
Suppose that the group $G$ splits over a tamely ramified extension 
of $\BQ_p$ and that $p$ does not divide the order of the algebraic fundamental group $\pi_1(G_\der)$.  There exists a pair $({\rm P}_{\CG,\mu}, s_{\CG,\mu})$, unique up to unique isomorphism, consisting of a $\CG$-equivariant $T_{\CG}$-torsor ${\rm P}_{\CG,\mu}\to \Mloc_{\CG,\mu}$ over the local model, and a $\CG$-equivariant trivialization (section) $s_{\CG,\mu}$ of this torsor over the generic fiber $\Mloc_{\CG,\mu}\otimes_{O_E}E$, such that
the following condition holds: 

For a  character $\chi: T_{\CG, \br \BZ_p} \to \BG_{m, \br\BZ_p}$, consider the 
$\BG_{m,O_{\br E}}$-torsor 
\[
{\rm P}_\chi:=\BG_{m,O_{\br E}}\times_{\chi, T_\CG}{\rm P}_{\CG,\mu} 
\]
over $\Mloc_{\CG,\mu}\otimes_{O_E}O_{\br E}$, obtained by pushing out ${\rm P}_{\CG,\mu}\otimes_{O_E}O_{\br E}$ by $\chi$. Denote by $\CL_\chi$ the line bundle over 
$\Mloc_{\CG,\mu}\otimes_{O_E}O_{\br E}$ which corresponds to the 
$\BG_{m,O_{\br E}}$-torsor ${\rm P}_\chi$. The section $s_{\CG,\mu}$ induces a section $s_\chi$ of the $\BG_{m,{\br E}}$-torsor ${\rm P}_\chi\otimes_{O_{\br E}}\br E$ over $\Mloc_{\CG,\mu}\otimes_{O_E}\breve E$ and, hence, a (meromorphic) section of the  line bundle
$\CL_\chi$ over $\Mloc_{\CG,\mu}\otimes_{O_E}O_{\br E}$. The condition is that, for all such $\chi$, the  divisor of this meromorphic section of $\CL_\chi$ is equal to 
\begin{equation*}
D_{\chi}:=\sum_{W^Kt_{\bar\mu'} W^K, \bar\mu'\in \Lambda_{\mu}}e\cdot \langle \bar\mu',\chi\rangle\cdot  Z_{\bar\mu'},
\end{equation*}
where  $e$ is the ramification index of $E$ over $\BQ_p$.

{\rm Here the sum is over the set indexing the  irreducible components $Z_{\bar\mu'}$} of the geometric special fiber of the local model $\Mloc_{\CG,\mu}$, and $\langle\ ,\ \rangle$ is the pairing defined in (\ref{pair2}); the coefficients $e\cdot \langle \bar\mu',\chi\rangle$ are integers, cf. Remark \ref{rem:coef}.
\end{theorem}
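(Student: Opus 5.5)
Uniqueness comes first, because it dictates the construction. The torus $T_\CG$ splits over $O_{\br E}$, and the local model $\Mloc_{\CG,\mu}$ is normal and flat with reduced special fiber. Given such a pair, the torsor ${\rm P}_{\CG,\mu}\otimes O_{\br E}$ together with its generic trivialization is equivalent to the collection of pairs $(\CL_\chi, s_\chi)$, compatible with tensor products in $\chi$. Each line bundle $\CL_\chi$ is then forced to be $\CO(D_\chi)$ with $s_\chi$ the canonical meromorphic section $1$. Indeed, on a normal integral scheme, a line bundle with a rational section is determined up to unique isomorphism by the Weil divisor of that section, and here that divisor is $D_\chi$. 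Any automorphism of $(\CL_\chi,s_\chi)$ fixes $s_\chi$ over the dense generic fiber and is therefore trivial. This gives uniqueness up to unique isomorphism over $O_{\br E}$. Uniqueness over $O_E$ then follows by Galois descent, using the uniqueness of the isomorphisms.

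For existence I would take the candidate suggested by uniqueness. Over $O_{\br E}$, set ${\rm P}$ to be the relative spectrum
\[
\underline{\rm Spec}\Bigl(\bigoplus_{\chi\in X^*(T_\CG)} \CO(D_\chi)\Bigr),
\]
with the multiplication induced by $\CO(D_\chi)\otimes\CO(D_{\chi'})\to\CO(D_{\chi+\chi'})$. This is a $T_\CG$-torsor precisely when each $\CO(D_\chi)$ is invertible and these maps are isomorphisms. The integrality of the coefficients $e\langle\bar\mu',\chi\rangle$ (Remark \ref{rem:coef}) makes $D_\chi$ a genuine Weil divisor, and linearity in $\chi$ is clear. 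The real content is therefore the \emph{divisor statement}: for every $\chi$, the Weil divisor $D_\chi$ is Cartier on $\Mloc_{\CG,\mu}\otimes O_{\br E}$.

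Once $D_\chi$ is Cartier, the remaining structure is formal:
\begin{itemize}
\item the section $s$ is $1$ on the generic fiber, where $D_\chi$ is trivial;
\item $\CG$-equivariance holds because the irreducible components $Z_{\bar\mu'}$ are $\CG$-stable;
\item descent to $O_E$ holds because the family $D_\chi$ is stable under Galois twisting of $\chi$, since the pairing (\ref{pair2}) is Galois-equivariant.
\end{itemize}

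To prove the divisor statement I would work on the open cells. The local model is covered by $\CG$-translates of the open affine neighbourhood of the unique closed orbit, so it suffices to show $D_\chi$ is principal near the worst point. I would produce an explicit function there, for instance $\chi$ evaluated on a suitable matrix coefficient, or a product of minors in the lattice-chain description. Its divisor should be computed by determining its order of vanishing along each $Z_{\bar\mu'}$. That order can be read at the generic point of $Z_{\bar\mu'}$, which lies in the open Schubert cell indexed by $t_{\bar\mu'}$. In the affine flag variety, such a function vanishes there to order $\langle\bar\mu',\chi\rangle$ in the uniformizer of $\br F$. Converting to the uniformizer of $O_{\br E}$ multiplies this by $e$.

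The conditions that $G$ is tamely split and that $p\nmid|\pi_1(G_\der)|$ are exactly what allow reduction to the case $G_\der$ simply connected (via a z-extension), and the use of the Pappas–Zhu description of the special fiber as a union of Schubert varieties. For simply connected $G_\der$ the relevant character group comes from characters of $G^{\ab}$. I expect the main obstacle to be constructing a function with exactly the divisor $D_\chi$ near the closed orbit, in particular showing it has no extra zeros. I would attack this through the $\BG_m$-equivariant degeneration to the affine Grassmannian/flag variety over $\BZ\lps t\rps$ and the normality/reducedness results.
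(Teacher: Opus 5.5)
Your uniqueness argument is fine: $\Mloc_{\CG,\mu}\otimes_{O_E}O_{\br E}$ is normal and irreducible with dense generic fiber, so each pair $(\CL_\chi,s_\chi)$ is determined by $D_\chi$, and with it the torsor and its equivariant structure.

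The gap is in existence. You reduce everything to the claim that every $D_\chi$ is Cartier, and then leave exactly this step open as ``the main obstacle''. But that step is the whole content of the theorem: the paper stresses that the Cartier property of $D_\chi$ is far from trivial, and it obtains it only as a \emph{consequence} of the theorem. The tools you suggest for it rest on a false premise.
\begin{itemize}
\item A character $\chi$ of $T_\CG$ is not a character of $G$ or of $G_{\rm ab}$. It comes from the reductive quotient of the special fiber of the parahoric. For the Iwahori of $\GL_n$ one has $X^*(T_\CG)=\BZ^n$, while $X^*(G_{\rm ab})=\BZ$. So ``$\chi$ evaluated on a matrix coefficient'' has no meaning.
\item Products of minors only exist when there is a lattice-chain description, i.e.\ in EL/PEL-type cases, not for a general tame $G$.
\item Showing that a locally chosen function has no extra zeros at the most singular point of $\Mloc_{\CG,\mu}$ cannot be reduced to the order computations at generic points of the $Z_{\bar\mu'}$ that you describe.
\end{itemize}

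The missing idea is to construct the torsor \emph{globally}. Then the Cartier property is automatic and only a codimension-one computation remains. The paper uses the hypotheses on $G$ only to have the Pappas--Zhu group scheme $\und\CG$ over $O[u]$, not for a z-extension reduction. The construction runs as follows.
\begin{itemize}
\item $T_\CG$ is the maximal torus quotient of the reductive quotient of $\CG_0=\und\CG\otimes_{O[u],u\mapsto 0}O$.
\item Push out the $\CL^+_{0,p}\und\CG$-torsor over the two-point Beilinson--Drinfeld Grassmannian ${\rm Gr}_{\und\CG,O,0,p}$ along $\CL^+_{0,p}\und\CG\to\CG_0\to T_\CG$. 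Restricting to $\Mloc_{\CG,\mu}$ gives ${\rm P}_{\CG,\mu}$.
\item The factorization of ${\rm Gr}_{\und\CG,O,0,p}[1/p]$ into its $u=0$ and $u=p$ factors trivializes the torsor on the generic fiber, explicitly by $\tau_0(\iota^*\und\alpha)$. The $\CG$-equivariance is checked separately.
\item The divisor of $s_\chi$ is then computed only at the generic points of the $Z_{\bar\mu'}$. This uses Zhu-type elements $\fks_\lambda\in\CL\und\CT(O_{E_\lambda})$, which give $O_E$-points of $\Mloc_{\CG,\mu}$ passing through the smooth locus of $Z_{\bar\lambda}$.
\item Along such a point $(\CL_\chi,s_\chi)$ restricts to the corresponding pair for the torus. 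There the multiplicity $e\langle\bar\lambda,\chi\rangle$ can be computed explicitly.
\end{itemize}
In lattice-chain cases your ``minors'' intuition does work, but again globally: take $\det\CQ_i\otimes\det\CQ_{i-1}^{-1}$ with the section induced by $\CQ_{i-1}\to\CQ_i$, as in \S\ref{sss:ExampleLoop}. It is not a local principal-ness argument near the closed orbit.
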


We note that the $Z_{\bar\mu'}$ are only Weil divisors in general and that the fact that the RHS is a Cartier divisor is far from trivial.  Our proof uses the  construction of local models in \cite{PZ} and Zhu's work on the coherence conjecture \cite{ZhuCoh}.  We conjecture that the statement of Theorem \ref{divconj-intro} holds in general, cf. Conjecture \ref{divconj}.
 Note that, under quite general assumptions, the Picard group   of the local model agrees with the Picard group of a corresponding affine partial flag variety, see \cite[Cor. 5.19]{FHLR}. Under this identification, the  map
\[
X^*(T_\CG)\to \Pic(\Mloc_{\CG,\mu}\otimes_{O_E}O_{\br E}),\quad \chi\mapsto \CL_\chi,
\]
obtained by Theorem \ref{divconj-intro} gives the standard construction of 
the subgroup of classes of line bundles with \emph{zero central charge}. Indeed, this subgroup is the image of the map. See, for example, \cite[p. 28]{ZhuCoh} or the proof of Cor. 5.19 (3) in \cite{FHLR}.

Let us illustrate Theorem \ref{divconj-intro} in the case $G=\GL_2$ for the Iwahori level. In this case $T_\CG=\BG_m^2$ and the torsor ${\rm P}_{\CG,\mu}$ is given in terms of the moduli functor \eqref{loG2} by $(\CF_1^{-1}\otimes\CF_0, \CF_0^{-1}\otimes\CF_1)$, and the section $s_{\CG,\mu}$ is given by the arrows in the bottom of \eqref{loG2} which are isomorphisms in the generic fiber. 

 From Theorem \ref{divconj-intro} (under the hypotheses made, or by Conjecture \ref{divconj} in general), the $\CG $-equivariant $T_{\CG }$-torsor ${\rm P}_{\CG,\mu}\to \Mloc_{\CG,\mu}$ and 
 the $G_E=\CG\otimes_{\BZ_p}E$-equivariant  trivialization $s_E$ of the general fiber define the  $T_{\CG,E}$-equivariant morphism over $E$
\begin{equation}\label{defdelintro}
\delta_E={\rm pr}\cdot s_{\CG, \mu}^{-1}: {\rm P}^{(-1)}_{\CG,\mu}
\otimes_{O_E}E
\to T_{\CG,E} 
\end{equation}
which is $G_E$-equivariant for the trivial action on the target. 
Its fibers are isomorphic to the partial flag variety $\CF(G, \{\mu\})=G_E/P_{\{\mu\}}$. 
Here  ${\rm P}^{(-1)}_{\CG,\mu}={\rm P}_{\CG,\mu}$ as a scheme, but with the inverse $T_{\CG}$-action, i.e. composed with $t\mapsto t^{-1}$.
  
\subsection{} We next introduce our tori and toric schemes for them. To simplify, we assume in the sequel  that $\CG$ is an Iwahori group scheme. Only at the end of the introduction do we discuss the case of an arbitrary parahoric group scheme. We denote by $T_\CG$ the unique lift over $\BZ_p$ of the torus $T=(\bar\CG_{\red})_{\rm ab}$. We now define an affine toric embedding of $T_\CG$.  Let
\[
S_{ \mu}=\{ \chi\in X^*(T_\CG)\mid \langle \bar\mu', \chi\rangle\geq 0, \forall\, \bar\mu'\in \Lambda_\mu\}.
\]
 In the Iwahori case, $\Lambda_\mu\subset X^*(T_\CG)_\BQ$;   when $G$ splits over $\br\BQ_p$, it is simply the Weyl orbit of $\mu$, see \S \ref{ss:LM}.
Then $S_{ \mu}$ is a saturated semi-group. If $G_\der$ is simply connected, $G_\ab$ is unramified and $\mu$ is non-trivial, or if $(G, \mu)$ is of local Hodge type,  $S_\mu$ generates $X^*(T_\CG)$ as a group, cf. Corollary \ref{cor:Hodgetype}. From now on we assume this. Hence $S_\mu$ defines an affine toric embedding 
\[
T_{\CG, O_{E_0}}\hookrightarrow Y_{\CG, \mu}
\]
over $O_{E_0}$.  Here $E_0$ denotes the maximal unramified subextension of $E$. The dual cone of $S_{ \mu}$ is given as 
\begin{equation*}
\sigma_{\mu}:=\{\sum\nolimits_{\mu'}r_{\mu'}\cdot \mu'\mid\ r_{\mu'}\geq 0\}\subset X_*(T_\CG)\otimes_{\BZ}\BR .
\end{equation*}
The affine toric scheme  $Y_{\CG,\mu}$ plays a central role in our constructions.

\begin{remark}
Let $T_{\CG, \ad}$ be the image of $T_\CG$ in the adjoint group $G_\ad$. Then  the image $\sigma_{\mu_\ad}$ of $\sigma_{\mu}$ in $(T_{\CG, \ad})_\BR$ defines a  toric variety for the torus $T_{\CG, \ad}$, by taking the cones over the images of the extreme rays of $\sigma_{\mu}$. In contrast to our toric variety $Y_{\CG, \mu}$ which is affine, this toric variety is projective, provided that $\mu$ projects non-trivially to all simple factors of $G_\ad\otimes_{\BQ_p}\breve\BQ_p$. Here we are using the amusing fact that for any irreducible root system $(V, R)$ and any non-trivial element $\mu\in V^*$, the convex hull of the orbit of $\mu$ under the Weyl group contains the origin in its interior, cf. Lemma \ref{lim}.
There are various ways  in the literature by which a projective toric variety is associated to an adjoint linear algebraic group but we were unable to find 
this particular one. It is for instance different from the toric varieties which arise from generic torus orbits in partial flag varieties, cf. \S \ref{ss:othcon} and the literature cited there.  Considering the fact that already at the birth of the theory of toric varieties, prominent examples arose from the theory of semisimple algebraic groups, it may be useful to add these toric varieties to the stock of standard examples of toric varieties.
\end{remark}

 It is not difficult to see that the map $\delta_E$ in \eqref{defdelintro} extends 
to a $T_{\CG}$-equivariant morphism  $\delta: {\rm P}^{(-1)}_{\CG,\mu}\to Y_{\CG,\mu}$ which is $\CG_{}$-equivariant for the trivial action on the target  (in the case of $\GL_2$, this amounts to the fact that the arrows in the bottom of \eqref{loG2} extend over the whole local model). 
In stacks language, we obtain,  after dividing out by $T_\CG$,   a morphism which we call the \emph{divisor map}, 
 \begin{equation}\label{divmap-intro}
 \Delta=\Delta_\CG: [\CG \bslash \Mloc_{\CG,\mu}]\to  [T_\CG \bslash Y_{\CG,\mu}].
 \end{equation}
 More generally, for a 
choice of saturated semi-group $S \subset S_{\CG, \mu} $, also generating $X^*(T_\CG)$ and satisfying suitable additional conditions (cf.  \S \ref{moregen}), we obtain  the toric scheme $Y_S$,
and   the corresponding divisor maps  $\delta_S$ and
 $
\Delta_S=\Delta_{\CG, S}:   [\CG \bslash \Mloc_{\CG,\mu}]\to  [T_\CG \bslash Y_{S} ] .
 $
 The point set of source and target of \eqref{divmap-intro} are finite posets: indeed, the point set of $[\CG \bslash \Mloc_{\CG,\mu}]$ can be identified with the admissible set $\Adm(\mu)$; the point set of $[T_\CG \bslash Y_{\CG,\mu} ]$ can be identified with the face poset of the cone $\sigma_{\mu}$. We give a conjectural purely combinatorial description of the induced map between these finite cosets, in terms of the \emph{face map}, cf. Conjecture \ref{conj:fac}. Results of Q.~Yu show that the face map is useful in the analysis of the fine structure of the admissible set, cf. Remark \ref{rem:yu}.

\subsection{}  The next step in our construction involves the Lang map $L: T\to T$ and its unique lifting over $\BZ_p$,
 \begin{equation}
 L\colon T_\CG\to  T_\CG,\quad x\mapsto {\rm Fr}(x)x^{-1} .
 \end{equation}
 Combining this with our toric embedding, we obtain  a Cartesian diagram
  \begin{equation*}\label{LangNormalization-intro}
\begin{aligned}
 \xymatrix{
        T _{\CG, O_{E} }\ar@{^{(}->}[r] \ar[d]_{L}  &  \wt Y_{S, O_E}  \ar[d]^L \\
        T_{\CG,  O_{E}} \ar@{^{(}->}[r]  & Y_{S,O_E} ,
        }
        \end{aligned}
\end{equation*}
 where  $\wt Y_{S}$ is the normalization of $ Y_{S}$ in the Lang covering map $L\colon T_{\CG}\to T_{\CG}$. Note that $\wt Y_{S}$ is a toric scheme for $T_\CG$ in its own right. 
 \begin{remark}
 Note that if $T_\CG$ is a split torus (as for $G=\GL_2$), then the Lang map is simply the map $x\mapsto x^{p-1}$. Note that this exponent is exactly the one appearing in the description of the cover $\CS_{\RK_1}\to\CS_\RK$ in the case of $\GL_2$. 
 \end{remark}
 We now define  the stack $\Mlocroot_{\CG,\mu}$ as the fiber product   of the Lang cover and $\delta_S$,
\begin{equation*}
\begin{aligned}
 \xymatrix{
      \Mlocroot_{\CG,\mu} \ar[r] \ar[d]_L  &   [T_\CG \bs \wt Y_S ] \ar[d]^L \\
       \Mloc_{\CG,\mu}\ar[r]^{\delta_S} &  [T_\CG \bs Y_S ]  .
        }
        \end{aligned}
\end{equation*}
 If $S=S_{\mu}$, we will  denote $\Mlocroot_{\CG,\mu}$ by $\Mlocrooto_{\CG,\mu}$.
 
 \begin{remark}\label{intro:rootstack}
The notation $\Mlocroot_{\CG,\mu}$ is supposed to be a reminder of the fact that this stack
 arises as a \emph{root stack}. To illustrate this point, assume that  the toric stack $[T_\CG \bs Y_S] $ is $[\BG_m\bs \BA^1]$. Recall that the $S$-valued points of $[\BG_m\bs \BA^1]$ correspond to a $\BG_m$-torsor $P$ over $S$ and an equivariant morphism  $P\to \BA^1$. By the  correspondence between $\BG_m$-torsors and invertible sheaves, an $S$-valued point of $[\BG_m\bs \BA^1]$ corresponds therefore to a pair $(\CL, s)$ consisting of an invertible sheaf $\CL$ over $S$ and  a global section $s$ of its dual $\CL^{-1}$, cf. \cite[Ex. 5.13]{Ol}. The $n$-root stack over $S$ corresponding to  $(\CL, s)$ is defined to be the fiber product in the following diagram,
\begin{equation*}\label{GMrootstack}
\begin{aligned}
 \xymatrix{
      X_{\CL, s, n} \ar[r] \ar[d]  &   [\BG_m \bs \BA^1 ] \ar[d]^{n} \\
       S\ar[r]^{\delta_{\CL, s}} &  [\BG_m \bs \BA^1 ]  ,
        }
        \end{aligned}
\end{equation*}
cf. \cite[Def. 2.2.1]{Cadman}. In \cite{Cadman}, this construction is mostly applied to a pair $(\CL, s)$ of the form $(\CO(-D), \CO\to\CO(D))$, defined by an effective Cartier divisor on $S$. Our construction is a generalization of the Cadman construction (in fact, when $T_\CG$ is split and $Y_S$ is smooth so that $[T_\CG \bs Y_S ]\simeq [\BG_m\bs \BA^1]^r$, our construction coincides with the $r$-fold power of Cadman's construction for $n=p-1$). 
\end{remark}

Let $\fkM_{\CG,\mu}=[\CG\bs  \Mloc_{\CG,\mu}]$ and $\fkM\tysup{\sqrt{S}}_{\CG,\mu}=[\CG\bs  \Mlocroot_{\CG,\mu}]$. Then the divisor morphism  $\Delta_S: \fkM_{\CG,\mu}\to [T_\CG\bs Y_S]$ gives  a fiber product
 diagram
 \begin{equation}\label{CDrootstackIntr}
\begin{aligned}
 \xymatrix{
      \fkM^{\sqrt{S}}_{\CG,\mu} \ar[r] \ar[d]  &   [T_\CG \bs \wt Y_S ] \ar[d]^L \\
       \fkM_{\CG,\mu}\ar[r]^{\Delta_S} &  [T_\CG \bs Y_S ]  .
        }
        \end{aligned}
\end{equation}
  
 \subsection{}  The following conjecture would give a systematic construction of $O_E$-integral models of $K_1$-level. In its statement, we let the prime to $p$ component $\RK^p$ vary, i.e., we consider  ${\rm Sh}_{K_1}(\RG, \RX)_E$, resp. ${\rm Sh}_{K}(\RG, \RX)_E$ (the inverse limit  over the prime to $p$ subgroups $K^p$). Let 
  \[
  \varphi\colon \CS_{K}\to  \fkM_{\CG,\mu}
  \]
 be the stacks version of the local model diagram. 
 
\begin{conjecture}\label{globconj-intro}
 There exists an $O_E$-integral model  of the Shimura variety 
 ${\rm Sh}_{K_1}(\RG, \RX)_E$, i.e. an $O_E$-scheme $\CS_{K_1, S}$ together with an isomorphism $\CS_{K_1, S}\otimes_{O_E}E\simeq {\rm Sh}_{K_1}(\RG, \RX)_E$,   which has the following properties:
 
 \begin{itemize}
\item[i)] There is a morphism \[\CS_{K_1, S}\to \mathcal \CS_{K}
\] which extends $\pi: {\rm Sh}_{\RK_1}(\RG, \RX)_E\to {\rm Sh}_{\RK}(\RG, \RX)_E$ on the generic fibers.
 
 \item[ii)] The action of $T_\CG(\BF_p)$ on ${\rm Sh}_{K_1 }(\RG, \RX)_E$ extends to $\CS_{K_1, S}$ and the morphism 
 $\CS_{K_1, S}\to \mathcal \CS_{K}$ of (i)
  identifies $\CS_{K}$ with the scheme quotient $T_\CG(\BF_p)\bs \CS_{K_1, S}$.
 
  \item[iii)] The scheme $\CS_{K_1, S}$ and the $T_\CG(\BF_p)$-cover $\CS_{K_1, S}\to \mathcal \CS_{K}$ are $\RG({\BA}_f^p)$-equivariant for a $\RG({\BA}_f^p)$-action that extends the natural action on the generic fiber.
  
  \item[iv)] There is a morphism $\varphi_{1, S}: \CS_{K_1, S}\to \fkM^{\sqrt{S}}_{\CG,\mu}$ which fits  in a $2$-commutative diagram
  \begin{equation*} 
\begin{aligned}
 \xymatrix{
     \CS_{K_1, S}\ \ar[r]^{\varphi_{1,S}} \ar[d]  &  \fkM^{\sqrt{S}}_{\CG,\mu} \ar[d] \\
       \CS_{K}\ar[r]^{\varphi}  & \fkM_{\CG,\mu},
        }
        \end{aligned}
\end{equation*}
 and which induces
   an isomorphism of stacks 
 \[
 [T_\CG(\BF_p)\bs \CS_{K_1, S}]\xrightarrow{\sim}  \CS_{K}\times_{\fkM_{\CG,\mu}}\fkM^{\sqrt{S}}_{\CG,\mu }.
 \]
 \end{itemize}
 \end{conjecture} 
The conjecture implies a local description of $\CS_{\RK_1}$ in the following sense. Let $x\in \CS_{\RK}$, with corresponding point $y\in \Mloc_{\CG,\mu}$ (only the $\CG$-orbit of $y$ is well-determined). Then there is an isomorphism
\begin{equation}\label{eq:sh2-intro}
 \CS_{K_1, S}\times_{\CS_{K}}\Spec(\CO^{\rm sh}_{\CS_{K}, x})\simeq  \wt Y_S\times_{Y_S,\delta}\Spec(\CO^{\rm sh}_{\Mloc_{\CG,\mu}, y})
 \end{equation}
 which respects the $T_\CG(\BF_p)$-actions on both sides.  This can be made completely explicit. For instance if 
 $T_\CG\simeq \BG_m^r$ is split (so that the Lang map is simply the dilation by $p-1$), then  \eqref{eq:sh2-intro} is the spectrum of the ring
\[
\CO^{\rm sh}_{\Mloc_{\CG,\mu}, y}[u_1, \ldots, u_n]/(u_1^{p-1}-\delta^*(s_1), \ldots , u_n^{p-1}-\delta^*(s_n)).
\]
Here $s_1, \ldots, s_n$ is a minimal generator set for $S$ and  $t\in T_\CG(\BF_p)$ acts by $t\cdot u_i=\chi_i(t) u_i$, where $\chi_i(t)$ is the image of $t$ under
\[
T_\CG(\BF_p) \subset T_\CG(\BZ_p)\xrightarrow{s_i} \BZ_p^*.
\]
(The inclusion is the Teichmuller lift.)
 
  In the direction of Conjecture \ref{globconj-intro}, our main result is the following.
 
\begin{theorem}\label{thm:PEL-intro} (cf. Theorem \ref{thm:PEL})
 Let $(\RB,\RV,( \,,\,), *, h, O_\RB, \CL )$ be integral PEL data as in \cite[Ch. 6]{RZbook}. Let
 $\CG={\rm Aut}_{O_B,(\, )}(\mathscr L)$ be
 the group scheme of isomorphisms of the polarized self-dual lattice multichain $\sL$ up to common similitude in $\BZ_p^*$.  Assume that $p\neq 2$, 
 $\CG\otimes_{\BZ_p}\BQ_p$ splits over a tamely ramified extension of $\BQ_p$, and that $\CG$ is an Iwahori group scheme.
Assume furthermore  that condition (S) is satisfied, see \S \ref{sss:MainPEL}. 
 Then Conjecture  \ref{globconj-intro} for the 
 $T_\CG(\BF_p)$-cover of the corresponding PEL type Shimura variety  with $\CG(\BZ_p)$-level at $p$ is true.
 \end{theorem}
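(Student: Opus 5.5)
Over the Iwahori locus the PEL integral model $\CS_\RK$ has a moduli description (Rapoport--Zink) in terms of chains of $p$-divisible groups/abelian varieties $A_0\to A_1\to\cdots$ with isogenies of $p$-power degree and kernels that are finite flat group schemes. The plan is to construct $\CS_{K_1,S}$ moduli-theoretically, by adding to this data \emph{generators}, in the sense of Oort--Tate, of the successive kernels. Then we identify the result with the root stack fiber product of Conjecture \ref{globconj-intro}(iv). The first step is to decompose the kernels $H_i=\ker(A_{i-1}\to A_i)[p]$, using the $O_B\otimes\BZ_p$-action together with the Iwahori (maximal flag) condition, into pieces $H_{i,j}$. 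After extending scalars to $\BZ_{p^r}$ and using idempotents, each $H_{i,j}$ is a Raynaud $\BF_{p^r}$-vector space scheme of rank one. Condition (S) is presumably exactly what makes this decomposition into Raynaud schemes work, and the pieces are indexed by the characters of $T_\CG$ coming from the simple factors of $\CG$ modulo $p$. Raynaud's classification then attaches to each $H_{i,j}$ line bundles $\CL_k$ with sections $\gamma_k\in\Gamma(\CL_k^{\otimes p}\otimes\CL_{k+1}^{-1})$, $\delta_k$. Equivalently, it attaches a point of a quotient stack $[T\bs \BA^r]$, and the Raynaud generators (Oort--Tate type, after Katz--Mazur/Haines--Rapoport) form exactly the Lang-type cover $[T\bs\wt Y]\to[T\bs Y]$.

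The second step is to compare this with the divisor map. Via the local model diagram, the Hodge filtration $\omega_{A_i}\subset H^1_{dR}(A_i)$ gives the point of $\Mloc_{\CG,\mu}$, hence the lattice chain data $\CF_0\to\CF_1\to\cdots$. I would show that the Raynaud line bundles $\CL_k$ of the $H_{i,j}$ are, up to a canonical isomorphism compatible with the sections, the line bundles $\CL_\chi$ of Theorem \ref{divconj-intro}. Here $\chi$ runs through the generators of $S$, pulled back along $\varphi$, and the Raynaud sections correspond to $s_\chi$. This holds because $\omega_{H}$ is computed from the cokernels of $\omega_{A_{i-1}}\to\omega_{A_i}$ on eigenspaces, which are determinants of the maps $\CF_{i-1}\to\CF_i$ restricted to eigen-pieces. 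That is exactly how ${\rm P}_{\CG,\mu}$ and $s_{\CG,\mu}$ are built in the $\GL_2$ example, and the uniqueness in Theorem \ref{divconj-intro} pins down the identification. Consequently the morphism $\CS_\RK\to[T_\CG\bs Y_S]$ defined by the group schemes agrees with $\Delta_S\circ\varphi$.

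The third step is to define $\CS_{K_1,S}$ as the scheme of generators. This is relatively representable and finite flat over $\CS_\RK$ (by the Raynaud description, it is locally cut out by $u^{p-1}=\delta^*(s)$ type equations), with $T_\CG(\BF_p)$ acting on generators. Properties (i)--(iv) then follow. The isomorphism in (iv) comes from the Cartesian comparison in the second step together with the identification of generator data with $[T_\CG\bs\wt Y_S]$. On the generic fiber, generators of étale $\BF_{p^r}$-schemes are honest trivializations, which recovers the $K_1$-level structure. Prime-to-$p$ Hecke equivariance is automatic, since prime-to-$p$ isogenies preserve the $p$-kernels. For (ii), the quotient statement reduces, étale locally, to $\wt Y_S/T_\CG(\BF_p)=Y_S$. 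This holds because $\wt Y_S\to Y_S$ is the normalization of $Y_S$ in the Galois $T_\CG(\BF_p)$-cover $L$, hence is a quotient map, and this is preserved by flat base change along $\delta$ after checking that the fiber product is normal, or at least that invariants commute.

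The main obstacle is the second step. One must match the Raynaud/Oort--Tate invariants of the kernel pieces with the torsor ${\rm P}_{\CG,\mu}$ and its section, including the correct powers of $p$ and the multiplicities $e\langle\bar\mu',\chi\rangle$, while checking that the generators $\chi$ of $S$ used really correspond to Raynaud pieces. I would first do this over the local model explicitly for $\GL_n$ and ${\rm GSp}$ Iwahori chains, and then reduce the general PEL case to these through the decomposition under condition (S).
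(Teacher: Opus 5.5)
Your Steps 1--2 contain the right ingredients, but Step 3 fails as stated. You define $\CS_{K_1,S}$ as the scheme of Raynaud generators of the kernel pieces, and assert that it is finite flat over $\CS_{K_0}$ and realizes the fibre product in (iv). There are three problems.
\begin{enumerate}
\item \emph{Flatness is false in general.} By Lemma \ref{rootback} and Corollary \ref{cor:NonFlat}, $\CS_{K_1,S_{\CG,\mu}}\to\CS_{K_0}$ can be flat essentially only in the Drinfeld case. For $\GSp_{2g}$ with $g>1$, which satisfies all hypotheses of the theorem, it is finite but not flat (Remark \ref{rem:badSiegel}).
\item \emph{A scheme of generators does not see $S$.} The conjecture concerns every admissible $S$, and different $S$ give different models. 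The generator equations $u_a^p=\delta_a u_{a+1}$, $(\prod_a u_a)^{p-1}=\prod_a\delta_a$ are exactly the normalized Lang cover of $\Res_{O/\BZ_p}\BA^1$ (\S\ref{ex:ramify}). So generators of the pieces indexed by $J$ realize the fibre product only for the free semigroup spanned by the Raynaud characters. For $S=S_{\CG,\mu}$ the local equations are $L^*(s)=\delta^*(s)$ in $R[\wt S]$ with $\wt S=L^*(S)^{\rm sat}$, and $\wt S$ is in general not generated by $L^*(S)$ and the Raynaud coordinates. This already fails for the inert unitary group with $n=3$. In the Hilbert--Siegel case with $d>1$, the duality-compatible generator scheme corresponds only to the non-saturated variant of Remark \ref{rem:NonNormalvariant}, and apparently differs from $\CS_{K_1}$ (Remark \ref{rem:HilbertSiegel}).
\item \emph{The similitude factor is missing.} Generators of the pieces indexed by $J$ only produce the $T'_\CG(\BF_p)$-cover. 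For the similitude factor you need an extra $(p-1)$-st root of $w_p$, or the duality compatibility of Corollary \ref{sieOT}.
\end{enumerate}

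The paper gives no moduli description in general; Raynaud theory is used only to construct the $T_\CG$-torsor $P$ of Conjecture \ref{conj:torsors}. This is where (S) enters. The Raynaud property itself comes from the Iwahori condition, which makes the $W_{j,i}$ one-dimensional, together with flatness of the base. What (S) provides is that the characters $\eta_{i,a}$, $i\in J$, possibly together with $c$ (trivial bundle with section $w_p$), form a basis of $X^*(T_\CG)$, so that $P=(\CL(i)_a)_{i,a}$.

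Note what exactly is matched with $\varphi^*(\CL_{\eta_{i,a}},s_{\eta_{i,a}})$. It is the Lang push-out $(\CL(i)_{a-1}^{\otimes -p}\otimes\CL(i)_a,\ \delta(i)_{a-1})$, not $\CL(i)_a$, and only for basis characters rather than generators of $S$. This matching is your Step 2. One identifies the co-Lie complex $[\CL(i)_{a-1}^{\otimes p}\to\CL(i)_a]$ with $[\Omega_{i,a}\to\Omega_{i-1,a}]$, takes determinants, and compares with the (naive $=$ canonical) local model of $\GL_{O_B}$ (Propositions \ref{prop:Fitting} and \ref{prop:LBLM}). No multiplicity check is needed there.

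The multiplicities enter only through Theorem \ref{tameconj}, which is where tameness is used. Effectivity of $D_\chi$ for $\chi\in S$ (Proposition \ref{prop:delta}) makes the sections $a_\chi$ extend over $\CS_{K_0}$ for all $\chi\in S$. This gives $\wt{[\pi]}\colon\CS_{K_0}\to[T_\CG\LBS Y_S]$, and one \emph{defines} $\CS_{K_1,S}:=\CS_{K_0}\times_{[T_\CG\LBS Y_S]}[T_\CG\bs\wt Y_S]$. This is finite by Lemma \ref{lem:representable} and satisfies (i)--(iv) by construction (Proposition \ref{prop:Conj}). Flatness is neither claimed nor needed. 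Descriptions via generators are available only in special cases: Siegel, split unitary with free $S$, and the fake unitary case.
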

 
 Note that the group $G=\CG\otimes\BQ_p$ might not be connected.
 By definition, an Iwahori subgroup of a non-connected $p$-adic reductive group   is an Iwahori subgroup of its neutral component. Our assumption ``$\CG$ is Iwahori" in the statement of the theorem more precisely means
 that we require that $\CG\otimes\BF_p$ is connected and that the neutral component $\CG^\circ$ is a Iwahori group scheme for the connected reductive group $G^\circ$; then also $\CG^\circ(\BZ_p)=\CG(\BZ_p)$ is an Iwahori subgroup of $G^\circ(\BQ_p)$.
 If $G$ is connected (i.e. for types A or C) this amounts to simply requiring that $\CG$ is Iwahori in the standard sense.
  We will see that our assumption implies that the stabilizer of $\CL$ in ${\rm Aut}_{O_B}(\mathscr L)$ is an Iwahori in $\GL_\RB(\RV)$. 
  Condition (S) is an additional requirement on the periodic polarized lattice multichain $\CL$. It requires that $T_\CG$ is an induced torus
  which is, roughly, presented in a specific manner and it is often satisfied when $G_\ad$ is absolutely simple.

 The Shimura variety ${\rm Sh}_\RK$ is open and closed in a moduli scheme of abelian varieties with additional structure and this is how the integral model $\CS_\RK$ is defined.
  The integral model $\CS_{\RK_1, S}$ is then constructed by adding moduli data to the abelian scheme representing a point of $\CS_\RK$.

\subsection{}  Assuming the truth of Conjecture \ref{globconj-intro}, let us enumerate properties of the integral models $\CS_{\RK_1, S}$. The local  properties are deduced from properties of the stack   $\Mlocroot_{\CG,\mu}$ above; these, in turn depend on properties of the Lang covers $L\colon \wt Y_S\to Y_S$. It turns out that the desire to find hypotheses that simultaneously  make both this cover ``nice" and the  
 stack   $\Mlocroot_{\CG,\mu}$ ``nice" can only be realized in the Drinfeld case (to be made more precise below).
 
\begin{enumerate}
\item The cover $\CS_{\RK_1, S}\to\CS_{\RK}$ is finite and surjective. In fact, $\CS_{\RK}$ is the scheme theoretic quotient of $\CS_{K_1, S}$ by the action of the finite group $T(\BF_p)$. Hence, assuming the flatness of $\CS_{\RK}$ over $O_E$, the scheme $\CS_{\RK_1, S}$ is topologically flat over $O_E$. In particular, $\CS_{\RK_1, S}$ is a  good model for topological questions, like, e.g., the analysis of the sheaf of nearby cycles.
\item If $S=S_{\mu}$, then the scheme $\CS_{\RK_1, S}$ is regular in codimension one, provided that $E$ is unramified over $\BQ_p$, cf. Corollary \ref{cor:R1}. By contrast, if $S\neq S_{\mu}$, then $\CS_{\RK_1, S}$ is not regular in codimension one, at least when $T_\CG$ is split, cf. Proposition \ref{prop:R1c}. 
\item Assume the validity of Conjecture \ref{conjLco} that the Lang cover $L\colon \wt Y_{S_\mu}\to Y_{S_\mu}$ is flat if and only if $Y_{S_\mu}$ is smooth. This holds when $T_\CG$ is split, but also in many more PEL cases, see Remark \ref{rem:Altmann}. Then the cover $\CS_{\RK_1, S_\mu}\to\CS_{\RK}$ is flat if and only if $Y_{S_\mu}$ is smooth, cf. Lemma \ref{rootback}. Assuming $G_\ad$ to be absolutely simple, this last property holds  only if the pair $(G_\ad\otimes_{\BQ_p} \breve\BQ_p, \mu)$ is isomorphic to $(\PGL_n, \varpi_1^\vee)$ (the Drinfeld case), cf. Corollary \ref{classfree}. 
\item We expect that, outside the Drinfeld case, the scheme $\CS_{\RK_1, S}$ is rarely normal--in fact, sometimes it is not even flat over $\BZ_p$, comp. Remark \ref{rem:HilbertSiegel}.  Also, computer calculations suggest that even the flat closure of the generic fiber can be bad (not normal, special fiber not reduced), comp. Remark \ref{rem:badSiegel}.  
\end{enumerate}
Here the notion of the ``Drinfeld case'' has to interpreted with caution. Indeed, our theory is sensitive to changes of $\mu$ by a central character. For instance the cases $(\GL_n, \varpi_1^\vee)$ and $(\GL_n, \varpi_{n-1}^\vee)$ are isomorphic when passing to the adjoint group (and hence share a common local model $ \Mloc_{\CG,\mu}$) but yield radically different $ \Mlocroot_{\CG,\mu}$. It is this sensitivity to central characters that prevents us so far from stating a conjecture on Lang covers of affine toric varieties that would have Conjecture \ref{conjLco} as consequence. 

We mention here an amusing group-theoretic ingredient in the proof of 3).  Let $(R, V)$ be an irreducible root system with Weyl group $W$. Then for any proper parabolic subgroup $W_J$ of $W$  there is the inequality
$$\# (W/W_J)\geq \dim V +1, $$
with equality if and only if $(V, R)$ is of type $A_n$ and $J=\{s_1,  \ldots, s_{n-1}\}$ or $J=\{s_2,  \ldots, s_{n}\}$. This is the Weyl group analogue of the inequality in \cite{OR} comparing $\dim G/P$ and ${\rm rk}\, G$. 

 At this point, one may ask why we do not define the finite cover $\CS_{\RK_1, S}$ of $\CS_{\RK}$ by simply taking the normal closure, which by (4) above would be different from our construction in most cases. The reason is that there seems no systematic way of analyzing the local structure of the normalization.

\subsection{}  Let us now explain the method of proof of Theorem \ref{thm:PEL-intro}. First, the $T_\CG(\BF_p)$-cover ${\rm Sh}_{\RK_1}\to {\rm Sh}_{\RK}$ gives rise to a pair $(Q, a)$ consisting of a $T_\CG$-torsor $Q$ over $ {\rm Sh}_{\RK}$ and a section $a$ of the push-out $L_*(Q)$ of $Q$ under the Lang map $L:T_\CG\to T_\CG$, cf. Proposition \ref{prop:torsors}. 

In  light of Theorem \ref{divconj-intro} (or its conjectural generalization, freeing it from the hypotheses made in  Theorem \ref{divconj-intro}), Conjecture \ref{globconj-intro} is reduced to the following statement.   Namely, that there exists a $T_\CG$-torsor $P$ over $\CS_K$ extending the $T_\CG$-torsor $Q$ and an isomorphism of pairs 
 \[
 \beta: \big(L_*(P), a: T_{\CG}[1/p]\xrightarrow{\sim} L_*(P)[1/p]\big)\xrightarrow{\sim} \varphi^*\big(\RP^{(-1)}_{\CG,\mu}, s_{\CG,\mu}\big),
 \]
 where the pair $(\RP_{\CG,\mu}^{(-1)}, s_{\CG,\mu})$ comes from  Theorem  \ref{divconj} and the pull-back is by the local model diagram morphism $\varphi: \CS_{K}\to \fkM_{\CG,\mu}
 =[\CG\bs \Mloc_{\CG,\mu}]$.  Note that this statement is independent of the semi-group $S$. 
 
 We are therefore reduced to constructing the $T_\CG$-torsor $P$ over $\CS_K$ with its section after push-out under the Lang isogeny. Here the main ingredient is the Oort--Tate--Raynaud theory of finite group schemes and their generators in the sense of \cite{P95} (and the condition  (S) is taylored to enable us to apply  this theory). Indeed, a $\BF_{p^d}$-Raynaud group  scheme $G$ over a flat $\BZ_p$-scheme $S$, corresponding in terms of that theory to the tuple $(\CL_i, \gamma_i, \delta_i)_{i\in\BZ/d}$, defines the $\Res_{W(\BF_{p^d})/\BZ_p}(\BG_m)$-torsor $P_G=(\CL_i)_{i\in\BZ/d}$ with $L_*(P_G)=(\CL_{i-1}^{p}\otimes\CL_i^{-1})_{i\in\BZ/d}$ and sections $(\delta_{i-1}: \CO_S\to\CL_{i-1}^{-p}\otimes\CL_i)_{i\in\BZ/d}$.
 
 We note that one may use the  $T_\CG$-torsor $P$ and its section of its push-out under the Lang isogeny to formulate a relative moduli problem for $\CS_{K_1}$ over $\CS_K$, cf. Proposition \ref{prop:Conj}. However, this moduli problem lacks in general a catchy formulation. In a few cases, one can give a formulation directly in terms of Raynaud group schemes and their generators. Here are a few instances of such formulations.
 
 \begin{itemize}
 \item In the Siegel case ($G=\GSp_g$), for $S=S_\mu$, the integral model $\CS_{K_1, S}$ parametrizes the set of Oort--Tate generators $z_i$ of the group schemes $G_i, i=1, \ldots 2g$, such that the products $z_i z_{2g+1-i}$ induced by the duality pairings are independent of $i$. Here the group schemes $G_i$ arise from the universal chain of $p$-isogenies $A_1\to A_2\to\ldots\to A_{2g}\to A_1$ over $\CS_K$, and the pairing is induced by the universal principal polarization over $\CS_K$, cf. Corollary  \ref{sieOT}. 
 \item In the ``split unitary case'' ($p$ splits), for $S$ the evident free semi-group in $S_\mu$, there is a similar moduli description in terms of Oort--Tate generators of universal group schemes $G_i$ of order $p$, cf. \S \ref{sss:split}.
 \item In the case of the fake unitary group leading to $p$-adic uniformization, for $S=S_\mu$, the integral model $\CS_{K_1, S}$ parametrizes the pairs $(P, u)$, where $u$ is a $p-1$-st root of $w_p$ and where $P$ is a generator of the Raynaud group scheme $X[\Pi]$, where $X$ is the universal $p$-divisible group with $O_D$-action over $\CS_K\otimes_{O_K}W(\kappa_D)$, cf. \S \ref{ss:fak}
 \end{itemize}
 
 Let us now drop the assumption, made so far in  this introduction, that $\CG$ is an Iwahori group scheme. The construction of the root stack $ \Mlocroot_{\CG,\mu}$ extends to general parahoric group schemes, and so does the formulation of Conjecture \ref{globconj-intro}. However, our method of using Raynaud theory to prove it does not extend (not even to other PEL cases). In fact, even in the context of Theorem \ref{thm:PEL-intro}, when we drop some of our conditions   but still insist that the neutral component $\CG^\circ$ be  Iwahori, Raynaud theory is insufficient. Considering the fact that Raynaud theory can be viewed as an early attempt at addressing modulo $p$ phenomena  in $p$-adic Hodge theory (and  has not been really integrated in the subsequent advances, like prismatic theory), it seems to us promising to try to apply these more recent developments to Conjecture \ref{globconj-intro}. 
 
\subsection{}  Our theory of integral models of $\Gamma_1(p)$-type seems the first attempt at a systematic theory of such integral models. However, there are quite a number of papers in the literature that develop special cases. We mention the following instances. 
 
 \begin{itemize}
 \item The case of modular curves, as also described in this introduction, is treated in \cite{DeligneRa} and by Katz-Mazur in \cite{KatzMazur}.
 \item The Hilbert-Blumenthal case is studied by the first-named author in \cite{P95} who introduces the notion of a ``generator" for a Raynaud group scheme (see loc. cit. \S 5), the same as in this paper, cf. \S \ref{sss:Generators}. The level subgroup considered is
 \[
 K'=\left\{\begin{pmatrix}1 &*\\ 0&  *\end{pmatrix}\,{\rm mod}\ \mathfrak p  \right\}\cap \{g\in \GL_2(O_{F_{\mathfrak p}})\ |\ \det(g)\in \BZ_p^*\},
 \]
 where $\mathfrak p$ is a prime of the corresponding totally real field $F$, comp. Rem. \ref{remarkHB}.
 \item  M. Harris and R. Taylor \cite{HT} consider  unitary similitude Shimura varieties  at a prime which splits in the quadratic extension. In particular they consider the case of a quadratic extension of $\BQ$ of signature $(1, n-1)$ and the subgroup of a parahoric (which is not an Iwahori)
 \[
K'=\left\{\begin{pmatrix}1_1 &*\\ 0& *_{n-1}\end{pmatrix}\,{\rm mod}\ p  \right\}\subset K=\left\{\begin{pmatrix}*_1 &*\\ 0& *_{n-1}\end{pmatrix}\,{\rm mod}\ p  \right\} .
\]
They use Oort--Tate theory to show that the normalization of $\CS_K$ in ${\rm Sh}_{K'}$ is a regular scheme mapping by a finite flat morphism to $\CS_K$. 
A similar construction for the Klingen parahoric in the Siegel case $\GSp_4$ is considered by Genestier-Tilouine \cite{GenTi}.
\item T. Haines and the second-named author \cite{HR} consider the unitary similitude Shimura variety of signature $(1, n-1)$ at a prime which splits in the quadratic extension, and consider the pro-unipotent radical $K_1$  of an Iwahori $K$.
They give a moduli problem for $\CS_{K_1}$ in terms of generators of Oort--Tate group schemes and show that $\CS_{K_1}$ is a regular scheme finite and flat over $\CS_K$. 
\item The Siegel case and the unitary similitude Shimura variety of general signature $(r, s)$ at a prime which splits in the quadratic extension
was considered by R. Shadrach \cite{Shadrach}, by T. Haines--B. Stroh \cite{HS}, and T. Haines--Q. Li-B. Stroh in \cite{HLS}. In all these cases, the authors give 
moduli schemes in terms of Oort--Tate generators on the universal isogenies, comp. \S \ref{ss:Siegel}, \S \ref{sss:split}. 
These authors take $S=S_\mu$ in the Siegel case, but make a different choice $S\neq S_\mu$ in the split unitary case. Essentially the same moduli problems were studied by G. Marazza \cite{Marazza}.  He proves that these schemes are rarely normal and in the Siegel case not even flat over $\BZ_p$, see Remark \ref{rem:badSiegel}.

\item The Hilbert-Siegel case for primes unramified in the corresponding totally real field is considered by S. Liu  \cite{SLiu}. He uses
a different notion of a Raynaud generator to give a moduli scheme, see Remark \ref{rem:Generators}. See Remark \ref{rem:HilbertSiegel} (3)
for some comments on \cite{SLiu}.

 \end{itemize}
 \subsection{} We also develop  a local analogue of our theory, concerning coverings of \emph{integral local Shimura varieties}.  Prominent examples of integral local Shimura varieties are the Lubin-Tate formal moduli scheme of one-dimensional formal $p$-divisible groups of height $n$ and the Drinfeld moduli scheme of special formal $O_D$-modules, where $D$ is the central division algebra with invariant $1/n$ over $\BQ_p$. The latter case leads to the integral local Shimura variety given by the formal model $\wh{\Omega}^n_{\BQ_p}$ of the Drinfeld $p$-adic halfspace. In this case, such coverings were considered earlier by J.~Teitelbaum \cite{T}, H.~Wang \cite{W} and L.~Pan \cite{P}. Here the first two papers take an indirect approach through the consideration of the rigid-analytic generic fiber; the last paper is close in spirit to ours, and constructs the integral cover by adding a generator of the natural Raynaud group scheme over the Drinfeld moduli scheme.  
  
\subsection{} Let us indicate some future directions that seem promising to us. 
We already mentioned that we conjecture that Theorem \ref{divconj-intro} holds without the hypotheses of tameness.  A proof should involve  the recent advances in the theory of local models  beyond the tamely ramified case.  Also, the conjectural description of the face map \eqref{divmap-intro} mentioned above should be within reach. 

 We also mentioned already the challenge of generalizing Theorem \ref{thm:PEL-intro}. Here the example at the end of  \S \ref{ss:localex} could be a first test case. As we argue in \S \ref{sss:ConjImplyConj}, the construction of $\CS_{K_1, S}$ rests on constructing a $2$-commutative 
 diagram of stacks 
\begin{equation}\label{Intro-StackDia1}
\begin{aligned}
 \xymatrix{
    {\rm Sh}_{\RK_0}(\RG, X)_E \ar[r]^{\rm\ \ incl} \ar[d]_{[\pi]}  &     \CS_{K_0}\ar[r]^{\varphi} \ar[d]_{\wt{[\pi]}} & [\CG\bs \Mloc_{\CG,\mu}]\ar[d]_{\Delta_S}\\
      [T_\CG  \LBS T_\CG ] \ar[r]  & [T_\CG \LBS Y_S ]  \ar[r]  & [T_\CG\, \bs\, Y_S ].
        }
        \end{aligned}
\end{equation}
 As a weakening of the conjecture, one might aim at constructing a similar diagram of $v$-stacks over perfectoid spaces. This would give directly a $v$-sheaf which should be represented by the integral model $\CS_{K_1,S}$ when this is shown to exist.  Such a construction could be simpler than proving the full Conjecture \ref{globconj-intro}. This $v$-sheaf version should be enough to capture the information needed for some applications, e.g. for calculating nearby cycles. Work in progress by the first-named author  gives hope that this can be accomplished by building on the methods of \cite{PRg}.  We also mention in this context that Y. Takaya \cite{Tak} gave a  construction of covers of tubular neighborhoods of local Shimura varieties with ``principal subgroup level".

Here are some other questions. 

 The local model diagram relates $\CS_K$ to the local model $\Mloc_{\CG,\mu}$, which is a closed subscheme of a Beilinson-Drinfeld affine partial flag variety ${\rm Gr}_{\CG,O}=\CL\und\CG/\CL^+\und\CG$, cf. \S \ref{ss:Bundles}.  Vaguely speaking, our root stack $\Mlocroot_{\CG,\mu}$ is related to a partial compactification of a quotient $\CL\und\CG/\CL^1\und\CG$ (which is not ind-projective), depending on the toric embedding $T_\CG\subset Y_S$. One might wonder whether this partial compactification arises from taking a quotient by $\CL^1\und\CG$ of a ``toroidal partial compactification'' of  $\CL\und\CG$, in the spirit of Mumford's construction of a toroidal partial compactification of a loop group, cf. \cite[ch. IV, \S 3, p. 202 et seq.]{TEI}.

Another circle of questions arises when we drop the assumption that the cone $\sigma_\mu$ is totally convex. This assumption is satisfied when the Shimura datum $(\RG, \RX)$ is of Hodge type, but not always when $(\RG, \RX)$ is of abelian type or more general. For instance, does there exist a parallel theory of coverings $\CS_{K_1}\to \CS_K$ when $\RG$ is an adjoint group? 

\VE
 
 \subsection{} We now explain the lay-out of the paper. 

In \S 2, we define the pairing which appears in the statement of Theorem \ref{divconj-intro}.
 and then prove this theorem. The proof involves the construction of the local models  in \cite{PZ} via  Beilinson-Drinfeld affine Grassmannians. 
 
 In \S 3, we define the toric schemes $Y_{\CG,\mu}$ and $Y_S$ which are obtained from cones spanned by  Weyl orbits of coweights. Since these could have independent interest, 
we give many examples and a discussion of various properties and relations to other similar constructions in the literature. 

In \S 4, we define the divisor map $\Delta$ and the toric compactification of the Lang cover of the torus $T_\CG$. We  discuss   the flatness of this cover and its ramification structure along the boundary of the toric embeddings. 

In \S 5, we define the local model root stacks $\Mloc^{\sqrt{S}}_{\CG,\mu}$ and discuss their algebraic-geometric properties. In particular, we give criteria for when $\Mloc^{\sqrt{S}}_{\CG,\mu}$ are regular in codimension $1$ and for when the morphism $\Mloc^{\sqrt{S}}_{\CG,\mu}\to \Mloc_{\CG,\mu}$ is flat.

The main Conjecture \ref{globconj-intro} is stated in \S 6 where we also give its implications for the \'etale local structure of the integral models of the covers. In \S \ref{ss:generalconstruction} we give a more concrete interpretation of the conjecture in terms of line bundles equipped with suitable sections. This involves the   diagram \eqref{Intro-StackDia1} (labeled \eqref{StackDia1} in the main text) which is key for the   proof of Theorem \ref{thm:PEL-intro} and the analysis of the examples in \S \ref{s:PELex}.   

In \S 7, we discuss the case of Shimura varieties of PEL type, recall the notion of a generator of an Oort--Tate/ Raynaud group scheme and give the proof of Theorem \ref{thm:PEL-intro}. 

In \S 8, we give several key examples of PEL type Shimura varieties. We discuss cases where the integral models can be obtained as moduli schemes for moduli problems involving generators of Raynaud group schemes and compare with prior results in the literature as in the list above. 

Finally, in \S 9 we discuss the parallel set-up for integral models of  local Shimura varieties.
 
\subsection{Acknowledgements} We thank T.~Haines and B.~Stroh for informing us of their results in the Siegel case and of their forthcoming joint work with Q. Li.   We thank Q.~Yu for his interest in our conjectures concerning the face map. We thank Q.~He and R.~Zhou for an e-mail concerning complete self-dual lattice chains for orthogonal groups. We especially thank Q. Li for several comments and corrections to  first versions of the paper and K.~Altmann for the many helpful explanations concerning the theory of toric varieties and for his interest in our questions about the flatness of the Lang covers.

This work was supported by the Deutsche Forschungsgemeinschaft (DFG, German Research Foundation) under Germany's Excellence Strategy – EXC-2047/2 – 390685813. G. P. was also supported by Simons Foundation grant SFI-MPS-TSM-00013296.

\subsection{Notations} Suppose $B$ is an $A$-algebra. If $X$ is a scheme over $\Spec(A)$ we will often write $X\otimes_{ A}B$ or simply $X_B$ for the base change $X\times_{\Spec(A)}\Spec(B)$. Suppose that $H$ is a group scheme over $\Spec(A)$ and $Y$ a scheme over $\Spec(B)$. Often, to simplify notation, we will refer to an $H_B$-action on $Y$  as simply ``an $H$-action on $Y$", omitting the base change.
Similarly, we will then   just write $[H\bs Y]$ for the quotient stack $[H_B\bs Y]$. We write $\CL^\vee$ or $\CL^{-1}$ for the dual of an invertible sheaf $\CL$. If $s: \CO\to \CL$ is a section which gives an isomorphism, we will sometimes write $s^{-1}:\CO\to \CL^{-1}$ for the dual of the inverse $ \CL\to \CO$ of $s$.

\VE

\section{Divisors on local models}

\subsection{The pairing}

\subsubsection{Tori attached to parahorics}\label{ss:211}
Let $\CG$ be a parahoric group scheme over $\BZ_p$ for $G=\CG\otimes_{\BZ_p}\BQ_p$. 
Consider the maximal reductive quotient $\ov \CG_\red:=(\overline \CG)_{\red}$ of the special fiber $\ov\CG=\CG\otimes_{\BZ_p}\BF_p$ and its maximal abelian quotient 
$\overline \CG_{\red,\ab}=(\ov \CG_{\red})_{\ab}$  which is a torus over $\BF_p$. Recall that, by rigidity of tori, reduction modulo $p$
gives an equivalence  between the categories of tori over $\BZ_p$ and tori over $\BF_p$ (with maps given by group scheme homomorphisms). We let $T_\CG$ be the unique (up to unique isomorphism) torus over $\BZ_p$ which lifts $(\overline \CG)_{\red,\ab}$. We have 
\[
X_*(T_\CG)=X_*(\overline \CG_{\red,\ab})
\]
as $\hat\BZ=\Gal(\br\BZ_p/\BZ_p)=\Gal(\ov\BF_p/\BF_p)$-modules; here we somewhat abuse notation and we write $X_*(T_\CG)$
instead of $X_*(T_\CG\otimes_{\BZ_p}\br\BZ_p)$.

Suppose that $\CG$, $\CG'$ are two parahoric group schemes for $G$ for which there is a (unique) group scheme homomorphism  $\CG\to\CG'$ extending the identity on the generic fiber. Then, we obtain $\overline \CG_{\red,\ab}\to \overline \CG'_{\red,\ab}$. Hence, by the above construction, we also have a group scheme homomorphism
\[
T_{\CG}\to T_{\CG'}.
\]
Associating $T_\CG\to T_{\CG'}$ to $\CG\to \CG'$ is functorial, i.e. this association  respects compositions.

\subsubsection{Relation to the standard torus}\label{ss:standardtorus} Suppose now that $\CG$ is the parahoric group scheme associated to a point $x$ in the extended Bruhat-Tits building $\CB(G, \BQ_p)$.
Let $A$ be a maximal $\BQ_p$-split torus of $G$ such that $x$ belongs to the apartment of $A$. We choose a maximal $\br\BQ_p$-split torus $S$ of $G\otimes_{\BQ_p}\br\BQ_p$ which contains $A$ and is defined over $\BQ_p$; such a torus exists by \cite[5.1.12]{BTII}. Let $T=Z_G(S)$ be the centralizer 
of $S$ in $G$. Then, using Steinberg's theorem, we see that $T$ is a maximal torus of $G$ which is defined over $\BQ_p$. 
By the construction of $\CG$ in \cite{BTII}, the connected ft Neron model $\CT=\CN(T)$  of $T$ is contained in $\CG$. More precisely, $T\hookrightarrow G$ extends to a closed immersion of group schemes 
\[
\CT\hookrightarrow \CG.
\]
Since $\ov\CT_{\red}$ is a maximal torus in the connected reductive group $\overline \CG_{\red}$, the homomorphism 
$\CT\to \CG$ induces a surjective homomorphism of tori $\ov\CT_{\red}\to \overline \CG_{\red,\ab}$ and hence a $\hat\BZ$-equivariant
\begin{equation}\label{map1}
X_*(\ov\CT_{\red})\to X_*(\overline \CG_{\red,\ab})=X_*(T_\CG)
\end{equation}
which is surjective up to torsion.
Note that when $\CG=\CI$ is an Iwahori group scheme, then by construction,
\[
\overline \CG_{\red,\ab}=\ov \CI_{\red}=\ov\CT_{\red}.
\]
 Hence, in the Iwahori case,
$
X_*(\ov\CT_{\red})=X_*(T_\CG).
$

By \cite[Prop. B.7.9, p. 678]{KalethaPrasad}, there is a natural  
  identification
  \[
  X_*(\ov\CT)=X_*(\ov\CT_{\red})=X_*(T)^I.
  \]
Hence, (\ref{map1}) becomes
\begin{equation}\label{map2}
X_*(T)^I\to X_*(T_\CG).
\end{equation}
In the Iwahori case,   (\ref{map2}) is an isomorphism and, in general, it is surjective up to torsion.

\subsubsection{Relation to the cocenter of $G$}\label{ss:cocenter} Let $\CG$ and $T$ be as above.
Note that the homomorphism $G\to G_\ab$ induces $T\to G_\ab$ whose kernel is a torus
\[
1\to T_1=T\cap G_\der\to T\to G_\ab\to 1.
\]
The homomorphism $G\to G_\ab$ also induces a homomorphism of group schemes over $\BZ_p$,
$$
\CG \to \CN(G_\ab).
$$
Here again, $\CN(G_\ab)$ denotes the connected ft Neron model of $G_\ab$ over $\BZ_p$. Since, ${\rm H}^1(\br\BQ_p, T_1)=(0)$, the map
$T(\br\BQ_p)\to G_\ab(\br\BQ_p)$ is surjective. Hence, by \cite[9.6, Lemma 2]{BLR}, so is $\CT(\br\BZ_p)\to \CN(G_\ab)(\br\BZ_p)$, cf.  \cite[proof of Cor. 11.7.4]{KalethaPrasad}. 
It follows that $T\to G_\ab$ extends to a fppf surjective homomorphism 
\[
\CT\to \CN(G_\ab)
\]
between connected ft Neron models. This induces $\ov\CT_{\red}\to  \ov{\CN(G_\ab)}_\red $, and also
\begin{equation}\label{map2b}
 \overline \CG_{\red,\ab} \to  \ov{\CN(G_\ab)}_\red,
\end{equation}
which are surjective homomorphisms of tori.
The latter gives  
\begin{equation}\label{map2c}
X^*(\ov{\CN(G_\ab)})\hookrightarrow X^*( \overline \CG_{\red,\ab})=X^*(T_\CG)
\end{equation}
and
\begin{equation}\label{map3}
X_*(\overline \CG_{\red,\ab})=X_*(T_\CG)\to X_*(\ov{\CN(G_\ab)})=X_*(G_\ab)^I,
\end{equation}
where the last identification is by \cite[Prop. B.7.9, p. 678]{KalethaPrasad}, as above.
We obtain a commutative diagram
\begin{equation}\label{commTG}
  \begin{aligned}
   \xymatrix{
         X_*(T)^I \ar[r]^{(\ref{map2})} \ar[rd]& X_*(T_\CG) \ar[d]^{(\ref{map3})}  \\
    & X_*(G_\ab)^I,
        }
        \end{aligned}
    \end{equation}
with the diagonal map induced by $T\to G_\ab$.

\subsubsection{The pairing}\label{ss:pairing} Again, we let $\CG$ and $T$ be as above.
Here, we will define a natural bilinear and $\Gal(\bar\BF_p/\BF_p)$-equivariant pairing
  \begin{equation}\label{pair2}
  \langle\ ,\ \rangle: X_*(T)_I\times   X^*(T_\CG)\to \BQ
   \end{equation}
which plays a crucial role in our constructions.  

Consider the ``averaging" homomorphism 
\[
  X_*(T)\to X_*(T)^I\otimes_{\BZ}\BQ,
 \]
given by
  \[
  \lambda\mapsto \lambda^\diam:=\frac{1}{[I:I_\lambda]}\sum_{\bar \gamma\in I/I_\lambda}\bar\gamma\cdot \lambda,
  \]
  where $ I=I_{\BQ_p}\subset \Gal(\bar\BQ_p/\BQ_p)$ is the inertia and $I_\lambda$ is the subgroup of $I$ fixing $\lambda$  (cf. the construction in \cite[\S 2.8]{KoIsoI} applied to $I=I_{\BQ_p}$).
This homomorphism factors through the coinvariants $X_*(T)_I$ as a composition
\[
  X_*(T)\to  X_*(T)_I\xrightarrow{\diam}  X_*(T)^I\otimes_{\BZ}\BQ,
  \]
 where  
  \[
  X_*(T)_I\xrightarrow{\diam} X_*(T)^I\otimes_{\BZ}\BQ
  \]
  is a $\Gal(\bar\BF_p/\BF_p)$-equivariant 
homomorphism.

 Recall the identification $X_*(\ov\CT_{\red})=X_*(T)^I$ which we can compose with 
 $X_*(T)_I\to X_*(T)^I\otimes_{\BZ}\BQ$ to obtain $X_*(T)_I\to X_*(\ov\CT_{\red})\otimes_{\BZ}\BQ$.
Combining this with the natural pairing 
  \[
X_*(\ov\CT_{\red})\times   X^*(\ov\CT_{\red})\to \BZ,
  \]
  gives a bilinear pairing
  \begin{equation}\label{pair3}
  \langle\ ,\ \rangle: X_*(T)_I\times   X^*(\ov\CT_{\red})\to \BQ.
   \end{equation}
  This is the desired pairing (\ref{pair2}) in the Iwahori case $\CG=\CI$, when $X^*(\ov\CT_{\red})=X^*(T_\CI)$.
  
  In general, to obtain (\ref{pair2}), we compose (\ref{pair3}) with $X^*(T_\CG)\to X^*(\ov\CT_{\red})$ given by 
  $\ov\CT_{\red}\to \ov \CG_{\red,\ab}$ (or given by $T_\CI\to T_\CG$).
  
\begin{remark}\label{remark:psi}  Consider the composition
  \[
 \phi_\CG:  X_*(T)_I\xrightarrow{\diam} X_*(T)^I_\BQ\xrightarrow{(\ref{map2})_\BQ} X_*(T_\CG)_\BQ.
  \]
 Then, by the construction  of $\langle\ ,\ \rangle$, we have
 \begin{equation}
   \langle \lambda , \chi \rangle=   \langle \phi_\CG(\lambda) , \chi \rangle_{T_\CG},
 \end{equation}
 where 
  \[
    \langle\ ,\ \rangle_{T_\CG}: X_*(T_\CG)_\BQ \times   X^*(T_\CG)_\BQ\to \BQ,
\]
is the natural duality pairing for the torus $T_\CG$.
\end{remark}

 \bigskip

\subsection{Divisors on local models and the divisor  conjecture}

\subsubsection{Local models}\label{ss:LM}

Let $(\CG,\{\mu\})$ be local model data, so that $\CG$ is a parahoric group scheme for $G$ and $\{\mu\}$ is the $G(\bar\BQ_p)$-conjugacy class of a minuscule cocharacter $\mu: (\BG_m)_{\bar\BQ_p}\to G_{\bar\BQ_p}$.
As usual, we denote by $E\subset \bar\BQ_p$ the reflex field of $\{\mu\}$ over $\BQ_p$. We denote by $O_E$ the ring of integers of $E$ and by $k_E$ its residue field. We set $k=\ov k_E$ for an algebraic closure of the finite field $k_E$. We also denote by $E_0$ the maximal unramified extension of $\BQ_p$ which is contained in $E$. We let $\Mloc_{\CG,\mu}$ be the corresponding local model, which is a flat projective scheme over $O_E$ with
$\CG_{O_E}$-action.  

Suppose a maximal torus $T$ of $G$ is chosen as in \S \ref{ss:standardtorus} above, with its connected ft N\'eron model $\CT$. We denote by $W$ its (absolute) Weyl group and by $\wt W=N_T(\br\BQ_p)/\CT(\br\BZ_p)$ its Iwahori Weyl group. Recall  from \cite[\S 4.3]{PRS} the definition of the subset $\Lambda_{\{\mu\}}\subset X_*(T)_I\subset \widetilde W$ of the Iwahori-Weyl group $\widetilde W$: 
We can suppose that a member $\mu$ of $\{\mu\}$ factors through $T_{\bar\BQ_p}$ and consider the corresponding $W$-conjugacy class $\{\mu\}_T$ of geometric cocharacters of $T\subset G$. Let $\tilde \Lambda_{\{\mu\}} $ be the subset of elements of $\{\mu\}_T$ whose images in $X_*(T)\otimes_\BZ \BR$ are contained in some (absolute)
Weyl chamber corresponding to a Borel subgroup of $G$ containing $T$ and defined
over $\br\BQ_p$. Then $\tilde \Lambda_{\{\mu\}}$ forms a single $W_0$-conjugacy class, i.e. a $W_0$-orbit $\tilde \Lambda_{\{\mu\}}=W_0\cdot \mu$, since all such Borels are
$W_0$-conjugate. Here, $W_0=N_T(\br\BQ_p)/T(\br\BQ_p)$ is 
the (relative) Weyl group of $G$ over $\br\BQ_p$. We write $\Lambda_{\{\mu\}} $ or occasionally just $\Lambda_\mu$ for the image of $\tilde \Lambda_{\{\mu\}}=W_0\cdot \mu$ in $X_*(T)_I\subset \widetilde W$.

The $\{\mu\}$-admissible set $\Adm(\{\mu\})$ of the Iwahori Weyl group $\widetilde W$ 
of $G$ is defined to be the set of $w\in\wt W$ such that $w\leq t_{\bar\mu'}$, for some  $\bar\mu'\in \Lambda_{\{\mu\}}\subset X_*(T)_I$.  Here we use the Bruhat order wrt a fixed alcove $\frak a$ in the apartment $\CA$ defined by $S\subset T$, cf. \S \ref{ss:standardtorus}. In all of this, we identify
$\bar\mu'$ with the corresponding translation elements $t_{\bar\mu'}\in \widetilde W$. 
 Then $\Adm(\{\mu\})$ contains the image  $\ov{\{\mu\}}_T$ in $X_*(T)_I$ of $\{\mu\}_T$, cf. \cite[equivalence of (4.1) and (4.2)]{Hai}, hence $\Adm(\{\mu\})$ can also be defined to be the set of $w\in\wt W$ such that $w\leq t_{\bar\mu'}$, for some  $\bar\mu'\in \ov{\{\mu\}}_T$. 

\subsubsection{Statement of the divisor conjecture}
Set $K=\CG(\br\BZ_p)\subset G(\br\BQ_p)$ and consider the subgroup $W^K=N_T(\br\BQ_p)\cap \CG(\br\BZ_p)/ \CT(\br\BZ_p)$ of the Iwahori-Weyl group $\widetilde W$. 
Recall that the double cosets $W^Kt_{\bar\mu'} W^K\in W^K\bs \widetilde W/ W^K$ with $\bar\mu'\in \Lambda_{\{\mu\}}$ parametrize the irreducible components of  $\Mloc_{\CG,\mu}\otimes_{O_E}k$, the geometric special fiber of the local model $\Mloc_{\CG,\mu}$, see
\cite{AGLR}, \cite{GL}, for the general case, and \cite[Thm. 9.3]{PZ} for the local models we will use below. Hence, for each element $\bar\mu'\in \Lambda_{\{\mu\}}$ there is an irreducible component 
$Z_{\bar\mu'}$ of $\Mloc_{\CG,\mu}\otimes_{O_E}k$; we may also think of $Z_{\bar\mu'}$ as a (fibral, irreducible) Weil divisor on the scheme $\Mloc_{\CG,\mu}\otimes_{O_E}O_{\br E}$.

\begin{conjecture}\label{divconj}
There exists a unique, up to unique isomorphism, pair $({\rm P}_{\CG,\mu}, s_{\CG,\mu})$ of a $\CG$-equivariant $T_{\CG}$-torsor ${\rm P}_{\CG,\mu}\to \Mloc_{\CG,\mu}$ over the local model $\Mloc_{\CG,\mu}$, and a $\CG$-equivariant trivialization (section) $s_{\CG,\mu}$ of this torsor over the generic fiber $\Mloc_{\CG,\mu}\otimes_{O_E}E$, such that
the following condition holds: 

For a  character $\chi: T_{\CG, \br \BZ_p} \to (\BG_m)_{\br\BZ_p}$, consider the 
$(\BG_m)_{O_{\br E}}$-torsor 
\[
{\rm P}_\chi:=(\BG_m)_{O_{\br E}}\times_{\chi, T_\CG} \RP_{\CG,\mu} 
\]
over $\Mloc_{\CG,\mu}\otimes_{O_E}O_{\br E}$, obtained by pushing out ${\rm P}_{\CG,\mu}\otimes_{O_E}O_{\br E}$ by $\chi$. Denote by $\CL_\chi$ the invertible sheaf over 
$\Mloc_{\CG,\mu}\otimes_{O_E}O_{\br E}$ which corresponds to the 
$(\BG_m)_{O_{\br E}}$-torsor ${\rm P}_\chi$. The section $s_{\CG,\mu}$ induces a section $s_\chi$ of the $(\BG_m)_{\br E}$-torsor ${\rm P}_\chi\otimes_{O_{\br E}}\br E$ over $\Mloc_{\CG,\mu}\otimes_{O_E}\breve E$ and, hence, a (meromorphic) section of the  invertible sheaf
$\CL_\chi$ over $\Mloc_{\CG,\mu}\otimes_{O_E}O_{\br E}$. The condition is that, for all such $\chi$, the  divisor of this meromorphic section of $\CL_\chi$ is equal to 
\begin{equation}\label{divsum}
D_{\chi}:=\sum_{W^Kt_{\bar\mu'} W^K, \mu'\in \Lambda_{\{\mu\}}}e\cdot \langle \bar\mu',\chi\rangle\cdot  Z_{\bar\mu'},
\end{equation}
where $\langle\ ,\ \rangle$ is the pairing (\ref{pair2}) and $e$ is the ramification index of $E$ over $\BQ_p$.
\end{conjecture}

\begin{remark}\label{rem:coef}
{\rm a) In the above, by saying a ``$\CG$-equivariant $T_\CG$-torsor ${\rm P}_{\CG,\mu}\to \Mloc_{\CG,\mu}$", we mean as usual that 
${\rm P}_{\CG,\mu}$ supports a $\CG $-action which lifts the $\CG $-action on $\Mloc_{\CG,\mu}$ and commutes with the 
$T_\CG $-action.

b) Since $e=[I:I_{\mu'}]$, the values $\langle \bar\mu',\chi\rangle$ lie in $e^{-1}\BZ$, and so $e\cdot \langle \bar\mu',\chi\rangle$ are all integers.

c)  It is not obvious that the linear combination $D_\chi$ of the $Z_{\bar\mu'}$ in (\ref{divsum}) above is a Cartier divisor on $\Mloc_{\CG,\mu}\otimes_{O_E}O_{\br E}$ ; this is one consequence of the conjecture.}
\end{remark}

\begin{remark}\label{rem:Slodowy}
  The formula \eqref{divsum} bears a resemblance to a classical formula which expresses the first Chern classes of the line bundles on an (affine or classical) Schubert variety as a linear combination of the Weil divisors given by its Schubert subvarieties which are ``at the boundary'', see e.g. \cite[Cor. 3]{Slodowy} and the references therein. 
  
A question arising in this context is to give an explicit description for the divisor class group ${\rm Cl}(\br\Mloc_{\CG,\mu})$ of  $\br\Mloc_{\CG,\mu}=\Mloc_{\CG,\mu}\otimes_{O_E} O_{\br E}$. More precisely, let 
  \[{\rm Cl}^\red(\br\Mloc_{\CG,\mu})=\ker({\rm Cl}(\br\Mloc_{\CG,\mu})\to {\rm Cl}(\br\Mloc_{\CG,\mu}\otimes_{O_{\br E}}\br E).
  \]
   Assume that  $G_\ad$  is simple.  Then one may ask whether there is a $\Gal(\br E/ E)$-equivariant isomorphism 
  \begin{equation}
  {\rm Cl}^\red(\br\Mloc_{\CG,\mu})\simeq \BZ^{\Lambda_{\CG,\{\mu\}}}/\BZ . 
  \end{equation}
  Here $\BZ$ is diagonally embedded (this embedding reflects the fact that the special fiber is a principal divisor), and $\Lambda_{\CG,\{\mu\}}$ denotes the index set of \eqref{divsum} and can be identified with the set of irreducible components of $\Mloc_{\CG,\mu}\otimes_{O_E} k$, with its Galois action. Note that this formula depends on $\CG$ and $\{\mu\}$, in contrast to the formula  for the Picard group of $\Mloc_{\CG,\mu}$ \cite[Cor. 5.19]{FHLR}, which depends on $\CG$ but not on $\mu$.   
\end{remark}

\subsection{The divisor conjecture in the tamely ramified case}

In this subsection, we will prove Conjecture \ref{divconj} in the tame case:

\begin{theorem}\label{tameconj}
Suppose that the group $G$ splits over a tamely ramified extension 
of $\BQ_p$ and that $p$ does not divide the order of the algebraic fundamental group $\pi_1(G_{\der, \bar\BQ_p})$. 
Then Conjecture   \ref{divconj} is true.
\end{theorem}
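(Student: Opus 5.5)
Under these hypotheses the local model $\Mloc_{\CG,\mu}$ is the one of \cite{PZ}. The plan is to build the torsor with its generic section on the ambient Beilinson--Drinfeld Grassmannian and then read off its divisor component by component. First, uniqueness. $\Mloc_{\CG,\mu}\otimes O_{\br E}$ is normal and flat, and its special fiber is reduced with irreducible components $Z_{\bar\mu'}$. Hence a $\BG_m$-torsor with a generic trivialization is determined, up to unique isomorphism, by the divisor of that trivialization: two such pairs with the same divisor differ by a unit on a normal scheme, and that unit is forced to be $1$ by the identification over the generic fiber. Since $X^*(T_\CG)$ generates, this gives uniqueness for $T_\CG$-torsors. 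Galois descent from $O_{\br E}$ to $O_E$ and $\CG$-equivariance then follow from uniqueness, because the divisors $D_\chi$ are Galois-stable and $\CG$-stable. So the real content is existence.

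For existence I would use the group scheme $\und\CG$ over $\BZ_p[u]$ of \cite{PZ}, with $u\mapsto p$ recovering $\CG$, and the associated global affine Grassmannian ${\rm Gr}_{\und\CG,O}$ containing $\Mloc_{\CG,\mu}$ as a closed subscheme. The key construction is a $T_\CG$-torsor on the whole of ${\rm Gr}_{\und\CG}$. The maximal reductive quotient of $\und\CG$ along $u=0$ gives a torus $T_{\und\CG}$ over $\BZ_p$ lifting $T_\CG$. The natural choice is then the ``determinant-type'' torsor attached to a $\und\CG$-bundle $\CE$ on the curve with a trivialization off the divisor $u=p$ (or $u=\varpi$ after base change). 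Restricting $\CE$ to $u=0$ and pushing out along $\und\CG|_{u=0}\to T_\CG$ gives a $T_\CG$-torsor. On the generic fiber the modification point $u=\varpi$ is disjoint from $u=0$, so the trivialization away from $\varpi$ restricts to a trivialization at $u=0$. This furnishes $s_{\CG,\mu}$. It is automatically equivariant for the positive loop group and hence for $\CG$.

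The divisor computation goes character by character. For $\chi\in X^*(T_\CG)$, the assumption $p\nmid|\pi_1(G_\der)|$ lets one reduce, via a $z$-extension argument or directly, to characters that factor through determinant line bundles of representations, or through $G_\ab$. The divisor of $s_\chi$ is supported on the special fiber, and I would compute its multiplicity along $Z_{\bar\mu'}$ at the generic point of that component. That generic point corresponds to the translation element $t_{\bar\mu'}$; near it the local model looks like a Schubert cell in the special fiber, with the degeneration $\varpi\to 0$ collapsing $u=\varpi$ onto $u=0$. Along a $T$-fixed section of the global Grassmannian specializing to $t_{\bar\mu'}$, the trivialization at $u=0$ is off by the element $(u-\varpi)^{\mu'}$ evaluated at $u=0$, that is $\varpi^{\mu'}$. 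Its image in $T_\CG$, after averaging over inertia (this is the map $\phi_\CG$ of Remark \ref{remark:psi}), has $\chi$-valuation $e\langle\bar\mu',\chi\rangle$. Concretely, one works over the tame extension $\tilde F$ splitting $G$, where $T$ is split, and takes $\Gal(\tilde F/\BQ_p)$-invariants as in \cite{PZ}. The norm of $\varpi^{\mu'}$ over inertia produces exactly the averaged cocharacter $\mu'^\diam$, and the factor $e$ comes from the valuation of the uniformizer of $E$.

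I expect the main obstacle to be rigor at the generic points of the $Z_{\bar\mu'}$. One has to confirm that the special fiber is reduced with these as its components, which follows from the coherence results of \cite{ZhuCoh} used in \cite{PZ}, so that multiplicities of the special-fiber divisor are $1$. One also has to check that the $T$-fixed points $t_{\bar\mu'}$ lift to $O$-points of $\Mloc$ passing through the generic points of these components, so that valuations can be evaluated along these curves. Since the torsor is already a line bundle everywhere, Cartier-ness of $D_\chi$ then comes for free.
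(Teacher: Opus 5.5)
Your outline follows the paper's proof closely. The torsor is the push-out of $\CE|_{u=0}$ along $\tau_0\colon\CG_0\to T_\CG$; this is exactly the paper's description of $R$-points $(\CE,\und\alpha,\beta_0)$ of $\RP_{\CG,\mu}$. The generic section is $\tau_0(\iota^*\und\alpha)$. The multiplicities are read off by restricting to Zhu-type torus points $[\fks_{\mu'}]$ whose special point lies in the open orbit of $t_{\bar\mu'}$, where $\Mloc_{\CG,\mu}$ is smooth over $O_E$.

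The one real divergence is $\CG$-equivariance, and your first justification there does not hold. Unless $R$ is $p$-adically complete, there is no homomorphism $R\lps u-p\rps\to R$ with $u\mapsto 0$. So the positive loop group $\CL^+\und\CG$ at $u=p$ has no evident action on $\CE|_{u=0}$, and even granting one, factoring through $\CG$ would still need proof. This is why the paper passes to the two-point Grassmannian ${\rm Gr}_{\und\CG,O,0,p}$, where $\CL^+_{0,p}\und\CG$ acts through $\tau$. It then shows by a flat-closure argument (using \cite{Ana} and \cite[\S 1.7]{BTII}) that this action factors through $\psi_p$.

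Your fallback, deducing equivariance from uniqueness, does work and is more elementary. Since $\div(s_\chi)$ is supported on the special fiber, it is a combination of the $Z_{\bar\mu'}$, and these are $\CG$-stable because $\CG_k$ is connected. So on the normal flat scheme $\CG\times\Mloc_{\CG,\mu}$, the pull-backs of $(\RP_{\CG,\mu},s_{\CG,\mu})$ along the action and along the projection have equal divisors. Hence the generic identification extends, and the cocycle condition follows by schematic density of the generic fiber. This uses only the support of the divisor, not the multiplicity formula. It also supplies the uniqueness part of Conjecture \ref{divconj}, which the paper does not spell out.

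Three smaller points.
\begin{itemize}
\item The reduction via $z$-extensions to determinant characters or characters of $G_\ab$ is unnecessary, and you never use it. The computation at $[\fks_{\mu'}]$ handles every $\chi$ at once, and the hypothesis $p\nmid|\pi_1(G_\der)|$ enters only through the construction of \cite{PZ} (normality, reduced special fiber).
\item In the non-split tame case the modification stays at $u=p$. The work behind your ``norm of $\varpi^{\mu'}$'' is to write down $\fks_{\mu'}\in\CL\und\CT(O_E)$; the paper takes $\omega_j(\fks_{\mu'})=\prod_i x_i^{\langle\mu',\gamma^i\omega_j\rangle_H}$ with $x_i=-\varpi\otimes 1+\zeta^i\otimes v$ and $v^e=u$. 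One must then check that it lies in the neutral component $\und\CT$, and compute its Kottwitz invariants on both fibers. This shows the generic point lies in $\Mloc_{\CG,\mu}[1/p]$ and the special point lies in the open orbit of $t_{\bar\mu'}$.
\item An $O_E$-point cannot pass through the generic point of $Z_{\bar\mu'}$, as you write. It only needs to meet the smooth locus where $Z_{\bar\mu'}$ is the only component.
\end{itemize}
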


We will use the fact that, under these two conditions, the local model $\Mloc_{\CG,\mu}$ is constructed in \cite{PZ} by using mixed characteristic Beilinson-Drinfeld affine Grassmannians.

\subsubsection{Spread-out group schemes} 
Let $(G,\{\mu\})$ be a local Shimura pair over $F=\BQ_p$ or $\br\BQ_p$ with reflex field $E$.  Set $ O= O_F$  and denote by $k=k_F$ the residue field. Let $\CG$ be a parahoric group scheme for $G$. Suppose that $G$ splits over a tamely ramified extension of $F$
so the construction of \cite[\S 4]{PZ} applies. In particular, this construction produces  a smooth and affine group scheme $\und \CG$ over $ O[u]$ such that
\begin{itemize}
\item $\und\CG$ is reductive over $O[u, u^{-1}]$,
\item   $\und\CG\otimes_{ O[u], u\mapsto p} O\simeq \CG$.
\end{itemize}
Set 
\[
\CG_0=\und\CG\otimes_{ O[u], u\mapsto 0} O.
\]
This is a smooth and affine group scheme over $ O$, but no longer reductive. Its reduction modulo $p$ is isomorphic to the special fiber of the parahoric $\CG$. There is an exact sequence
\begin{equation}
1\to \CU\to \CG_0\to \CG_{0,\red}=(\CG_{ 0})_{\red}\to 1 ,
\end{equation}
where $\CG_{0,\red}$ is smooth, affine and reductive over $ O$ and $\CU$ is smooth unipotent over $ O$. This exact sequence lifts a similar exact sequence
\[
1\to U\to \bar\CG\to \bar\CG_{\red}\to 1
\]
for the special fiber $\bar\CG=\CG\otimes_{ O}k$ of the parahoric $\CG$. The maximal abelian quotient $(\CG_{0,  \red})_\ab$
of $\CG_{0,\red}$ is a torus over $ O$ which lifts the torus $(\bar\CG_{\red})_{\ab}$. Hence, we can identify $(\CG_{0,  \red})_\ab$ with the torus
$T_\CG$ of the previous section, i.e.
\[
(\CG_{0,  \red})_\ab=T_\CG.
\]
In particular, the construction gives a group scheme homomorphism
\begin{equation}\label{push}
\tau_0: \CG_0\to T_\CG
\end{equation}
over $ O$.

\subsubsection{The $T_\CG$-torsor over the local model}\label{ss:Bundles} Now recall  from \cite{PZ} some aspects of the construction of the local model $\Mloc_{\CG,\mu}$  over $ O_E$. Consider the Beilinson-Drinfeld affine Grassmannian 
\[
{\rm Gr}_{\und\CG,  O[u]}\to \Spec( O[u]).
\]
Let $R$ be an $ O[u]$-algebra with $u\mapsto r\in R$. By definition,  
an $R$-valued point of ${\rm Gr}_{\und\CG, O[u]}$ lifting the corresponding point of 
$\Spec( O[u])$ is given by a $\und\CG$-torsor over $\Spec(R[u])$ together with a 
trivialization of its restriction to $\Spec(R[u][(u-r)^{-1}])$. We also need
\[
{\rm Gr}_{\und\CG,  O}={\rm Gr}_{\und\CG,  O[u]}\otimes_{ O[u],u\mapsto p} O.
\]
By definition, ${\rm Gr}_{\und\CG,  O}$ parametrizes 
 isomorphism classes of pairs of a $\und\CG$-torsor over $\Spec(R[u])$ together with a trivialization of its restriction to the open subscheme $\Spec(R[u][ (u-p)^{-1}])$ and is representable by an ind-projective ind-scheme over $O$, cf. \cite[Prop. 6.5]{PZ}.
 Consider the loop/jet groups $\CL \und\CG$ and $\CL^+\und\CG$ over $O$ given by
 \begin{equation}\label{loop1BD}
\CL \und\CG(R)=\und\CG(R\lps u-p\rps[(u-p)^{-1}]),\quad \CL^+ \und\CG(R)=
\und\CG(R\lps u-p\rps).
 \end{equation}
Here $R\lps u-p\rps$ denotes the completion of $R[u]$ at the ideal $(u-p)$.
 By Beauville-Laszlo glueing 
 \[
 {\rm Gr}_{\und\CG,  O}=\CL \und\CG/\CL^+ \und\CG,
 \]
 see \cite[\S 6.2.4]{PZ}, \cite[\S 3.1]{ZhuIntro}. By definition \cite[Def. 7.1]{PZ}, the local model is a closed subscheme 
\[
\Mloc_{\CG,\mu}\hookrightarrow {\rm Gr}_{\und\CG,  O }\otimes_{ O}O_E.
\]
Recall that $\Mloc_{\CG,\mu}$ admits an action of $\CG $.
The main construction here is:

\begin{proposition}\label{mainTorsor}
a) There exists a $\CG $-equivariant $T_\CG$-torsor  
\[
\pi_{\Mloc}: \RP_{\CG,\mu}\to \Mloc_{\CG,\mu},
\]
i.e, a $T_\CG$-torsor such that the natural action of $\CG $ on $\Mloc_{\CG,\mu}$ lifts to 
$\RP_{\CG,\mu}$ and the lifted action commutes with the action of $T_\CG$ on $\RP_{\CG,\mu}$. 

b) The restriction of the $T_\CG$-torsor $\pi_{\Mloc}$ over the generic fiber $ \Mloc_{\CG,\mu}\otimes_{ O_E}E$ admits a $G_E=\CG\otimes_{ O}E$-equivariant section
\[
s_{\CG,\mu}: \Mloc_{\CG,\mu}\otimes_{ O_E}E\to \RP_{\CG,\mu}\otimes_{ O_E}E.
\]
\end{proposition}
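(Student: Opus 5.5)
We build the torsor on the whole Beilinson--Drinfeld Grassmannian by evaluating the universal $\und\CG$-torsor at $u=0$ and pushing out along $\tau_0$ from (\ref{push}); the local model then inherits it by restriction. A point of ${\rm Gr}_{\und\CG,O}$ with values in an $O$-algebra $R$ is a pair $(\CE,\beta)$, where $\CE$ is a $\und\CG$-torsor over $\Spec(R[u])$ and $\beta$ is a trivialization of $\CE$ over $\Spec(R[u][(u-p)^{-1}])$. Restricting $\CE$ along the section $u=0$, i.e. along $R[u]\to R$, gives a $\CG_0$-torsor $\CE_0=\CE\otimes_{R[u],u\mapsto 0}R$ over $\Spec(R)$. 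Its push-out $\tau_{0*}\CE_0=T_\CG\times^{\CG_0}\CE_0$ is a $T_\CG$-torsor over $\Spec(R)$, and this construction is functorial in $R$. It therefore defines a $T_\CG$-torsor on ${\rm Gr}_{\und\CG,O}$, hence on the ind-scheme, and we define $\RP_{\CG,\mu}$ as its restriction to the closed subscheme $\Mloc_{\CG,\mu}$.

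For the equivariance in (a), we realize the $\CG$-action on the local model through the jet group. The group $\CL^+\und\CG$ acts on ${\rm Gr}_{\und\CG,O}$ by changing the trivialization, and $\CG$ acts on $\Mloc_{\CG,\mu}$ through $\CL^+\und\CG\to \und\CG\otimes_{u\mapsto p}O=\CG$. The $\CL^+\und\CG$-action factors through a finite-type quotient on $\Mloc_{\CG,\mu}$, as in \cite{PZ}.

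To lift this action to the torsor, we describe ${\rm Gr}_{\und\CG,O}$ via Beauville--Laszlo gluing as the space of $\und\CG$-torsors on $\Spec(R[u])$ trivialized away from $u=p$. The key observation is the following. Over the locus of $\Spec(R)$ where $p$ is invertible, the points $u=0$ and $u=p$ are disjoint. At a general point they are not, and the action of $g\in \CL^+\und\CG(R)=\und\CG(R\lps u-p\rps)$ modifies $\CE$ only through the formal neighbourhood of $u=p$. Consequently it does not change $\CE$ as an abstract torsor: it sends $(\CE,\beta)$ to $(\CE,g\beta)$. Hence $\CE_0$ is unchanged, and the action lifts tautologically to $\RP_{\CG,\mu}$ (acting trivially on the fibre data $\tau_{0*}\CE_0$ and commuting with $T_\CG$). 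This is the argument we would aim to make precise. If the $\CG$-action on $\Mloc_{\CG,\mu}$ is instead defined as in \cite{PZ} through a global group $\und\CG(R[u])$ acting on both $\CE$ and $\beta$, the lift is the induced isomorphism $\CE_0\to \CE_0$ composed with $\tau_0$. We would check that this is well defined, that it satisfies the cocycle condition, and that it commutes with $T_\CG$ because $T_\CG$ is commutative.

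For (b), we work over $E$, where $p$ is invertible. There the trivialization $\beta$ is defined on $\Spec(R[u][(u-p)^{-1}])$, an open set containing $u=0$. Restricting $\beta$ to $u=0$ gives a trivialization of $\CE_0$, hence of $\tau_{0*}\CE_0$. This defines the section $s_{\CG,\mu}$ on ${\rm Gr}_{\und\CG,O}\otimes E$, and we restrict it to the generic fibre of $\Mloc_{\CG,\mu}$.

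The main obstacle is the $G_E$-equivariance of $s_{\CG,\mu}$. An element $g$ of $\CL^+\und\CG$ changes $\beta$ to $g\beta$, and evaluating at $u=0$ this multiplies the trivialization by $\tau_0(g(0))$, where $g(0)$ is only meaningful once $g$ is expanded around $u=0$. One option is to use, on the generic fibre, the splitting $R\lps u-p\rps \leftarrow R[u]$. The more robust option is to use that $G_E=\und\CG_{E}|_{u=p}$ acts through the group $\und\CG(R[u,(u-p)^{-1}])$ restricted appropriately, with the action arranged to be trivial at $u=0$.

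Concretely, we will check the claim via the generic-fibre identification of \cite{PZ}. Over $E$, the Beilinson--Drinfeld Grassmannian at $u=p$ becomes the ordinary affine Grassmannian of $G_E$, and the modifications are supported at $u=p$, away from $u=0$. There $\CE_0$ is canonically identified with the trivial torsor via $\beta$, compatibly with the action because the action changes $\beta$ only near $u=p$. Uniqueness of the gluing then gives equivariance. Finally we would verify that the restrictions are compatible with base change to $O_E$, using that $\Mloc_{\CG,\mu}$ is flat, so that the equivariance identities, which hold on the dense generic fibre, extend where needed.
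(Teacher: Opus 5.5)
Your construction of the torsor and of the section agrees with the paper's. The paper's $\RP_{\CG,\mu}$ is, by its own description, the torsor of trivializations $\beta_0$ of $T_\CG\times_{\tau_0,\CG_0}\CE|_{u=0}$, which is your torsor. Your section is the paper's $s[(\CE,\und\alpha)]=[(\CE,\und\alpha,\tau_0(\iota^*\und\alpha))]$.

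\textbf{The gap is the $\CG$-equivariant structure in (a).} The step where $g\in\CL^+\und\CG(R)=\und\CG(R\lps u-p\rps)$ ``does not change $\CE$ as an abstract torsor'' and sends $(\CE,\beta)$ to $(\CE,g\beta)$ fails on three counts:
\begin{enumerate}
\item $g\beta$ is not a trivialization over $R[u][(u-p)^{-1}]$ unless $g$ extends there.
\item The action is by Beauville--Laszlo regluing, which changes the glued $R[u]$-torsor.
\item When $p$ is nilpotent in $R$, the formal neighbourhood of $u=p$ contains $u=0$, so $\CE|_{u=0}$ is not left alone.
\end{enumerate}
More seriously, the lift ``acting trivially on $\tau_{0*}\CE_0$'' can be defined in two settings. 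One is over $p$-adically complete $R$, using $\CE^\wedge$ and $R\lps u-p\rps\to R$, $u\mapsto 0$. The other is for $\CL^+_{0,p}\und\CG$ acting only on the left factor of the contracted product, cf.\ Remark \ref{rem:differentaction}. In both settings it does \emph{not} factor through evaluation at $u=p$, so it is not a $\CG$-action. Already for $G=\CG=T_\CG=\BG_m$, $\mu={\rm id}$, $\Mloc_{\CG,\mu}=\Spec(\BZ_p)$, the element $h=1+(u-p)$ maps to $1\in\CG$ but acts on the fibre by $\tau_0(h|_{u=0})=1-p\neq 1$. For the same reason this lift is not the integral extension of the generic-fibre action you use in (b), which fixes $s_{\CG,\mu}$. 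So your (b) proves equivariance for an action you have not shown to be the restriction of the one in (a).

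\textbf{The missing idea is the twist.} The correct lift is
$(\CE,\und\alpha,\beta_0)\mapsto(\CE,\und\alpha h^{-1},\beta_0\,\tau_0(h|_{u=0})^{-1})$, i.e.\ \eqref{Newaction}. To make sense of $h|_{u=0}$ over every $R$ you need a jet group that sees both points. That is why the paper passes to ${\rm Gr}_{\und\CG,O,0,p}$ and $\CL^+_{0,p}\und\CG$, where $\Rcom\to R$, $u\mapsto 0$, always exists.

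You must then prove that the twisted action factors through $\psi_p$, and this is not formal. On the generic fibre it follows from \eqref{factor1BD}. Integrally, the paper:
\begin{itemize}
\item passes to a finite jet quotient $\CL^{+,m}_{0,p}\und\CG$;
\item takes the flat closure $H$ of $\ker_m[1/p]$;
\item shows $\CL^{+,m}_{0,p}\und\CG/H=\CG$ using equality of $\br O$-points and Bruhat--Tits.
\end{itemize}
Your closing appeal to flatness of $\Mloc_{\CG,\mu}$ gives only \emph{uniqueness} of an extension of the generic-fibre action, not existence. Alternatively, you could extend the generic-fibre isomorphism $a^*\RP\cong{\rm pr}^*\RP$ determined by $s_{\CG,\mu}$ over the normal scheme $\CG\times\Mloc_{\CG,\mu}$. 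This works because $\div(s_\chi)$ is supported on special-fibre components, which are stable under the connected group $\CG_k$. Either way, such an argument has to be supplied.
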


 In Subsection \ref{ss:mult}, we will show that the pair $( {\rm P}_{\CG,\mu}, s_{\CG, \mu})$ satisfies the condition in Conjecture \ref{divconj}. This will provide the desired proof of the conjecture in this tamely ramified case, where the local model is given by the construction of \cite{PZ}.

 \begin{proof} Note that, by \cite[Thm. 19.5.1]{SWberkeley}, it is enough to construct the $T_\CG$-torsor as a cohomological fppf sheaf torsor in the sense of loc. cit.. We will need the Beilinson-Drinfeld affine Grassmannian  ${\rm Gr}_{\und\CG, O, 0,p}$ over $O$
 for $\und\CG$ and the two points $u=0$ and $u=p$, see \cite[\S 3.1]{ZhuIntro}. By definition, this parametrizes 
 isomorphism classes of pairs of a $\und\CG$-torsor over $\Spec(R[u])$ together with a trivialization of its restriction to the open subscheme $\Spec(R[u][u(u-p)^{-1}])$. 
 Restricting specializations along $\Spec(R[u][(u(u-p))^{-1}])\subset \Spec(R[u][(u-p)^{-1}])$ gives  
 \begin{equation}\label{restrict2BD}
 {\rm Gr}_{\und\CG,  O}\to {\rm Gr}_{\und\CG, O, 0,p}.
 \end{equation}
 Consider the loop/jet groups $\CL_{0,p}\und\CG$ and $\CL^+_{0,p}\und\CG$ given by
 \begin{equation}\label{loop2BD}
\CL_{0,p}\und\CG(R)=\und\CG(\Rcom[(u(u-p))^{-1}]),\quad \CL^+_{0,p}\und\CG(R)=
\und\CG(\Rcom),
 \end{equation}
 over $O$. Here, $\Rcom$ denotes the completion of $R[u]$ along the ideal generated by $u(u-p)$. 
 Note that we have natural $R$-algebra homomorphisms
 \[
 \Rcom\to R \lps u\rps,\quad  \Rcom\to R\lps u-p\rps,
 \]
where $R\lps u-p\rps=\wh{R[u]}_{(u-p)}$. By Beauville-Laszlo glueing,   $\CL_{0,p}\und\CG$ acts on  
${\rm Gr}_{\und\CG, O, 0,p}$ which we can write as a fpqc quotient over $O$,
\[
{\rm Gr}_{\und\CG,  O, 0,p}=\CL_{0,p}\und\CG/\CL^+_{0,p}\und\CG,
\]
cf. \cite[Prop. 3.2.9]{ZhuIntro}. For our purposes, we do not need to know that ${\rm Gr}_{\und\CG,  O, 0,p}$ is ind-representable over $O$ and we can just treat it as an fpqc sheaf. (However, this ind-representability should be true and can probably be shown following \cite[\S 6.2]{PZ}.
See \cite[Rem. 3.1.4]{ZhuIntro} for the function field case.)
In any case, we can see that both the generic and special fiber of ${\rm Gr}_{\und\CG,  O, 0,p}$ over $O$ are ind-representable:

If $p$ is a unit in $R$, then   
\[
\Rcom=R\lps u\rps\times R\lps  u-p \rps.
\]
This gives the  factorization of the generic fiber, cf. \cite[\S 3.2]{ZhuIntro},
\begin{equation}\label{factor1BD}
{\rm Gr}_{\und\CG, O, 0,p} [1/p]={\rm Gr}_{\und\CG, O, 0 }[1/p]\times_{\Spec( O[1/p])} 
{\rm Gr}_{\und\CG, O, p}[1/p].
\end{equation}
On the other hand, there is a canonical equivariant isomorphism between special fibers
\begin{equation}\label{factorSpBD}
{\rm Gr}_{\und\CG, O, 0,p}\otimes_O k={\rm Gr}_{\und\CG, O}\otimes k.
\end{equation}
(This isomorphism can also be viewed as part of the ``factorization structure" of the affine Beilinson-Drinfeld Grassmannians, cf. \cite[\S 3.2]{ZhuIntro}.)

We now continue with the construction. Since $\CG_0=\und\CG\otimes_{O[u], u\mapsto 0}O$, evaluating at $u=0$  gives 
\[
\und\CG( \Rcom )\to \CG_0(R)  
\]
which defines a  surjective  homomorphism
\[
\psi_0: \CL^+_{0,p}\und\CG \to \CG_0.
\]
Similarly, evaluating at $u=p$ gives  
\[
\psi_p: \CL^+_{0,p}\und\CG \to \CG.
\]
Composing $\psi_0$ with $\CG_0\to  T_\CG$ gives a   homomorphism
\[
\tau: \CL^+_{0,p}\und\CG \to  T_\CG.
\]
We can now consider the contracted product
\[
\CP_{\und\CG}:=\CL_{0,p}\und\CG \times_{\CL^+_{0,p}\und\CG, \tau} T_\CG
\]
which is a $T_\CG$-torsor 
\[
\pi_{0,p}: \CP_{\und\CG}\to {\rm Gr}_{\und\CG,  O, 0,p}
\]
 over ${\rm Gr}_{\und\CG,  O, 0,p}=\CL_{0,p}\und\CG/\CL^+_{0,p}\und\CG$, equivariant for the action of $\CL_{0,p}\und\CG$. For clarity, we note 
that we form the contracted product as the quotient fpqc sheaf of  $\CL_{0,p}\und\CG \times  T_\CG $ by the equivalence relation which identifies  $(\und g\cdot h, t)$ with $(\und g, \tau(h)\cdot t)$, when $h$ is a point of $\CL^+_{0,p}\und\CG$. We now equip $\CP_{\und \CG}$ with a (left) action of $\CL^+_{0,p}\und\CG$ given on points by
\begin{equation}\label{Newaction}
h\cdot (\und g, t)=(h\cdot \und g, \tau(h)^{-1}t).
\end{equation}
(We can see that  this respects the equivalence relation  defining the contracted product.) This action commutes with the action of $T_\CG$ and lifts the natural action of $\CL^+_{0,p}\und\CG$ on ${\rm Gr}_{\und\CG,  O, 0,p}$. 

\begin{remark}\label{rem:differentaction} This action \emph{differs} from an other 
natural $\CL^+_{0,p}\und\CG$-action on the contracted product which is obtained 
by acting only on the left factor. Under \eqref{Newaction}, $\CL^+_{0,p}\und\CG$ acts trivially on the fiber of   $\pi_{0,p}$ over the neutral point given by the trivial coset 
of $\CL_{0,p}\und\CG/\CL^+_{0,p}\und\CG$. 
%The action  of $\CL^+_{0,p}\und\CG$  via \eqref{Newaction} also commutes with the action of $T_\CG$.
% so it provides a $\CL^+_{0,p}\und\CG$-equivariant structure on the $T_\CG$-torsor $\CP_{\und\CG}$ over ${\rm Gr}_{\und\CG,  O, 0,p}$.
\end{remark}

%(This is the $T_\CG$-torsor obtained by pushing out the $\CL^+_{0,p}\und\CG$-torsor
%$\CL_{0,p}\und\CG\to \CL_{0,p}\und\CG/\CL^+_{0,p}\und\CG$ by $\tau$.)

Pulling back $\pi_{0,p}$ along \eqref{restrict2BD} and then base changing to $O_E$ gives a $T_\CG$-torsor  
\[
\pi:  {\rm Gr}^1_{\und\CG,  O_E}\to  {\rm Gr}_{\und\CG,  O_E}
\]
  over $\Spec( O_E)$. We define the $T_\CG$-torsor 
\[
\pi_{\Mloc}:  \RP_{\CG,\mu}\to \Mloc_{\CG,\mu}
\]
as the restriction of $\pi$ to $\Mloc_{\CG,\mu}\hookrightarrow  {\rm Gr}_{\und\CG,  O_E}$.

\begin{lemma}\label{lem:embLMop}
The composition 
\[
f: \Mloc_{\CG, \mu}\to {\rm Gr}_{\und\CG,  O_E}\to {\rm Gr}_{\und\CG, O_E, 0,p}={\rm Gr}_{\und\CG, O, 0, p}\otimes_{O}{O_E}
\]
is relatively representable by closed immersions.
\end{lemma}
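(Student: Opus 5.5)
We need to show: for any scheme $X$ over $O_E$ and a morphism $X\to {\rm Gr}_{\und\CG,O_E,0,p}$, the fiber product with $\Mloc_{\CG,\mu}$ is a closed subscheme of $X$. The plan is to split the restriction map \eqref{restrict2BD} into two pieces. The first is the map ${\rm Gr}_{\und\CG,O}\to{\rm Gr}_{\und\CG,O,0,p}$ itself, which I will show is relatively representable by closed immersions (a monomorphism cut out by a closed condition). The second is the closed immersion $\Mloc_{\CG,\mu}\hookrightarrow{\rm Gr}_{\und\CG,O}\otimes_O O_E$. Since compositions and base changes of closed immersions are closed immersions, the lemma follows.

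\emph{Monomorphism.} An $R$-point of ${\rm Gr}_{\und\CG,O,0,p}$ is a pair $(\CE,\beta)$, where $\CE$ is a $\und\CG$-torsor on $\Spec(R[u])$ and $\beta$ is a trivialization over $\Spec(R[u][(u(u-p))^{-1}])$. It lies in the image of \eqref{restrict2BD} exactly when $\beta$ extends across $u=0$, i.e. to $\Spec(R[u][(u-p)^{-1}])$. Such an extension is unique, because $R[u][(u-p)^{-1}]\to R[u][(u(u-p))^{-1}]$ is injective ($u$ is a nonzerodivisor) and $\und\CG$ is affine. So \eqref{restrict2BD} is a monomorphism.

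\emph{Closedness.} By Beauville--Laszlo glueing at $u=0$, the condition "$\beta$ extends across $u=0$" is local near $u=0$. Concretely, we may choose $\CE$ trivialized over $R\lps u\rps$, fpqc locally on $R$ (torsors under the smooth $\und\CG$ are trivial over $R\lps u\rps$ étale locally on $R$, using smoothness and Hensel's lemma). Then $\beta$ gives an element $g\in\und\CG(R\llps u\lrps)$ — here $u-p$ is a unit in $R\llps u\lrps$ — and the condition becomes $g\in\und\CG(R\lps u\rps)$. Embed the affine group $\und\CG_{O\lps u\rps}$ into an affine space via finitely many generators $f_i$ of its coordinate ring. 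The condition then says that all negative-power coefficients of the Laurent series $f_i(g)$ vanish. This is an (infinite) family of equations on $R$, so it defines a closed subscheme of $\Spec R$, compatible with base change. These local descriptions descend along the fpqc cover because closed immersions satisfy fpqc descent.

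Alternatively one can use the factorization \eqref{factor1BD}: on the generic fiber the map is ${\rm Gr}_{p}[1/p]\to{\rm Gr}_0[1/p]\times{\rm Gr}_p[1/p]$, $x\mapsto(e,x)$. This is the base change of the unit section $e\colon\Spec O\to{\rm Gr}_{\und\CG,O,0}$, which is a closed immersion since ${\rm Gr}_{\und\CG,O,0}$ is ind-separated. This only handles the generic fiber, though, so I will use the direct argument above.

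The step I expect to be the main obstacle is the closedness claim in the mixed situation. Since $p$ may not be invertible in $R$, the completion $\Rcom$ does not split as $R\lps u\rps\times R\lps u-p\rps$. One must therefore check that the condition really only involves the $u$-adic completion, and that the Laurent-coefficient conditions are well defined independently of the local trivialization. Independence holds because changing the trivialization multiplies $g$ by an element of $\und\CG(R\lps u\rps)$, which preserves the condition. The key input is Beauville--Laszlo along the single nonzerodivisor $u$ in $R[u]$, which is legitimate with no assumption on $p$.
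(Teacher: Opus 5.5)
Your monomorphism argument is fine. The gap is the claim that the whole map ${\rm Gr}_{\und\CG,O}\to{\rm Gr}_{\und\CG,O,0,p}$ is relatively representable by closed immersions: that claim is false.

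\emph{Counterexample.} Take $\und\CG=\BG_m$ and let $x$ be the $O$-point of ${\rm Gr}_{\und\CG,O,0,p}$ given by the trivial torsor with trivialization $u^a(u-p)^b$, where $a\neq 0$.
\begin{itemize}
\item For an $O$-algebra $R$, $x_R$ lies in the image iff $u^a(u-p)^b$ is a unit of $R[u][(u-p)^{-1}]$.
\item That holds iff $u$ is a unit there, i.e. iff $R[u][(u-p)^{-1}]/(u)=R[1/p]$ is zero.
\item Hence the fiber over $x$ is the functor of $O$-algebras in which $p$ is nilpotent, i.e. $\Spf O$.
\end{itemize}
The point $x$ is not in the image, but its restriction to every $\Spec(O/p^n)$ is. So the fiber is not a closed subscheme of $\Spec O$. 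Geometrically, a generic-fiber point $(a,b)$ of \eqref{factor1BD} outside the image specializes into the special fiber, where the map is an isomorphism by \eqref{factorSpBD}.

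\emph{Where your closedness step breaks.} You assert that $u-p$ is a unit in $R\llps u\lrps$. This is true when $p$ is invertible or nilpotent in $R$, but false for $R=O$: since $O$ is $p$-adically complete, $u\mapsto p$ defines a map $O\llps u\lrps\to O[1/p]$ killing $u-p$ (cf.\ Remark \ref{rem:padicComplete}). The correct completion of $R[u][(u-p)^{-1}]$ along $u$ is $R[1/p]\lps u\rps$. So your Laurent-coefficient equations live on $\Spec(R[1/p])$ and impose nothing where $p$ is nilpotent. That is exactly what produces $\Spf O$ above.

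\emph{Missing ingredient.} The lemma holds only because $\Mloc_{\CG,\mu}$ is proper. For a proper source, the fiber over such an $x$ is an honest closed subscheme supported on the special fiber. The paper proceeds as follows:
\begin{itemize}
\item it uses your discarded generic-fiber argument via \eqref{factor1BD};
\item it notes that the map is an isomorphism on special fibers by \eqref{factorSpBD};
\item it concludes from properness of $\Mloc_{\CG,\mu}$, as at the end of the proof of \cite[Prop. 8.1]{PZ}.
\end{itemize}
Alternatively, you can keep your monomorphism step and finish with the fact that a proper monomorphism is a closed immersion. For that you must also show the diagonal of ${\rm Gr}_{\und\CG,O,0,p}$ is representable by closed immersions. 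This holds because extending a section of an affine finite type $R[u]$-scheme across $V(u(u-p))$ amounts to divisibility by powers of the monic polynomial $u(u-p)$, a closed condition compatible with base change. Then $\Mloc_{\CG,\mu}\times_{{\rm Gr}_{\und\CG,O_E,0,p}}X$ is closed in $\Mloc_{\CG,\mu}\times X$, hence proper and a monomorphism over $X$.
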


\begin{proof} It is easy to see that $f$ is a closed immersion on the generic fibers using \eqref{factor1BD}. Also, 
$f$ is a closed immersion on the special fibers since by  \eqref{factorSpBD}, the morphism ${\rm Gr}_{\und\CG,  O_E}\to {\rm Gr}_{\und\CG, O_E, 0,p}$ is an isomorphism on special fibers. Since $\Mloc_{\CG,\mu}$ is proper  the result now follows by an argument as in the end of the proof of \cite[Prop. 8.1]{PZ}.
\end{proof}

By Lemma \ref{lem:embLMop} we can view $\Mloc_{\CG,\mu}$ as a fpqc subsheaf of  ${\rm Gr}_{\und\CG, O_E, 0,p}$. This is preserved by the $\CL^+_{0,p}\und\CG$-action 
because $\Mloc_{\CG,\mu}$ is flat and this is true on the generic fiber (see also below). For the same reason, we see that the resulting $\CL^+_{0,p}\und\CG$ action on 
$\Mloc_{\CG,\mu}$ factors through the homomorphism $\psi_p: \CL^+_{0,p}\und\CG\to \CG$ given by   $u\mapsto p$ and it comes from the usual $\CG$-action on $\Mloc_{\CG,\mu}$.
The morphism $f$  is $\CL^+_{0,p}\und\CG$-equivariant with the action on the source as described above. It follows that  the action \eqref{Newaction} of $\CL^+_{0,p}\und\CG$ on $ \CP_{\und\CG}$   restricts to an action on $\RP_{\CG, \mu}$ which lifts the natural action on $\Mloc_{\CG,\mu}$ via $\psi_p$ as above. To show (a) it now remains to prove that the $\CL^+_{0,p}\und\CG$-action on $\RP_{\CG,\mu}$ factors through $\psi_p$. Then it gives an $\CG$-action on $\RP_{\CG,\mu}$ which lifts the $\CG$-action on $\Mloc_{\CG,\mu}$; the fact that this commutes with the $T_\CG$-action follows from the above.

We now discuss the generic fibers. This is useful for both completing the proof of (a) and for proving (b). 
In what follows, for simplicity, we omit some subscripts and write 
$\Mloc$ instead of $\Mloc_{\CG,\mu}$ and also write $\Mloc[1/p]$ for the generic fiber $\Mloc\otimes_{O_E}E$. In general, we write $X[1/p]$ instead of $X\otimes_{\BZ_p}\BQ_p$.
Recall the  factorization \eqref{factor1BD}
\begin{equation*}
{\rm Gr}_{\und\CG, O, 0,p} [1/p]={\rm Gr}_{\und\CG, O, 0 }[1/p]\times_{\Spec( O[1/p])} 
{\rm Gr}_{\und\CG, O, p}[1/p].
\end{equation*}
The group homomorphism $\tau$ factors
\[
\tau : \CL^+_{0,p}\und\CG  \to \CL^+_0\und\CG \xrightarrow{\   \ }    \CG_0\to T_\CG,
\]
where $\CL^+_0\und\CG(R)=\und\CG(R\lps u\rps)$, the first arrow is given by the projection, and $  \CL^+_0\und\CG\to \CG_0$ is obtained by evaluation at $u=0$. This implies that the $T_\CG$-torsor $\pi_{0,p}[1/p]$ is obtained by pulling back the generic fiber of the $T_\CG$-torsor
\[
\pi_0: \CL_{0}\und\CG \times_{\CL^+_{0}\und\CG, \tau} T_\CG\to \CL_{0}\und\CG/\CL^+_{0}\und\CG={\rm Gr}_{\und\CG,  O, 0 } 
\]
by    the projection
\[
{\rm Gr}_{\und\CG, O, 0,p}[1/p] \to {\rm Gr}_{\und\CG, O, 0 }[1/p]
\] 
to the first factor in \eqref{factor1BD}. On the other hand, under
\begin{equation}\label{LMtoBD}
\Mloc\hookrightarrow  {\rm Gr}_{\und\CG,  O_E}\to {\rm Gr}_{\und\CG, O, 0, p}\otimes_{O}{O_E}
\end{equation}
the generic fiber $\Mloc[1/p]$   embeds in the base change to $E$ of the second factor
in \eqref{factor1BD}. However, the $T_\CG$-torsor $\pi_M$ is obtained by restricting $\pi_{0,p}$ along \eqref{LMtoBD}. Hence we obtain an isomomorphism
\[
\RP_{\CG,\mu}[1/p]\simeq T_{\CG, E}\times_{\Spec(E)} \Mloc[1/p] 
\]
between $\RP_{\CG,\mu}[1/p]$ and the trivial $T_\CG$-torsor over $\Mloc[1/p]$. Under this, the action  of $\CL^+_{0,p}\und\CG$ on $\RP_{\CG,\mu}[1/p]$ is trivial 
on the factor $T_{\CG, E}$ and is the standard action on $\Mloc[1/p]$. In particular, it factors through the quotient
\[
  \CL^+_{0,p}\und\CG[1/p]\to \CG[1/p]=G
\]
obtained by reducing modulo $(u-p)$. Hence, it follows that the $T_\CG [1/p]$-torsor $\pi_\Mloc[1/p]$ is $\CG_E=G_E$-equvariant and  equivariantly trivial. This gives (b) provided
we complete the proof of (a).

For this it remains to show that
the $\CL^+_{0,p}\und\CG$-action on $\RP_{\CG,\mu}$ factors through $\psi_p$, i.e. through reducing modulo $u-p$. 

  This is clear on the generic fibers by the above discussion.
To show it in general, we start by  observing that, since $\RP_{\CG,\mu}$ is of finite type, the action factors 
  through the group scheme of $m$-jets
\[
\CL^{+,m}_{0,p}\und\CG:=\Res_{O[u]/((u(u-p))^m)/O}(\und\CG\otimes_{O[u]}O[u]/((u(u-p))^m)
\]
for some $m\gg 0$. Since $\und\CG$ is smooth over $O[u]$, $\CL^{+,m}_{0,p}\und\CG$ is smooth   over $O$. Write
\[
1\to \ker_m\to \CL^{+,m}_{0,p}\und\CG\to \CG\to 1.
\]
 Let $H\subset \ker_m$ be the flat closure of $\ker_m[1/p]$ in $\CL^{+,m}_{0,p}\und\CG$. The fppf quotient $\CL^{+,m}_{0,p}\und\CG/H$
 is representable (\cite{Ana}) by a smooth group scheme $\CG'$ with $G$ as generic fiber. It acts on $\RP_{\CG,\mu}$ since $\CL^{+,m}_{0,p}\und\CG$ acts on $\RP_{\CG,\mu}$ and $H$ acts trivially, since $H[1/p]$ acts trivially on $\RP_{\CG,\mu}[1/p]$ and $\RP_{\CG,\mu}$ is flat over $O_E$. We have a group scheme homomorphism $\CG'\to \CG$ extending the identity on the generic fiber $G$.
Now, by smoothness, $\CL^{+,m}_{0,p}\und\CG(\br O)\to \CG(\br O)$ is surjective and so $\CG'(\br O)$ also surjects onto $\CG(\br O)$. But $\CG'(\br O)$, $\CG(\br O)$ are both subgroups of $G(\br O[1/p])$,
so $\CG'(\br O)=\CG(\br O)$ and, by \cite[\S 1.7]{BTII}, $\CG'=\CG$.
The proof of (a) follows. Part (b) now also follows from the dicusssion about the generic fibers above.

The above completes the proof of the statement but in fact, we can also construct an ``explicit" canonical  section $s_{\CG,\mu}$ in (b), as follows. This section is functorial for change of groups and its construction is useful in the sequel.

By the construction, an $R$-valued point of $\RP_{\CG,\mu}$ is the
  isomorphism class 
$[ (\CE,\und\alpha, \beta_0))]$ of
\[
 (\CE,\und\alpha, \beta_0):  \quad \CE,\quad \und\alpha: \und\CG\xrightarrow{\sim} \CE_{|u\neq p}, \quad
\beta_0:   T_\CG\xrightarrow{\sim} T_\CG\times_{\tau_0, \CG_0} \CE_{|u=0},
\]
where $\CE$ is a $\CG$-torsor over $R[u]$ and the trivialization $\und\alpha$ is ``bounded by $\mu$'' along $u=p$. An $R$-valued point of $\Mloc$ is the
  isomorphism class 
$[ (\CE,\und\alpha)]$ of a pair as above; the $T_\CG$-torsor $\pi_\Mloc$ is obtained by forgetting $\beta_0$. 

Assume now $p$ is invertible in $R$. Then
\[
\Spec(R)=\Spec(R/uR)\hookrightarrow   \Spec(R[u])
\]
 factors 
\[
\Spec(R) \xrightarrow{\iota} \Spec(R[u][(u-p)^{-1}])\subset \Spec(R[u]).
\]
We can consider the restriction $\iota^* \und\alpha: \CG_0\xrightarrow{\sim} \CE_{|u=0}$ which is an isomorphism of  $\CG_0$-torsors over $\Spec(R)$. This gives a $T_\CG$-trivialization $\tau_0(\iota^* \und\alpha)$ of the push out $T_\CG\times_{\tau_0, \CG_0} \CE_{|u=0}$. We can now give a section $s=s_{\CG,\mu} $ of $\pi_M[1/p]$  
by setting
\begin{equation}\label{cansection}
s[(\CE, \und\alpha)]:=[(\CE, \und\alpha, \tau_0(\iota^*\und\alpha))]
\end{equation}
on $R$-valued points.  

We now check that $s$ is $\CG$-equivariant. By Beauville-Laszlo glueing we can interpret the $R$-valued points 
of $\RP_{\CG,\mu}[1/p]$, $\Mloc[1/p]$, as isomorphism classes
\[
 [(\CE^\wedge , \und\alpha^\wedge, \beta_0)]  \ ,\quad   [(\CE^\wedge , \und\alpha^\wedge)].
\] 
Here, $\CE^\wedge$ is a $\und\CG$-torsor over $\Rcom=R\lps u\rps\times R\lps u-p\rps$, and 
$\und\alpha^\wedge$ is a trivialization over $R\llps u\lrps\times R\llps u-p\lrps$ which extends to a trivialization over $R\lps u\rps$, and $\beta_0:   T_\CG\xrightarrow{\sim} T_\CG\times_{\tau_0, \CG_0} \CE_{|u=0}$
 is a trivialization over $\Spec(R)$.
Given $g\in \CG(R)=G(R)$, we lift it to 
\[
   \und g =(\wh g_0, \wh g_p)\in \CG(\Rcom)=\und \CG(R\lps u\rps)\times \und \CG(R\lps u-p\rps).
\]
Then  
\[
\und  g \cdot (\CE^\wedge, \und\alpha^\wedge\cdot \und g^{ -1}, \beta_0\cdot \tau((\wh g_0)_{u=0})^{-1})
,\qquad 
 \und g\cdot (\CE^\wedge, \und\alpha^\wedge)= (\CE,  \und\alpha^\wedge\cdot \und g^{-1}).
\]
(The corresponding isomorphism classes are independent of the choice of lift $\und g$ of $g$.)
We have $\iota^*(\und g)=(\wh g_0)_{|u=0}$.  Hence,
\begin{align*}
s(\und g\cdot (\CE^\wedge, \und\alpha^\wedge))&= s(\CE^\wedge,  \und\alpha^\wedge\cdot \und g^{-1})=(\CE^\wedge,  \und\alpha^\wedge\cdot \und g^{-1}, \tau(\iota^*(\und\alpha^\wedge\cdot \und g^{-1})))=\\
&=(\CE^\wedge,  \und\alpha^\wedge\cdot \und g^{-1}, \tau(\iota^*(\und\alpha^\wedge)\cdot  (\hat g_0)^{-1}))=\\
&=\und g\cdot  (\CE^\wedge, \und\alpha, \tau_0(\iota^*\und\alpha)\cdot \tau_0((\wh g_0)_{u=0})^{-1})=\und g\cdot s (\CE^\wedge, \und\alpha^\wedge).
\end{align*}
This shows the $\CG$-equivariance of $s_{\CG,\mu}$.
\end{proof}

\begin{remark}\label{rem:padicComplete}
If $R$ is $p$-adically complete and separated, sending $u$ to $0$ gives an $R$-algebra map $R\lps u-p\rps\to R$.
 In that case, the ``evaluation at $u=0$" we use in the proof above factors
\[
\Rcom\to R\lps u-p\rps\to R.
\] 
%We can use this observation to give an alternative proof of Proposition \ref{mainTorsor} in which we first construct a torsor over the formal $p$-adic completion 
%of the proper scheme $\Mloc_{\CG,\mu}$ and then use Grothendieck's algebraization. This avoids the use of Beilinson-Drinfeld affine Grassmannians for the two points $0$ and $p$. 
%When $R$ is $p$-adically complete and separated, 
In this case, $R$-valued points of $\RP_{\CG,\mu}$ are given by isomorphism classes of triples $(\CE^\wedge, \und\alpha^\wedge, \beta_0)$, with
$\CE^\wedge$ a $\und\CG$-torsor over $R\lps u-p\rps$, and $\und \alpha:\und\CG\xrightarrow{\sim}(\CE^\wedge)_{|u\neq p}$, $\beta_0:T_\CG\xrightarrow{\sim} T_\CG\times_{\tau_0, \CG_0} \CE_{|u=0}$, trivializations. An element  $g\in \CG(R)$  acts on $\RP_{\CG,\mu}(R)$ as follows: We lift $g$ to $\und g\in \und\CG(R\lps u-p\rps)$ and set
\[
g\cdot (\CE^\wedge, \und\alpha^\wedge, \beta_0)=(\CE^\wedge, \und\alpha^\wedge\cdot \und g^{-1}, \beta_0\cdot \tau_0(\und g_{|u=0})^{-1})
\]
with isomorphism class independent of the lift. On the other hand,   $t\in T_\CG(R)$ acts by changing $\beta_0$ to $t\cdot \beta_0$. Since we do not need this we omit the details.
\end{remark}

 \subsubsection{Line bundles over the local model}\label{ss:BundlesLines} We continue with the above set-up. Suppose  that 
\[
\chi:   T_\CG\to \BG_m
\]
 is a character; by   pushing out $\pi_{\Mloc}$ along $\chi$, we obtain a $\BG_m$-torsor, i.e. a line bundle over $\Mloc_{\CG,\mu}$ which corresponds to an invertible sheaf $\CL_\chi$ so that 
 \[
 \BG_m\times_{\chi, T_\CG} \RP_{\CG,\mu}={\rm Isom}(\CL^\vee_\chi,\CO_{\Mloc_{\CG,\mu}}).
 \]
By the above, the invertible sheaf $\CL_\chi$  is $\CG $-equivariant, i.e. admits a $\CG $-action which lifts the $\CG $-action on $\Mloc_{\CG,\mu}$. The section $s=s_{\CG,\mu}$ above gives a non-zero global section $s_\chi$ of $\CL_\chi[1/p]$ over the generic fiber $\Mloc_{\CG,\mu}[1/p]$. We can think of $s_\chi$ as a meromorphic section of $\CL_\chi$ over $\Mloc_{\CG,\mu}$ whose divisor $\div(s_\chi)$ is supported on the special fiber.  Showing the condition in Conjecture \ref{divconj} amounts to proving that the multiplicities of $\div(s_\chi)$ are given by the formula (\ref{divsum}). As we will see later, for $\chi$ in a certain ``positive cone'', $s_\chi$ is an actual section of $\CL_\chi$ and so the divisor $\div(s_\chi)$ is effective for such $\chi$.

\begin{remark}\label{rem:factorBD}
Recall the  isomorphism \eqref{factorSpBD} of special fibers
\[
{\rm Gr}_{\und\CG, O, 0,p}\otimes_O k={\rm Gr}_{\und\CG, O}\otimes k.
\]
By \eqref{LMtoBD} we have
\[
\Mloc_{\CG,\mu}\otimes_{O_E}k_E\to {\rm Gr}_{\und\CG, O, 0,p}\otimes_O k_E={\rm Gr}_{\und\CG, O}\otimes k_E
\]
which agrees with the closed immersion
\[
\Mloc_{\CG,\mu}\otimes_{O_E}k_E\to  {\rm Gr}_{\und\CG, O}\otimes k_E
\]
given by the definition of $\Mloc_{\CG,\mu}$. 
 We also have an identification
 \[
 {\rm Gr}_{\und\CG, O}\otimes k_E=(L H/L^+ \CH)_{k_E},
 \]
where $L H/L^+ \CH$ is the   affine flag variety ${\rm Gr}_{\CH}$ for the twisted loop group $H=\und G\otimes_{O[u]}k\llps u\lrps$ over $k\llps u\lrps$ and
 parahoric $\CH=\und \CG\otimes_{O[u]}k\lps u\rps$. Here, 
  $ \CH\otimes_{k\lps u\rps}k = \CG\otimes_O k $, see \cite[Cor. 4.2]{PZ}.
 
 Suppose now 
$
 \chi:   T_\CG\to \BG_m
$
 is a character over $O$ as above. Using the identifications
 \[
 (\CH\otimes_{k\lps u\rps}k)_{\red}=( \CG\otimes_O k)_{\red}=(\CG_0\otimes_O k)_{\red},
 \]
 we see that $\chi$ gives a character $\ov\chi : L^+\CH\to \BG_m$ over $k$. The usual construction
 \[
 L H \times_{L^+\CH,\ov\chi}k
 \]
 now defines a $LH$-equivariant line bundle over ${\rm Gr}_{\CH}$. The restriction of this line bundle to $\Mloc_{\CG,\mu}\otimes_{O_E}k_E$  is isomorphic to the line bundle $\CL_\chi\otimes_{O_E}k_E$ obtained from the special fiber of the $T_{\CG}$-torsor $\RP_{\CG,\mu}\to \Mloc_{\CG,\mu}$. This follows from the  proof of Proposition \ref{mainTorsor}. Indeed,
  the $T_\CG$-torsor $\pi_\Mloc$ is given by restricting a similar contracted product. Note however, that the $L^+\CH$-structure which is obtained from the natural $LH$-structure on $\CL_\chi$ differs from the equivariant structure given by Proposition \ref{mainTorsor}, see also Remark \ref{rem:differentaction}.
\end{remark}

 \subsubsection{An example}\label{sss:ExampleLoop}

Suppose $G=\GL_n={\rm Aut}_{\BQ_p}(\BQ^n_p)$ and $\CG$ is the Iwahori group scheme  given as the stabilizer of the standard periodic $\BZ_p$-lattice chain $\{\Lambda_m\}_{m\in \BZ}$
\[
\cdots \subset   \Lambda_{i-1}\subset \Lambda_i\subset  \cdots ,
\] with 
\[
\Lambda_{-i}=p \BZ_p e_1\oplus \cdots \oplus p\BZ_p e_i\oplus \BZ_p e_{i+1}\oplus\cdots \oplus  \BZ_p e_n,
\]
for $0\leq i\leq n-1$, and $\Lambda_{j}=p^k\Lambda_i$, for $j=i-nk$, where $e_i$ is the standard basis of $\BQ_p^n$.

In this case, the group scheme $\und\CG$ over $\BZ_p[u]$ is obtained as the automorphisms of ``the corresponding" periodic chain of free $\BZ_p[u]$-modules
$\{\und\Lambda_i\}_{i\in \BZ}$ 
\[
  \cdots \subset \und\Lambda_{i-1}\subset \und\Lambda_i\subset \cdots   ,
\] 
with
\[
\und \Lambda_{-i}=u \BZ_p[u] e_1\oplus \cdots \oplus u\BZ_p[u] e_i\oplus \BZ_p[u] e_{i+1}\oplus\cdots \oplus  \BZ_p[u] e_n,
\]
for $0\leq i\leq n-1$, and $\und\Lambda_{j}=u^k\und\Lambda_i$, for $j=i-nk$. The corresponding global affine Grassmannian
${\rm G}_{\und\CG,\BZ_p}$ over $\BZ_p$ represents the functor which to a $\BZ_p$-algebra $R$ associates the set of chains $\{\und\CE_i\}_{i\in \BZ}$ of
projective finitely generated $R[u]$-modules  of rank $n$  contained in $\und\Lambda_0\otimes_{\BZ_p[u]}R[u, u^{-1}, (u-p)^{-1}]=R[u, u^{-1}, (u-p)^{-1}]^n$:
\[
  \cdots \subset \und\CE_{i-1}\subset \und\CE_i\subset \cdots  ,
\]
which are periodic $\und \CE_{i-kn}=u^k\und\CE_i$, for all $i$ and $k$, and with the following properties (see \cite[\S 6, \S 7.2]{PZ}):
 \begin{itemize}
\item For each $i$, the quotient $\und\CE_i/\und\CE_{i-1}$ is killed by $u$ and is  locally free of rank $1$ over $R$.
\item  There is some $N\geq 1$ such that
 \[
 (u-p)^N  \und \Lambda_i\otimes_{\BZ_p[u]}R[u]\subset \und\CE_i\subset  (u-p)^{-N}  \und \Lambda_i\otimes_{\BZ_p[u]}R[u],
 \]
for  all $i$.
\end{itemize}
  For $-n+1\leq i\leq 0$, set
\[
\CL_i:={\rm Hom}_R(\und\CE_i/\und\CE_{i-1}, (\und\Lambda_{i}/\und\Lambda_{i-1})\otimes_{\BZ_p}R)\simeq (\und\CE_{i}/\und\CE_{i-1})^\vee,
\]
which are locally free $R$-modules of rank $1$. The universal modules $\und\CL^{\rm univ}_i$, $-n+1\leq i\leq 0$, define $n$ invertible sheaves over ${\rm Gr}_{\und\CG,\BZ_p}$
and hence a $T_\CG=\BG_m^n$-torsor given as 
\[
\prod_{i=-n+1}^{i=0}{\rm Isom}(\und\CE^{\rm univ}_{i}/\und\CE^{\rm univ}_{i-1}, \und\Lambda_{i}/\und\Lambda_{i-1})=\prod_{i=-n+1}^{i=0}{\rm Isom}((\und\CL^{\rm univ}_i)^\vee, \CO).
\]
 By definition, this agrees with the $T_\CG$-torsor 
\[
\pi:  {\rm Gr}^1_{\und\CG,\BZ_p}\to {\rm Gr}_{\und\CG,\BZ_p}
\]
which appears in the proof of Prop. \ref{mainTorsor}.

Now fix the coweight $\mu=(1^{(r)}, 0^{(n-r)})$ and recall the comparison isomorphism between the Iwahori $\GL_n$ local model from \cite{RZbook} or \cite{Goertz} given by the ``classical definition"
and the local model $\Mloc_{\CG,\mu}\subset {\rm Gr}_{\CG,\BZ_p}$ given by the group-theoretic definition in \cite{PZ}.
 This comparison is explained, for example, in \cite[\S 7.2.1]{PZ}. Under this isomorphism, $R$-points of the classical local model given by $R$-submodules $\CF_i\subset \Lambda_i\otimes_{\BZ_p}R$ which are locally direct summands of rank $r$, correspond to $(u-p)  \und \Lambda_i\subset \und\CE_i\subset    \und \Lambda_i\otimes_{\BZ_p}R[u]$, by taking $\und\CE_i$ to be the inverse image of $\CF_i$ under the map $\und\Lambda_i\otimes_{\BZ_p[u]}{R[u]}\to \Lambda_i\otimes_{\BZ_p}R$ given by $u\mapsto p$. For simplicity of notation, set $\und\Lambda_{i, R}:=\und\Lambda_i\otimes_{\BZ_p[u]}{R[u]}$. Set also
 \[
 \CQ_i:=(\Lambda_i\otimes_{\BZ_p}R)/\CF_i=\und\Lambda_{i, R}/\und\CE_i.
 \]
 The standard property of the determinant functor on the derived category of
 perfect complexes of $R$-modules (\cite[Ch. I, Def. 4, Thm. 2]{KnM}) applied to the exact sequence of complexes
 \[
0\to [\und\CE_{i-1}\rightarrow  \und\CE_{i}]\to [ \und\Lambda_{i-1, R}\rightarrow \und\Lambda_{i, R}] \to [\CQ_{i-1}\to \CQ_i]\to 0
\]
 gives a canonical isomorphism between the locally free rank $1$ $R$-modules
 \[
 \det(\CQ_i)\otimes_R \det(\CQ_{i-1})^{-1}
 \]
 and
 \[
 (\und\Lambda_{i, R}/\und\Lambda_{i-1, R})\otimes_R (\und\CE_i/\und\CE_{i-1})^{\otimes-1}={\rm Hom}_R(\und\CE_i/\und\CE_{i-1}, (\und\Lambda_{i}/\und\Lambda_{i-1})\otimes_{\BZ_p}R).
 \]
 So we have
\begin{equation}\label{isoDet}
 \det(\CQ_i)\otimes_R \det(\CQ_{i-1})^{-1}\xrightarrow{\sim} {\rm Hom}_R(\und\CE_i/\und\CE_{i-1}, (\und\Lambda_{i}/\und\Lambda_{i-1})\otimes_{\BZ_p}R)=\CL_i.
\end{equation}

The isomorphisms (\ref{isoDet}) give a straightforward expression for the $T_\CG$-torsor $\RP_{\CG,\mu}$ over the local model $\Mloc_{\CG,\mu}$, in this example. In particular, sections of the $T_\CG$-torsor $\RP_{\CG,\mu}$ correspond via (\ref{isoDet}) to nowhere zero sections of the line bundles  $\det( \CQ^{\rm univ}_i)\otimes \det(\CQ^{\rm univ}_{i-1})^{-1}$, $i=-n+1,\ldots ,0$.  We can see that the section $s_{\CG,\mu}$ 
of the $T_\CG$-torsor $\RP_{\CG,\mu}[1/p]$ over the generic fiber $\Mloc_{\CG,\mu}[1/p]$, corresponds to the collection of  sections of $ \det( \CQ^{\rm univ}_i)\otimes \det(\CQ^{\rm univ}_{i-1})^{-1} $ induced by the isomorphisms $\CQ^{\rm univ}_{i-1}\xrightarrow{\sim} \CQ^{\rm univ}_i$ over $\Mloc_{\CG,\mu}[1/p]$.
 
\begin{remark}\label{rem:PEL}
{\rm Versions of the isomorphism (\ref{isoDet}) can be given in many other EL and PEL cases, and in particular in the case of  
${\rm GSp}_{2g}$, by comparing with corresponding
 explicit ``classical" descriptions of the local models $\Mloc_{\CG,\mu}$, see \cite[\S 7, \S 8]{PZ}. We will revisit this in Section \ref{s:PEL}.  }
\end{remark}

\subsubsection{Proof of Conjecture \ref{divconj} in the tame case;  multiplicities}\label{ss:mult}
We continue with the notations of the previous paragraph. Again, we first extend the base to $\br \BQ_p$ and  set $O=\br\BZ_p$. For simplicity, we will often omit the base change to $O$ and write $\CG$ instead of $\CG_O$.

We would like to 
 calculate the multiplicity $m_{\bar\mu', \chi}$ of the divisor $\div(s_\chi)$ along the irreducible  component $Z_{\bar\mu'}$ of the geometric special fiber of $\Mloc_{\CG,\mu}$ which corresponds to an extreme  element $\bar\mu'$ of the $\{\mu\}$-admissible set ${\rm Adm}^{\CG}(\mu)\subset W^K\bs \widetilde W/W^K$:
\[
  \div(s_\chi)=\sum_{\bar \mu'} m_{\bar\mu', \chi}\cdot Z_{\bar\mu'}.
  \]

Our goal is to show:

 \begin{proposition}\label{multiProp}
 Let $\chi:  T_\CG \to \BG_m$. Let 
   $Z_{\bar\mu'}$ be the component of the geometric special fiber of $\Mloc_{\CG,\mu}$  corresponding to the coset of the translation element $\bar \mu'\in \Lambda_{\{\mu\}}\subset X_*(T)_I\subset \widetilde W$.  The  multiplicity of the divisor $\div(s_\chi)$ along $Z_{\bar\mu'}$ is given by
  \[
  m_{ \mu', \chi}=[\br E:\br\BQ_p]\cdot \langle\bar\mu',  \chi\rangle,
  \]
  where $\langle\bar\mu',  \chi\rangle$ is the value of the pairing (\ref{pair2}).
  \end{proposition}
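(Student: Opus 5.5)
We compute the multiplicity of $\div(s_\chi)$ along $Z_{\bar\mu'}$ by restricting to a curve that meets $Z_{\bar\mu'}$ transversally at a generic point. The natural choice is the closure of a $T$-fixed point on the generic fiber degenerating to the $T$-fixed point $t_{\bar\mu'}$ of the special fiber, which lies in the open cell of $Z_{\bar\mu'}$.

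First, reduce to a rank-one computation. Work over $O=\br\BZ_p$. Fix a lift $\mu'\in W_0\cdot\mu\subset X_*(T)$ of $\bar\mu'$. The cocharacter $\mu'$ is defined over a finite tame extension $\tilde F$ of $\br\BQ_p$ of degree $e_{\mu'}=[I:I_{\mu'}]$, through which $\br E$ embeds. Since $\mu'$ lies in the image of $\{\mu\}_T$, the point $(u-p)^{\mu'}$ of the affine Grassmannian over $\tilde F$ lies in $\Mloc[1/p]$. Here we use the spread-out torus $\und\CT\subset\und\CG$ of \cite[\S 4]{PZ}, so that $\mu'$ is viewed through the Weil restriction of a split torus over $O[v]$ with $v^{e}=u$. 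Its closure in $\Mloc\otimes O_{\tilde F}$ is a section $\Spec O_{\tilde F}\to\Mloc$. Because the $T$-fixed points on the special fiber are exactly the elements of $\widetilde W$, its special point is $t_{\bar\mu'}$. The same comparison in the special fiber (\eqref{factorSpBD}, Remark \ref{rem:factorBD}) shows that this point lies in the open Schubert cell $Z^\circ_{\bar\mu'}$, which is smooth.

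Second, pull back $(\CL_\chi,s_\chi)$ along this section and compute its valuation. By the functoriality of the construction \eqref{cansection}, the pullback is computed inside the torus: the torsor is $\CL_{0,p}\und\CT\times_{\tau}T_\CG$ restricted to the point, and $s$ evaluates $\und\alpha=(u-p)^{\mu'}$ at $u=0$. This gives the element $\tau_0\big((-p)^{\mu'}\big)$. More precisely, in the non-split case one takes the norm/averaging of $(-v_p)^{\mu'}$ down to $\CT_0$, followed by the map $X_*(T)^I\to X_*(T_\CG)$ of \eqref{map2}. Meanwhile the trivialization over $O_{\tilde F}$ comes from the integral structure, i.e.\ from $\beta_0$ extended over the whole section. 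Hence the valuation of $\chi(s)$ with respect to a uniformizer of $O_{\tilde F}$ is $e_{\tilde F}\cdot\langle\mu'^{\diam},\chi\rangle$. This is exactly $e_{\tilde F}\langle\bar\mu',\chi\rangle$ via Remark \ref{remark:psi}, with $\phi_\CG$ appearing because we average over the inertia orbit.

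Third, convert this valuation into the multiplicity. If the section meets $Z_{\bar\mu'}$ with intersection number $\iota$, then the valuation equals $m_{\bar\mu',\chi}\cdot\iota$. The special fiber $\Mloc\otimes k$ is reduced, and it is the divisor of $\pi_E$ (by \cite{PZ} and Zhu's coherence result \cite{ZhuCoh}). Therefore its pullback has valuation $[\tilde F:\br E]$, and comparing gives $\iota=[\tilde F:\br E]$. Dividing, we get $m_{\bar\mu',\chi}=[\br E:\br\BQ_p]\langle\bar\mu',\chi\rangle$, which is independent of the auxiliary choices.

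The main obstacle is the second step in the ramified case. One must check that evaluating at $u=0$ of the tame Weil-restricted torus $\und\CT$ produces exactly the inertia-averaged cocharacter in $X_*(\CT_0)\otimes\BQ$. This requires the explicit description of $\und\CT$ in \cite{PZ} as invariants of $\Res_{O[v]/O[u]}$ of a split torus under the tame inertia. A secondary point is ensuring that the specialization lies in the smooth locus of $Z_{\bar\mu'}$. For that we use that $t_{\bar\mu'}$ is the unique maximal $T$-fixed point of that component.
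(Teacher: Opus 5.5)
Your strategy matches the paper's. You restrict $(\CL_\chi,s_\chi)$ along an $O_{\br E}$-point coming from $\und\CT\subset\und\CG$ whose special point lies in the open stratum of $Z_{\bar\mu'}$, compute the valuation inside the torus, and use that $\Mloc_{\CG,\mu}$ is smooth over $O_E$ along that stratum. Your reducedness argument for $\iota$ is an equivalent way of doing this last step. (In fact the field of definition of $\mu'\in W_0\cdot\mu$ is $\br E$ itself, so $\iota=1$.) When $G$ splits over $\br\BQ_p$ your argument is complete: $(u-p)^{\mu'}$ is an honest element of $\und\CT(O[u][(u-p)^{-1}])$, its value at $u=0$ is $(-p)^{\mu'}$, and it visibly specializes to $u^{\mu'}$ (cf.\ Remark \ref{rem:simpler}).

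In the tamely ramified case, however, the step you call the main obstacle is exactly the missing content, and taking the closure does not get around it. Properness of $\Mloc_{\CG,\mu}$ does give an $O_{\tilde F}$-section extending the $\tilde F$-point. But you still need to (a) decide which fixed point it specializes to, and (b) compare $s_\chi$ with the integral trivialization $\beta_0$ along it. Both require a representative of the section in $\CL_{0,p}\und\CT$ over the integers, and ``$(u-p)^{\mu'}$'' has no integral meaning there. Indeed, $\und\CT$ splits over $\tilde F\llps u-p\lrps$ but not over $O_{\tilde F}\lps u-p\rps[(u-p)^{-1}]$, because the factors $\zeta^i v-\varpi$ of $v^e-\varpi^e=u-p$ all become associates of $v$ modulo $\varpi$. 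Knowing that the special point is torus-fixed only places it somewhere in $\widetilde W/W^K$; neither \eqref{factorSpBD} nor Remark \ref{rem:factorBD} says it is $t_{\bar\mu'}$. Likewise, ``the norm of $(-\varpi)^{\mu'}$'' is the right answer, but it is not yet the value at $u=0$ of any integral object.

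The paper fills this gap with Zhu's construction (Proposition \ref{ZhuProp}). Put $x_i=-\varpi\otimes 1+\zeta^i\otimes v$; these are units in $\tilde O\lps u-p\rps[(u-p)^{-1}]\otimes_{O[u]}O[v]$ since $\prod_i x_i=\pm(u-p)$. Define $\fks_\lambda$ by $\omega_j(\fks_\lambda)=\prod_{i=1}^e x_i^{\langle\lambda,\gamma^i\omega_j\rangle_H}$. This element has the following properties:
\begin{itemize}
\item it is fixed by the twisted $\Gamma$-action, since $\gamma(x_i)=x_{i+1}$;
\item it lands in the neutral component, as one checks on the reductive quotient at $u=0$, where it equals $(-\varpi)^{\langle\lambda,{\rm Tr}_\Gamma\omega_j\rangle_H}$;
\item over $\tilde F\llps u-p\lrps$, where $\und\CT$ becomes $T_H$ over $\tilde F\llps v-\varpi\lrps$, it is $(v-\varpi)^{\lambda}$ times a unit, so its Kottwitz invariant is $\lambda$;
\item modulo $\varpi$ it is $\pm v^{{\rm Tr}_\Gamma\lambda}$, so its Kottwitz invariant is $\bar\lambda$.
\end{itemize}
The two Kottwitz-invariant computations give (a), and the value at $u=0$ gives (b), i.e.\ precisely your averaged formula. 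Without this, or an equivalent integral construction, your Steps 1 and 2 remain unproved outside the case where $G$ splits over $\br\BQ_p$.
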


By the above, this implies Theorem \ref{tameconj}, i.e. the divisor conjecture \ref{divconj} in the tame case.
   
\begin{remark} a) Note that  for $\Gamma=I$, $[\Gamma:\Gamma_{\mu'}]=[\br E:\br\BQ_p]$, so  the values $\langle\bar\mu',  \chi\rangle$ of the pairing (\ref{pair2}) lie in $ [\br E:\br\BQ_p]^{-1}\cdot \BZ$, hence the RHS is indeed an integer. 

b)  Proposition \ref{multiProp} says that the value $\langle\bar\mu', \bar \chi\rangle$ gives the ``absolute" multiplicity of the divisor $\div(s_\chi)$, i.e. the $p$-adic valuation of a local equation for $\div(s_\chi)$   at the generic point $\eta_{\bar \mu'}$ of  $Z_{\bar\mu'}$, i.e., 
  \[
  \div(p^{\langle\bar\mu', \bar \chi\rangle})=\div(s_\chi)
  \]
 in $\Spec(\CO_{\Mloc, \eta_{\bar\mu'}})$.
  \end{remark}
  
  \begin{proof} Recall that we have base changed to $O=\br\BZ_p$, so  $\CG$, $\BG_m$, etc., is over $O$. Recall the choices of tori $S\subset T=Z_G(S)$  above. By the tameness assumption, there exists a finite tamely ramified extension $\ti F/\br\BQ_p$ with $\Gamma=\Gal(\ti F/\br\BQ_p)\simeq \BZ/e\BZ$, such that  
 \[
G\simeq ({\rm Res}_{\ti F/\br\BQ_p}  (H\otimes_{\br\BQ_p}\ti F))^{\Gamma},\quad T\simeq ({\rm Res}_{\ti F/\br\BQ_p} (T_H\otimes_{\br\BQ_p}\ti F))^{\Gamma},
\]
where  $H$ is a split Chevalley form of $G$, and $T_H=\BG_{m}^r$ is a maximal torus 
of $H$, for a ``twisted'' action of $\Gamma$. For simplicity, we set $F=\br\BQ_p$.
  
  Under this tameness assumption, the constructions of \cite{PZ} apply to the torus $T$ and we have smooth affine group schemes $\und\CT$, resp. $\CT_0$, over $O[u]$, resp. $O$, which extend the parahoric  group scheme $\CT$ (i.e. connected ft Neron model), and its special fiber $\bar\CT=\CT\otimes_{O}k$. Note that by the construction in \cite{PZ}, there are natural  identifications of   Galois groups
\[
\Gamma=\Gal(\ti F/F)=\Gal(O[v]/O[u])=\Gal(F\llps v-p\lrps/F\llps u-p\lrps)=\Gal(k\llps v\lrps/k\llps u\lrps) ,
\]
 where $v^e=u$, and a
  $\Gamma$-equivariant identification
\[
X_*(T)=X_*(\und\CT_{F\llps t\lrps})=X_*(\und\CT_{k\llps t\lrps})
\]
  where $t=u-p$. In particular, we can identify the  coinvariants 
  \[
    X_*(T)_I= X_*(\und\CT_{k\llps t\lrps})_I.
  \]
  Here, $X_*(T)_I$ and $X_*(\und\CT_{k\llps t\lrps})_I$ are the targets of the Kottwitz homomorphism for  the tori obtained from $\und\CT$ by specializing to the two discretely valued fields $F$ and $k\llps u\lrps$, by $O[u]\to F$, $u\mapsto p$, and $O[u]\to k\llps u\lrps$.  (On the other hand, the extension $F\llps v-p\lrps/F\llps u-p\lrps$ of discretely valued fields,  is unramified.)

 For simplicity, write $\ti O$ for $O_{\ti F}$ and, as above, $t=u-p$. The following  is a mixed characteristic version of \cite[Prop. 3.4]{ZhuCoh}, see also \cite[\S 7.1.1]{PZ}. 
  For $\lambda\in X_*(T)$, we denote by 
 $E_\lambda$ its field of definition, this is a subfield of $\ti F$ containing $F=\breve\BQ_p$.

  \begin{proposition}\label{ZhuProp}
  For each $\lambda\in X_*(T)$, there is a morphism  over $\Spec(O)$,
  \[
  \fks_\lambda: \Spec(O_{E_\lambda})\to \CL\und\CT,
  \]
  such that, denoting by $\eta$, resp. $s$, the generic, resp. special point of $\Spec(O_{E_\lambda})$, 
  \begin{itemize}
  \item[i)] The element 
 $
  \fks_\lambda(\eta)\in \CL\und\CT(E_\lambda)=\und\CT(E_\lambda\llps u-p\lrps) 
 $
 maps under the Kottwitz  homomorphism 
 \[
 \CT(E_\lambda\llps u-p\lrps)\to X_*(\und\CT_{E_\lambda\llps u-p\lrps})
 \]
  to the image of $\lambda$ under the identification $X_*(T)=X_*(\und\CT_{E_\lambda\llps u-p\lrps})$.

 \item[ii)] The element
  $
  \fks_\lambda(s)\in \CL\und\CT(k )=\und\CT(k\llps t\lrps),
  $
   maps under the Kottwitz homomorphism
    \[
   \CT(k\llps t\lrps)\to X_*(\und\CT_{k\llps t\lrps})_I,
   \]
    to the image of $\lambda$ under the natural map $X_*(T)\to X_*(T)_I=X_*(\und\CT_{k\llps t\lrps})_I$.
 \end{itemize}
\end{proposition}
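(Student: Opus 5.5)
We construct $\fks_\lambda$ explicitly, following Zhu's function-field construction \cite[Prop. 3.4]{ZhuCoh}. The plan is to work first on the split torus over the tame extension and then descend to $T$ by a norm.

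\textbf{Split case.} Over $O[v]$ with $v^e=u$, the torus $\und\CT\otimes_{O[u]}O[v]$ contains, away from $v=0$, the split torus $T_H\otimes O[v,v^{-1}]$. Indeed, by the construction in \cite[\S 4]{PZ}, $\und\CT$ is the connected component of the $\Gamma$-fixed points of $\Res_{O[v]/O[u]}(T_H\otimes O[v])$ for the twisted action. For $\lambda\in X_*(T_H)=X_*(T)$, the natural candidate is the element $\lambda(v-\varpi)$, where $\varpi\in O_{\ti F}$ satisfies $\varpi^e=p$ (so $E_\lambda\subset \ti F$).

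\textbf{Descent to $T$.} This candidate is not $\Gamma$-invariant, so we apply the norm:
\[
\fks_\lambda:=\prod_{\gamma\in \Gamma/\Gamma_\lambda}\gamma\bigl(\lambda(v-\varpi)\bigr),
\]
where $\Gamma_\lambda$ is the stabilizer of $\lambda$. This product is $\Gamma$-invariant over $O_{E_\lambda}$. It therefore lies in
\[
\und\CT\bigl(O_{E_\lambda}[v,(v^e-p)^{-1}]\bigr)^{\Gamma}\subset \und\CT\bigl(O_{E_\lambda}\llps u-p\lrps\bigr),
\]
which defines the morphism $\Spec(O_{E_\lambda})\to\CL\und\CT$.

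\textbf{Generic fiber.} Over $E_\lambda$ the extension $E_\lambda\llps v-p\lrps/E_\lambda\llps u-p\lrps$ is unramified, and $v^e-p$ factors into distinct linear factors $v-\zeta\varpi$. Only the factor $v-\varpi$ is a uniformizer at the relevant point, and the other conjugates contribute units. Hence the Kottwitz map sends $\fks_\lambda(\eta)$ to $\lambda$, since on a torus that splits over an unramified extension the Kottwitz map is given by valuation. The point to check is that the remaining Galois translates $\gamma\lambda$ appear with unit arguments and so contribute nothing.

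\textbf{Special fiber.} Reducing modulo $\varpi$ gives $\varpi=0$ and $v^e=u$. The element becomes $\prod_{\gamma}\gamma(\lambda(v))$, which equals $N_{\Gamma/\Gamma_\lambda}(\lambda(v))$ up to the twist of the action. The Kottwitz map for the tamely ramified torus over $k\llps u\lrps$ is computed by the norm compatibility: it sends $\lambda(v)$ to the image of $\lambda$ in the coinvariants. Since $\Gamma$-translates become equal in $X_*(T)_I$, the norm then has image $[\Gamma:\Gamma_\lambda]\cdot\bar\lambda$.

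\textbf{Main obstacle: the normalization.} This factor $[\Gamma:\Gamma_\lambda]$ is the step I expect to be hardest, because it appears in the special fiber but must not. To get exactly $\bar\lambda$, I would instead take $\fks_\lambda$ to be $\lambda(v-\varpi)$ viewed through $\und\CT(O_{E_\lambda}\llps u-p\lrps)\subset \und\CT_H(O_{E_\lambda}\otimes O[v]\ldots)$. This uses that over $O_{E_\lambda}$ the $\Gamma_\lambda$-twisted descent already holds, via the identification $O_{E_\lambda}[v]\otimes\cdots$, exactly as in \cite[\S 7.1.1]{PZ}.

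The remaining work is to verify two compatibilities:
\begin{itemize}
\item the Kottwitz homomorphism commutes with the specialization from $E_\lambda\llps u-p\lrps$ to $k\llps t\lrps$, which I would argue by continuity of Kottwitz maps in the family $\und\CT$ over $O[u]$;
\item on the generic fiber one gets $\lambda$ itself rather than its average, which again comes down to the unit contributions of the non-identity factors.
\end{itemize}
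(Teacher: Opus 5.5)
Your construction does not produce a point of $\CL\und\CT$. The ``obstacle'' that leads you to abandon the norm is also an error.

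\textbf{The two $\Gamma$-actions.} Points of $\und{\ti\CT}$ (whose neutral component is $\und\CT$) over an $O[u]$-algebra $R$ are the invariants in $T_H(R\otimes_{O[u]}O[v])$ under a twisted action. In it, $\gamma$ acts on the factor $O[v]$ by $v\mapsto\zeta v$ and on $X^*(T_H)$. It does not move the coefficients, so $\varpi\in\ti O\subset R$ stays fixed. Descent from $\ti O$ to $O_{E_\lambda}$ is a separate matter: it concerns the action of $\Gamma_\lambda=\Gal(\ti F/E_\lambda)$ on the coefficients.

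\textbf{Why your candidates fail.} Your product $\prod_{\gamma\in\Gamma/\Gamma_\lambda}\gamma(\lambda(v-\varpi))$ conflates these two actions. Under either one, $\Gamma_\lambda$ does not fix $\lambda(v-\varpi)$: it replaces $v$ by $\zeta^k v$, resp.\ $\varpi$ by $\zeta^k\varpi$. So the product depends on the choice of coset representatives and is not invariant under the twisted action. Your fallback $\fks_\lambda=\lambda(v-\varpi)$ fails outright. The twisted action sends it to $(\gamma\lambda)(\zeta v-\varpi)$, so it is not a point of $\und\CT$ even over $\ti O$. No $\Gamma_\lambda$-descent over $O_{E_\lambda}$ can repair this.

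\textbf{The spurious factor.} The factor $[\Gamma:\Gamma_\lambda]$ came from applying the Kottwitz map factor by factor to elements $\gamma(\lambda(v))$, which do not lie in $\und\CT(k\llps u\lrps)$. Norm compatibility says the invariant of $N_{k\llps v\lrps/k\llps u\lrps}(t)$ is the image in $X_*(T)_I$ of the invariant of $t$ over $k\llps v\lrps$, with no multiplicity. Hence the norm of $\lambda(v)$ over all of $\Gamma$ (of order $e$) has invariant exactly $\bar\lambda$.

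\textbf{What works (the paper's construction).} Take the full norm in the $v$-direction with $\varpi$ held fixed. With $x_i=\zeta^i v-\varpi$ for $1\le i\le e$, put
$\omega_j(\fks_\lambda)=\prod_{i=1}^e x_i^{\langle\lambda,\gamma^i\omega_j\rangle_H}$.
\begin{itemize}
\item The $x_i$ are units once $u-p$ is inverted, since their product is $\pm(u-p)$.
\item Since $\gamma(x_i)=x_{i+1}$, the element is invariant under the twisted action.
\end{itemize}
Two further checks are missing from your proposal.
\begin{itemize}
\item $\fks_\lambda$ must lie in the neutral component $\und\CT$ of the fixed-point group. Reducing mod $u$, its image in the reductive quotient is $({\rm Tr}_\Gamma\lambda)(-\varpi)$, which lies in the torus part.
\item $\fks_\lambda$ must be defined over $O_{E_\lambda}$. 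The group $\Gamma_\lambda$, acting on $\varpi$, permutes the $x_i$ up to roots of unity, and these roots of unity cancel.
\end{itemize}

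\textbf{The two Kottwitz computations.}
\begin{itemize}
\item For (ii): modulo $\varpi$ the element becomes, up to a root of unity, $({\rm Tr}_\Gamma\lambda)(v)$, i.e.\ the norm of $\lambda(v)$. By norm compatibility its invariant is $\bar\lambda$.
\item For (i): after the unramified base change to $\ti F\llps u-p\lrps$ one has $\ti F\lps u-p\rps\otimes_{O[u]}O[v]=\prod_i\ti F\lps\zeta^iv-\varpi\rps$ and $\und\CT(\ti F\llps u-p\lrps)=T_H(\ti F\llps v-\varpi\lrps)$. There only $x_e=v-\varpi$ is a uniformizer and the other $x_i$ are units, so the invariant is $\lambda$.
\end{itemize}
Your generic-fiber heuristic is therefore right, but for this element. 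No ``continuity of Kottwitz maps in families'' is needed, since each fiber is computed directly.
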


\begin{proof}
Recall we consider $ O[u]\to O[v]$, $u\mapsto v^e$, with Galois group $\Gamma=\BZ/e\BZ$, so that $\ti O/O$ 
is the base change of this cover over $u=p$, i.e. $\ti O=O[v]/(v^e-p)$. Then $v\,{\rm mod}\, (u-p)$ defines $\varpi\in \ti O$ with $\varpi^e=p$. We have  models $\und{\ti\CT}$ and $\und\CT$ of the torus over $O[u]$ which specialize 
to $\ti\CT$ and $\CT$ under $u\mapsto p$. For example, choose a basis $\omega_1,\ldots , \omega_r$ of the $\Gamma$-module $X^*(T_H)=\BZ^r$ which corresponds to $T_H\simeq \BG_m^r$. Then
\[
\und{\ti\CT}=({\rm Res}_{O[v]/O[u]}( \BG^r_m\otimes_{\BZ_p} O[v]))^{\Gamma},
\]
where the $\Gamma$-action extends in the obvious way, and $\und{\CT}$ is the neutral component of $\und{\ti\CT}$.
For an $O$-algebra $R$,  
 \[
 \CL\und{\ti\CT}(R)=\und{\ti\CT}(R\lps u-p\rps[(u-p)^{-1}]).
 \]
This is equal to the $\Gamma$-fixed points of 
\[
({\rm Res}_{O[v]/O[u]}\BG_m^r)(R\lps u-p\rps[(u-p)^{-1}])=\prod_{j=1}^r (R\lps u-p\rps[(u-p)^{-1}]\otimes_{O[u]} O[v])^\times.
\]
Recall $T_H=\BG_{m, F}^r$ and  $T(\ti F)=T_H(\ti F)=(\ti F^\times)^r$ but with ``twisted" $\Gamma$ action. 
Write
\[
\ti O=O[v]/(v^e-p),\quad \varpi=v\,{\rm mod}\,(v^e-p)
\]
and let $\zeta$ be a primitive $e$-th root of unity in $O$. We can choose a generator $\gamma$ of $\Gamma$ such that 
so $\gamma(\varpi)=\zeta\cdot\varpi$ and  $\gamma(v)=\zeta\cdot v$.
For $1\leq i\leq e$, we set
\[
x_i:=-\varpi\otimes 1+\zeta^i\otimes v\in \ti O\lps u-p\rps \otimes_{O[u]}O[v].
\]
We will consider reduction modulo $(u)$:
\[
(\ti O\lps u-p\rps\otimes_{O[u]}O[v])/(u)=\ti O\otimes_{O}O[v]/(v^e)=\ti O[v]/(v^e),
\]
and
\[
(\ti O\lps u-p\rps[\frac{1}{u-p}]\otimes_{O[u]}O[v])/(u)=\ti F\otimes_{O}O[v]/(v^e)=\ti F[v]/(v^e),
\]
followed by reduction modulo $(v)$. 

Note that the image of $x_i$ under reduction modulo $(v)$ is the constant
\[
x_i\equiv -\varpi \in \ti O\subset \ti O[v]/(v^e)\subset \ti F[v]/(v^e)=\ti F\oplus \ti F\cdot v\oplus \cdots \oplus \ti F\cdot v^{e-1}
\]
which is a unit in $\ti F[v]/(v^e)$. Hence,  $x_i$ is a unit,
\[
x_i\in (\ti O\lps u-p\rps[\frac{1}{u-p}]\otimes_{O[u]}O[v])^\times.
\]
Recall the basis $\omega_1,\ldots , \omega_r$ of $X^*(T_H)=\BZ^r$. To distinguish with our other pairing, we use the notation 
\[
\langle\ ,\ \rangle_H: X_*(T_H)\times X^*(T_H)=X_*(T)\times X^*(T)\to \BZ
\]
for the standard  pairing.

As in \cite[Prop. 3.4]{ZhuCoh}, we define
 \[
 \fks_\lambda\in \prod_{j=1}^r(\ti O\lps u-p\rps[(u-p)^{-1}]\otimes_{O[u]}O[v])^\times.
\]
  by setting 
 \[
 \omega_j(\fks_\lambda):=\prod_{i=1}^e x_i^{\langle \lambda, \gamma^i\cdot \omega_j\rangle_H}.
 \]
The action of $\Gamma$ on $(\ti O\lps u-p\rps[(u-p)^{-1}]\otimes_{O[u]}O[v])^\times$ obtained 
by acting by Galois automorphisms on the second factor $O[v]$ is such that $\gamma(x_i)=x_{i+1}$.
This implies that $ \fks_\lambda$ is  fixed for the twisted $\Gamma$-action and so
  \[
 \fks_\lambda\in \CL\und{\ti\CT} (\ti O).
 \]
 In fact, we can see that $\fks_\lambda\in \CL\und{\ti\CT} (O_{E_\lambda})$, where we recall  the reflex field $E_\lambda\subset \ti F$ 
 of $\lambda\in X_*(T)$.
Note that we can view $\fks_\lambda\in \CL\und{\ti\CT} (\ti O)$ as a morphism over $\Spec(O[u])$,
\[
\fks_\lambda: \Spec(\ti O\lps u-p\rps[\frac{1}{u-p}])\to \und{\ti\CT} .
\]
 To show that this factors through the neutral component $\und{\CT}\subset \und{\ti\CT}$, we 
consider the reduction modulo $(u)$. Now $\fks_\lambda$ induces 
\[
\fks_\lambda\,{\rm mod}\,(u): \Spec(\ti O\lps u-p\rps[\frac{1}{u-p}]/(u))=\Spec(\ti F)\to \und{\ti\CT}_{|u=0} .
\]
We compose this map with 
\[
\und{\ti\CT}_{|u=0}=(\prod_{j=1}^r(\Res_{O[v]/(v^e)/O}(\BG_m\otimes_{O}O[v]/(v^e))))^\Gamma\to  (\und{\ti\CT}_{|u=0})^\red=(\prod_{j=1}^r \BG_{m, O})^\Gamma.
\] 
The composition is
\[
\Spec(\ti F)\to (\prod_{j=1}^r \BG_{m, O})^\Gamma\subset \prod_{j=1}^r \BG_{m, O},
\]
with $j$-component
\[
(-\varpi)^{\langle \lambda, {\rm Tr}_\Gamma \omega_j\rangle_H}.
\]
So this composition 
\[
\Spec(\ti F)\to (\und{\ti\CT}_{|u=0})^\red=(\prod_{j=1}^r \BG_{m, O})^\Gamma\subset \BG_{m,O}^r,
\]
is also
\begin{equation}\label{initialterm}
\Spec(\ti F)\xrightarrow{-\varpi} \BG_{m,O}\xrightarrow{{\rm Tr}_\Gamma\cdot \lambda} \BG_{m,O}^r.
\end{equation}
But this lands in the neutral component (the torus part) of $(\und{\ti\CT}_{|u=0})^\red=(\prod_{j=1}^r \BG_{m, O})^\Gamma$, as desired. 
Hence  $\fks_\lambda$ factors through $\CL\und{\CT} $ and we obtain
$
 \fks_\lambda\in \CL\und{\CT} (\ti O).
$
It then also follows that
\[
 \fks_\lambda\in \CL\und{\CT} (O_{E_\lambda})
\]
as desired.

Let us now consider the special fiber $ \fks_\lambda(s)\in \CL\und{\CT} (k)$; this is given by
\[
\fks_\lambda(s)= \fks_\lambda\,{\rm mod}\, (\varpi): \Spec(k\llps t\lrps)\to \und{\CT}\otimes_{O[t]}k[t]
\]
with $t=u$. Hence, by the above definition,
\[
\omega_j( \fks_\lambda\,{\rm mod}\, (\varpi))=
\prod_{i=1}^e (\zeta^i \otimes v)^{\langle \lambda, \gamma^i\cdot \omega_j\rangle_H}=\pm v^{\langle {\rm Tr}_\Gamma\cdot \lambda, \omega_j\rangle_H }.
\]
This and the compatibility of the Kottwitz invariant with norms, gives part (ii). 

It remains to show part (i). Since $ \ti F\llps u-p\lrps/E_\lambda \llps u-p\lrps$ is an unramified extension of  discretely valued fields, it is enough to calculate the Kottwitz invariant of $\fks_\lambda(\eta)\in \und\CT(E_\lambda\llps u-p\lrps)$ after this extension, i.e. consider $\fks_\lambda(\eta)$ in $\und\CT(\ti F\llps u-p\lrps)$. Now 
\[
  v^e-\varpi^e=(v-\varpi)\cdot\prod_{i=1}^{e-1}(\zeta^i v-\varpi)
\]
\[
\ti F\lps u-p\rps\otimes_{O[u]}O[v]=\prod_{i=1}^e\ti F\lps \zeta^i v-\varpi\rps.
\]
Using this we can see that $\und\CT(\ti F\llps u-p\lrps)=T_H(\ti F\llps v-\varpi\lrps)$. We now have
\[
\omega_j( \fks_\lambda(\eta))=(v-\varpi)^{\langle\lambda, \omega_j\rangle_H}\cdot A(\lambda, j) ,
\]
where
\[
A(\lambda, j)=\prod_{i=1}^{e-1}(\zeta^i v-\varpi)^{\langle\lambda, \gamma^i\omega_j\rangle_H} ,
\]
a unit in $\ti F\lps v-\varpi\rps$. This shows that the Kottwitz invariant of 
\[
\fks_\lambda(\eta)\in \und\CT(\ti F\llps u-p\lrps)=T_H(\ti F\llps v-\varpi\lrps)
\]
 is given by $\lambda$ in $X_*(T)$.\end{proof}

The above now gives a point  
\[
[\fks_\lambda]: \Spec(O_{E_\lambda})\to {\rm Gr}_{\und\CT, O}=\CL\und\CT /\CL^+\und\CT ,
\]
  of the Beilinson-Drinfeld affine Grassmannian for the torus $\und\CT$.

\begin{remark}\label{rem:spoints}
(i) We can also interpret the points $[\fks_\lambda]$
following \cite[Prop. 21.3.1]{SWberkeley}, as follows.
Consider the $\Gal(\bar F/F)$-module $X_*(T)$
which defines an \'etale $\Spec(F)$-scheme $\und X$ and consider the normalization 
$\und X^{\rm int}$ of $\Spec(O)$  in $\und X$. Each $\lambda\in X_*(T)$ defines a point \[
[\lambda]:\Spec(O_{E_\lambda})\to \und X^{\rm int}.
\]
The points $[\fks_\lambda]$ combine to give a morphism of $\Spec(O)$-schemes
\[
\iota: \und X^{\rm int}\to {\rm Gr}_{\und\CT,O}
\]
such that $[\fks_\lambda]=\iota\cdot [\lambda]$, for each $\lambda$.

(ii) If $\lambda$ is defined over $\breve \BQ_p$, the point  $\fks_\lambda\in \CL\und\CT_{O}(O_{E_\lambda})$ is simply given by the image of $t=u-p\in \CL\und\BG_{m, O}(O)=(O\lps u-p\rps(u-p)^{-1})^* $ under $\CL\und\BG_{m, O}\to\CL\und\CT $. 
\end{remark}

We now continue with the proof of Proposition \ref{multiProp}: 
\smallskip

\emph{Step 1. The torus case.} Consider $(\CT,\{\lambda\})$ and a character $\chi: \CT_0\to \BG_m$. Recall that $\CT_0=\und\CT_{u=0}:=\und\CT\otimes_{O[u], u\mapsto 0}O$ and that $\chi$ factors through $\CT^{\red}_0=(\und\CT_{|u=0})^\red$. The construction in \S \ref{ss:BundlesLines}, for $\CG=\CT$, $\chi$, and $R=O_{E_{\lambda}}$, gives a $\BG_m$-torsor over $\Spec(O_{E_\lambda})$ which is
\[
\CL_\chi=\BG_m\cdot \chi((\fks_\lambda^{-1})_{|u=0}).
\]
The calculation in the proof of Proposition \ref{ZhuProp}, see in particular (\ref{initialterm}), gives
 \[
 \chi((\fks_\lambda^{-1})_{|u=0})=(-\varpi)^{\langle\lambda, {\rm Tr}_\Gamma\cdot \chi\rangle_H}.
 \]
The section of $\CL_\chi$ over $E_\lambda$, constructed in \S \ref{ss:BundlesLines}, is given by $\chi((\fks_\lambda^{-1})_{|u=0})\in E^\times_\lambda$. Hence, the corresponding divisor over $\Spec(O_{E_\lambda})$ is
\[
{\rm val}_{E_\lambda}((-\varpi)^{\langle\lambda, {\rm Tr}_\Gamma\cdot \chi\rangle_H})\cdot [s],
\]
with $[s]$ the closed point of $\Spec(O_{E_\lambda})$. Since $\varpi$ is a uniformizer of $\ti F$ and $\ti F/E_\lambda$
is totally ramified, this is equal to 
\[
\frac{1}{[\ti F:E_\lambda]}\cdot \langle\lambda, {\rm Tr}_\Gamma\cdot \chi\rangle_H\cdot [s].
\]
The multiplicity is
\[
\frac{1}{[\ti F:E_\lambda]}\cdot \langle\lambda, {\rm Tr}_\Gamma\cdot \chi\rangle_H=\frac{1}{[\ti F:E_\lambda]}\cdot 
\langle{\rm Tr}_\Gamma\cdot \lambda, \chi\rangle_H.
\]
Since $[\ti F:E_\lambda]=|\Gamma_\lambda|$, we have
\[
\frac{1}{[\ti F:E_\lambda]}\cdot {\rm Tr}_\Gamma\cdot \lambda=\frac{1}{|\Gamma_\lambda|}\cdot \sum_{\gamma\in\Gamma}\gamma\cdot\lambda=\sum_{\bar\gamma\in \Gamma/\Gamma_\lambda}\bar\gamma\cdot \lambda.
\]
It follows that this multiplicity is 
\[
[\Gamma:\Gamma_\lambda]\cdot \langle \lambda^\diamond, \chi\rangle_H=[E_\lambda: F]\cdot \langle \lambda^\diamond, \chi\rangle_H=[E_\lambda: \br\BQ_p]\cdot \langle \bar\lambda, \chi\rangle,
\]
where again $\bar\lambda \in X_*(T)_I$ denotes the image of $\lambda$, as claimed. This shows Proposition \ref{multiProp} in the case of a torus.
 \smallskip

  \emph{Step 2: The general case.} We show the general case by  reducing it to the case of a torus, which was handled above.
We start with $(\CG,\{\mu\})$ and  the local model $\Mloc_{\CG,\mu}$. As usual, it is enough to base change to $O=\br\BZ_p$ and consider the situation over this base.   In particular, $E$ is a finite extension of $\br\BQ_p$.  We choose $S\subset T=Z_G(S)$ in $G_{F}$ as above, such that the parahoric $\CG$ corresponds to a point in the apartment of $S$, as usual.  Let $\lambda=\mu'\in W_0\cdot \mu\subset X_*(T)$; then $\lambda\in \{\mu\}$ and $E_\lambda=E_\mu=E$. By the construction of \cite{PZ}, we have $\und\CT\hookrightarrow \und\CG$, and  
this gives
\[
\CT_0\hookrightarrow \CG_0, \quad
{\rm Gr}_{\und\CT,O}\to {\rm Gr}_{\und\CG, O}.
\]
 By composing with $[\fks_\lambda]$, we obtain a point
 $
 [\fkt_\lambda]: \Spec(O_{E })\to {\rm Gr}_{\und\CG, O}\otimes_{O}O_E
 $
which factors through the local model, 
\[
 [\fkt_\lambda]: \Spec(O_{E })\to \Mloc_{\CG,\mu}\subset {\rm Gr}_{\und\CG, O}\otimes_{O}O_E
\]
because its generic fiber does so. Note that $\Spec(O_{E })$ here is naturally identified with the local model $\Mloc_{\CT, \lambda}$
for the torus. Let now $\chi: \CG_0\to \BG_m$ be a character, inducing the pair $(\CL_\chi, s_\chi)$
over $\Mloc_{\CG,\mu}$ by the construction in  \S \ref{ss:BundlesLines}. By the construction of this pair, we see that the restriction  $[\fkt_{\lambda}]^*(\CL_\chi, s_\chi)$   to $\Mloc_{\CT, \lambda}=\Spec(O_{E})\hookrightarrow {\rm Gr}_{\und\CT,O}$, is isomorphic to the pair obtained, by the same construction applied to the local model pair $(\CT,\{\lambda\})$ and the composition 
$\chi_{|\CT_0}: \CT_0\to \CG_0\xrightarrow{\chi} \BG_m$.

We can now see  that the special fiber $[\fkt_\lambda](s)$
lands in the $\CG$-orbit  $Z_{\bar\lambda}$ of $\Mloc_{\CG,\mu}\otimes_{O_E}k$ given by the coset $W^K\cdot t_{\bar\lambda}\cdot W^K\subset \widetilde W$ of the 
translation element $t_{\bar\lambda}$ in the Iwahori-Weyl group $\widetilde W$ which corresponds to the image $\bar\lambda$ of $\lambda$ in $X_*(T)_I$. This follows from Proposition \ref{ZhuProp} (ii).

Since $\Mloc_{\CG,\mu}$ is smooth over $O_{E}$ along the stratum $Z_{\bar\lambda}$ and $[\fks_{\lambda}]$ is an $O_{E}$-valued point, the multiplicity of the divisor $\div(s_\chi)$ along $Z_{\bar\lambda}$, is determined by the restriction $[\fkt_{\lambda}]^*(\CL_\chi, s_\chi)$ of the pair $(\CL_\chi, s_\chi)$ to $\Spec(O_{E})$; the restricted pair is an invertible sheaf over $\Spec(O_{E})$ with a trivialization over the 
generic point $\Spec(E)$. However, the restriction  $[\fkt_{\lambda}]^*(\CL_\chi, s_\chi)$   to $\Mloc_{\CT, \lambda}=\Spec(O_{E})\hookrightarrow {\rm Gr}_{\und\CT, O}$ is isomorphic to the pair obtained, for the local model pair $(\CT,\{\lambda\})$ and the composition 
$\chi_{|\CT_0}: \CT_0\to \CG_0\xrightarrow{\chi} \BG_m$. Hence, the multiplicity is given by the
formula we showed above in the torus case. This gives the desired result and completes the proof of Proposition \ref{multiProp} and hence also of Theorem \ref{tameconj}.
\end{proof}

\begin{remark}\label{rem:simpler}
 a) The proof of Theorem \ref{tameconj} is significantly simpler if $G$ splits over $\br\BQ_p$.
Indeed, then $T$ also splits over $\br\BQ_p$ and the construction of the points ${\mathfrak s}_\lambda$ in Proposition \ref{ZhuProp} is  straightforward, see Remark \ref{rem:spoints} (ii).

b) Proposition \ref{mainTorsor} but also the proof of Conjecture \ref{divconj} in the tame case (Theorem \ref{tameconj}), should  generalize  to local models for groups which are the Weil restriction of scalars of a tame group by a wildly ramified extension, by using Levin's construction   \cite{Levin} in that case. For the general case, one can hope to use the constructions of \cite{Lourenco}, \cite{FHLR}, 
\cite{AGLR} etc., as informed by Scholze-Weinstein's theory of $v$-sheaf local models. For example, the construction of $[\fks_\lambda]$ is straightforward for $v$-sheaf local models, see \cite[Prop. 21.3.1]{SWberkeley}. 

\end{remark}

\section{Toric schemes }

\subsection{Toric embeddings of $T_\CG$}\label{ss:toric31}

\subsubsection{Semigroups}\label{ss:semigroups} Recall $\Lambda_{\{\mu\}}\subset X_*(T)_I$. This generates
  the (rational polyhedral) convex cone
\begin{equation}
\sigma_{\{\mu\}}:=\{\sum\nolimits_{\lambda \in \Lambda_{\{\mu\}}}r_\lambda\cdot \lambda\ |\ r_\lambda\geq 0\}\subset (X_*(T)_I)\otimes_{\BZ}\BR .
\end{equation}
(We will sometimes say that $\sigma_{\{\mu\}}$ is the \emph{positive hull} of $\Lambda_{\{\mu\}}$.)
Here we recall from \S \ref{ss:pairing} that the ambient vector space can be identified with
$$
X_*(T)_I\otimes_{\BZ}\BR=X_*(T)^I\otimes_{\BZ}\BR=X_*(\ov \CT_\red)\otimes_{\BZ}\BR.
$$
We also write $\sigma_\mu$ for $\sigma_{\{\mu\}}$.  On the character group side, this defines a saturated finitely generated semigroup $S_\mu\subset X^*(\ov \CT_\red)$,  
\begin{equation}
S_{\mu}=\{\chi\in X^*(\ov \CT_\red)\mid \langle \mu', \chi\rangle\geq 0,\forall\, \mu'\in \Lambda_{\{\mu\}}\}.
\end{equation}
 In other words,
 \[
 S_\mu=\sigma_\mu^\vee\cap X^*(\ov \CT_\red),
 \]
where $\sigma_\mu^\vee$ is the dual cone of $\sigma_\mu$.

More generally, let $\CG$ be a parahoric group scheme with corresponding torus $T_\CG$ over $\BZ_p$, as in \S \ref{ss:211}. Then we define
\begin{equation}
\begin{aligned}
\sigma_{\CG, \mu}&= \varphi_\CG(\sigma_{\mu})\subset X_*(T_\CG)\otimes_{\BZ}\BR,\\
S_{\CG,\mu}&=\{\chi\in X^*(T_\CG)\mid \langle \mu', \chi\rangle\geq 0,\forall\, \mu'\in \Lambda_{\{\mu\}}\}.
\end{aligned}
\end{equation}
Recall here the map $\varphi_\CG:X_*(T)^I\to X_*(T_\CG)$ from Remark \ref{remark:psi}.
Then $S_{\CG, \mu}$ is a  finitely generated, saturated sub semigroup of $X^*(T_\CG)$. 

\subsubsection{The toric scheme}\label{ss:toricscheme} 
Continuing with the set-up above we set
\[
\br Y_{\CG,\mu}:=\br Y_{S_{\CG, \mu}}=\Spec(\br\BZ_p[S_{\CG, \mu}])
\]
which comes with an action of $T_\CG$ and an equivariant morphism
\begin{equation}\label{tomaY}
T_{\CG}\otimes_{\BZ_p}\br\BZ_p\to \br Y_{ \CG,\mu}
\end{equation}
over $\Spec(\br\BZ_p)$. Here the affine ring of $\br Y_{\CG,\mu}$ is given by the  semigroup ring of $S_{\CG,\mu}$, and the morphism \eqref{tomaY} by the inclusion
\[
\Gamma(\br Y_{\CG,\mu}, \CO)=\br\BZ_p[S_{\CG,\mu}]\subset \Gamma(T_{\CG}\otimes_{\BZ_p}\br\BZ_p, \CO)=\br\BZ_p[X^*(T_{\CG})].
\]
By the stability of $\Lambda_{\{\mu\}}$ under $\Gal(\br\BQ_p/E_0)$, the semigroup $S_{\CG,\mu}$
is also stable and  so the affine scheme $\br Y_{ \CG,\mu}$  and the morphism \eqref{tomaY} descend to define an affine scheme $Y_{ \CG,\mu}$ with $T_\CG$-action and equivariant $T_{\CG}\otimes_{\BZ_p}{O_{E_0}}\to  Y_{ \CG, \mu}$ over $O_{E_0}$.  
(Recall from \S \ref{ss:LM} that  $E_0$ is the maximal unramified extension of $\BQ_p$ contained  in $E$.) We have 
\[
Y_{\CG, \mu}=({\rm Res}_{\BZ_{p^m}/O_{E_0}}\br Y_{S_{\CG, \mu}})^{\Gal(\BQ_{p^m}/E_0)} ,
\]
with $m$ large enough so that the action of $\Gal(\br\BQ_{p}/E_0)$ on $S_{\CG, \mu}$ factors through $\Gal(\BQ_{p^m}/E_0)$.

\begin{remark}
Note that $S_{\CG, \mu}$  contains the subgroup  of $X^*(T_\CG)$,
\begin{equation}
X^*(T_\CG)^{\mu, 0}=\sigma_{\CG, \mu}^\perp\cap X^*(T_\CG)=\{\chi\in X^*(T_\CG)\mid \langle \mu', \chi\rangle=0,\forall\, \mu'\in \Lambda_{\{\mu\}}\}.
\end{equation}
Here $X^*(T_\CG)^{\mu, 0}\subset X^*(T_\CG)$ is co-torsion free and stable under $\Gal(\breve\BQ_p/E_0)$. Hence  the quotient is the character group of a subtorus $T_{\CG, \mu}\subset T_\CG\otimes_{\BZ_p} O_{E_0}$ defined over $O_{E_0}$,  i.e.,
\begin{equation}
X^*(T_{\CG, \mu})=X^*(T_\CG)/X^*(T_\CG)^{\mu, 0}.
\end{equation} 
We denote by $\ov S_{\CG, \mu} $ the image of $S_{\CG,\mu}$ in $X^*(T_{\CG, \mu})$. We obtain an affine variety $\br Z_{S_{\CG,\mu}}=\br Z_{\ov S_{\CG,\mu}}$ over $\br\BZ_p$ with an action of $T_{\CG, \mu}\otimes_{O_{E_0}}\br\BZ_p$ and an equivariant map 
\begin{equation}\label{torma}
T_{\CG, \mu}\otimes_{O_{E_0}}\br\BZ_p\to \br Z_{S_{\CG,\mu}} .
\end{equation}
The affine ring of $\br Z_{S_{\CG,\mu}}$ is given by the  semigroup ring of $\ov S_{\CG,\mu}$, and the morphism \eqref{torma} by the inclusion
\[\Gamma(\br Z_{S_{\CG,\mu}}, \CO)=\br\BZ_p[\ov S_{\CG, \mu}]\subset \Gamma(T_{\CG, \mu}\otimes_{O_{E_0}}\br\BZ_p, \CO)=\br\BZ_p[X^*(T_{\CG, \mu})].\]
By the stability of $\Lambda_{\{\mu\}}$ under $\Gal(\bar\BQ_p/E_0)$, the semigroup $\ov S_{\CG,\mu}$ is also  invariant and  so the affine scheme $\br Z_{S_{\CG,\mu}}$  and the morphism \eqref{torma} descend to $T_{\CG, \mu} \to  Z_{S_{\CG,\mu}}$ over $O_{E_0}$.

We can also give the affine $O_{E_0}$-scheme  $Y_{\CG,\mu}=Y_{S_{\CG, \mu}}$ and the morphism $T_{\CG,O_{E_0}}\to Y_{S_{\CG, \mu}}$ by pushout 
of $Z_{S_{\CG,\mu}}$  and   $T_{\CG, \mu} \to  Z_{S_{\CG,\mu}}$ along $T_{\CG,\mu}\hookrightarrow 
T_{\CG,O_{E_0}}$, in particular 
\begin{equation}\label{torma2}
T_{\CG,O_{E_0}}\to Y_{S_{\CG, \mu}}=T_{\CG, O_{E_0}}\times_{T_{\CG, \mu}}Z_{S_{\CG, \mu}}.
\end{equation}
  When the inclusion $T_{\CG, \mu}\subset T_{\CG, O_{E_0}}$ is proper, the toric variety $Y_{S_{\CG, \mu}}$  has a \emph{toric factor} in the sense of \cite[\S 3.3]{CLS}. Note that there is a natural identification of stacks 
  $$
  [T_{\CG,\mu} \bs Z_{S_{\CG, \mu}}]=[T_\CG \bs  Y_{S_{\CG, \mu}}].
  $$
 
In the next subsection, we will introduce ``non-degeneracy" conditions under which $\ov S_\mu$ generates the group $X^*(T_{\CG, \mu})$ and hence the morphisms \eqref{torma} and \eqref{torma2}  are open  embeddings and define   toric schemes $T_{\CG, \mu}\hookrightarrow  Z_{S_{\CG, \mu}}$, resp. $T_{\CG,O_{E_0}}\hookrightarrow  Y_{S_{\CG, \mu}}$.
\end{remark}
 
 \begin{remark}\label{rem:toruscase}
Let us consider the extreme case when $G=T$ is a torus. In this case, there is only one parahoric group scheme, which is an Iwahori group scheme $\CI$. Then $\{\mu\}$ consists of a single element $\mu$. Let $\bar\mu$ be the image of $\mu$ in $ X_*(T)_I\otimes_{\BZ}\BR$. If $\bar\mu=0$, then $T_{\CI, \mu}$ is trivial and there are no nontrivial torus embeddings. If $\bar\mu$ is non-trivial (this case falls under the name of \emph{ab-nondegenerate}, cf. Definition \ref{defabn} below), then $T_{\CI, \mu}$ is a one-dimensional torus over $O_{E_0}$. Since  the Frobenius element of $\Gal(\br\BQ_p/E_0)$ fixes $\bar\mu$, we see that then $T_{\CI, \mu}$ is isomorphic to $\BG_m$ over $O_{E_0}$. In this case, the morphism  $T_{\CI, \mu}\otimes O_{E_0}\to Z_{S_\mu}$ is the embedding $\BG_{m, O_{E_0}}\hookrightarrow \BA^1_{O_{E_0}}$. 
\end{remark}

 \subsubsection{More general semigroups}\label{ss:generalsemigroups}\label{moregen}
More generally, let $S$ be a semigroup of $X^*(T_\CG)$ satisfying the following conditions: 
\begin{enumerate}
\item $S$  is saturated. 
\item $S$ is contained in $S_{\CG,\mu}$ and generates the same group.
\item $S$ contains $X^*(T_\CG)^{\mu, 0}$.
\item $S$ is stable under the action of $\Gal(\bar\BQ_p/E_0)$. 
\end{enumerate}
On the dual side, we can also prescribe $S$ as $S=\tau^\vee\cap X^*(T_\CG)$, where  $\tau\subset X_*(T_\CG)_\BR$
is a choice of a rational polyhedral cone  such that $\tau\supset\sigma_{\CG, \mu}$, and satisfying the following conditions, in which we set $\sigma=\sigma_{\CG, \mu}$: 
\begin{enumerate}
\item Let $W_\tau$, resp. $W_\sigma$ the maximal sub vector space contained in $\tau$, resp. $\sigma$. Then the inclusion $W_\sigma\subset W_\tau$ is an equality. 
\item The inclusion $\tau^\perp\subset\sigma^\perp$ of subspaces of $(X^*(T_\CG))\otimes_{\BZ}\BR$ is an equality.
\item $\tau$ is stable under the action of $\Gal(\br\BQ_p/E_0)$. 
\end{enumerate}
Note that the cone $\tau$ is uniquely determined by $S$ and vice versa, via $\tau^\vee=\BR_+S$.

We let $\ov S$ be the image of $S$ in $X^*(T_{\CG, \mu})$, and define as before the affine variety $Z_S$
  with an action of $T_{\CG, \mu}$. Under these conditions we obtain a $T_{\CG, \mu}$-equivariant map over $O_{E_0}$ 
\begin{equation}\label{relt}
Z_{S_{\CG,\mu}}\to Z_{S} .
\end{equation}
Via push-out, we obtain as before a $T_{\CG}$-equivariant map
\begin{equation}\label{relt2}
Y_{S_{\CG, \mu}}\to Y_{S} .
\end{equation}
 When $S=\tau^\vee\cap X^*(T_\CG)$, we will sometimes write $Y_\tau$ instead of $Y_S$.

\subsubsection{Nondegeneracy}\label{ss:nondegeneracy}

In this subsection, we consider a pair $(G, \{\mu\})$, where $\mu$ is not necessarily minuscule. The pair $(G, \{\mu\})$ is called \emph{strictly convex} if  the convex cone $\sigma_{\{\mu\}}$ of $ X_*(T)_I \otimes_{\BZ}\BR$ is strictly convex, i.e. it does not contain an $\BR$-line through the origin. Equivalently, the dual cone of $\sigma_{\mu}$ in $ X^*(T)^I \otimes_{\BZ}\BR$ contains a non-empty open subset.  

We also introduce the notion of a pair $(\CG, \{\mu\})$ to be strictly convex: This means that 
\[
\sigma^\vee_{\CG, \mu}:=\{ \chi\in X^*(T_\CG)_\BR\ |\ \langle\lambda ,\chi \rangle\geq 0, \ \forall\, \lambda\in \Lambda_{\{\mu\}}\},
\]
 contains a non-empty open subset of $X^*(T_\CG)_\BR$. Equivalently, $S_{\CG, \mu}$ generates $X^*(T_\CG)$ as a group, or $\ov S_{\CG, \mu}$ generates $X^*(T_{\CG, \mu})$ as a group. Note that $(\CI, \{\mu\})$ is strictly convex iff $(G, \{\mu\})$ is strictly convex. 

Recall the homomorphism
\begin{equation}
\phi: X_*(T)_I\to \Omega_G=\pi_1(G)_I\to \pi_1(G_{\rm ab})_I =X_*(G_{\rm ab})_I.
\end{equation}
Under the composed map, all $\lambda\in \Lambda_{\{\mu\}}\subset X_*(T)_I$ have the same image $\bar\mu_{\rm ab}$ in $X_*(G_{\rm ab})_I$, since they all have the same image in $\Omega_G$. 
\begin{definition}\label{defabn}
The pair $(G, \{\mu\})$ is called \emph{ab-nondegenerate} if $\bar\mu_{\rm ab}$ is not a torsion element, i.e., if the image of $\bar\mu_{\rm ab}$ in $ X_*(G_{\rm ab})_I\otimes\BR$ is non-zero.
\end{definition}
Since the second map above is an isogeny, ab-nondegeneracy is equivalent to the condition that the image of $\Lambda_{\{\mu\}}$ in $\pi_1(G)_I$ (which consists of a single element)  is not a torsion element. 
\begin{proposition}\label{propAo}
 If  $(G, \{\mu\})$ is ab-nondegenerate, then $(G, \{\mu\})$ is  strictly convex. Similarly, then $(\CG, \{\mu\})$ is  strictly convex, for any parahoric $\CG$ of $G$.
\end{proposition}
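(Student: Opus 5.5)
To prove strict convexity of $\sigma_{\{\mu\}}$, I will exhibit a linear form on $X_*(T)_I\otimes\BR$ that is strictly positive on every element of $\Lambda_{\{\mu\}}$. Such a form is constant on $\Lambda_{\{\mu\}}$ and comes from the cocenter. Then every nonzero element of $\sigma_{\{\mu\}}$ has strictly positive value, so $\sigma_{\{\mu\}}$ contains no line. Equivalently, $\sigma_{\{\mu\}}^\vee$ contains an interior point, since a linear form strictly positive on the finitely many generators stays positive under small perturbation.

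The key map is
\[
X_*(T)_I\otimes\BR\to X_*(G_\ab)_I\otimes\BR=X_*(G_\ab)^I\otimes\BR,
\]
induced by $T\to G_\ab$. Compatibility with averaging (the diagram \eqref{commTG}) shows this map sends every $\lambda\in\Lambda_{\{\mu\}}$ to the same vector $v=\bar\mu_{\ab}\otimes 1$. By ab-nondegeneracy, $v\neq 0$. Choose $\psi\in X^*(G_\ab)^I\otimes\BR$ with $\langle v,\psi\rangle>0$; this is possible since the pairing between $I$-invariants of the two lattices is nondegenerate over $\BR$. Pull $\psi$ back to a form $\chi_0$ on $X_*(T)_I\otimes\BR$. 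Concretely, use the inclusion $X^*(G_\ab)\hookrightarrow X^*(T)$ and take $I$-invariants; an $I$-invariant character pairs with $X_*(T)_I$ compatibly with $\diam$.

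We then get $\langle\lambda,\chi_0\rangle=\langle v,\psi\rangle>0$ for all $\lambda\in\Lambda_{\{\mu\}}$. Hence for $x=\sum r_\lambda\lambda\in\sigma_{\{\mu\}}$ with all $r_\lambda\ge0$, $\langle x,\chi_0\rangle=0$ forces every $r_\lambda=0$, so $x=0$. This rules out a line: if both $x$ and $-x$ lay in $\sigma_{\{\mu\}}$, their values under $\chi_0$ would have opposite signs, forcing both to vanish and hence $x=0$.

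For a parahoric $\CG$, I will show $\chi_0$ comes from $X^*(T_\CG)_\BR$. By \eqref{map2c}, $X^*(\ov{\CN(G_\ab)})\hookrightarrow X^*(T_\CG)$, and $X^*(\ov{\CN(G_\ab)})_\BR$ is dual to $X_*(G_\ab)^I_\BR$ by \eqref{map3}. So $\psi$ defines an element $\chi\in X^*(T_\CG)_\BR$. The commutative diagram \eqref{commTG} together with Remark \ref{remark:psi} gives $\langle\lambda,\chi\rangle=\langle v,\psi\rangle>0$ for all $\lambda\in\Lambda_{\{\mu\}}$. Perturbing $\chi$ keeps it in $\sigma^\vee_{\CG,\mu}$, so $\sigma^\vee_{\CG,\mu}$ has nonempty interior in $X^*(T_\CG)_\BR$.

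The main care is in the bookkeeping step: checking that the averaging map $\diam$ is compatible with $T\to G_\ab$. This holds because $\diam$ is defined functorially via the inertia action, and an $I$-invariant character pairs with $\lambda$ and with $\lambda^\diam$ identically. No further obstacle is expected.
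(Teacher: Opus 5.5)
Your argument is correct and is essentially the paper's: both push $\Lambda_{\{\mu\}}$ to the cocenter $X_*(G_\ab)_I\otimes\BR$, where it collapses to the single nonzero vector $\bar\mu_\ab$. This collapse comes from $W$-invariance of $X_*(T)\to X_*(G_\ab)$ rather than from \eqref{commTG}, which is only needed for the parahoric case; the paper then concludes by comparing $\phi(x)$ with $\phi(-x)$, while you compose with a functional $\psi$ positive on $\bar\mu_\ab$. Your variant has the small advantage of exhibiting an interior point of $\sigma^\vee_{\CG,\mu}$ directly, which is exactly the definition of strict convexity for $(\CG,\{\mu\})$, so no appeal to cone duality is needed.
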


\begin{proof} Assume that $\bar\mu_{\rm ab}\in X_*(G_{\rm ab})_I$ is not torsion, so $\bar\mu_{\rm ab}$ is not zero in the vector space $ X_*(G_{\rm ab})_I\otimes_{\BZ}\BR$. Suppose by contradiction that there is $x\in \sigma_{\{\mu\}}$ such that $-x\in \sigma_{\{\mu\}}$ also. 
We can write $x=\sum_{\lambda\in \Lambda_\mu}r_\lambda\cdot \lambda$, and also
$-x=\sum_{\lambda\in \Lambda_\mu}r'_\lambda\cdot \lambda$, with  $r_\lambda\geq 0$ and $r'_\lambda\geq 0$. Apply the homomorphism 
\[
\phi: X_*(T)_I\to  X_*(G_{\rm ab})_I
\]
as above.  We obtain 
\[
\phi(-x)=(\sum_{\lambda\in \Lambda_\mu}r'_\lambda)\cdot \bar\mu_{\rm ab}=-\phi(x)=-(\sum_{\lambda\in \Lambda_\mu}r_\lambda)\cdot \bar\mu_{\rm ab}.
\]
Hence, $\sum_{\lambda\in \Lambda_\mu}(r'_\lambda+r_\lambda)=0$, which gives a contradiction unless $x=0$, hence $\sigma_{\{\mu\}}$ is strictly convex. 

Now let us consider $\CG$ and $\sigma_{\CG, \mu}=\varphi_\CG(\sigma_\mu)\subset X_*(T_\CG)\otimes_{\BZ}\BR$. Using \eqref{commTG}, the same argument proves that $\sigma_{\CG, \mu}$ is strictly convex. But this is equivalent to the fact that $\ov S_{\CG, \mu}$ generates $X^*(T_{\CG, \mu})$ as a group. 
\end{proof}
 
\begin{corollary}\label{cor:Hodgetype} Under any one of the following hypotheses $(G, \{\mu\})$ is ab-nondegenerate:
\begin{enumerate}
\item if there exists a character $\chi: G\to \BG_m$ such that $\chi\circ \mu$ is not trivial.

\item If $(G, \{\mu\})$ is of local Hodge type, i.e. $\mu$ is minuscule and   there exists an embedding
\[
  (G, \{\mu\})\hookrightarrow (\GL_n,\{\mu_d\})
\]
with $\mu_d(a)=\diag(a^{(d)}, 1^{(n-d)})$,  $1\leq d<n$. 

\item If $\mu$ is non-trivial minuscule, $G_{\rm ab}$ splits over an unramified extension,
and $G_\der$ is simply-connected.
\end{enumerate}
\end{corollary}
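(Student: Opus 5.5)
In each of the three cases I will show that the common image $\bar\mu_{\rm ab}\in X_*(G_{\rm ab})_I$ of $\Lambda_{\{\mu\}}$ is non-torsion. The tool is a character pairing: the map $X_*(G_{\rm ab})\to X_*(G_{\rm ab})_I$ followed by pairing with an $I$-invariant character is well defined. So it suffices to find $\chi\in X^*(G_{\rm ab})^I\otimes\BQ$ with $\langle \mu_{\rm ab},\chi\rangle\neq 0$, where $\mu_{\rm ab}$ is the image of $\mu$ in $X_*(G_{\rm ab})$.

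\emph{Case (1).} Here $\chi$ is defined over $\BQ_p$, so it factors through $G_{\rm ab}$ and is Galois (hence $I$-) invariant. The pairing $\langle\mu_{\rm ab},\chi\rangle$ is the integer $n$ with $\chi\circ\mu(a)=a^n$, and this is nonzero by hypothesis. Therefore $\bar\mu_{\rm ab}$ has nonzero image in $X_*(G_{\rm ab})_I\otimes\BR$.

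\emph{Case (2).} I reduce to (1) using the determinant. Compose the embedding with $\det\colon \GL_n\to\BG_m$ to get a character $\chi=\det|_G$ defined over $\BQ_p$. Then $\chi\circ\mu$ is conjugate to $\det\circ\mu_d$, which is $a\mapsto a^d$ with $d\ge1$, so (1) applies. The only check is that the embedding carries $\{\mu\}$ into $\{\mu_d\}$. This is part of the definition, and $\det\circ\mu$ depends only on the conjugacy class.

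\emph{Case (3).} This is the main step. Since $G_{\rm ab}$ splits over an unramified extension, $I$ acts trivially on $X_*(G_{\rm ab})$. Hence $X_*(G_{\rm ab})_I=X_*(G_{\rm ab})$, and it suffices to show $\mu_{\rm ab}\neq0$ rationally.

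Suppose instead that $\mu_{\rm ab}$ is torsion. Then it is zero, because $X_*(G_{\rm ab})$ is torsion free. Consequently $\mu$ factors through $G_\der$ over $\bar\BQ_p$: the image of $\mu$ lies in $\ker(G\to G_{\rm ab})=G_\der$, and the image of a connected group is connected. Choose a maximal torus $T_\der=T\cap G_\der$. Because $G_\der$ is simply connected, $X_*(T_\der)$ is the coroot lattice. Since $X_*(T_\der)$ is saturated in $X_*(T)$, $\mu$ is an element of the coroot lattice.

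A nonzero minuscule coweight, however, is never in the coroot lattice. Reduce to an almost simple factor on which $\mu$ is nontrivial. There a nonzero minuscule coweight is a fundamental coweight $\varpi_i^\vee$ for a minuscule node, and it represents a nontrivial element of the weight-lattice-mod-coroot-lattice quotient, i.e. of $\pi_1(G_{\rm ad})$. This is the standard fact that minuscule coweights give distinct nontrivial representatives of $P^\vee/Q^\vee$.

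This contradiction shows $\mu_{\rm ab}\neq0$. By the pairing reduction above, $\bar\mu_{\rm ab}$ is not torsion, so $(G,\{\mu\})$ is ab-nondegenerate in this case too.
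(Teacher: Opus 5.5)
Your proof is correct and follows the paper's argument: in (1) you use the map $X_*(G_{\rm ab})_I\to X_*(\BG_m)=\BZ$ induced by the rational character $\chi$, in (2) you reduce to (1) via $\det$, and in (3) you use that $X_*(G_{\rm ab})_I=X_*(G_{\rm ab})$ is torsion-free, so $\mu_{\rm ab}=0$ would force $\mu$ to factor through $G_\der=G_{\rm sc}$. For that last step you also spell out the justification the paper leaves implicit: a nonzero minuscule coweight has nontrivial class in $P^\vee/Q^\vee$, so it cannot lie in the coroot lattice $X_*(T_\der)$.
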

\begin{proof}
(1)  Indeed,  then $\bar\mu_{\rm ab}\in X_*(G_{\rm ab})_I$ maps to a non-zero integer under $X_*(G_{\rm ab})_I\to X_*(\BG_m)_I=X_*(\BG_m)=\BZ$, hence it is not torsion.

(2) This follows from (1) because the composition with the determinant gives   a character $\chi$ as in (1).

(3) The hypotheses imply that 
$X_*(G_{\rm ab})=X_*(G_{\rm ab})_I$ is torsion-free. It is enough to check that $ \mu_{\rm ab}\in X_*(G_{\rm ab})$ is not trivial. But if it were, 
$\mu$ would factor through $G_\der=G_{\rm sc}$ which would contradict the assumption that $\mu$ is minuscule. 
\end{proof}
On the other hand, if $G$ is adjoint, then $(G, \{\mu\})$ is not ab-nondegenerate. The following proposition shows that Proposition  \ref{propAo}  is sharp under mild hypotheses.

 \begin{proposition}\label{ab-stric} Consider $(G, \{\mu\})$ such that  $\mu$ is non-trivial and such that the torus $G_{\rm ab}$ splits over an unramified extension. If  the  cone $\sigma_{\{\mu\}}$ of $ X_*(T)_I\otimes_{\BZ}\BR$ is  strictly convex, then  $(G, \{\mu\})$ is ab-nondegenerate. 
 \end{proposition}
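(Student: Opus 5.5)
We argue by contraposition: assume $(G,\{\mu\})$ is not ab-nondegenerate, i.e. the common image $\bar\mu_{\rm ab}$ of $\Lambda_{\{\mu\}}$ in $X_*(G_{\rm ab})_I\otimes\BR$ is zero. We show that $\sigma_{\{\mu\}}$ then contains a line, so it is not strictly convex. The idea is that $\sigma_{\{\mu\}}$ is the positive hull of a relative Weyl orbit. Once the central direction is killed, this orbit should lie in the image of $X_*(T_1)$, with $T_1=T\cap G_\der$, and its convex hull should contain the origin in its relative interior (Lemma \ref{lim}, the ``amusing fact'' from the introduction).

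\emph{Step 1: splitting off the central part.} Since $G_{\rm ab}$ splits over an unramified extension, $I$ acts trivially on $X_*(G_{\rm ab})$, so $X_*(G_{\rm ab})_I=X_*(G_{\rm ab})$. Rationally,
\[
X_*(T)^I_\BR=X_*(T_1)^I_\BR\oplus X_*(Z^0)^I_\BR ,
\]
where $Z^0$ is the connected center; this decomposition is $W_0$-stable, and $W_0$ acts trivially on the second summand. The projection to the second summand is identified with $X_*(G_{\rm ab})_\BR$ via the isogeny $Z^0\to G_{\rm ab}$. Using the averaging map $\diam$, every $\lambda^\diam$ with $\lambda\in\Lambda_{\{\mu\}}$ has the same central component, namely the image of $\bar\mu_{\rm ab}$, which is $0$ by assumption. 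Hence $\Lambda_{\{\mu\}}\subset X_*(T_1)^I_\BR$, which is the space spanned by the coroots of the relative (échelonnage) root system of $G$ over $\br\BQ_p$.

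\emph{Step 2: the averaged orbit is nonzero.} $\mu$ is nontrivial and its central part vanishes rationally, because the unramifiedness gives $X_*(G_{\rm ab})=X_*(G_{\rm ab})_I$. So $\mu$ has a nonzero projection to $X_*(T_1)_\BR$. We need $\mu^\diam\neq 0$. Choose $\mu$ dominant for a $\br\BQ_p$-rational Borel. Then averaging over $I$ preserves dominance, since $I$ permutes the simple roots of a rational Borel. A nonzero dominant element of the derived part has positive pairing with some $I$-invariant sum of fundamental weights, so its average is nonzero dominant. This is the step I expect to need the most care, because of the twisted Galois action on the absolute root datum.

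\emph{Step 3: a line in the cone.} $\Lambda_{\{\mu\}}$ is the $W_0$-orbit of $\mu^\diam\neq 0$ in the span of the relative coroots. Decompose the relative root system into irreducible factors $(V_i,R_i)$. For each $i$ with nonzero projection of $\mu^\diam$ to $V_i$, the convex hull of the $W_i$-orbit of that projection contains $0$ in its interior, by Lemma \ref{lim}. Summing over the orbit gives $0=\sum_{w\in W_0}w\mu^\diam$ with all coefficients positive. Thus $\mu^\diam$ and $-\mu^\diam=\sum_{w\neq 1}w\mu^\diam$ both lie in $\sigma_{\{\mu\}}$, which is a nonzero line. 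This contradicts strict convexity, and the claim follows.
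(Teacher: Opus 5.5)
Your proof is correct and takes essentially the paper's route: use the unramifiedness of $G_{\rm ab}$ to place $\Lambda_{\{\mu\}}$ in the derived/adjoint part, observe that it is a $W_0$-orbit of a nonzero vector in the span of the relative coroots, and conclude by Weyl-group symmetry (the paper applies Lemma \ref{lim} after passing to the adjoint group). Your Step 2 usefully proves the nonvanishing of $\mu^\diam$, which the paper takes for granted. In Step 3, the identity $\sum_{w\in W_0}w\mu^\diam=0$ follows directly from $W_0$ having no nonzero fixed vectors in the coroot span, so Lemma \ref{lim} is not actually needed there.
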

 \begin{proof}
   Indeed, assume that $\bar\mu_{\rm ab}$ is torsion. After extension of scalars to $\br F$, we may assume that $G$ is quasi-split and that a positive multiple of $\mu$ factors through $G_{\rm der}$. Passing to the adjoint group, we may assume that $G$ is simple and $\mu$ is non-trivial. Denote by $S$ a maximal split torus and by $T$ the centralizer of $S$, a maximal torus of $G$. The set $\Lambda_{\{\mu\}}$ consists of non-zero elements, all conjugate under the relative Weyl group $W_0$. The assertion follows from the next lemma applied to the reduced root system associated to the system of relative roots  in $X_*(S)\otimes\BR=X_*(T)_I\otimes\BR$.  
\end{proof}
\begin{lemma}\label{lim}
Let $(V, R)$ be an irreducible root system with Weyl group $W$, and let $\mu\in V^*$ be non-zero. Then the convex hull of the finite set $\{\mu'=w\mu\mid w\in W\}$ contains an open neighborhood of the origin. 
\end{lemma}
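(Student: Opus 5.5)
The plan is to prove that the convex hull $C$ of the orbit $W\mu$ is a $W$-stable convex set containing $0$ in its interior. I argue via the barycenter and a separation argument.

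First, $0\in C$. The barycenter $b=\frac{1}{|W|}\sum_{w\in W}w\mu$ lies in $C$ and is $W$-invariant. Since $(V,R)$ is irreducible and $W$ acts irreducibly and nontrivially on $V^*$ (assuming $R\neq\emptyset$), there are no nonzero $W$-fixed vectors. Indeed, a fixed vector is orthogonal to all roots, hence zero because the roots span. So $b=0$.

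Next I show that $0$ is an interior point, by contradiction. Suppose $0$ lies on the boundary of $C$ (or $C$ has empty interior). Then by the supporting hyperplane theorem there is a nonzero $v\in V$ with $\langle w\mu, v\rangle\ge 0$ for all $w\in W$. Summing over $W$ gives $\sum_w\langle w\mu,v\rangle=|W|\langle b,v\rangle=0$. Since each term is nonnegative, each term vanishes: $\langle w\mu,v\rangle=0$ for all $w$, i.e. $\langle \mu, w^{-1}v\rangle=0$ for all $w$. Thus $\mu$ annihilates the span $U$ of the orbit $Wv$.

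This span $U$ is a nonzero $W$-stable subspace of $V$. By irreducibility of the $W$-representation on $V$ (standard for irreducible root systems), $U=V$. Hence $\mu=0$, a contradiction.

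So $0$ is not on the boundary of $C$ and lies in $C$, hence it is interior, and $C$ contains an open neighborhood of the origin. The case of empty interior is covered by the same argument, since then $C$ lies in a hyperplane through $0$ and that hyperplane provides $v$.

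The only substantive input is the irreducibility of the reflection representation of $W$ for an irreducible root system. This is standard: a $W$-stable subspace $U$ either contains a root $\alpha$ or is orthogonal to it, since $s_\alpha U=U$ forces $U\subset\alpha^\perp$ or $\alpha\in U$. Connectivity of the Dynkin diagram then forces $U\in\{0,V\}$. I expect no real obstacle; the main care point is ensuring the roots span $V$ (essential root system), which I take as part of the convention.
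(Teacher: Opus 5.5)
Your proof is correct, but it takes a different route from the paper's.

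The paper argues with Weyl chambers. For each $\mu'=w\mu$ it chooses the closed chamber $\bar C$ containing $\mu'$. It then uses $\bar C\setminus\{0\}\subset\operatorname{int}\bar C^\vee$, citing Bourbaki and Lim; this is where irreducibility enters, since a nonzero dominant element has strictly positive coefficients on all simple coroots. From this, $0$ lies in the interior of the translated cone $\mu'-\bar C^\vee$. Finally it invokes the Atiyah--Bott description of $\mathrm{conv}(W\mu)$ as the intersection of these finitely many translated cones, so $0$ is interior to the intersection.

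You use only two facts:
\begin{enumerate}
\item The orbit barycenter is $W$-fixed, hence $0$.
\item A supporting hyperplane at $0$ would give $v\neq 0$ with nonnegative terms $\langle w\mu,v\rangle$ summing to zero. This forces $\mu$ to vanish on the nonzero $W$-stable span of $Wv$, contradicting irreducibility of the reflection representation.
\end{enumerate}

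Your argument is more elementary and self-contained: it needs no chamber combinatorics and no external lemma identifying the convex hull with an intersection of cones. It also proves more. For any finite group acting irreducibly and nontrivially on a real vector space, the convex hull of any nonzero orbit contains $0$ in its interior.

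The paper's argument instead yields an explicit cone-by-cone description of the polytope $\mathrm{conv}(W\mu)$ attached to the Weyl chambers. That fits the later use of this polytope and its faces (the polytope face map), though the lemma itself does not need it.
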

\begin{proof} Let $\mu'=w\mu$ and choose the positive closed Weyl chamber $\ov C$ such that $\mu'\in \ov C$. Consider the translate by $\mu'$ of the negative of the obtuse Weyl chamber, 
\[
\ov C^\vee=\{ x\in V^*\otimes \BR\mid x=\sum x_i\alpha_i^\vee, \ x_i\geq 0, \ \forall\, i\},
\]
where $\alpha_i^\vee$ ranges over the simple coroots.  We claim that this translate contains the origin. Indeed, this is equivalent to the existence of $x_i\geq 0$ such that $\mu'=\sum x_i\alpha_i^\vee$. This follows from the inclusion $\ov C\subset\ov C^\vee$, cf. \cite[ch. VI, \S 1.6, Prop. 18]{Bou}.  In fact, $0$ lies in the interior of this translate because $\ov C\setminus \{0\}$ is contained in the interior of $\ov C^\vee$, cf. \cite[Lem. 2.18]{Lim}. The assertion follows because the convex hull in question is the intersection of the above translates for varying $\mu'$ in the $W$-orbit, cf. \cite[Lem. 12.14]{AB}. 
\end{proof}

\begin{remark}
Let $(G, \{\mu\})$ be ab-nondegenerate.  Then  the map \eqref{torma2} is an open embedding and  $Y_{\CG, \mu}$ is a toric embedding of $T_\CG\otimes_{\BZ_p}O_{E_0}$. Furthermore, for $S\subset S_{\CG,\mu}$ as in \S \ref{ss:generalsemigroups}, $Y_S$ is also a toric embedding of $T_\CG\otimes_{\BZ_p}O_{E_0}$ and the map \eqref{relt2} induces the identity on $T_\CG\otimes O_{E_0}$.
\end{remark}

\begin{proposition}\label{dimT1}
Assume that $(G, \{\mu\})$ is ab-nondegenerate, and let $\CG$ be a parahoric group scheme. Then
\[
\dim T_{\CG, \mu}\leq 1+\sum_i {\rm rank}_{\breve \BQ_p}(G_\ad\otimes\breve\BQ_p)_{ i}, 
\]
where the sum is over the simple factors  of $G_\ad\otimes\breve\BQ_p$ with non-trivial component $\mu_{\ad, i}$ of $\mu_\ad$. Here ${\rm rank}_{\breve \BQ_p}$ means the split rank over $\breve\BQ_p$. 

Furthermore, if $\CG$ is an Iwahori group scheme, there is equality between the two sides. 
\end{proposition}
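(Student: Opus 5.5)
We need $\dim T_{\CG,\mu}$, which equals the dimension of the real span of $\sigma_{\CG,\mu}=\varphi_\CG(\sigma_\mu)$ in $X_*(T_\CG)_\BR$. This holds because $X^*(T_\CG)^{\mu,0}$ is the annihilator of that span, so the rank of the quotient $X^*(T_{\CG,\mu})$ is the dimension of the span. The plan is to bound the span of $\Lambda_{\{\mu\}}$ in $X_*(T)_I\otimes\BR$ and then use that $\varphi_\CG$ is linear. In the Iwahori case $\varphi_\CG$ is an isomorphism after $\otimes\BQ$ (\S\ref{ss:standardtorus}), so equality will reduce to computing the span of $\Lambda_{\{\mu\}}$ itself. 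For a general parahoric, the dimension of the image is at most the dimension of the span, which gives the inequality. So everything reduces to showing that
\[
V_\mu:=\mathrm{span}_\BR(\Lambda_{\{\mu\}})\subset X_*(T)_I\otimes\BR=X_*(S)\otimes\BR
\]
has dimension exactly $1+\sum_i {\rm rank}_{\breve\BQ_p}(G_\ad\otimes\breve\BQ_p)_i$, with $i$ running over the factors where $\mu_{\ad,i}\neq 0$.

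First I decompose, after base change to $\breve\BQ_p$:
\[
X_*(S)\otimes\BR = Z\oplus\bigoplus_i V_i,
\]
where $Z=X_*(Z(G)^\circ)_I\otimes\BR$, which maps isomorphically to $X_*(G_\ab)_I\otimes\BR$, and $V_i$ is the span of the relative coroots of the $i$-th simple factor. Here $\dim V_i$ is the $\breve\BQ_p$-rank of that factor. The relative Weyl group $W_0$ is the product of the $W_i$, each acting on $V_i$ and trivially on $Z$. Write the image of $\mu$ as $\mu^\diamond=z+\sum_i\mu_i$. Then $\Lambda_{\{\mu\}}=\{z+\sum_i w_i\mu_i\}$, and its span is contained in $\BR z\oplus\bigoplus_{\mu_i\neq 0}V_i$. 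This already gives the upper bound.

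For the reverse inclusion, ab-nondegeneracy says $z\neq0$, since $z$ maps to $\bar\mu_\ab$ modulo torsion. Averaging over $W_i$ kills $V_i$, because $V_i^{W_i}=0$ for an irreducible (reduced relative) root system. Hence averaging over all of $W_0$ gives $z\in V_\mu$. Next, for a fixed $i$ with $\mu_i\neq0$, take $\lambda=z+\sum_j\mu_j$ and $\lambda'=z+w\mu_i+\sum_{j\ne i}\mu_j$. Then $\lambda-\lambda'=\mu_i-w\mu_i$ lies in $V_\mu$ for all $w\in W_i$. The span of $\{\mu_i-w\mu_i\}$ is a nonzero $W_i$-stable subspace of $V_i$: it is nonzero because $\mu_i\ne0$ and $W_i$ fixes no nonzero vector. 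Irreducibility of the $W_i$-representation on $V_i$ then makes this span all of $V_i$. So $V_\mu$ contains $\BR z\oplus\bigoplus_{\mu_i\neq0}V_i$, which gives equality for the span and hence the equality in the Iwahori case. Alternatively, Lemma~\ref{lim} applied to the convex hull of the $W_i\mu_i$ also yields $V_i$.

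The main obstacle is bookkeeping in the quasi-split/twisted setting. One must check that the relative root system of each $\breve\BQ_p$-simple factor is irreducible (taking the reduced system) and that its Weyl group acts irreducibly on $V_i$. One must also check that the image of $\mu$ in $X_*(T)_I\otimes\BR$ decomposes compatibly, i.e. that the averaged $\mu^\diamond$ projects to $\mu_{\ad,i}^\diamond$, which is nonzero exactly when $\mu_{\ad,i}\neq0$. The latter holds because averaging a nonzero minuscule (or any nonzero) coweight of a simple group over inertia stays nonzero in $X_*(S_{\ad,i})_\BR$: its pairing with a $W$-invariant positive-definite form against the averaged coweight is positive. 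Finally, I have to verify that $X_*(T_\CG)_\BR$ is the right ambient space via Remark~\ref{remark:psi}, which is immediate.
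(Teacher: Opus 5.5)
Your argument is correct and is essentially the paper's proof, read on the cocharacter side. The paper also reduces to the Iwahori case (via the surjection $T_{\CI,\mu}\to T_{\CG,\mu}$) and splits off the central direction, which contributes exactly $1$ by ab-nondegeneracy. It then uses Lemma~\ref{lim} to show that the annihilator of $\Lambda_{\{\mu\}}$ vanishes on each adjoint factor with $\mu_{\ad,i}\neq 0$; this is dual to your span computation via irreducibility of $W_i$ on $V_i$.

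One correction to your last paragraph: a nonzero minuscule coweight can have inertia average $0$, so the claim ``(or any nonzero)'' is false. For ramified ${\rm PU}_3$, inertia acts on $X_*(T_\ad)=\BZ^3/\BZ(1,1,1)$ by $(x_1,x_2,x_3)\mapsto(-x_3,-x_2,-x_1)$, and the average of $(0,1,0)$ is $0$. Your positivity argument must therefore use that $\mu$ is taken dominant for a $\breve\BQ_p$-rational Borel, whose closed chamber is inertia-stable. Then $\langle\gamma\mu,\rho_i\rangle=\langle\mu,\rho_i\rangle>0$ for all $\gamma\in I$ whenever $\mu_{\ad,i}\neq 0$, where $\rho_i$ is the half-sum of positive roots of that factor, and hence $\mu_i\neq 0$. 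The paper uses this same point implicitly when it applies Lemma~\ref{lim}.
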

\begin{proof} By the functoriality in the parahoric \S \ref{ss:211}, \S \ref{ss:standardtorus}, we obtain for a suitable Iwahori $\CI\to \CG$ a surjective homomorphism of tori $T_{\CI, \mu}\to T_{\CG, \mu}$. Hence we are reduced to the case where $\CG$ is an Iwahori group scheme. Then
\[
\dim T_{\CG, \mu}={\rm rank}\, X^*(T)^I-{\rm rank}\, (X^*(T)^{I})^{  \mu, 0} ,
\]
where $ (X^*(T)^{I})^{  \mu, 0} =\{\chi\in X^*(T)^I\mid \langle \mu', \chi\rangle=0,\forall\, \mu'\in \Lambda_{\{\mu\}}\}$. Consider the following diagram with exact rows, 
\begin{equation}\label{CharGroupsDiag}
\begin{aligned}
 \xymatrix{
     0 \ar[r]    &  C_\BQ\ar[r]& X^*(T_\der)^I_\BQ \ar[r]& D_\BQ\ar[r] &0  \\
      0 \ar[r]   &  (X^*(T)^I)^{\mu, 0}_\BQ \ar[r]\ar[u] & X^*(T)^I_\BQ \ar[r]\ar[u] & X^*(T_{\CI, \mu})_\BQ\ar[r]\ar[u] &0\\
      0 \ar[r]   &  A_\BQ\ar[r]\ar[u] & X^*(G_\ab)^I_\BQ \ar[r]\ar[u]  & B_\BQ\ar[r]\ar[u]  &0, 
        }
        \end{aligned}
\end{equation}
where the subscript $\BQ$ means, as usual, tensoring with $\BQ$. Here $A=(X^*(T)^I)^{\mu, 0}\cap X^*(G_\ab)$, and $B$, $C$, $D$ are defined as cokernels of the obvious maps between lattices. The hypothesis implies that ${\rm rank}\, B=1$. On the other hand, ${\rm rank}\, D={\rm rank}\, X^*(T_\der)^I-{\rm rank}(X^*(T_\ad)^I)^{\mu, 0}$. By Lemma \ref{lim}, we  have $(X^*(T_{\ad, i})^I)^{\mu, 0}=(0)$ when $\mu_{\ad, i}$ is non-trivial, and 
$(X^*(T_{\ad, i})^I)^{\mu, 0}=X^*(T_{\ad, i})^I$ when $\mu_{\ad, i}$ is trivial. We deduce that
\[
\begin{aligned}
\dim T_{\CI, \mu}=\, &{\rm rank} \, X^*(T_\der)^I-{\rm rank}\, (X^*(T_\ad)^I)^{\mu, 0} +1\\
 =\, &1+\sum_i {\rm rank}_{\breve \BQ_p}(G_{\ad, i}\otimes_{\BQ}\br\BQ_p)
\end{aligned}
\]
with the sum over the simple factors as in the statement.
\end{proof}

 \subsubsection{Blanket assumptions}\label{blanket}
 For the rest of the paper, we  make the following simplifying assumption on $(\CG, \{\mu\})$:  
 \begin{altenumerate}
 \item The pair $(\CG, \{\mu\})$ is strictly convex.  This is equivalent to the dual cone $\sigma_{\CG, \mu}^\vee$ having the full dimension. As a consequence, our toric schemes contain the torus  $T_\CG\otimes_{\BZ_p}O_{E_0}$ as an open subscheme. 
 \vskip0.01in
 
In addition, we often assume:
 \vskip0.01in
 
 \item  The subgroup $X^*(T_\CG)^{\mu, 0}$ of  $X^*(T_\CG)$ is trivial, i.e., $T_{\CG, \mu}=T_\CG$. 
 This is equivalent to the cone $\sigma_{\CG, \mu}$ having the full dimension. Then our toric schemes have no torus factor. 

 \end{altenumerate}
 
 We will point out when we  assume (ii) or when (ii) fails. 
 
% Equivalently, both cones $\sigma_{\CG, \mu}$ and $\sigma_{\CG, \mu}^\vee$ have the full dimension.

\subsection{Examples}\label{ss:Exam}

 Here we give some examples of the toric schemes $Y_{\CG,\mu}$ for various $(\CG,\{\mu\})$. In this list  all parahoric subgroups $\CG$ are Iwahori. In all cases, $(\CG, \{\mu\})$ is strictly convex and even ab-nondegenerate. Also, in all cases $T_{\CG, \mu}=T_\CG$. 
 \smallskip
 
 1) $(G,\{\mu\})=(\GL_n, \{\varpi_1^\vee\})$, where $\varpi^\vee_1=(1,0,\ldots, 0)$.
 (Here and below, we will denote by $(a_1,\ldots, a_n)$ the coweight $\BG_m\to \GL_n$ given by $t\mapsto \diag(t^{a_1},\ldots , t^{a_n})$.)
  Take $T$ to be the standard maximal diagonal torus. Then the Weyl group orbit $\Lambda_{\{\mu\}}=W_0\cdot \mu\subset X_*(T)$ is  the  standard basis  given by $e_i=(0,\ldots,0,1,0,\ldots, 0)$, with $1$ in the $i$-th place. Then
 \[
 \sigma_{\CG,\mu}=\{(a_1,\ldots, a_n)\in\BR^n\ |\ a_i\geq 0, \forall\, i\}, \quad S_{\CG,\mu}=\{(x_1,\ldots, x_n)\in\BZ^n\ |\ x_i\geq 0, \forall\, i\} .
 \] 
 The toric scheme is $Y_{\CG,\mu}=\BA^n_{\BZ_p}$.
 \smallskip

 2) $(G,\{\mu\})=(\GL_n, \{\varpi_2^\vee\})$, where $\varpi^\vee_2=(1,1, 0,\ldots, 0)$. Take $T$ to be the standard maximal diagonal torus. Then $\Lambda_{\{\mu\}}=W_0\cdot \mu$ is the set of all vectors with exactly two coordinates $1$ and the rest $0$; there are $n\choose 2$ such vectors. Then
 \[
S_{\CG,\mu}= \sigma^\vee_{\CG,\mu}\cap X^*(T)=\{(x_1,\ldots, x_n)\in \BZ^n\ |\ x_i+x_j\geq 0, \forall\, i\neq j\}.
 \]
 We can see that the Hilbert basis (i.e. minimal set of semi-group generators)  of $S_{\CG,\mu}$ 
 is the set of $e_i=(0,\ldots,0,1,0,\ldots, 0)$, with $1$ in the $i$-th place, and of $f_i=(1,\ldots, 1, -1,1,\ldots, 1)$ with $-1$ in the $i$-th place.
 Then $Y_{\CG,\mu}$ is the spectrum of the quotient of
 \[
 \BZ_p[e_1,\ldots, e_n, f_1,\ldots, f_n]
 \]
 by the ideal generated by 
 \[
 e_if_i-e_1\cdots \wh e_i \cdots e_n,\quad 1\leq i\leq n,
 \]
 \[
  f_if_j-\prod_{k\not\in \{i,j\} }e_k^2, \quad 1\leq i, j\leq n, \quad i\neq j.
 \]
Here the hat means that we omit the term.
\smallskip

3) $(G,\{\mu\})=(\GL_n, \{\varpi_{n-1}^\vee\})$,  where $\varpi^\vee_{n-1}=(1,1,\ldots, 1, 0)$.
Note that when $n>2$ this is not the same as example (1) above, even though the corresponding local models are isomorphic.  For $n>2$ the corresponding toric variety $Y_{\CG,\mu}$ is simplicial but not smooth. 
Indeed, then
 \[
S_{\CG,\mu}= \sigma^\vee_{\CG,\mu}\cap X^*(T)=\{(x_1,\ldots, x_n)\in \BZ^n\, |\, x_1+\cdots +\widehat{x_i}+\cdots +x_n\geq 0, \forall\, i\}.
 \]
 The dual cone has extremal rays given by the $n$ vectors obtained from $(1,\ldots,1, 2-n)$ by permuting its coordinates; these vectors give a basis of $X^*(T)_\BR$, generate $X^*(T)$ as a group, and form a subset of the minimal generating set (Hilbert basis) of $S_{\CG,\mu}$. However, this is a proper subset since these $n$ vectors do not generate $S_{\CG,\mu}$: Indeed, we can easily see for example that $(1,0,\ldots , 0)\in S_{\CG, \mu}$ but it is not a {\sl positive integral} linear combination of these $n$ vectors. In particular, $S_{\CG,\mu}$ is not free and $Y_{\CG,\mu}$ is not smooth. 
In fact, as we will explain later, when $(p, n-1)=1$, $Y_{\CG,\mu}$ can be identified with the quotient of $\BA^n_{\BZ_p}$ by the diagonal action of the group scheme $\mu_{n-1}$ and it is the affine cone of the $(n-1)$-th Veronese embedding of $\BP^{n-1}_{\BZ_p}$.
 (See Example \ref{DrversAnti}.)
\smallskip

4) $(G, \{\mu\})=(\GL_n, \{\varpi_j^\vee\})$, $ \varpi_j^\vee=(1^{(j)}, 0^{(n-j)})$, for $1\leq j\leq n-1$. 
 In all these cases, the  semigroup $S=\{(x_1,\ldots, x_n)\in \BZ^n\ |\ x_i\geq 0\}\subset S_{\CG,\mu}$ satisfies our assumptions. Then $Y_S=\BA^n_{\BZ_p}$ which supports a 
 birational $T$-equivariant morphism $Y_{\CG,\mu}\to Y_S$. 
\smallskip
 
 5) $(G,\{\mu\})=(\GSp_{2g}, \{\mu_{\rm std}\})$.  Here, $\GSp_{2g}$ is the symplectic similitude group defined by the symplectic form $(\, ,\,)$ with matrix
 \[
\begin{pmatrix}0 & J\\ -J & 0\end{pmatrix}
\]
where $J$ is the antidiagonal $g\times g$ matrix with coefficients equal to $1$ and $\mu_{\rm std}$ is the standard non-trivial minuscule coweight of type $\varpi_g^\vee$.  Take $T$ to be the standard maximal diagonal torus, so
\[
T=\{{\rm diag}(r_1,\ldots, r_g, cr^{-1}_g,\ldots , cr^{-1}_1)\}=\{(r_1,\ldots, r_g, c)\}=\BG_m^{g+1}.
\]
In the second more compact presentation of this torus   $\mu_{\rm std}$ is given by $x\mapsto (x,x,\ldots, x)$ and the Weyl orbit $\Lambda_{\{\mu\}}=W_0\cdot \mu_{\rm std}$ consists of the homomorphisms  $x\mapsto (a_1,\ldots, a_g, x)$, where $a_i$ are either $1$ or $x$. So there are $2^g$ such $\mu'$. Denote by $(x_1,\ldots, x_g, y)$ coordinates of $X^*(T)_\BR$. Then the inequalities defining the dual cone $\sigma^\vee_{\CG,\mu}$ are:
\begin{equation}\label{ineq}
 (\sum_{i\in U}x_i)+y\geq 0,\ \forall\ U\subset \{1,\ldots ,g\}.
\end{equation}
Here $U$ runs over all the subsets (including $\emptyset$) of $\{1,\ldots ,g\}$. 
The Hilbert basis   of $S_{\CG,\mu}$ are the following $2g$ elements of $X^*(T)$:
\[
e_1=(1,0,0,\ldots, 0,0),  e_2=(0,1,0,\ldots,0,0),\ldots, e_g=(0,0,\ldots, 1,0),
\]
\[
f_1=(-1,0,0,\ldots, 0,1) , f_2=(0,-1,0,\ldots,0,1),\ldots, f_g=(0,0,\ldots, -1,1).
\]
To see these generate $S_{\CG,\mu}$ take $a:=(x_1,\ldots, x_g, y)$ which satisfies the inequalities above; in particular $y\geq 0$. By subtracting a sum  
of the  $f_i$'s, we can assume that $y=0$.
Then $a=(x_1,\ldots, x_g, 0)$, with all $x_i\geq 0$. But these points are sums of points of the form 
 $e_i$.
(Note that, in the above, $(A_1,\ldots, A_g, B)$ denotes the   character which takes the value $\prod_{i=1}^g r_i^{A_g} c^B$ on $(r_1,\ldots, r_g,c)$.
Hence, $e_1,\ldots, e_g,f_1,\ldots, f_g$ 
are all the weights for the action of $T$ on the standard representation $V$ of $G=\GSp(V)$.)
We can now also easily see that this set of generators is minimal.
The  toric scheme $Y_{\CG,\mu}$ is the spectrum of the quotient of 
 \[
 \BZ_p[e_1,\ldots, e_g, f_1,\ldots, f_g]
 \]
 by the ideal generated by 
\[
e_if_i-e_jf_j,\quad\ \hbox{\rm for all}\ \ 1\leq i, j\leq g.
\]
This toric scheme  appears again in  \S \ref{ss:Siegel}, see also \cite{Marazza}, \cite{HS}, \cite{HLS}.
\smallskip

$5'$) $(G,\{\mu\})=(\GSp_{2g}, \{\mu_{\rm std}\})$, as in (5) above. We can consider the sub-semigroup $S$ of $S_{\CG, \mu}$ which is generated by $e_1,\ldots , e_g, f_1$. Then $S$ is free and still satisfies our assumptions.
We have $Y_S=\BA^{g+1}_{\BZ_p}$. This choice of $S$ and the corresponding toric scheme $Y_S$ is implicitly used in \cite[\S 4.1, \S 4.2]{Shadrach} and \cite{Marazza}. It still fits in our framework   (see Sect. \ref{sec:SV}). However, from the point of view of the current paper, this $S$ and the corresponding toric scheme $Y_S$ are not the canonical choices; the canonical choices are  $S_{\CG,\mu}$ and $Y_{\CG,\mu}$ of Ex. (5) above.
\smallskip

6) $(G, \{\mu\})=({\rm GSpin}(V),\{\mu_1\})$, where $V$ is a split quadratic space over $\BQ_p$ of dimension $2g+1$ and $\mu_1$ the standard non-trivial minuscule coweight of type $\varpi^\vee_1$,
landing in the standard maximal split torus $T$, both as described for example in \cite[\S 4.2.1]{HP}  (it is important to pin down precisely the data, as our theory is sensitive to changes of the center). We again take $\CG$ to be an Iwahori, given by an alcove in the apartment of $T$. 

Using the duality ${\rm GSpin}^\vee_{2g+1}\simeq \GSp_{2g}$, we see that the dual  (character) cone $\sigma^\vee_{\CG,\mu_1}$ identifies with the (cocharacter) cone for
$(\GSp_{2g}, \{\mu_{\rm std}\})$ as in example (5) above. As in  (5), we can see that the latter cone is the positive hull of
the set of $(a_1,\ldots, a_g, 1)$, where $a_i=0$ or $1$ (so there are $2^g$ elements) in $X_*(T_{\GSp_{2g}})\simeq \BZ^{g+1}$. It follows that $\sigma^\vee_{\CG,\mu}\subset X^*(T)_\BR$
is the convex hull of the same subset of $X^*(T)\simeq X_*(T_{\GSp_{2g}})\simeq \BZ^{g+1}$. It is now not hard to check that the elements $(a_1,\ldots, a_g, 1)$, with $a_i=0, 1$, form a Hilbert basis
 of the semigroup  $S_{\CG,\mu}=\sigma^\vee_{\CG,\mu}\cap X^*(T)\subset \BZ^{g+1}$. Hence, the  toric scheme $Y_{\CG,\mu}$ is isomorphic the spectrum of the quotient 
of the algebra $\BZ_p[\{x_U\}_{U\subset \{1,\ldots ,g\}}]$, with generators $x_U$ parametrized by the subsets $U$ of $\{1,\ldots ,g\}$, by the ideal of all  relations between the $x_U$'s  
given by corresponding identities in $\BZ^{g+1}$ by mapping $x_U$ to the element  $(a_1,\ldots ,a_g, 1)$ with $a_i=1$ iff $i\in U$. This ideal is generated 
by the   elements 
\[
x_{U}\cdot x_{U'}-x_{U\cap U'}\cdot x_{U\cup U'},
\]
for any pair $U$, $U'$ of subsets of $\{1,\ldots ,g\}$.

7) \emph{The genuine Drinfeld case.} $(G,\{\mu\})=(D^*, \{\varpi_1^\vee\})$, where $D$ is a central division $\BQ_p$-algebra of index $d$. Again $\CG$ is the unique Iwahori
given by the units $O_D^*$ of the maximal oder $O_D$. Set $q=p^d$. Then $T_\CG\simeq \Res_{\BZ_{q}/\BZ_p}\BG_m$. We have $E_0=\BQ_{p}$ and
\[
Y_{O_D^*, \varpi_1^\vee} \simeq \Res_{\BZ_q/\BZ_p}\BA^1.
\] 
This is the torus embedding $\Res_{\BZ_{q}/\BZ_p}\BG_m\hookrightarrow \Res_{\BZ_q/\BZ_p}\BA^1$.
\smallskip

8)  \emph{Ramified Weil restriction of scalars.} Let $F/\BQ_p$ be a finite totally ramified extension  and consider the pair $(G, \{\mu\})$, where $G={\rm Res}_{F/\BQ_p}H$, where  $H$ is a split reductive group over $F$ and $\mu=(\mu_\phi)_{\phi: F\hookrightarrow \ov\BQ_p}$, with  $\mu_\phi: (\BG_m)_{/\ov\BQ_p}\to H\times_F\ov\BQ_p$ a minuscule coweight or trivial, for each 
field embedding $\phi: F\hookrightarrow \ov\BQ_p$. Take $T=\Res_{F/\BQ_p}T_H $ with $T_H\simeq (\BG^n_m)_{/F}$ a split maximal torus of $H$ and fix a Borel subgroup $T_H\subset B_H\subset H$.
Then $X_*(T)_I$ can be identified with $\BZ^n= X_*(T_H)$ using the averaging map and, under this identification, the positive hull of $\Lambda_{\{\mu\}}\subset X_*(T)_I$ is the positive hull of the $W_0$-orbit of the average $\sum_\phi \mu^{\rm dom}_\phi$, where $\mu^{\rm dom}_\phi\in X_*(T_H)=\BZ^n$ is a dominant representative of the conjugacy class $\{\mu_\phi\}$. In this, $W_0$ identifies with the Weyl group $N(T_H)/T_H$ of $H$.

We can apply this to the case $H=\GL_n$, which is already very interesting: Then we take $T=\Res_{F/\BQ_p}(\BG^n_m)_{/F}$ to be the standard diagonal torus of $G$ and $\mu_\phi=(1^{(r_\phi)}, 0^{(r_\phi)})$, for some $r_\phi$, with $0\leq r_\phi\leq n$, for each 
field embedding $\phi: F\hookrightarrow \ov\BQ_p$. As above, $X_*(T)_I$ can be identified with $ \BZ^n$ using the averaging map. Under this identification, $\Lambda_{\{\mu\}}\subset X_*(T)_I$ 
is given as
\begin{equation*}
\text{$\Lambda_{\{\mu\}}$\,=\,$S_n$-orbit of $(s_1,s_2,\ldots ,s_n)$, where $s_i=\#\{\phi\ |\ r_\phi\geq i\}$.}
\end{equation*} We assume that $\Lambda_{\{\mu\}}$ generates the vector space $X_*(T)_I\otimes\BR=\BR^n$ and that at least one of the $\mu_\phi$ is not trivial. (The second condition guarantees that $(G,\{\mu\})$ is ab-nondegenerate.) 

Now consider the  Iwahori $\CG=\Res_{O_F/\BZ_p}\CI$, with $\CI$ the standard Iwahori for $\GL_n$ over $O_F$.  The cone $\sigma_{\CG,\mu}$ is the positive hull of $\Lambda_{\{\mu\}}$,  a general ``permutohedral cone".  We have
\[
S_{\CG,\mu}=\{(a_1,a_2,\ldots, a_n)\in \BZ^n\ |\ \sum_{i=1}^n s_{w(i)}\cdot a_i\geq 0,\ \forall\, w\in S_n\},
\]
and $Y_{\CG,\mu}$ is the toric affine scheme given by the spectrum of the corresponding semigroup algebra over $\BZ_p$.
These schemes can quickly become very complicated: For example, the $3$-dimensional permutohedral cone for $(s_1,s_2,s_3)=(5,2,1)$ 
gives a semigroup whose Hilbert basis has $30$ elements. According to Macaulay 2, the corresponding toric ideal of relations for the semigroup algebra has $1181$ generators.

A similar construction can be performed for more general $H$, $F$ and $(\mu_\phi)_\phi$ as  above.

9)  \emph{Ramified unitary similitudes.} $(G, \{\mu\})=({\rm GU}_n, \{\mu_{r,s}\})$. Let us explain the data involved in the definition of this pair. We take $p>2$, $F/\BQ_p$ a ramified quadratic extension, and $V$ an $n$-dimensional $F$-vector space equipped with a $F/\BQ_p$-hermitian form $h: V\times V\to F$. We assume $n\geq 3$ and that $h$ is split, i.e. there is an $F$-basis of $V$ such that $h(e_i, e_{n+1-j})=\delta_{ij}$. For a $\BQ_p$-algebra $R$
set $V_R=V\otimes_{\BQ_p}R$ and denote by $h_R: V_R\times V_R\to F\otimes_{\BQ_p}R$   the base change of $h$. Then ${\rm GU}_n(R)={\rm GU}(V,h)(R)$ is the group of   $g\in {\rm Aut}_{F\otimes_{\BQ_p}R}(V_R)$ for which there exists  $c(g)\in R^*$ such that  
\[
h_R(gv, gw)=c(g)\cdot h_R(v,w),  \quad \forall\, v, w\in V_R.
\]
Denote by $a\mapsto \ov a$ the conjugation of $F$ over $\BQ_p$ which gives an involution on the $F$-rational points ${\rm GU}_n(F)$.  There is an isomorphism   
\[
\psi: {\rm GU}_n(F)\xrightarrow{\sim}\GL_n(F)\times F^*,
\]
which carries this involution
to 
$
(A, c)\mapsto (\ov c\cdot (A^*)^{-1}, \ov c)
$
where $A^*$ denotes the hermitian adjoint of $A$. For a pair $(r, s)$ of non-negative integers with $n=r+s$,  we take the coweight $\mu_{r,s}$ defined by 
\[
 \mu_{r,s}(a)=\psi^{-1}((\diag(a^{(r)}, 1^{(s)}), a)).
\]

Take $T$ to be the maximal torus of ${\rm GU}_n$ which corresponds via $\psi$ to the 
standard maximal torus $\diag\times \BG_m\subset \GL_n\times \BG_m$.  We can see that the Galois involution on $X_*(T)=\BZ^n\times \BZ$ is given by
\[
\tau(x_1,\ldots, x_n, y)=(y-x_n, y-x_{n-1},\ldots, y-x_2, y-x_1,y).
\]

 a) Suppose $n=2m+1$ is odd. 
Then 
\[
X_*(T)^I=\{(x_1,\ldots, x_m, x_{m+1},2x_{m+1}-x_m,\ldots, 2x_{m+1}-x_1, 2x_{m+1})\}\subset \BZ^n\times\BZ
\]
and the projection to the first $m+1$ coordinates $(x_1,\ldots, x_m, x_{m+1})$ gives $X_*(T)^I\xrightarrow{\sim} \BZ^{m+1}$.

b) Suppose $n=2m$ is even. Then 
\[
X_*(T)^I=\{(x_1,\ldots, x_m,y-x_m,\ldots, y-x_1, y)\}\subset \BZ^n\times\BZ
\]
and we can project to $(x_1,\ldots, x_m, y)$ to obtain $X_*(T)^I\xrightarrow{\sim} \BZ^{m+1}$.

In either case, we consider the composition
\[
a: X_*(T)\to X_*(T)^I_\BQ\xrightarrow{\sim}\BZ^{m+1}\otimes_\BZ\BQ,
\]
where the first arrow is given by the averaging map $\lambda\mapsto \lambda^\diam$.

Now take  $\mu=\mu_{1, n-1}=(1,0,\ldots ,0,1)\in X_*(T)$. Then $\tau\mu=(1,1,\ldots,1,0,1)$ and so 
\[
\mu^\diam=\frac{\mu+\tau\mu}{2}=(1,1/2,\ldots, 1/2,0,1)\in X_*(T)^I_\BQ
\]
Under the isomorphism $X_*(T)^I_\BQ\xrightarrow{\sim} \BQ^{m+1}$ this maps to $(1,1/2,\ldots, 1/2)$ if $n=2m+1$ is odd, and to
$(1,1/2,\ldots, 1/2, 1)$ if $n=2m$ is even. 

We now consider the action of the Weyl group $W_0$.  Recall that the reduced root system $\Sigma$ on $X_*(T)_I\otimes\BQ $ is of type $B_m$ (if $n$ is even) or $C_m$ (if $n$ is odd), see \cite{PRLM3} for more details. In both cases, the Weyl group $W_0$ is isomorphic to $S_m\rtimes (\BZ/2\BZ)^m$.
We let $S_m\rtimes (\BZ/2\BZ)^m$ act
on $X_*(T)=\BZ^n\times\BZ=\{(x_1,\ldots , x_n, y)\}$ by fixing $y$ and permuting the indices $1,\ldots , n$, so that  the partition of $\{1,\ldots , n\}$ into subsets $\{i, n+1-i\}$ is preserved (so also fixing the ``middle" $ m+1$, if $n$ is odd).  We let $W_0$ act on $X_*(T)^I$ as follows. If $n$ is odd, the first factor in the semi-direct product $S_m\rtimes (\BZ/2\BZ)^m$ acts on $(x_1, \ldots  ,x_m, x_{m+1})$ by permuting the first $m$ entries. The element $(-1)_i$ in the second factor maps $(x_1, \ldots  ,x_m, x_{m+1})$ to the vector with $i$th entry equal to $2x_{m+1}-x_i$ and all other entries unchanged. If $n$ is even, then $S_m$ acts on $(x_1, \ldots  ,x_m, x_{m+1})$ by permuting the first $m$ entries, whereas the factor $(-1)_i$ maps $(x_1, \ldots  ,x_m, x_{m+1})$ to the vector with $i$th entry equal to $x_{m+1}-x_i$ and all other entries unchanged.  Then the natural map $X_*(T) \to X_*(T)_I\otimes\BQ $ respects these actions. 

The orbit of $\mu=(1,0,\ldots ,0,1)$ by the $W_0$-action on $\BZ^n\times \BZ$ as above, consists of the $2m$ elements $(0,\ldots, 0,1,0,\ldots 0,1)$, where the first $1$ is in any of the $2m$ possible spots (after excluding the middle spot in the odd case $n=2m+1$). These elements map under $a$ to 
\[
(1/2,\ldots, 1/2, 1,1/2,\ldots, 1/2,1/2), \quad\hbox{\rm and}\quad (1/2,\ldots, 1/2, 0,1/2, \ldots, 1/2,1/2),
\]
 in $X_*(T)^I_\BQ=\BQ^{m+1}$ if $n=2m+1$, and to 
\[
(1/2,\ldots, 1/2, 1,1/2,\ldots, 1/2,1), \quad\hbox{\rm and}\quad (1/2,\ldots, 1/2, 0,1/2, \ldots, 1/2,1),
\]
 in $X_*(T)^I_\BQ=\BQ^{m+1}$ if $n=2m$,
with the   $1$ or the $0$ taking any of the first $m$ possible spots.  The resulting $2m$ elements in $X_*(T)^I_\BR$ span the extremal rays of the cone $\sigma_{\CG,\mu}$. (Here  $\CG$ is a standard Iwahori of $G$).  

Some more explicit examples: For $n=4$, $(r,s)=(1,3)$, by the above,  the extremal rays of $\sigma_{\CG,\mu}$ are spanned by
$(1,1/2,1)$, $(1/2,1,1)$,  $(1/2,0,1) $, $(0,1/2,1) $. The corresponding
primitive elements in $X_*(T_\CG)=X_*(T)^I\simeq \BZ^3$ on these rays are $(2,1 ,2 )$, $(1 ,2,2 )$,  $(1 ,0,2) $, $(0,1 ,2 ) $. For $n=3$, $(r,s)=(1,2)$, the extremal rays are spanned by $(1,1/2) $, $(0,1/2)$; the corresponding primitive elements of $X_*(T)^I\simeq \BZ^2$ on these rays are $(2,1)$, $(0,1)$.  
In this last case, the toric scheme $Y_{\CG,\mu}$ is isomorphic to the spectrum of 
$\BZ_p[x_1,x_2,x_3]/(x_1^2-x_2x_3)$.

 \subsection{Further properties; relation to other constructions}\label{ss:othcon}
 
In this section we give some further properties and constructions relating to the toric schemes $Y_{\CG,\mu}$.

 \subsubsection{Central isogenies}\label{cisogenies}
Let $(G, \{\mu\})\to (G', \{\mu'\})$ be a central isogeny, i.e., $f: G\to G'$  is surjective, the kernel is  a finite central subgroup $C$ of $G$ and $\{\mu'\}=\{f\cdot \mu\}$. Assume  that  $C$ splits over $\breve\BQ_p$, i.e., the action of $\Gal(\bar\BQ_p/\BQ_p)$ on $C(\bar\BQ_p)$ is unramified and that the order of $C(\bar\BQ_p)$ is prime to $p$. Let $\CG$, resp. $\CG'$, be parahorics that correspond to each other under the isogeny.  
Then $C(\bar\BQ_p)\subset G(\breve\BQ_p)$ and even $C(\bar\BQ_p)=C(\breve\BQ_p)\subset \CG(\breve\BZ_p)$ since $C(\breve\BQ_p)$ fixes the point in the Bruhat-Tits building corresponding to $\CG$ and  the Kottwitz map  $G(\breve \BQ_p)\to \pi_1(G)_I$ evaluated on $C(\breve\BQ_p)$ is trivial. Furthermore the closure $C_\CG$ of $C$ in $\CG$ has special fiber $\bar C_\CG$ \'etale of multiplicative type. We obtain an exact sequence of groups of multiplicative type over $\BZ_p$,
\[
1\to  C_\CG\to T_\CG\to T_{\CG'}\to 1 .
\] 
Assume $(\CG, \{\mu\})$, and hence also $(\CG', \{\mu'\})$, satisfies both our blanket assumptions \ref{blanket} (i) and (ii).   Under the identification $X_*(T_\CG)_\BR=X_*(T_{\CG'})_\BR$, the two cones $\sigma_{\CG, \mu}$ and $\sigma_{\CG',\mu'}$ coincide, hence $S_{\CG', \mu'}=S_{\CG, \mu}\cap X^*(T'_{\CG'})$, where  $X^*(T'_{\CG'})\subset X^*(T_{\CG})$ is an inclusion with finite index. We obtain a finite morphism $Y_{\CG, \mu}\to Y_{\CG', \mu'}$ equivariant wrt the isogeny $T_\CG\to T_{\CG'}$. In fact, $Y_{\CG', \mu'}$  is the quotient of $Y_{\CG, \mu}$ by the action of $C_\CG$. 

\begin{example}\label{DrversAnti}
Consider the central isogeny  
\[
f: \GL_n\to \GL_n\quad A\mapsto \wedge^{n-1}A.
\]
Assume $p$ is coprime to $n-1$.
This gives an isogeny $f: T\to T$ of the standard maximal torus with the corresponding $X_*(T)\to X_*(T)$ given by 
\[
(x_1,\ldots, x_n)\mapsto (\sum_i x_i-x_1,\ldots, \sum_i x_i-x_n)
\]
so that $\varpi^\vee_1=(1,0,\ldots ,0)$ maps to $\varpi^\vee_{n-1}=(0,1,\ldots ,1,1)$. The kernel of $f$ is 
the group scheme of roots of unity $\mu_{n-1}$.
The isogeny gives a toric morphism $Y_{\varpi^\vee_1}\to Y_{\varpi^\vee_{n-1}}$ and induces $Y_{\varpi^\vee_{n-1}}=Y_{\varpi^\vee_1}/\mu_{n-1}$.
Since $Y_{\varpi^\vee_1}=\BA^n_{\BZ_p}$ (cf. Example 1 in \S \ref{ss:Exam}), we get
\[
Y_{\varpi^\vee_{n-1}}=\BA^n_{\BZ_p}/\mu_{n-1} ,
\]
the quotient for the diagonal scaling action. This is consistent with Example 3  in \S \ref{ss:Exam}, reconfirming that $Y_{\varpi^\vee_{n-1}}$ is not smooth. The affine coordinate ring of this quotient is the invariants 
\[
(\BZ_p[x_1,\ldots, x_n])^{\mu_{n-1}}=\bigoplus_{i\geq 0}\BZ_p[x_1,\ldots, x_n]_{{\rm deg}=i(n-1)}.
\]
This is the ``$(n-1)$-th Veronese algebra'' and   $Y_{\varpi^\vee_{n-1}}=\BA^n_{\BZ_p}/\mu_{n-1}$ is the $(n-1)$-st Veronese cone, i.e. the affine cone corresponding to the line bundle $\CO_{\BP^{n-1}}(n-1)$ over $\BP^{n-1}_{\BZ_p}$.
\end{example} 

\subsubsection{The projective toric scheme $X_{\CG, \mu}$}\label{ss:projtor}

 The triple $(G, \{\mu\}, \CG)$ defines $(G_\ad, \{\mu_\ad\}, \CG_\ad)$.  Let us  assume that $\mu_\ad$ is non-trivial in all simple factors of $G_\ad\otimes_{\BQ_p}\breve\BQ_p$, cf. Proposition \ref{dimT1}.  We consider the polytope $P_{\mu_\ad}=P_{\mu_\ad}^\CG$ which is the convex hull 
of the image of the Weyl orbit $\Lambda_{\{\mu\}}$  in $X_*(T_{\CG_\ad})_\BR$. 
 By Lemma \ref{lim}, the polytope $P_{\mu_\ad}$ contains the origin in its interior.  Let $\Sigma=\Sigma^\CG_{\mu_\ad}$ be the associated fan. The extreme rays of $\Sigma$ are in bijection with the extreme rays of $\sigma_{\CG,\mu}$, when $\sigma_{\CG,\mu}$ is strictly convex. Let $X_{\CG, \mu}=X_\Sigma$ be the corresponding toric scheme (over $\BZ_p$) for the torus $T_{\CG_\ad, \mu_\ad}$. Since $|\Sigma|=X_*(T_{\CG_\ad})_\BR$, we see that $X_{\CG, \mu}$ is a projective toric scheme. It depends only on  $(G_\ad, \{\mu_\ad\}, \CG_\ad)$. In the rest of this paragraph, we try to relate the schemes  $Y_{\CG,\mu}$ and  $X_{\CG,\mu}$.

 To simplify our discussion, we will suppose, in addition to both the blanket assumptions (i) and (ii), that $\CG$ is Iwahori and that $(G,\{\mu\})$ is ab-nondegenerate. Consider the exact sequence
\[
1\to Z\to G\to G_\ad\to 1 .
\]
By Proposition \ref{dimT1}, it follows that $\dim Z=1$. Let us also assume that the neutral component $Z^o$ is isomorphic to $\BG_m$ and that the finite abelian group scheme $\pi_0(Z)$ is unramified and has rank prime to $p$. This exact sequence induces an exact sequence 
\begin{equation}
1\to Z_{\BZ_p}\to T_\CG\to T_{\CG_\ad}\to 1 
\end{equation}
where we denote by $Z_{\BZ_p}$ the smooth model of $Z$ over $\BZ_p$ obtained 
as the Zariski closure of $Z$ in $T_\CG$; then $Z^o_{\BZ_p}= \BG_m$ over $\BZ_p$ and   $\pi_0(Z_{\BZ_p})=\pi_0(Z)_{\BZ_p}$ is the unique finite \'etale abelian group scheme over $\BZ_p$ with  $\pi_0(Z)$
as generic fiber.

The first observation is that $X_{\CG, \mu}$ is the quotient of $Y_{\CG, \mu}$ by the action of $Z_{\BZ_p}$ in the following sense: We normalize the isomorphism $Z^o_{\BZ_p}=\BG_m$ such that the corresponding element $z\in X_*(T_\CG)$ lies in $\sigma_{\mu}$. Then the action of $Z^o_{\BZ_p}=\BG_m$ on the affine ring $A=A_{\CG, \mu}$ of $Y_{\CG, \mu}$ defines a grading $A=\bigoplus_{m\geq 0} A_m$.   Let $\wt X_{\CG, \mu}={\rm Proj} (A)$. Then $X_{\CG, \mu}$  is the quotient of $\wt X_{\CG, \mu}$ under the action of the finite abelian group scheme
$\pi_0(Z)_{\BZ_p}$.

Next, we give a condition which implies that $Y_{\CG,\mu}$ is the affine cone of a line bundle over $\wt X_{\CG,\mu}$.

  \begin{proposition}\label{prop:free} Under the above assumptions, the $\BG_m$-action on $Y_{\CG, \mu}\setminus \{0\}$ is free if and only if $\langle z, \chi_\rho\rangle=1$ for all generators $\chi_\rho$ of the extremal rays $\rho$ of $\sigma^\vee_\mu$.  Then $Y_{\CG, \mu}\setminus \{0\}\to \wt X_{\CG, \mu}$ is a $\BG_m$-torsor and
  $Y_{\CG, \mu}$ is the affine cone for the  line bundle over $\wt X_{\CG, \mu}$
  which corresponds to this $\BG_m$-torsor.  
  \end{proposition}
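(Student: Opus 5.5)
The plan is to reduce everything to the orbit--stabilizer structure of the affine toric scheme $Y_{\CG,\mu}=\Spec \BZ_p[S_\mu]$ and then read off freeness face by face. Write $M=X^*(T_\CG)$, $N=X_*(T_\CG)$ and $\sigma=\sigma_{\CG,\mu}$. By the blanket assumptions \S\ref{blanket}, $\sigma$ is full-dimensional and strictly convex. Since $Y_{\CG,\mu}$ is flat of finite type over $\BZ_p$, freeness of the $\BG_m$-action, i.e.\ triviality of all stabilizer group schemes, can be checked on geometric fibers, after base change to algebraically closed fields $\kappa$ of characteristic $0$ and $p$. Over such $\kappa$ we use the standard orbit decomposition: the $T_\CG$-orbits $O(\tau)$ correspond to the faces $\tau$ of $\sigma$. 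The stabilizer in $T_\CG$ of a point of $O(\tau)$ is the diagonalizable group $T_\tau=\ker\bigl(T_\CG\to \Spec\kappa[\tau^\perp\cap M]\bigr)$. This description is valid as group schemes in every characteristic, since it only involves the semigroup algebra. The point $0$ is the orbit for $\tau=\sigma$, because $\sigma^\perp=0$. Hence $Y_{\CG,\mu}\setminus\{0\}$ is the union of the orbits $O(\tau)$ for the proper faces $\tau$.

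The stabilizer of the $\BG_m$-action through $z$ at a point of $O(\tau)$ is $z^{-1}(T_\tau)$. This is the kernel of the homomorphism $\BG_m\to \Spec\kappa[\tau^\perp\cap M]$ whose character map is $\chi\mapsto\langle z,\chi\rangle$. That kernel is the trivial group scheme exactly when $\tau^\perp\cap M\to\BZ$, $\chi\mapsto \langle z,\chi\rangle$, is surjective; note that a nontrivial $\mu_p$-kernel in characteristic $p$ is also detected this way. If $\tau'\subset\tau$ are faces then $\tau^\perp\subset\tau'^\perp$, so the strongest conditions come from the facets. For a facet $\tau$ the subspace $\tau^\perp$ is the line spanned by an extremal ray $\rho$ of $\sigma^\vee$, and $\tau^\perp\cap M=\BZ\chi_\rho$. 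Every extremal ray of $\sigma^\vee$ arises this way. So freeness on $Y_{\CG,\mu}\setminus\{0\}$ is equivalent to $\langle z,\chi_\rho\rangle=\pm1$ for all $\rho$. The sign is $+$ because $z\in\sigma$ and $\chi_\rho\in\sigma^\vee$. In fact $z$ lies in the interior of $\sigma$, being a positive multiple of the $W_0$-average of $\Lambda_{\{\mu\}}$ modulo the adjoint directions, so the pairing is even $>0$. This proves the equivalence.

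For the torsor statement, cover $Y_{\CG,\mu}\setminus\{0\}$ by the $T_\CG$-stable affine opens $U_\tau=\Spec\BZ_p[\tau^\vee\cap M]$, for $\tau$ running over the facets. Since $\chi_\rho\in\tau^\perp$, both $\pm\chi_\rho$ lie in $\tau^\vee\cap M$. Thus $\chi_\rho$ is an invertible function on $U_\tau$ which is homogeneous of degree $\langle z,\chi_\rho\rangle=1$ for the grading defined by $z$. A graded ring containing a unit of degree $1$ is $B_0[t,t^{-1}]$, where $B_0$ is its degree-$0$ part. Moreover $B_0$ is the ring of the chart $D_+(\chi^m)\subset{\rm Proj}(A)$ for a suitable monomial $\chi^m$ vanishing exactly off $U_\tau$. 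Hence $U_\tau\cong \BG_m\times D_+(\chi^m)$, and these charts glue to show that $Y_{\CG,\mu}\setminus\{0\}\to\wt X_{\CG,\mu}$ is a $\BG_m$-torsor. We also use that $A_0=\BZ_p$ and $A_m=0$ for $m<0$, which follows from $z$ lying in the interior of $\sigma$. Then $A_m=\Gamma(\wt X_{\CG,\mu},\CL^{\otimes m})$ for the associated line bundle $\CL$, and $Y_{\CG,\mu}=\Spec\bigoplus_{m\ge0}A_m$ is its affine cone.

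The main obstacle I expect is the bookkeeping in the second step. One must make sure that the stabilizer computation is correct scheme-theoretically over $\BZ_p$, including possible infinitesimal $\mu_p$-stabilizers in the special fiber, and that the facet charts $U_\tau$ really cover the complement of the vertex over $\BZ_p$. The latter holds because the vertex is cut out by the ideal generated by all nonzero $\chi\in S_\mu$, and the $\chi_\rho$ generate this ideal up to radical.
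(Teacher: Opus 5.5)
Your argument is correct. For the freeness criterion you follow essentially the paper's route. Both proofs use the face--orbit correspondence, reduce to the one-dimensional orbits attached to facets of $\sigma_\mu$ (dually, to extremal rays $\rho$ of $\sigma^\vee_\mu$), and observe that the stabilizer of such an orbit in $z(\BG_m)$ has order $\langle z,\chi_\rho\rangle$. The paper reaches the facets by a closure argument. You instead compute $z^{-1}(T_\tau)$ on every orbit through the character map $\tau^\perp\cap M\to\BZ$ and use monotonicity in $\tau$. Your version is visibly scheme-theoretic, so it also detects infinitesimal stabilizers when $p\mid\langle z,\chi_\rho\rangle$.

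For the torsor and cone statements your route is genuinely different. The paper passes to the subalgebra $A'\subset A$ generated by $A_1$. It argues that $A$ is the normalization of $A'$ and agrees with it in high degree, so that ${\rm Proj}(A)={\rm Proj}(A')$ carries the invertible twisting sheaf $\CO(1)$. It then quotes general results to get $A=\bigoplus_{m\ge 0}\Gamma(\wt X_{\CG,\mu},\CO(1)^{\otimes m})$.

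You instead trivialize directly. On each facet chart $U_\tau=D(\chi^{m_\tau})$ the ray generator $\chi_\rho$ is a unit of degree one, so $\BZ_p[\tau^\vee\cap M]=B_0[\chi_\rho^{\pm 1}]$ and $U_\tau\cong\BG_m\times D_+(\chi^{m_\tau})$. This avoids the auxiliary algebra $A'$ altogether, and with it the need to know that $A$ lies in the fraction field of $A'$. It also proves local triviality over $\BZ_p$ explicitly and exhibits $\wt X_{\CG,\mu}$ as covered by the affine charts $\Spec B_0$. The paper's route, in exchange, identifies the line bundle immediately as the tautological $\CO(1)$.

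The one step you leave unjustified is $A_m=\Gamma(\wt X_{\CG,\mu},\CL^{\otimes m})$. It needs $\Gamma(Y_{\CG,\mu}\setminus\{0\},\CO)=A$ in non-negative degrees. This follows from normality of $Y_{\CG,\mu}$ together with the vertex having codimension ${\rm rk}\,T_\CG\ge 2$. Alternatively it follows directly from $\bigcap_\tau\tau^\vee=\sigma^\vee_\mu$, the intersection taken over the facets $\tau$.
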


\begin{proof}
 Recall that each extremal ray $\rho$ of the dual cone $\sigma^\vee_\mu$ corresponds to a ($1$-codimensional) facet $F=F(\rho)$ of the cone $\sigma_\mu$. In turn, by
  the face-orbit correspondence, this corresponds to a $1$-dimensional torus orbit $O(F)$ in $Y_{\CG,\mu}$. If $y\in Y_{\CG, \mu}\setminus \{0\}$ is a fixed point of $t_0\in z(\BG_m)$, then the whole $T_\CG$-orbit of $y$ is fixed by $t_0$ which, by considering closures, implies that $t_0$ fixes all points of some  orbit $O(F)$. But the subgroup of $z(\BG_m)\subset T_\CG$ which fixes all the points of $O(F)$ has order $\langle z, \chi_\rho\rangle$. The claim in the first part of the statement follows. 
  
  We now  show the second part of the statement. Note that the Hilbert homogeneous basis of $A$ is given by the indecomposable elements in $S_{\CG, \mu}$. Among them are  the generators $\chi_\rho$ of the extremal rays $\rho$ of the dual cone $\sigma^\vee_\mu$, but there may be more. 
 Let $A'$ be the $\BZ_p$-subalgebra of $A$ which is generated by the degree $1$ part $A_1$. By our assumption $\langle z, \chi_\rho\rangle=1$, $A_1$ contains the ray generator $\chi_\rho$, for each extremal ray $\rho$ of $\sigma_\mu^\vee$. It is easy to see that $S_{\mu}$ is the saturation of its semigroup $S'_\mu$
which is  generated in $X^*(T_\CG)$ by the ray generators $\chi_\rho$. Since $A$ is normal, this implies that $A$ is the normalization of $A'$ and we have $\wt X_{\CG,\mu}={\rm Proj}(A)={\rm Proj}(A')$, since $A$ and $A'$ coincide in high degree. Since $A'$ is generated by $A_1$,  the tautological twisting sheaf $O_{\wt X_{\CG,\mu}}(1)$ is a line bundle and the canonical homomorphism
  \[
  A'\to \wt A':= \bigoplus_{m\geq 0}\Gamma(\wt X_{\CG,\mu}, O_{\wt X_{\CG,\mu}}(1)^{\otimes m})
  \]
  identifies $\wt A'$ with the normalization $A$ of $A'$. Hence
  \[
   A=\bigoplus_{m\geq 0}\Gamma(\wt X_{\CG,\mu}, O_{\wt X_{\CG,\mu}}(1)^{\otimes m})
  \]
  (Stacks 27.8-27.10). The rest of the statement now follows.
  \end{proof}

\begin{remark}\label{ex:notfree} The condition in Proposition \ref{prop:free} is often not satisfied. Then the action of $Z$ on $Y_{\CG, \mu}\setminus \{0\}$ is not free. Consider the case $(G=\GL_4,\mu=\varpi^\vee_2, \CG=\CI)$ of Ex. 2, \S \ref{ss:Exam}.  Then $Z=\BG_m$ and the extremal ray generators are the elements $e_i$ and $f_i$ there;
 the $e_i$' s have degree $1$ but the $f_i$'s have degree $2$. In this situation, there are
 stabilizer groups of order $2$: The closed subscheme $W$ of $Y_{\CG, \mu}$ defined by the ideal
  $(e_1,e_2,e_3,e_4, f_1f_2,f_1f_3,f_1f_4,f_2f_3,f_2f_4, f_3f_4)$ has dimension $1$, so it is not the closed orbit. We can see that  $\mu_2\subset Z=\BG_m$ acts trivially on $W$.   \end{remark} 
 
 \begin{example}\label{DrversAnti2}
 In the situation of Example \ref{DrversAnti}, the projective toric scheme $X_{\CG,\mu}$ associated to $(G,\{\mu\},\CG)=(\GL_n,\{\varpi^\vee_{n-1}\},\CI)$ is $\BP^{n-1}_{\BZ_p}$ and $Y_{\CG, \mu}$ is the $(n-1)$-st Veronese cone.
 \end{example}

\subsubsection{Constructions starting with a weight; duality}\label{duality}
The following ``dual" construction of toric varieties/schemes is perhaps more common and variations have appeared in many contexts,   comp., e.g., the references in \cite{MoR}. 
The resulting toric embeddings are sometimes called ``Weyl orbit toric varieties".

Let $H$ be a reductive group over $\BQ_p$ which is quasi-split and splits over $F=\br\BQ_p$. Choose a maximal torus $S$ and a Borel subgroup $ B$ in $ H_{F}$ over $F$; we can assume that both $S$ and $B$ are defined over $\BQ_p$. Suppose that $\lambda\in X^*(S)$ is a dominant weight $\neq 0$ and consider the Weyl orbit $\{\lambda\}=W_0\cdot \lambda\subset X^*(S) $. The rational polyhedral convex cone
\[
\tau^\vee_\lambda=\{ a\in X_*(S)_\BR\ |\ \langle  w\cdot\lambda, a\rangle\geq 0, \forall\, w\in W_0\}
\]
determines an affine $S_{O_F}$-toric scheme $V_{H,\lambda}$ over $O_F$, as follows: the cone $\tau^\vee_\lambda$ is the dual of the positive hull $\tau_\lambda={\rm Cone}(W_0\cdot \lambda) \subset X^*(S)_\BR$, and we set
\[
V_{H,\lambda}:=\Spec(O_F[\tau_\lambda\cap X^*(S)]).
\]

There is also a projective variant. Assume for simplicity that $H$ is absolutely  almost simple. Consider the ``Weyl polytope" ${\rm WP}_{\lambda}$ defined as the convex hull of the Weyl orbit 
$\{\lambda \}=W_0\cdot \lambda $ in $X^*(S )_\BR$ and let  $Q=X^*(S_\ad)\subset X^*(S_\der)$ be the root lattice.   We can define a projective toric scheme embedding $U_{H,\lambda}$ for the torus $S_{\ad,O_F}$, as follows:  the corresponding fan in $X_*(S_\ad)_\BR$ is the projection from $X_*(S)_\BR $ to $X_*(S_\ad)_\BR$ of the normal (dual) fan of the polytope ${\rm WP}_\lambda$. Here we refer to \cite[Ch. 1, \S 2.3]{CLS} for the definition of the normal fan of a lattice polytope in $X^*(S)_\BR$ (the normal fan is called the \emph{dual fan} in \cite{MoR}). 

By Dabrowski  \cite[Thm. 3.2] {Dabrowski}\footnote{\cite{Dabrowski} assumes the base is an algebraically closed field but the proof applies in our situation too. Here the qualifier  ``generic'' is meant in the sense of \cite[Def. 1.1]{Dabrowski}, which implies that the Zariski closure is normal.} the toric scheme $U_{H,\lambda}$ is isomorphic to the Zariski closure of  a generic $S$-orbit in  $H/P_\lambda$ (see also \cite[Prop. 3]{MoR} for this description of the toric variety defined by a generic orbit closure).

 Let us now consider the dual reductive group $G=H^\vee$ with corresponding dual maximal torus $T=S^\vee\subset H^\vee=G$. We have identifications $X^*(S)=X_*(T)$, $\chi\mapsto \chi^\vee$. Using the dual $\mu:=\lambda^\vee$, which is now a  coweight of $G$, we construct  the pair $(G, \{\mu\})=(H^\vee, \{\lambda^\vee\})$. Take $\CG$ to be an Iwahori of $G$ defined by an alcove in the apartment of $T$. Then, under the above identification $X_*(T)=X^*(S)$, the cone $\sigma_{\CG,\mu}$ is equal to $\tau^\vee_\lambda$, and its dual 
$\sigma_{\CG,\mu}^\vee$ is equal to $\tau_\lambda={\rm Cone}(W_0\cdot \lambda)$, as above. It follows that the $T_{O_F}$-toric scheme 
$Y_{\CG,\mu}\otimes_{O_E}O_F$ is isomorphic to the dual (in the sense of toric varieties, as in \cite[Def. 3.9.1]{BL} for example) of the $S_{O_F}=T^\vee_{O_F}$-toric scheme $V_{H,\lambda}=V_{G^\vee,\mu^\vee}$ given as above. Note that these considerations generalize some of the arguments in Ex. 6, \S \ref{ss:Exam}.

\begin{remark}
One might ask for a more direct connection between the toric schemes $Y_{\CG,\mu}$ which are defined from the Weyl orbit of a \emph{coweight} and the toric schemes  $V_{H,\lambda}$ which are defined from the Weyl orbit of a \emph{weight}.

Let us start with $(G,\{\mu\})$, with $\mu$ a non-trivial coweight, such that $G$ is quasi-split and splits over $F=\br\BQ_p$. Let $T$ be a maximal torus defined over $\BQ_p$ and take $\CG$ to be an Iwahori of $G$ defined by an alcove in the apartment of $T$. Now take $H=G$ and $S=T$ in the above. We ask if there is a weight $\lambda\in X^*(S)=X^*(T)$ such that $Y_{\CG,\mu}\otimes_{O_E}O_F\simeq V_{H,\lambda}$ as $T_{O_F}$-toric schemes. This happens if $\tau_\lambda=\sigma^\vee_\mu$ in $X^*(T)_\BR$. 
Now recall that the extremal rays of $\sigma^\vee_\mu$ correspond to facets of the cone $\sigma_\mu$. Hence, a necessary 
and sufficient condition for the existence of $\lambda\in X^*(S)$ with $\tau_\lambda=\sigma^\vee_\mu$ is that the facets of $\sigma_\mu$ consist of one Weyl orbit.
We can list the cases when this happens by applying work of Maxwell \cite{Maxwell}: For simplicity, assume that, in addition, $G$ is absolutely almost simple. Let $\Delta$ be the set of simple roots regarded as vertices of the Dynkin diagram of $G$. Consider
\[
J(\mu):=\{s\in \Delta\ |\ s(\mu)=\mu\}.
\]
Then by \cite{Maxwell}, see also \cite[Cor. 1.3]{Renner}:
\begin{itemize}
\item[] \emph{The set of Weyl orbits of the facets of $\sigma_\mu$ is in bijection with 
the set of subsets $J\subset \Delta$ of
size $|J|=|\Delta|-1$ which satisfy the following condition:
No connected component of $J$ is contained in $J(\mu)$.}
\end{itemize}
It is not hard to see that if the Dynkin diagram   has branches ($D$, $E$ types)
then, for each $\mu$, there exists more than one such subset $J$, hence also more than one Weyl orbit of facets. For the unbranched diagrams ($A$, $B$,
$C$, $F$, $G$ types) there is a unique $J$ if and only if $J(\mu)$ is obtained by just omitting one of
the extreme points of the Dynkin diagram. Then the unique $J $ is given by
omitting the opposite extreme point. Let us now   restrict consideration to minuscule $\mu$. Then, by the above,  there is a single orbit of facets of $\sigma_\mu$ in exactly the following cases:
\begin{itemize}
\item $A$ type: $\{\mu_\ad\}$ is the Drinfeld $\{\varpi^\vee_1\}$ (or anti-Drinfeld $\{\varpi^\vee_{n-1}\}$) class (Ex. 1, resp. Ex. 3, \S \ref{ss:Exam}),
\item $B$ type: $\{\mu_\ad\}$ is the unique minuscule class (Ex. 6, \S \ref{ss:Exam}),
\item $C$ type: $\{\mu_\ad\}$ is the unique minuscule class (Ex. 5, \S \ref{ss:Exam}).
\end{itemize}

\end{remark}

  \section{The divisor map and Lang covers}

\subsection{The divisor map}\label{ss:divm}

\subsubsection{Definitions} 
Assume now that the divisor conjecture, i.e. Conjecture \ref{divconj}, holds. Then there is a $\CG $-equivariant $T_{\CG }$-torsor ${\rm P}_{\CG,\mu}\to \Mloc_{\CG,\mu}$ and 
 a $G_E=\CG\otimes_{\BZ_p}E$-equivariant  trivialization 
\[
s: T_{\CG,E}\times_{\Spec(E)} (\Mloc_{\CG,\mu}\otimes_{O_E}E)\xrightarrow{\sim} {\rm P}_{\CG,\mu}
\otimes_{O_E}E
\]
 over the generic fiber,  satisfying the conditions of Conjecture \ref{divconj}. The trivialization $s$ gives, by using the projection, a $T_{\CG,E}$-equivariant morphism over $E$
\begin{equation}\label{defq}
\delta_E={\rm pr}\cdot s^{-1}: {\rm P}_{\CG,\mu}^{(-1)}
\otimes_{O_E}E
\to T_{\CG,E} 
\end{equation}
 with fibers isomorphic to the partial flag variety $\CF(G, \{\mu\})=G_E/P_{\{\mu\}}$. Here  ${\rm P}^{(-1)}_{\CG,\mu}={\rm P}_{\CG,\mu}$ as a scheme, but with the inverse $T_{\CG}$-action, i.e. composed with $t\mapsto t^{-1}$. Since $s$ is $G_E $-equivariant, $\delta_E$ is $G_E$-equivariant for the trivial action on the target. 
 
 \begin{proposition}\label{prop:delta}
Under the above assumptions (in particular assuming Conjecture \ref{divconj}), there is a unique extension of the morphism $\delta_E$  to a $T_{\CG,O_E}$-equivariant morphism  
\[
\delta: {\rm P}^{(-1)}_{\CG,\mu}\to Y_{\CG,\mu}\otimes_{O_{E_0}} O_E.
\]
Furthermore, $ \delta$ is $\CG_{O_E}$-equivariant for the trivial action on the target. 
\end{proposition}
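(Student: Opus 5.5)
To extend $\delta_E$ we only need to extend regular functions, since $Y_{\CG,\mu}\otimes_{O_{E_0}}O_E$ is affine with coordinate ring $O_E[S_{\CG,\mu}]$ (after base change to $O_{\br E}$, then descending).

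A $T_{\CG}$-equivariant morphism ${\rm P}^{(-1)}_{\CG,\mu}\to Y_{\CG,\mu}$ is therefore the same as a compatible family, indexed by $\chi\in S_{\CG,\mu}$, of functions $f_\chi$ on ${\rm P}^{(-1)}_{\CG,\mu}$ that transform by $\chi$ and are multiplicative in $\chi$. Functions on ${\rm P}_{\CG,\mu}$ of weight $\chi$ (for the inverse action) are exactly global sections of the line bundle $\CL_\chi$ on $\Mloc_{\CG,\mu}$, with the appropriate sign convention. So $\delta_E$ amounts to the family $\chi\mapsto s_\chi$, where $s_\chi$ is the section of $\CL_\chi[1/p]$ induced by $s$. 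These are multiplicative, $s_{\chi+\chi'}=s_\chi\otimes s_{\chi'}$, because they come from one torsor section. I would first verify these identifications carefully on $O_{\br E}$, where characters are defined, and deal with Galois descent to $O_E$ at the end.

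The key step is to show that $s_\chi$ extends to a regular section of $\CL_\chi$ over all of $\Mloc_{\CG,\mu}\otimes O_{\br E}$ whenever $\chi\in S_{\CG,\mu}$. By Conjecture \ref{divconj}, $\div(s_\chi)=D_\chi=\sum e\langle\bar\mu',\chi\rangle Z_{\bar\mu'}$. By the definition of $S_{\CG,\mu}$ all coefficients are $\geq 0$, so $\div(s_\chi)$ is effective. Since $\Mloc_{\CG,\mu}$ is normal, a meromorphic section of a line bundle with effective divisor is regular, by Hartogs (sections extend across codimension $\geq 2$). Hence $f_\chi$ is regular on ${\rm P}^{(-1)}_{\CG,\mu}$. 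Multiplicativity persists because it holds on the dense generic fiber and ${\rm P}_{\CG,\mu}$ is flat, hence reduced and torsion-free over $O_E$. This gives an $O_{\br E}$-algebra map $O_{\br E}[S_{\CG,\mu}]\to\Gamma({\rm P},\CO)$, i.e.\ the morphism $\delta$, which is equivariant by construction. This normality/effectivity step is the heart of the argument; the only subtlety I expect is keeping track of signs (dual line bundle versus inverse action) so that effectivity matches $S_{\CG,\mu}$ and not $-S_{\CG,\mu}$.

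Uniqueness follows because ${\rm P}_{\CG,\mu}$ is flat over $O_E$, so its generic fiber is schematically dense and $Y$ is separated. Equivariance for $\CG$ with the trivial action on $Y$, and descent from $O_{\br E}$ to $O_E$, follow the same way. Both $\delta\circ g$ and $\delta$, respectively $\delta$ and its Frobenius twist, agree on the generic fiber, since $s$ is $G_E$-equivariant and defined over $E$. By density and separatedness they therefore agree everywhere. For the $\CG$-action I would apply this to the action morphism $\CG\times{\rm P}\to{\rm P}$, whose source is also flat over $O_E$.
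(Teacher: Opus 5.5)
Your proposal is correct and follows essentially the same route as the paper: after passing to $O_{\br E}$ you show that the functions $\delta_E^*(\chi)$, $\chi\in S_{\CG,\mu}$, extend. You do this by using normality to reduce to the generic points of the special fiber, where effectivity of $D_\chi$ (from the divisor formula and the positivity defining $S_{\CG,\mu}$) gives regularity, and you get uniqueness and $\CG$-equivariance from flatness over $O_E$. The only differences are cosmetic: you phrase the extension via sections of $\CL_\chi$ on $\Mloc_{\CG,\mu}$ rather than via local trivializations of ${\rm P}_{\CG,\mu}$, and what you actually need for injectivity of restriction to the generic fiber is torsion-freeness (which flatness gives), not reducedness (which flatness alone does not give).
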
 

 Note that   $ \delta$ amounts to a morphism $\Mloc_{\CG,\mu}\to [T_\CG \bslash Y_{\CG,\mu}]$ which is $\CG $-equivariant for the trivial action on the target. We postpone the proof to give:
 
\begin{definition}\label{def:DivisorMap}
 Assume the divisor conjecture for $(\CG,\{\mu\})$. Then the \emph{divisor map} for the  pair $(\CG,\{\mu\})$ is  the  morphism of stacks
  \[
 \Delta_\CG: [\CG \bslash \Mloc_{\CG,\mu}]\to  [T_\CG \bslash Y_{\CG,\mu}]
 \]
obtained from the $\CG $- and $T_{\CG }$-equivariant morphism $\delta$ above. More generally, for a 
choice of semi-group $S $  as in \S \ref{moregen}, defining the toric scheme $Y_S$,
 we define the divisor map  
 \[
\Delta_{\CG, S}:   [\CG \bslash \Mloc_{\CG,\mu}]\to  [T_\CG \bslash Y_S ]
 \]
 to be the composition of  $\Delta_\CG$ above with  $[T_\CG \bslash Y_{\CG,\mu}]\to [T_\CG \bslash Y_S ]$.
 \end{definition} 
  
 We now give the proof of Proposition \ref{prop:delta}.
 
\begin{proof}
 The uniqueness statement follows using that $\Mloc_{\CG,\mu}$ and hence ${\rm P}_{\CG,\mu}$ is flat over $O_E$. A similar argument proves that the $\CG$-equivariance
 of any extension $\delta$ of $\delta_E$ follows from the $G_E$-equivariance of $\delta_E$.
To extend $\delta_E$ to $ \delta$, it  is enough to base change to $O_{\br E}$  and check that, if $\chi\in S_\mu$, then the element $\delta_E^*(\chi)\in \Gamma(\br{\rm P}^{(-1)}_{\CG,\mu}[1/p], \CO_{\br{\rm P}_{\CG,\mu}}[1/p])$ actually lies in $\Gamma(\br{\rm P}^{(-1)}_{\CG,\mu}, \CO_{\br{\rm P}_{\CG,\mu}})$.
Since ${\rm P}_{\CG,\mu}\to \Mloc_{\CG,\mu}$ is smooth and $\Mloc_{\CG,\mu}$ normal and flat over $O_E$, ${\rm P}_{\CG,\mu}^{(-1)}={\rm P}_{\CG,\mu}$ is also normal and flat over $O_E$, and it is enough to verify that $\delta_E^*(\chi)$ is regular at the generic points
 of the special fiber of $\br{\rm P}_{\CG,\mu}$. The result then follows from the multiplicity formula (\ref{divsum}) in Conjecture \ref{divconj}  and the positivity appearing in the definition of $S_{\CG,\mu}$: Indeed, on sufficiently small open charts $\Spec(R)$ of $\Mloc_{\CG,\mu}\otimes_{O_E}O_{\br E}$ covering the generic points of its special fiber, we have ${\rm P}_{\CG,\mu}^{(-1)}=T_\CG\times \Spec(R)=\Spec(R[X^*(T_\CG)])$ and $\delta_E$ gives 
 \[
 f: \Spec(R[1/p])\to \Spec(\br\BQ_p[\omega^\pm_1,\ldots ,\omega^\pm_r]).
 \]
 For $\chi=\omega_1^{a_1}\cdots\omega^{a_r}_r\in S_{\CG,\mu}$, we have 
$ f^*(\chi)\in R$ 
 since $D_\chi$ is an effective divisor on $\Mloc_{\CG, \mu}\otimes_{O_E}O_{\br E}$ by (\ref{divsum}) and by the definition of $S_{\CG,\mu}$. So this gives
 \[
 \bar f: \Spec(R)\to \Spec(\br\BZ_p[S_{\CG,\mu}])=\br Y_{\CG,\mu}
 \]
 which, together with $T_\CG\to   Y_{\CG,\mu}$,  defines the extension $ \delta: T_\CG\times \Spec(R)\to \br Y_{\CG,\mu}$ over $\Spec(R)$.
 In particular, $\delta_E^*(\chi)$ is regular over $\Spec(R)$.
\end{proof}

\subsubsection{Combinatorics of the divisor map}\label{ss:comfa} 

 Let us assume that $\sigma_{\CG, \mu}$ is a strictly convex cone. Consider the partially ordered set of faces ${\mathscr F}(\sigma_{\CG,\mu})$ of the cone $\sigma_{\CG,\mu}\subset X_*(T_\CG)_\BR$ with $\tau'\leq \tau$
if $\tau'$ is a face of $\tau$. This is in bijection 
$\tau\mapsto O(\tau)$ with the set of
$T_\CG$-orbits in $\br Y_{\CG,\mu}$ and $\tau'\leq \tau$ if and only if $O(\tau)\subset \ov{O(\tau')}$, \cite[Thm. 3.2.6]{CLS}. Note that  ${\mathscr F}(\sigma_{\CG,\mu})$  has a unique maximal element, namely $\sigma_{\CG,\mu}$  and a unique minimal element, namely $\{0\}$.

Also, let ${\rm Adm}^\CG(\{\mu\})\subset W^\CG\bs \wt W/W^\CG$ be the $\{\mu\}$-admissible set which parametrizes the $\CG_k$-orbits $S_w$
in $\Mloc_{\CG,\mu}\otimes_{O_E}k$, cf. \cite[\S 4.4]{PRS}. This   also has a partial order induced by the Bruhat order on $\wt W$ with $w'\leq w$ iff $S_{w'}\subset \ov{S_w}$, cf. \cite[Prop. 4.18]{PRS}.
Note that ${\rm Adm}^\CG(\{\mu\})$ contains a smallest element $\tau$ which corresponds to the unique closed $\CG_k$-orbit on 
$\Mloc_{\CG,\mu}\otimes_{O_E}k$.  The maximal elements are exactly given by the elements of  the image of $\Lambda_{\{\mu\}}$ in ${\rm Adm}^\CG(\{\mu\})$.

The map of stacks $\Delta_\CG$  gives a map of posets,
\begin{equation}\label{divcomb}
|\Delta_\CG|: {\rm Adm}^\CG(\{\mu\})\to {\mathscr F}(\sigma_{\CG,\mu}).
\end{equation}
Recall that the set 
 $$
 \{\phi_\CG(\bar\mu')\in X_*(T_\CG)_\BR\mid \bar\mu'\in\Lambda_{\{\mu\}}\}
 $$
 generates the cone $\sigma_{\CG,\mu}$. Let 
 \[
 \rho^\CG_{\bar\mu'}:=\BR_{>0}\cdot \phi_\CG(\bar\mu')
 \]
  be the ray generated by $\phi_\CG(\bar\mu')$.
 Then the set of extremal rays  of 
 $\sigma_{\CG,\mu}$ is exactly  the image in $X_*(T_\CG)_\BR$ of the set of rays 
 $\{\rho_{\bar\mu'}=\BR_{>0}\cdot  \bar\mu' \mid \bar\mu'\in \Lambda_{\{\mu\}}\}$, and they form one orbit under the relative Weyl group $W_0$ over $\br\BQ_p$.

  \begin{proposition}\label{prop:deltastrata}
  a) The  map $|\Delta_\CG|: {\rm Adm}^\CG(\{\mu\})\to {\mathscr F}(\sigma_{\CG,\mu})$ reverses the  orders on source and target.
  
  b) Let  ${\bar\mu'}\in\Lambda_{\{\mu\}}$. Then $|\Delta_\CG|(t^{\bar\mu'})=\rho^\CG_{\bar\mu'}$.
  
  c) $|\Delta_\CG|(\tau)=\sigma_{\CG,\mu}$.
    \end{proposition}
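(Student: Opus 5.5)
The plan is to work on geometric points after base change to $O_{\br E}$. Both $\delta$ and the stratifications are equivariant, so $|\Delta_\CG|$ is well defined on $\CG_k$-orbits. The key tool is the standard description of $T_\CG$-orbits in an affine toric scheme. A point $y\in \br Y_{\CG,\mu}(\kappa)$ lies in $O(\tau)$ exactly when the set $\{\chi\in S_{\CG,\mu}\mid \chi(y)\neq 0\}$ equals $\tau^\perp\cap S_{\CG,\mu}$. So for a point $x$ of $\Mloc_{\CG,\mu}$, choose a local trivialization of $\RP_{\CG,\mu}$ near $x$. Then $\delta(x)\in O(\tau)$ where $\tau^\perp\cap S_{\CG,\mu}$ is the set of $\chi\in S_{\CG,\mu}$ such that the section $s_\chi$ of $\CL_\chi$ (regular for $\chi\in S_{\CG,\mu}$, by the proof of Proposition \ref{prop:delta}) does not vanish at $x$. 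Equivalently, this is the set of $\chi\in S_{\CG,\mu}$ such that $x\notin \mathrm{Supp}(D_\chi)$, where $D_\chi$ is the effective Cartier divisor of Conjecture \ref{divconj}.

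Part (a) follows from this description. If $w'\leq w$, then $S_{w'}\subset \ov{S_w}$. Any $\chi$ with $s_\chi$ nonvanishing at a point of $S_{w'}$ is nonvanishing on an open set meeting $S_{w'}$, hence on a dense open subset of $S_w$. By equivariance it is then nonvanishing on all of $S_w$. Thus the face $\tau_w$ attached to $S_w$ has $\tau_w^\perp\supseteq\tau_{w'}^\perp$, i.e.\ $\tau_w\leq \tau_{w'}$. (One can also argue via continuity: $\delta(\ov{S_w})\subset\ov{O(\tau_w)}$, which is a union of orbits $O(\tau')$ with $\tau'\geq\tau_w$.)

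For (b), take $x$ a generic point of $Z_{\bar\mu'}$, equivalently a point of the open orbit $S_{t^{\bar\mu'}}$. The component $Z_{\bar\mu'}$ lies in $\mathrm{Supp}(D_\chi)$ iff $\langle\bar\mu',\chi\rangle>0$. At $x$ the only component of the special fiber passing through is $Z_{\bar\mu'}$, since $\Mloc$ is smooth along this stratum, as used in the proof of Proposition \ref{multiProp}. Hence $s_\chi(x)\neq 0$ iff $\langle\phi_\CG(\bar\mu'),\chi\rangle=0$. The face $\tau$ with $\tau^\perp\cap S_{\CG,\mu}=\phi_\CG(\bar\mu')^\perp\cap S_{\CG,\mu}$ is the smallest face containing $\phi_\CG(\bar\mu')$. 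This is the ray $\rho^\CG_{\bar\mu'}$, provided that ray is extremal, which holds by the stated fact that the rays $\phi_\CG(\bar\mu')$ are exactly the extremal rays, forming one $W_0$-orbit. That fact would itself need a brief justification: one orbit of generators cannot have a member in the interior of a face spanned by the others, by convexity and $W_0$-symmetry.

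For (c), the point is that the closed orbit $S_\tau$ lies in the closure of every $S_w$, in particular in every $Z_{\bar\mu'}$. Hence $D_\chi$ contains $S_\tau$ as soon as some $\langle\bar\mu',\chi\rangle>0$. By strict convexity, the only $\chi\in S_{\CG,\mu}$ vanishing on all $\Lambda_{\{\mu\}}$ lie in $\sigma_{\CG,\mu}^\perp$. So the nonvanishing set is $\sigma_{\CG,\mu}^\perp\cap S_{\CG,\mu}$, giving the face $\sigma_{\CG,\mu}$.

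The main obstacle is the step in (b) from the multiplicity formula to pointwise nonvanishing. An effective Cartier divisor whose associated Weil divisor does not contain $Z_{\bar\mu'}$ does not vanish at the generic point of $Z_{\bar\mu'}$; I would justify this using normality of $\Mloc$ and the fact that $D_\chi$ is Cartier. One must also handle the subtlety that $x$ should lie on no other component. I would handle this via the smoothness of $\Mloc$ along the maximal strata, or alternatively by evaluating directly at the $O_E$-point $[\fkt_{\bar\mu'}]$ constructed in Proposition \ref{ZhuProp}, where $\chi(\delta)$ equals $(-\varpi)^{e\langle\bar\mu',\chi\rangle}$ up to a unit.
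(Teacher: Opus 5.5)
Your proof is correct and follows the paper's argument for (a) and (b): equivariance plus closure relations for (a), and for (b) the divisor formula combined with the description of $\ov{O(\tau)}$ by the characters in $S_{\CG,\mu}\setminus\tau^\perp$. You differ only in proving (c) directly rather than deducing it from (a) and (b), and in making explicit the passage from multiplicities to pointwise vanishing, which the paper leaves implicit.

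One caveat, shared with the paper, concerns the extremality of $\rho^\CG_{\bar\mu'}$, which the paper simply asserts. Your $W_0$-symmetry justification needs $\phi_\CG$ to be $W_0$-equivariant, i.e.\ $\CG$ Iwahori, and for a general parahoric extremality can fail. For example, for $\GL_4$, $\mu=(1,1,0,0)$ and $\CG$ with reductive quotient $\GL_2\times\GL_2$, the points $\phi_\CG(\bar\mu')$ are $(2,0),(1,1),(0,2)$, so $(1,1)$ lies in the interior of the first quadrant. In that case your argument correctly gives the smallest face containing $\phi_\CG(\bar\mu')$ rather than $\rho^\CG_{\bar\mu'}$, while your direct proof of (c) still goes through.
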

 
Note that (c) means that the unique closed $\CG_k$-orbit in 
$\Mloc_{\CG,\mu}\otimes_{O_E}k$ maps to the unique closed orbit in $\br Y_{\CG,\mu}$.

\begin{proof}
a) Let $S^1_w\subset {\rm P}_{\CG,\mu}$ be the inverse image of the stratum $S_w\subset \Mloc_{\CG,\mu}\otimes_{O_E}k$
under the $T_\CG$-torsor $\pi: {\rm P}_{\CG,\mu}\to \Mloc_{\CG,\mu}$. Then $\delta(S^1_w)$ is a single torus orbit which, by definition, is $O(|\Delta_\CG|(w))$.
Suppose $w'\leq w$. This implies $S_{w'}\subset \ov{S_w}$, so $S^1_{w'}=\pi^{-1}(S_{w'})\subset \pi^{-1}(\ov{S_w})=\ov{S^1_w}$. Now
\[
O(|\Delta_\CG |(w'))=\delta(S^1_{w'})\subset \delta(\ov {S^1_{w}})\subset \ov{\delta(S^1_{w})}=\ov{O(\Delta_\CG(w))}.
\]
Hence, $|\Delta_\CG |(w)\leq |\Delta_\CG|(w')$.
 
b) Set $\tau_{\bar\mu'}=|\Delta_\CG |(t^{\bar\mu'})$, a face of $\sigma_{\CG,\mu}$. Consider $u\in S_{\CG,\mu}\setminus \{0\}$, which defines a regular function on $\br Y_{\CG,\mu}$. By the definition of $\delta$ and the divisor formula (\ref{divsum}),  $u$ vanishes on $\delta(S_{t_{\bar\mu'}}^1)$ if and only if $\langle \bar\mu', u\rangle >0$. Since $ \delta(S_{t_{\bar\mu'}}^1)= O(\tau_{\bar\mu'})$, this implies that $\ov {O(\tau_{\bar\mu'})}\subset V(u)$ if and only if $\langle \bar\mu', u\rangle=\langle \phi_\CG(\bar\mu'),u\rangle_\CG >0$. Here the RHS is the paring $X_*(T_{\CG, \mu})_\BR\times X^*(T_{\CG, \mu})_\BR\to \BR$. But the ideal corresponding to the closure $\ov {O(\tau)}$ of any orbit $O(\tau)$ corresponding to a face $\tau$ of $\sigma_{\CG,\mu}$  is given by 
$$
I(\ov {O(\tau)})=\langle  S_{\CG,\mu}\setminus \tau^\perp\rangle = \langle u\in S_{\CG,\mu}\mid \langle x,u\rangle_\CG >0, \forall\, x\in \tau^\circ \rangle,
$$
where $\tau^\circ$ is the relative interior of $\tau$, cf. \cite[(3.2.7)]{CLS}. Applying this to  $\tau_{\bar\mu'}$ and $\rho^\CG_{\bar\mu'}$, this implies 
 $\tau_{\bar\mu'}=\rho^\CG_{\bar\mu'}$.
 
 c) Follows from a) and b).
 \end{proof}

 \subsubsection{The face map}

We have the following conjectural description of the map $|\Delta_\CG |$, which is completely in combinatorial terms.  We define as follows a map of posets called the \emph{face map},
\begin{equation}
|\Delta_\CG|^{\rm f}: {\rm Adm}^\CG(\{\mu\})\to {\mathscr F}(\sigma_{\CG,\mu}).
\end{equation}
To $w\in\Adm(\{\mu\})$, we associate the set $\Lambda(w)\subset \Lambda_{\{\mu\}}$ given as 
 $$
 \Lambda(w)=\{\bar\mu'\in\Lambda_{\{\mu\}}\mid w\leq t^{\bar\mu'}\} .
 $$
  We can interpret $\Lambda(w)$ as a subset of the set of extreme rays of $\sigma_\mu$, and its image  $\Lambda_\CG(w)$ in ${\rm Adm}^\CG(\{\mu\})$  as a subset of the set of extreme rays of $\sigma_{\CG, \mu}$. By Proposition \ref{prop:deltastrata},  the relation $w\leq t^{\bar\mu'}$ implies that the extremal ray $\rho^\CG_{\bar\mu'}$ is a face of  $|\Delta_\CG(w)|$. Hence  $\Lambda_\CG(w)$ is a subset of the set of extreme rays of $|\Delta_\CG(w)|$.  The face 
 $|\Delta_\CG |^{\rm f}(w)\in {\mathscr F}(\sigma_{\CG,\mu})$ is the smallest face of  $\sigma_{\CG,\mu}$ containing the rays for elements of $\Lambda(w)$. 
\begin{conjecture}\label{conj:fac}
 
  The map $|\Delta_\CG |$ coincides with the map $|\Delta_\CG |^{\rm f}$.

\end{conjecture}

There is the following variant of the face map. Let $(G, \{\mu\}, \CG)$ be a triple as usual, and let $(G_\ad, \{\mu_\ad\}, \CG_\ad)$ be the corresponding adjoint triple. We do not assume that $\{\mu\}$ is minuscule but we assume that $\mu_\ad$ is non-trivial in all simple factors of $G_\ad\otimes_{\BQ_p}\br\BQ_p$. Recall from \S \ref{ss:projtor} the polytope 
$P^\CG_{\mu_{\ad}}$ which is the convex hull of the image of $\Lambda_{\{\mu\}}$ in $X_*(T_{\CG_\ad})_\BR$. Then  $P^\CG_{\mu_{\ad}}$ contains the origin in its interior. An obvious variant of the definition of the face map defines the \emph{polytope face map}
\begin{equation}\label{polfac}
\nabla_\CG: {\rm Adm}^\CG(\{\mu\})\to {\mathscr F}(P^\CG_{\mu_\ad}).
\end{equation}
Now let $(G, \{\mu\}, \CG)$  as in \S \ref{ss:comfa}, in particular, $\sigma_{\CG, \mu}$ is a strictly convex cone. Then there are bijections between the following three sets: (a) the set of vertices of  $P^\CG_{\mu_\ad}$; (b) the set of extreme rays of the fan $\Sigma^\CG_{\mu_\ad}$ of \S\ref{ss:projtor}; (c) the set of extreme rays of $\sigma_{\CG, \mu}$. Under the resulting identification ${\mathscr F}(P^\CG_{\mu_\ad})={\mathscr F}(\sigma_{\CG, \mu})\setminus \{0\}$, the  face map $|\Delta_\CG |^{\rm f}$ coincides with the composition of $\nabla_\CG$ and the inclusion ${\mathscr F}(P^\CG_{\mu_\ad})\hookrightarrow{\mathscr F}(\sigma_{\CG, \mu})$. 
\begin{remark}\label{rem:yu}
 The map \eqref{polfac} can be useful in analyzing the set $ {\rm Adm}^\CG(\{\mu\})$ which has a notoriously difficult structure. In response to some of our conjectures, Q. Yu indicated to us that he can prove the following properties of  $\nabla_\CG$ (for details comp. \cite{Yu}). 
 \begin{altenumerate}
 \item $\nabla_\CG$ is surjective but not injective  (the source has potentially many more elements than the target).
 \item There is a criterion for when two elements $w, w'\in\Adm^\CG(\{\mu\})$ have the same image under $\nabla_\CG$. 
 \item The fibers of $\nabla_\CG$ give a disjoint decomposition of ${\rm Adm}^\CG(\{\mu\})$ into some pieces which are ``primitive'' and some pieces which can be identified with admissible subsets of smaller groups.
 \end{altenumerate}
 \end{remark}
 
 \bigskip

\subsection{Lang covers}

 \subsubsection{Lang torsor for unramified  tori}
Let $T$ be a torus over $\BZ_p$. 
We set $\overline T=T\otimes_{\BZ_p}\BF_p$ for its reduction modulo $p$ and denote by ${\rm Frob}: \overline T\to \overline T$
the Frobenius which is an isogeny of $\overline T$. We also consider the Lang isogeny 
 \[
 \overline T\to \overline T,\quad x\mapsto {\rm Frob}(x)x^{-1}.
 \]
 Recall that reduction modulo $p$ gives an isomorphism
\[
{\rm Hom}_{\Spec(\BZ_p)}(T, T)\xrightarrow{\sim }{\rm Hom}_{\Spec(\BF_p)}(\overline T, \overline T)
\]
which takes isogenies to isogenies and preserves degrees (see, for example, \cite{Conrad}, App. B.3, in particular, Thm. B.3.2.)
 Hence there is a unique isogeny ${\rm Fr}: T\to T$ lifting the Frobenius
${\rm Frob}$ and, similarly, there is
\begin{equation}
L:  T\to T, \quad L(x)={\rm Fr}(x)x^{-1},
\end{equation}
 the unique isogeny lifting the Lang isogeny over $\BF_p$. We also call this isogeny the \emph{Lang isogeny} of $T$. The degree of $L$ is prime to $p$ and, in fact,
the kernel of $L$ is $\underline{ T(\BF_p)}$, the constant group scheme given by the finite  abelian group 
$T(\BF_p)$. Hence, we have a  sequence of group scheme homomorphisms over $\BZ_p$
 \[
 1\to \underline{ T(\BF_p)}\to T\xrightarrow{\ L\ } T\to 1,
 \]
 which is exact for the \'etale topology. The Lang isogeny is functorial, in the sense that if $T\to T'$ is a homomorphism of tori over $\BZ_p$, then the  diagram 
\begin{equation*}\label{LangComp}
\begin{aligned}
 \xymatrix{
        T \ar[r] \ar[d]_{L}  &  T' \ar[d]^{L'} \\
        T\ar[r]  & T'.
        }
        \end{aligned}
\end{equation*}
 is commutative. 
 \begin{remark}
 Under the anti-equivalence $T\mapsto X^*(T)$  of the category of unramified tori over $\BQ_p$ and the category of unramified Galois representations of $\Gal(\bar\BQ_p/\BQ_p)$ on finitely generated free $\BZ$-modules, the Lang isogeny $L$ corresponds to $L^*: X^*(T)\to X^*(T)$ given by $\chi\mapsto p\sigma(\chi)-\chi$, where $\sigma$ lifts the Frobenius. 
   \end{remark}

 \subsubsection{Lang covers of toric embeddings}

 We continue with the same set-up as above. In particular, $T$ is a torus over $\BZ_p$. Let $E_0$ be a finite unramified extension of $\BQ_p$ and let $T_{O_{E_0}}\hookrightarrow Y=Y_\tau$ be an affine torus embedding 
 over $O_{E_0}$ defined by a rational polyhedral cone $\tau\subset X_*(T)_\BR$ which is $\Gal(\br\BQ_p/E_0)$-stable. Let $S=\tau^\vee\cap X^*(T)$ the corresponding semigroup so that $Y_S=Y_\tau$.
 
  Let $\wt Y_{S} $ be the integral closure of $Y_{S}$  in the Lang morphism cover  $L\otimes_{\BZ_p}O_{E_0}: T_{O_{E_0}}\to T_{O_{E_0}}$.  We have a Cartesian diagram
  \begin{equation}\label{LangNormalization}
\begin{aligned}
 \xymatrix{
        T _{O_{E_0} }\ar@{^{(}->}[r] \ar[d]_{L}  &  \wt Y_{S}  \ar[d]^L \\
        T_{ O_{E_0}} \ar@{^{(}->}[r]  & Y_{S}.
        }
        \end{aligned}
\end{equation}
The  morphism $L: \wt Y_{S} \to Y_S$ is a finite $T (\BF_p)$-cover: The group $T (\BF_p)$ acts on $\wt Y_{S} $ preserving $L$ and the   morphism $L$ identifies $Y_S$ with the scheme-theoretic quotient $T (\BF_p)\bs \wt Y_S$.  The top horizontal arrow is also an affine toric embedding, see Proposition \ref{prop:normalToric} below.

\begin{proposition}\label{prop:normalToric} The normalization $\ti Y_S$ can be identified with $Y_{L^*(S)^{\rm sat}}$, where $L^*(S)^{\rm sat}\subset X^*(T_\CG)$ is the saturation of image of $S$ under the group homomorphism $L^*: X^*(T )\to X^*(T )$, $\chi\mapsto p\sigma(\chi)-\chi$. Alternatively, if $\tau\subset X_*(T_\CG)_\BR$ is the rational polyhedral cone which gives $S=\tau^\vee\cap X_*(T )$ and $Y_S=Y_\tau$, then $\wt Y_S$ can be   identified with the toric scheme $Y_{L_*^{-1}(\tau)}$ given by the inverse image cone $L_*^{-1}(\tau)\subset X_*(T )_\BR$, where $L_*: X_*(T )_\BR\to X_*(T )_\BR$ is given by $\mu\mapsto p\sigma(\mu)-\mu$. The Lang cover $L: 
\wt Y_{S} \to  Y_S$ can be identified with the morphism of toric schemes 
\[
Y_{L_*^{-1}(\tau)}\to Y_\tau
\]
induced by $L_*: X_*(T)\to X_*(T)$.
\end{proposition}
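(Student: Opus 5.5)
The plan is to reduce everything to the standard dictionary between affine toric schemes and saturated semigroups, after base change to $\breve\BZ_p$ (or to $\BZ_{p^m}$ splitting $T$), and then descend by Galois invariance. Over $\breve\BZ_p$ the torus splits, so $T_{O_{E_0}}\otimes\breve\BZ_p=\Spec(\breve\BZ_p[X^*(T)])$ and $Y_S\otimes\breve\BZ_p=\Spec(\breve\BZ_p[S])$. The Lang isogeny $L$ corresponds on coordinate rings to the map $\breve\BZ_p[X^*(T)]\to\breve\BZ_p[X^*(T)]$ induced by $L^*$, $\chi\mapsto p\sigma(\chi)-\chi$. Indeed, on characters $\chi\circ L=\chi\circ{\rm Fr}-\chi$ and $\chi\circ {\rm Fr}=p\,\sigma(\chi)$ by the preceding Remark. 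Since $L$ is an isogeny, $L^*$ is injective with finite cokernel, so $L^*_\BQ$ is an isomorphism. Thus the integral closure of $\breve\BZ_p[S]$ inside $\breve\BZ_p[X^*(T)]$, where the latter is viewed as an algebra over the former via $L^*$, is what we must compute.

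The key step is the standard fact that the integral closure of a semigroup ring $A[S]$ over a normal domain $A$ (here $\breve\BZ_p$, a DVR) inside $A[M]$, for a lattice $M$ containing $S$, is $A[S^{\rm sat}]$, where $S^{\rm sat}=\BR_{\ge0}S\cap M$. I would recall the proof briefly. The inclusion $\supseteq$ holds because for $m\in S^{\rm sat}$ some $N m\in S$, so $x^m$ is a root of $T^N-x^{Nm}$. The inclusion $\subseteq$ holds because $A[S^{\rm sat}]$ is normal: it is an intersection of the normal rings $A[M]$ and of the localizations at the facets (each facet gives a discrete valuation $\langle v_F,\cdot\rangle$), so $A[S^{\rm sat}]=A[M]\cap\bigcap_F\{\text{val}_F\ge 0\}$; alternatively one can cite \cite[Thm. 1.3.5]{CLS} over fields and flatness over $A$. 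Apply this with $M=X^*(T)$ and the image semigroup $L^*(S)$. The normalization is then $\Spec(\breve\BZ_p[L^*(S)^{\rm sat}])$, and $\BR_{\ge0}L^*(S)=L^*_\BR(\tau^\vee)$.

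It remains to identify cones by duality. For $\mu\in X_*(T)_\BR$ and $\chi\in X^*(T)_\BR$ we have $\langle L_*\mu,\chi\rangle=\langle\mu,L^*\chi\rangle$. This uses that $\sigma$ acts compatibly on both lattices and preserves the pairing, so $\langle p\sigma\mu-\mu,\chi\rangle=\langle \mu,p\sigma^{-1}\chi-\chi\rangle$. Here I would take care whether the adjoint is $L^*$ with $\sigma$ or with $\sigma^{-1}$, and fix conventions so that $L_*$ is the dual of $L^*$, i.e. the map on cocharacters induced by $L$; that map is $\mu\mapsto L\circ\mu$, which should be the formula in the statement. With this, $(L_*^{-1}\tau)^\vee=L^*(\tau^\vee)$ because $L^*_\BR$ is an isomorphism. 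Hence $L^*(S)^{\rm sat}=(L_*^{-1}\tau)^\vee\cap X^*(T)$, and we get $\wt Y_S=Y_{L_*^{-1}(\tau)}$. The map $\wt Y_S\to Y_S$ is by construction induced by $L^*$ on semigroups, i.e. it is the toric morphism given by $L_*$ mapping the cone $L_*^{-1}(\tau)$ into $\tau$.

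Finally comes descent to $O_{E_0}$. Because $\sigma$ commutes with $L^*$ and $\tau$ is $\Gal(\breve\BQ_p/E_0)$-stable, $L_*^{-1}(\tau)$ is stable as well. Normalization commutes with the étale base change $O_{E_0}\to\BZ_{p^m}$, and then with passage to invariants. So the identification descends, exactly as $Y_S$ was defined via $({\rm Res}\,\cdot)^{\Gal}$ in \S\ref{ss:toricscheme}. I expect the only real obstacle to be bookkeeping: keeping the $\sigma$ versus $\sigma^{-1}$ convention consistent between $L^*$ and $L_*$, and justifying that normalization commutes with the unramified base change.
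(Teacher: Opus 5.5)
Your argument is correct and is essentially the paper's. The paper views $Y_{L_*^{-1}(\tau)}\to Y_\tau$ as the toric morphism attached to the same cone $\tau$ and the finite-index sublattice $L_*(X_*(T))\subset X_*(T)$, which is finite by \cite[Prop.~1.3.18]{CLS}, and then uses normality of the source to conclude that it is the normalization; you instead compute the integral closure directly on the character side as $\breve\BZ_p[L^*(S)^{\rm sat}]$ and then dualize cones. Your choice of $L_*$ as $\mu\mapsto L\circ\mu$, the transpose of $L^*$ (for the pairing-compatible Galois action this is $\mu\mapsto p\sigma^{-1}(\mu)-\mu$), and your descent remark supply details the paper leaves implicit.
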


 \begin{proof} The morphism $Y_{L_*^{-1}(\tau)}\to Y_\tau$ can be identified with the toric morphism between the two affine toric schemes for the (same) cone $\tau\subset X_*(T)_\BR$ and the two lattices $L_*(X_*(T))$ and $X_*(T)$, which is obtained by 
  the finite index inclusion   $L_*(X_*(T))\subset  X_*(T)$ in $X_*(T)_\BR$, see \cite[Prop. 1.3.18]{CLS}. This morphism 
  is finite and, since $Y_{L_*^{-1}(\tau)}$ is normal, it agrees with the normalization $\wt Y_S$  of $Y_\tau=Y_S$ 
  in the cover $L: T\to T$.
 \end{proof}

\begin{remark}\label{remark:splitLang}
Note that if the torus is split, i.e., $T \simeq \BG_m^r$ over $\BZ_p$, then $L: \BG_m^r\to \BG_m^r$ is given by $L(x_1,\ldots, x_r)=(x^{p-1}_1,\ldots, x^{p-1}_r)$ and both
$L_*: X_*(T )\to X^*(T )$ and  $L^*: X^*(T )\to X^*(T_\CG)$, are given as dilation by a factor of $p-1$. Then, for every choice of semigroup $S$, we have 
$L^*(S)=(p-1)S$   and so $L^*(S)^{\rm sat}=S$. Alternatively, $L_*^{-1}(\tau)=\tau$. In this case, $\wt Y_S=Y_S$ and the Lang cover 
$L: Y_S\to Y_S$ is given by the ring homomorphism on semigroup algebras which sends $x$ to $x^{p-1}$, for all $x\in S$. 
\end{remark}

\subsubsection{Ramification along the boundary}\label{ss:ramify}
Recall that the boundary 
\[
[ Y_{\tau}\otimes_{O_{E_0}}{\br\BZ_p} \setminus T_{\br\BZ_p} ]=\bigcup\nolimits_{\rho}D(\rho)
\]
 of the toric embedding is a union of divisors $D(\rho)$, which are parametrized by the extremal rays $\rho=\BR_{> 0}\cdot \lambda$ ($1$-dimensional faces) of the cone 
 $\tau\subset X_*(T)_\BR$, cf. \cite[Thm. 3.2.6]{CLS}. Let us denote by $\lambda_\rho\in \rho\cap X_*(T)$  the minimal generator of the ray $\rho$ which belongs to $X_*(T)$, in particular
$\rho=\BR_{>0}\cdot \lambda_\rho$. 

A similar description of the boundary holds for the toric embedding $Y_{L_*^{-1}(\tau)}$. The Lang map given by 
 $L_*: X_*(T)_\BR\to X_*(T)_\BR$ maps each extremal ray $\wt \rho $ of $L_*^{-1}(\tau)$ to some extremal ray $\rho=L_*(\wt \rho)$ of $\tau$. 
 If $\lambda_{\wt \rho}\in \wt\rho\cap X_*(T)$ is the minimal generator of the ray $\wt \rho$, we can write
 \begin{equation}\label{defe}
 L_*(\lambda_{\wt \rho})=e_{\wt\rho/\rho}\cdot \lambda_{\rho}, 
 \end{equation}
 where $e_{\wt\rho/\rho}$ is a positive integer. 
 
 \begin{lemma}\label{lemma:ram}
The integer $e_{\wt\rho/\rho}$in \eqref{defe} is equal to the ramification degree of the extension 
 of dvrs $\CO_{\wt Y_S, D(\wt \rho)}/\CO_{Y_S, D(\rho)}$ which is 
 obtained by localizing the Lang cover $L: (\wt Y_S)_{\br \BZ_p}\to (Y_S)_{\br\BZ_p}$ at the generic points of the
  boundary divisors $D(\wt \rho)$ and $D(\rho)$.
 \end{lemma}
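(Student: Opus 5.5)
The plan is to compute the valuation of a local equation at the generic point of each boundary divisor. Toric geometry identifies these valuations with the ray generators, and since both schemes sit over the same field of rational functions via $L$, comparing valuations gives the ramification index directly.

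Recall from \cite[\S 4.1]{CLS} the standard description of the discrete valuation $v_{D(\rho)}$ of the function field $\br\BQ_p(X^*(T))$ (equivalently of $\br\BZ_p[X^*(T)]$) attached to the prime divisor $D(\rho)$ of the toric scheme $Y_\tau\otimes\br\BZ_p$. On characters it is
\[
v_{D(\rho)}(\chi)=\langle \lambda_\rho,\chi\rangle ,\qquad \chi\in X^*(T).
\]
This holds over the base $\br\BZ_p$ as well as over a field. The boundary divisor is horizontal, i.e.\ flat over $\br\BZ_p$. Its local ring is obtained by localizing $\br\BZ_p[\tau^\vee\cap X^*(T)]$ at the prime generated by the characters $\chi$ with $\langle\lambda_\rho,\chi\rangle>0$. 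After a lattice change making $\lambda_\rho$ part of a basis, this reduces to $\br\BZ_p[x_1^{\pm},\dots,x_{r-1}^{\pm},x_r]$ localized at $(x_r)$. That ring is a DVR with $v(x_r)=1$, and the residue field $\br\BQ_p(x_1,\dots,x_{r-1})$ ensures that $p$ is a unit there. The same formula holds on $\wt Y_S=Y_{L_*^{-1}(\tau)}$ (Proposition \ref{prop:normalToric}): $v_{D(\wt\rho)}(\chi)=\langle\lambda_{\wt\rho},\chi\rangle$.

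Next, identify the Lang cover on function fields. By Proposition \ref{prop:normalToric}, $L$ corresponds to the ring map $\chi\mapsto L^*(\chi)$ on characters. For the extension of DVRs $\CO_{Y_S,D(\rho)}\subset\CO_{\wt Y_S,D(\wt\rho)}$ (which holds because $L(D(\wt\rho))$ is dense in $D(\rho)$, since $L_*(\wt\rho)=\rho$), the ramification index $e$ is characterized by
\[
v_{D(\wt\rho)}|_{\mathrm{Frac}(Y_S)}=e\cdot v_{D(\rho)} .
\]
Evaluating on a character $\chi$ gives
\[
v_{D(\wt\rho)}(L^*\chi)=\langle\lambda_{\wt\rho},L^*\chi\rangle=\langle L_*\lambda_{\wt\rho},\chi\rangle=e_{\wt\rho/\rho}\langle\lambda_\rho,\chi\rangle ,
\]
using the adjointness $\langle L_*\lambda,\chi\rangle=\langle\lambda,L^*\chi\rangle$ of $L_*=p\sigma-1$ and $L^*=p\sigma-1$. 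This adjointness needs $\sigma$ to act compatibly on both sides, which is $\langle\sigma\lambda,\chi\rangle=\langle\lambda,\sigma^{-1}\chi\rangle$; more carefully, $L_*$ and $L^*$ are the cocharacter and character maps of the same isogeny, so adjointness is automatic. Choosing $\chi$ with $\langle\lambda_\rho,\chi\rangle=1$, which is possible since $\lambda_\rho$ is primitive, gives $e=e_{\wt\rho/\rho}$. A valuation on the fraction field is determined by its values on characters together with $\br\BZ_p$-units, so this comparison suffices.

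The main obstacle is justifying that the valuation on characters is given by pairing with the primitive ray generator in the mixed-characteristic setting, and that residue extensions cause no extra subtlety. Both points are handled by the basis change above, since everything is étale-locally a product of a torus with $\BA^1$ over $\br\BZ_p$.
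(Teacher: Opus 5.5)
Your argument is correct and takes essentially the paper's route: the paper's proof just cites the construction of the Lang cover (Proposition \ref{prop:normalToric}) and the order-of-vanishing formula $\mathrm{ord}_{D(\rho)}(\chi)=\langle\lambda_\rho,\chi\rangle$ from the proof of \cite[Prop. 4.1.1]{CLS}, and you have written out the details (horizontal boundary divisors over $\br\BZ_p$, adjointness of $L_*$ and $L^*$ as the two maps induced by one isogeny, and evaluation at a character with $\langle\lambda_\rho,\chi\rangle=1$). Your closing remark that a valuation is determined by its values on characters is not needed: once $\CO_{\wt Y_S,D(\wt\rho)}$ dominates $\CO_{Y_S,D(\rho)}$, the index $e$ is already determined by a single element of valuation $1$.
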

 
 \begin{proof}
It follows from the construction of the Lang cover and  the proof of \cite[Prop. 4.1.1]{CLS}.  
 \end{proof}

\subsubsection{An example}\label{ex:ramify}
 Suppose $T=\Res_{\BZ_q/\BZ_p}\BG_m$, with $q=p^d$. Then 
 \[
 X_*( T)=\BZ[\BZ/d\BZ]=\oplus_{i=0}^{d-1} \BZ\sigma^i
 \]
 is a permutation $\Gal(\BZ_{p^d}/\BZ_p)=\{1, \sigma,\ldots, \sigma^{d-1}\}$-module.
 We have 
 \[
 T\otimes_{\BZ_p}\BZ_q\simeq \BG_m\times\cdots \times \BG_m=\BG_m^d
 \]
 and $L: T\otimes_{\BZ_p}\BZ_q\to T\otimes_{\BZ_p}\BZ_q$ is given by
 \[
 L(a_0,a_1,\ldots ,a_{d-1})=L((a_i)_{i\in \BZ/d\BZ})=((a_{i-1}^pa_{i}^{-1})_{i\in \BZ/d\BZ}).
 \]
 Then, $L(a_0,a_1,\ldots, a_{d-1})=1$ amounts to $a_i^p=a_{i+1}$, for all $i$, which gives $a_0^{p^d-1}=1$. Hence, $\ker(L)\simeq \BF_{p^d}^*= T(\BF_p)$.
 
 We now consider the standard toric embedding $T=\Res_{\BZ_q/\BZ_p}\BG_m \subset Y=\Res_{\BZ_q/\BZ_p}\BA^1$.  
 The corresponding cone in $X_*(T)_\BR=\BR^d$ is $\tau=\{(a_i)\in \BR^d\ |\ a_i\geq 0,\forall i\}$ with extremal rays generated by the standard basis. 
 Since $L_*: \BR^d\to \BR^d$ is given by 
 $(a_i)\mapsto (pa_{i-1}-a_i)$, the inverse image is
 \[
 L_*^{-1}(\tau)=\{(a_i)\in \BR^d\ |\ pa_{i-1}\geq a_i,\forall i\}.
 \]
 The minimal generators of the extremal rays of $L_*^{-1}(\tau)$ are the vectors with  coordinates $1, p, \ldots , p^{d-1}$, cyclically permuted. For example, for $d=3$, we have $(1,p,p^2)$, $(p^2,1,p)$, $(p,p^2,1)$. Applying $L_*$ to any such vector gives $p^d-1$ times a standard basis vector. Hence, by Lemma \ref{lemma:ram}, the ramification degree of the cover $L: \ti Y\to Y$ along every boundary component is $p^d-1$, i.e. the cover is totally ramified along the boundary.

In this case, we can also show directly that the base change of the cover $L: \wt Y\to Y$  to $\BZ_q$ is given by the spectrum  of the $R= \BZ_q[x_0,x_1,\ldots, x_{d-1}]$-algebra 
 \[
\wt R= R[(u_i)_{i=0}^{d-1} ]/((u_{i}^p-x_{i+1}u_{i+1})_{i\in \BZ/d\BZ}, (u_0u_1\cdots u_{d-1})^{p-1}-x_0x_1\cdots x_{d-1}).
 \]
Indeed, over the open subscheme   of $\BA^d_{\BZ_q}=\Spec( R)$ given by the torus $T\otimes_{\BZ_p}\BZ_q=\BG_m^d$, we have $x_i=u_{i-1}^pu_i^{-1}$, for all $i$, and so this agrees with the  Lang map.
The morphism $\Spec(\wt R)\to \BA^d_{\BZ_q}$  is finite. We can also see that the fiber of the morphism over each point $x$ of $\BA^d_{\BZ_q}$ is the spectrum of a $k(x)$-algebra which has rank $p^d-1$ as a $k(x)$-vector space. Hence, $\Spec(\wt R)\to \BA^d_{\BZ_q}$ is  finite flat and it follows that $\wt R$ is Cohen-Macaulay. Over the open subscheme  $U(i)=\cup_{j\neq i}D(x_j)\subset \BA^d_{\BZ_q}$, only $x_i$ is not a unit. We can see, by an explicit calculation,  that 
\[
\Spec(\wt R)\times_{\BA^d_{\BZ_q}}U(i)\simeq \underline{\Spec}(\CO_{U(i)}[u_{i-1}]/(u_{i-1}^{p^d-1}-x_i\cdot v)),
\]
 with $v$ a unit in $\CO_{U(i)}$. (This also verifies the statement about total ramification along the boundary deduced above and is an analogue of Abhyankar's lemma in our context. Note that the complement of $\cup_{i=0}^{d-1} U(i)$ in $\BA^d_{\BZ_q}$ has codimension $2$).
Using this, we see  that $\wt R$ 
 is regular in codimension $1$ and by Serre's criterion normal. Hence,  $\wt R$ is indeed the integral closure as the definition requires and the base change of $ L: {\wt Y} \to   Y$ to $\BZ_q$ is given by
 \[
 R=\BZ_q[(x_i)_{i=0}^{d-1} ]\to  \wt R=R[(u_i)_{i=0}^{d-1} ]/((u_{i}^p-x_{i+1}u_{i+1})_{i\in \BZ/d\BZ}, (u_0u_1\cdots u_{d-1})^{p-1}-x_0x_1\cdots x_{d-1})
 \]
 as we wanted to show. Note that, in this example, $L:\wt Y\to Y$ is flat.

   \begin{remark}\label{rem:ramify}
a) Suppose $T$ is split as in Remark \ref{remark:splitLang}. Then for all choices of $S$, $e_{\wt\rho/\rho}=p-1$, since 
 $L_*: X_*(T)\to X_*(T)$ is dilation by $p-1$. Hence, if $T$ is a split torus and $p>2$, then all Lang covers $\wt Y_S\to Y_S$
 of $T$-embeddings are ramified over each component of the boundary. 
 
 b) Suppose $T={\rm Res}_{O_F/\BZ_p}(\BG_m)_{O_F}$, $[F:\BQ_p]=d=2$, and the torus embedding $Y=Y_\tau$ is given by the cone in $X_*(T)_\BR=\BR^2$ with extremal rays $(-1,p)$ and $(p,-1)$. The Lang torsor $L: T\to T$ induces $L_*: X_*(T)\to X_*(T)$ with $L_*(1,0)=(-1,p)$, $L_*(0,1)=(p, -1)$. Proposition \ref{prop:normalToric} implies that the normalization $\wt Y=Y_{L^{-1}_*(\tau)}$ is given by the first quadrant cone with extremal rays $(1,0)$ and $(0,1)$.
 Hence, $\wt Y={\rm Res}_{O_F/\BZ_p}\BA^1_{O_F}$. The cover $L: \wt Y\to Y$ is \'etale over the complement of the origin and the ramification degrees along the two divisors at the boundary are both $1$. 
\end{remark}

 \subsubsection{Lang covers of toric embeddings for $(\CG,\{\mu\})$}
Recall we assume $(\CG, \{\mu\})$ is a local model pair which is strictly convex. 
Let  $S\subset X^*(T_{\CG})$ be a semigroup as in \S \ref{moregen}. This produces the affine toric embedding $T_{\CG, O_{E_0}}\hookrightarrow  Y_{S}$ and we can apply the above constructions for $T=T_\CG$ to obtain the Lang cover $L: \wt Y_S\to Y_S$ by normalization: 

 \begin{equation}\label{LangNormalization2}
\begin{aligned}
 \xymatrix@R=3em@C=3em{
        T_{\CG, O_{E_0}} \ar@{^{(}->}[r] \ar[d]_{L}  &  \wt Y_{S}  \ar[d]^L \\
        T_{\CG, O_{E_0}} \ar@{^{(}->}[r]  & Y_{S}.
        }
        \end{aligned}
\end{equation}

 \begin{remark}\label{rem:YvsZ}
For this remark, we suppose $T_{\CG,\mu}\hookrightarrow T_\CG\otimes_{\BZ_p}O_{E_0}$ is a proper embedding, so assumption \S \ref{blanket} (ii) does not hold. We suppose however  that $T_{\CG,\mu}\hookrightarrow T_\CG$ is  defined over $\BZ_p$; this is the case when $E_0=\BQ_p$. The same normalization construction can be applied to the Lang morphism of the torus $T_{\CG,\mu}$ and the toric embedding $T_{\CG,\mu}\otimes_{\BZ_p}O_{E_0}\to Z_S$, see \S \ref{ss:generalsemigroups}. This produces a $T_{\CG,\mu}(\BF_p)$-cover $L: \wt Z_S\to Z_S$. Using the functoriality of the Lang morphism for $T_{\CG,\mu}\to T_\CG$ we can induce and obtain
 \[
 T_\CG\times_{T_{\CG,\mu}}\wt Z_S\to  Y_S=T_\CG\times_{T_{\CG,\mu}}Z_S; \quad (t, z)\mapsto (L(t), L(z)).
 \]
This agrees with the $T_\CG(\BF_p)$-cover $L: \ti Y_S\to Y_S$ as given directly above.
 \end{remark}
 The Lang covering $\wt Y_S\to Y_S$ is an isomorphism if $T_{\CG}(\BF_p)$ is trivial (the case of \emph{pointless groups}).  This is the case if and only if $p=2$ and $T_{\CG}$ is split \cite{MO}\footnote{ Here is a sketch of the argument. Assume that $T_\CG(\BF_p)=\{1\}$. Then by looking at $X_*(T_\CG)$ we conclude that $\det(p\sigma-1)=\pm 1$.  Let $\sigma$ have order $n$. Then the LHS is a product of factors of the form $p\zeta-1$, where $\zeta$ is an $n$-th root of unity. But $|p\zeta-1|\geq p-1$, with equality iff $p=2$ and $\zeta=1$, which implies the claim. }.  

 \begin{conjecture}\label{conjLco}
Assume that $T_{\CG}(\BF_p)$ is non-trivial. Then the Lang cover $\wt Y_S\to Y_S$ is flat if and only if  $Y_S$ is smooth. 
 \end{conjecture}
 Here one direction is clear: Indeed, the scheme $\wt Y_{S}$ is Cohen-Macaulay, as is any toric variety. Hence   $\wt Y_S\to Y_S$ is flat by the ``miracle flatness" theorem.  
  
 The following proposition shows that Conjecture \ref{conjLco} holds when  $T_\CG$ is split.
\begin{proposition}\label{flatL}
Assume that the torus $T_\CG$ is split, as in Remark \ref{remark:splitLang}. If $p=2$, the Lang cover is an isomorphism. Now let $p>2$.  Then the Lang cover $L: \wt Y_S\to Y_S$ is flat if and only if $Y_S$ is smooth.
\end{proposition}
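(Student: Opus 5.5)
The plan is to use Remark \ref{remark:splitLang}, which makes the split case completely explicit. Since $T_\CG\simeq\BG_m^r$, we have $L^*=(p-1)\cdot$, so $L^*(S)^{\rm sat}=S$ and $\wt Y_S=Y_S$. The Lang cover is the toric endomorphism of $Y_S=\Spec(\BZ_p[S])$ (after base change to $O_{E_0}$, which is harmless for flatness and smoothness) given on the semigroup algebra by $\chi\mapsto (p-1)\chi$. If $p=2$, this is the identity, so $L$ is an isomorphism. From now on let $p>2$ and set $n=p-1\geq 2$.

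For the implication "smooth $\Rightarrow$ flat": if $Y_S$ is smooth, then $S\simeq \BN^a\times\BZ^b$, as a smooth affine toric scheme with full-dimensional dual cone. The map $A=\BZ_p[S]\to A$, $\chi\mapsto n\chi$, is then the tensor product of the maps $x\mapsto x^n$ on $\BZ_p[x]$ and $\BZ_p[x^{\pm1}]$. The source $A$ is free over the target, with basis the monomials $\chi$ whose coordinates in $\BN^a\times\BZ^b$ lie in $[0,n-1]$. This also recovers the direction noted after Conjecture \ref{conjLco}.

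For the converse, suppose $L$ is flat. Since $Y_S$ is Noetherian and $L$ is finite, $A$ is then a locally free, hence projective, module over its image $A'=\BZ_p[nS]\simeq A$. I will use the $X^*(T_\CG)$-grading. Decompose $A=\bigoplus_{c} A_c$, where $c$ runs over the classes of $S$ modulo $nX^*(T_\CG)$, with $A_c=\bigoplus_{\chi\in S,\ \chi\equiv c}\BZ_p\chi$; each $A_c$ is a graded $A'$-module. Then $A$ flat over $A'$ forces each $A_c$ to be flat, and a graded finitely generated flat module over the positively graded ring $A'$ (graded by the pointed cone) is graded free. Localize at the torus-fixed point, i.e. the ideal generated by $S\setminus S^\times$ together with $p$. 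Splitting off the group of units, we may assume $S$ is pointed; this is the "torus factor" reduction, which is harmless. The rank of $A$ over $A'$ is $n^r$, equal to the degree of $L$. On the other hand, the minimal number of homogeneous generators is $\dim_{\BF_p} A/\mathfrak m' A$, where $\mathfrak m'$ is generated by $p$ and $nS\setminus\{0\}$. This quotient has basis the elements $\chi\in S$ not of the form $n\psi+\chi'$ with $\psi\in S\setminus\{0\}$, $\chi'\in S$. Freeness therefore says that the number of such "$n$-reduced" elements equals $n^r$.

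The main step, and the expected obstacle, is showing that this count equals $n^r$ only when $S\simeq\BN^r$. I would proceed as follows. Every minimal generator of $S$ (Hilbert basis element) is $n$-reduced. Each residue class $c$ contributes exactly one generator of the free module $A_c$, so the map from $n$-reduced elements to $X^*/nX^*$ is a bijection. If the Hilbert basis $h_1,\dots,h_m$ has $m>r$, pick a linear relation among them with nonnegative integer coefficients in two disjoint groups, $\sum a_ih_i=\sum b_jh_j$. Choose it so that the coefficients are less than $n$ and both sides are $n$-reduced. If that is not directly possible, I would pass to the rank-two face, or to a simplicial subcone containing a non-ray generator, to get two distinct reduced elements in the same class. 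This contradicts injectivity, so $m=r$ and the $h_i$ form a basis, because $S$ generates $X^*(T_\CG)$. Hence $S$ is free and $Y_S\simeq\BA^r$ is smooth.

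The delicate point is producing the collision mod $n$ using only $n\geq2$. A cleaner way to attempt this is via the multiplicity of a simplicial subcone: if the extremal ray generators of a cone span a sublattice of index $>1$, then there is a nonzero lattice point in the half-open parallelepiped. Scaling it and the corresponding ray combination then gives two reduced elements congruent mod $n$. If the cone is nonsimplicial, the excess of extremal rays over $r$ already gives too many reduced elements in a single class. Both cases contradict the bijection, which completes the converse.
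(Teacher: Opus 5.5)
\textbf{There is a genuine gap in the converse.} Several parts of your proposal are fine: the case $p=2$, the direction smooth $\Rightarrow$ flat, and your reduction of the converse. After splitting off units, flatness makes each $A_c$ graded free of rank one. Hence every class of $X^*(T_\CG)/nX^*(T_\CG)$ contains exactly one $n$-reduced element of $S$. Equivalently, $\dim_{\BF_p}\BF_p[S]/(x_1^n,\dots,x_s^n)=n^r$, where $x_1,\dots,x_s$ is the Hilbert basis.

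The gap is exactly the step you flag as the obstacle: you never show that this forces $s=r$, and none of the mechanisms you sketch works.
\begin{itemize}
\item A relation $\sum a_ih_i=\sum b_jh_j$ produces one element of $S$, not two distinct reduced elements in the same class.
\item Passing to a rank-two face, or to a simplicial subcone containing a non-ray Hilbert basis element, is impossible for $S=S_{\CG,\mu}$ of $\GSp_4$. This is Example 5 of \S\ref{ss:Exam} with $g=2$, where $k[S]=k[e_1,e_2,f_1,f_2]/(e_1f_1-e_2f_2)$. All its proper faces are free, and its Hilbert basis consists of the four ray generators.
\item The claim that surplus extremal rays put ``too many reduced elements in a single class'' is unfounded. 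The ray generators are reduced and nonzero mod $n$, but nothing forces two of them into one class. In this example with $n=2$, the four generators lie in four distinct classes, and the actual collision is $e_1+f_2\equiv e_2+f_1$.
\item The ``scaling'' argument in the simplicial case is not carried out either.
\end{itemize}

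\textbf{The missing idea is the Kunz-type argument the paper uses.} Localize $k[nS]$ at the torus-fixed point. There the minimal generators $x_1,\dots,x_s$ of the maximal ideal are ``independent'': by Nakayama, every relation $\sum a_ix_i=0$ has all $a_i\in(x_1,\dots,x_s)$. Flatness of $k[nS]\to k[S]$ carries this independence to the images $x_1^n,\dots,x_s^n$ in $k[S]$, by \cite[Lem.~51.17.4]{Stacks}. For these independent powers, \cite[Lem.~51.17.5]{Stacks} gives $\dim_k k[S]/(x_1^n,\dots,x_s^n)=n^s$. Comparing with your count $n^r$ and using $n\geq 2$ gives $s=r$, so $S$ is free.
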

\begin{proof} The assertion regarding the case $p=2$ is obvious. Now let $p>2$ and assume $L: \wt Y_S\to Y_S$ is flat. Using the construction in Remark \ref{rem:YvsZ}, we see that the Lang cover $L: \wt Z_S\to Z_S$, for the torus $T_{\CG,\mu}$, is flat. Recall $Z_S=Z_{\ov S}$ and $\wt Z_S=\wt Z_{\ov S}$; these toric schemes have no toric factor.
 Let $x_1,\ldots, x_s$ be the minimal set of generators of $\ov S$ (the ``Hilbert basis"). Recall that since $\ov S$ generates the group $X^*(T_{\CG,\mu})$, we have $s\geq r$ with equality if and only if $\ov S$ is free. The ideal $(x_1,\ldots, x_s)$ cuts out the unique  closed $T_{\CG,\mu}$-orbit in $Z_S$ and $x_1,\ldots, x_s$ form a system of parameters there, cf.  \cite[Lemma 1.3.10]{CLS}.
By the above description of the cover, the fiber of $L: (Z_S)_k\to (Z_S)_k$ over the closed orbit is the spectrum of the $k$-algebra $k[\ov S]/(x^{p-1}_1,\ldots, x^{p-1}_s)$. Assuming flatness of $L: (Z_S)_k\to (Z_S)_k$, the elements $x^{p-1}_1,\ldots , x^{p-1}_s$ are ``independent'' in $k[\ov S]$  by \cite[Lem. 51.17.4]{Stacks}, in the sense defined there. It then follows by 
\cite[Lem. 51.17.5]{Stacks} that $k[\ov S]/(x^{p-1}_1,\ldots, x^{p-1}_s)$ has 
rank $(p-1)^s$
over $k$. By flatness, this is equal to $(p-1)^r$, which is the  degree of the cover generically. Since $p>2$, we deduce $r=s$. Since $\ov S$ is free, $Z_S$ and then $Y_S=T_\CG\times_{T_\CG,\mu}Z_S$ are both smooth. The converse is easy, as explained above.
\end{proof}

\begin{remark}\label{rem:Altmann}
We mention here some work of K. Altmann in progress \cite{Alt} which gives further evidence for Conjecture \ref{conjLco}. It is based on a combinatorial flatness criterion for the map $\wt Y\to Y$ between affine toric varieties of the same torus $T$ which prolongs the map $T\to T$ given by $x\mapsto \tau(x)^nx^{-1}$. Here $\tau$ is a finite order automorphism of $T$ and $n$ is a positive integer.  Using this criterion, he has confirmed Conjecture \ref{conjLco} in the following cases in which it is always assumed that $G$ splits over $\breve\BQ_p$ and $\CG$ is Iwahori:
\begin{enumerate}
\item When $G$ is a non-split form of $\GSp$ obtained from a quaternionic hermitian form. Then the Lang cover is never flat and the conjecture holds in this case.
\item In the Hilbert-Siegel case, see \S \ref{ss:HilbertSiegel}. Then flatness occurs only when $g=d=1$.
\item When $G$ is of the form $\GL_n(D^*)$, where $D$ is a division algebra over $\BQ_p$. 
\item When $G$ is of the form $\GU(V)$, for some $K/\BQ_p$-hermitian vector space $V$. 
\item When $G$ is of the form $\GSpin(V)$ for an  orthogonal $\BQ_p$-vector space  of odd dimension $2m+1\geq 5$
with Witt index $m-1$ and $\mu$ the standard Hodge type coweight. Then the Lang cover is never flat.
\end{enumerate}
Let us mention here that  Altmann found examples of covers $\wt Y\to Y$ coming from $\tau$ and $n$ as above, where flatness occurs even when $Y$ is not smooth, but his examples do not conform to our group theoretic situation. 
\end{remark}

\section{The root stack local model}\label{s:RootStack}

\subsection{Definitions}  We assume the pair $(\CG,\{\mu\})$ is strictly convex (\S \ref{blanket} assumption (i)).

 Furthermore, we choose $S\subset S_{\CG,\mu}$ to satisfy the conditions listed in \S\ref{ss:generalsemigroups}. 
 Under these assumptions,  the toric schemes $Y_{\CG,\mu}$ and $Y_S$ have (relative) dimension over $O_{E_0}$ equal to the rank of 
 $T_\CG$.

\begin{definition}
We define the stack $\Mlocroot_{\CG,\mu}$ as the fiber product of the Lang cover and $\delta$,
\begin{equation}\label{LMrootstack}
\begin{aligned}
 \xymatrix{
      \Mlocroot_{\CG,\mu} \ar[r] \ar[d]_L  &   [T_\CG \bs \wt Y_S ] \ar[d]^L \\
       \Mloc_{\CG,\mu}\ar[r]^\delta &  [T_\CG \bs Y_S ]  .
        }
        \end{aligned}
\end{equation}
 If $S=S_{\CG,\mu}$, we will  denote $\Mlocroot_{\CG,\mu}$ by $\Mlocrooto_{\CG,\mu}$.
\end{definition}
 The  motivation for calling $\Mlocroot_{\CG,\mu}$ the \emph{root stack local model} is given in  Remark \ref{intro:rootstack} in the Introduction.

\begin{proposition}\label{prop:DM}
 The stack $\Mlocroot_{\CG,\mu}$ is a (separated, finite type) Deligne-Mumford stack over $O_E$. 
\end{proposition}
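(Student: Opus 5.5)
The plan is to identify $\Mlocroot_{\CG,\mu}$, étale-locally on $\Mloc_{\CG,\mu}$, with a quotient stack $[T_\CG(\BF_p)\bs X]$ for an affine scheme $X$ that is finite over the base. The group $T_\CG(\BF_p)$ is finite and its order is prime to $p$, so such a quotient is Deligne--Mumford. Separatedness and finite type will then follow from those properties of $\Mloc_{\CG,\mu}$ and of the representable finite morphism $L$.

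\textbf{Step 1: the fiber product is finite and representable over $[T_\CG\bs Y_S]$.} The Lang cover $L\colon \wt Y_S\to Y_S$ is a finite morphism (Proposition \ref{prop:normalToric}). It is equivariant for the isogeny $L\colon T_\CG\to T_\CG$, whose kernel is the constant group $\underline{T_\CG(\BF_p)}$. I would use the factorization
\[
[T_\CG\bs \wt Y_S]\to [T_\CG\bs Y_S]\times_{[T_\CG\bs \mathrm{pt}],L}[T_\CG\bs\mathrm{pt}] .
\]
This lets me compute the fiber over a morphism $\Spec R\to[T_\CG\bs Y_S]$, that is, over a pair consisting of a $T_\CG$-torsor $P$ and an equivariant map $P\to Y_S$. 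The fiber is the stack of pairs made of a $T_\CG$-torsor $Q$ with an isomorphism $L_*Q\simeq P$, together with an equivariant lift $Q\to \wt Y_S$.

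\textbf{Step 2: local presentation.} Étale-locally on $\Spec R$ the torsor $P$ is trivial, and then we may also take $Q$ trivial, since the torsors $Q$ with $L_*Q\simeq P$ form a gerbe banded by $\underline{T_\CG(\BF_p)}$, which is étale. Choices of $Q$ then differ by $T_\CG(\BF_p)$. The fiber becomes $[T_\CG(\BF_p)\bs (\wt Y_S\times_{Y_S,\delta}\Spec R)]$, where $\delta$ is the composite of $\Spec R\to P$ with $P\to Y_S$. Here $\wt Y_S\times_{Y_S}\Spec R$ is finite over $\Spec R$. This gives an étale atlas of $\Mlocroot_{\CG,\mu}$ by schemes finite over étale opens of $\Mloc_{\CG,\mu}$, and the stabilizers are subgroups of the finite étale group $T_\CG(\BF_p)$. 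This is exactly the computation implicit in \eqref{eq:sh2-intro}.

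\textbf{Step 3: conclusion.} The diagonal of $[T_\CG(\BF_p)\bs X]$ is finite and unramified, since the order of $T_\CG(\BF_p)$ is prime to $p$ and the group is constant étale. So $\Mlocroot_{\CG,\mu}$ is Deligne--Mumford. The morphism $L\colon\Mlocroot_{\CG,\mu}\to\Mloc_{\CG,\mu}$ is proper and quasi-finite with finite diagonal; indeed it is a base change of the representable finite map to the relative gerbe. Therefore $\Mlocroot_{\CG,\mu}$ is separated and of finite type over $O_E$, because $\Mloc_{\CG,\mu}$ is projective over $O_E$.

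The main obstacle is Step 1: stating cleanly that $[T_\CG\bs\wt Y_S]\to[T_\CG\bs Y_S]$ is the composite of a finite representable map and a $\underline{T_\CG(\BF_p)}$-gerbe (namely $B\ker L$ relative to the base), and checking the torsor bookkeeping with the inverse action on $\RP^{(-1)}_{\CG,\mu}$.
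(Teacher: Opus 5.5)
Your proposal is correct and follows essentially the paper's route: trivialize $\RP_{\CG,\mu}$ locally, identify $Y_S\times_{[T_\CG\bs Y_S]}[T_\CG\bs\wt Y_S]$ with $[\underline{T_\CG(\BF_p)}\bs\wt Y_S]$ so that the fiber over the chart $U$ is $[T_\CG(\BF_p)\bs(\wt Y_S\times_{Y_S}U)]$, and conclude that it is Deligne--Mumford. The only differences are that you derive separatedness and finite type yourself, from properness of $\Mlocroot_{\CG,\mu}\to\Mloc_{\CG,\mu}$ via the gerbe factorization (cf.\ Lemma \ref{lem:representable}), where the paper cites \cite[Thm.~2.3.3]{Cadman}, and that the prime-to-$p$ order of $T_\CG(\BF_p)$ is not actually needed for the diagonal to be unramified.
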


\begin{proof}
Let $f: U\to \Mloc_{\CG,\mu}$ be an open neighborhood such that the pull back $f^*{\rm P}_{\CG,\mu}$ of the $T_\CG$-torsor 
${\rm P}_{\CG,\mu}\to \Mloc_{\CG,\mu}$ is trivial.
A section of $f^*{\rm P}_{\CG,\mu}$ gives a morphism $U\to Y_S$ which we compose with the natural projection $Y_S\to [T_\CG\bs Y_S]$. By 
(\ref{LMrootstack})
\[
U\times_{\Mloc_{\CG,\mu}}   \Mlocroot_{\CG,\mu} \simeq U\times_{[T_\CG\bs Y_S]}[T_\CG\bs \wt Y_S]\simeq 
U\times_{Y_S}(Y_S\times_{[T_\CG\bs Y_S]}[T_\CG\bs \wt Y_S]).
\]
On the other hand, we have
\[
Y_S\times_{[T_\CG\bs Y_S]}[T_\CG\bs \wt Y_S]\simeq [\und{T_\CG(\BF_p)}\bs \wt Y_S].
\]
Hence the diagram 
\begin{equation}\label{DMrstack}
\begin{aligned}
 \xymatrix{
      U\times_{\Mloc_{\CG,\mu}}   \Mlocroot_{\CG,\mu}\ar[r] \ar[d]  &   [\und{T_\CG(\BF_p)} \bs \wt Y_S ] \ar[d] \\
       U\ar[r] &  Y_S 
        }
        \end{aligned}
\end{equation}
is cartesian. Since $\und{T_\CG(\BF_p)} =T_\CG(\BF_p)$ is a finite constant group acting on $\ti Y_S$, $[\und{T_\CG(\BF_p)}\bs \wt Y_S]$ is Deligne-Mumford and then the assertion follows, cf.  \cite[Thm. 2.3.3]{Cadman}.  
\end{proof}

Since by Proposition \ref{prop:delta}, the morphism $\delta:  \Mloc_{\CG,\mu}\to [T_\CG \bs Y_S ]$ is $\CG$-equivariant for the trivial action on the target, the smooth group scheme $\CG$ acts on $ \Mlocroot_{\CG,\mu}\to  \Mloc_{\CG,\mu}$. It makes sense to consider the quotient stack which we call the \emph{root stack local model} associated to $(\CG, S)$,
\[
{\mathfrak M}^{\sqrt{S}}_{\CG,\mu}:=[\CG \bs \Mlocroot_{\CG,\mu}].
\]

\subsection{Properties} 

Here we discuss some (local) properties of  the stacks $\Mlocroot_{\CG,\mu}$ and the stack covers $\Mlocroot_{\CG,\mu}\to \Mloc_{\CG,\mu}$. 

\subsubsection{The (R1) property for $\Mlocroot_{\CG,\mu}$}

Throughout this paragraph, we assume both  \S \ref{blanket} (i) and (ii), i.e. $(\CG,\{\mu\})$ is strictly convex and $T_{\CG,\mu}=T_\CG$.

 Note that it makes sense to ask if the  Deligne-Mumford stack $\Mlocroot_{\CG,\mu}$ is (R1), i.e. regular in codimension $1$. 
 Recall from \S \ref{remark:psi} the definition of the homomorphism
  \[
  \phi_\CG: X_*(T)_I\to X_*(T)^I\to X_*(T_\CG)_\BQ .
  \]
   By the definition of  the cone $\sigma_{\CG,\mu}\subset X_*(T_\CG)_\BR$, the rays
 \[
 \rho^\CG_{\bar\mu'}:=\BR_{>0}\cdot \phi_\CG(\bar\mu')
 \]
 for $\bar\mu'\in \Lambda_{\{\mu\}}$, give the extremal  rays of  $\sigma_{\CG,\mu}$.
Let $e=e_{E/\BQ_p}$ be the absolute ramification degree of the reflex field $E$. Observe that,  for all $\bar\mu'\in \Lambda_{\{\mu\}}$, the multiple  $e\cdot \phi_\CG(\bar\mu')$ lies in the lattice $X_*(T_\CG)$.

\begin{proposition}\label{prop:R1a}
Suppose that, for all $\bar\mu'\in \Lambda_{\{\mu\}}$, the element $e\cdot \phi_\CG(\bar\mu')$ is not divisible in the lattice $X_*(T_\CG)$. Then the stack $\Mlocrooto_{\CG,\mu}$ is (R1).
 \end{proposition}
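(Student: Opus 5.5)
The plan is to reduce the (R1) property to a statement about codimension one points of the étale charts constructed in the proof of Proposition \ref{prop:DM}. By the cartesian diagram \eqref{DMrstack}, étale locally over an open $U\subset \Mloc_{\CG,\mu}$ trivializing ${\rm P}_{\CG,\mu}$, the stack $\Mlocrooto_{\CG,\mu}$ is $[T_\CG(\BF_p)\bs (U\times_{Y_{\CG,\mu}}\wt Y_{\CG,\mu})]$, and since $T_\CG(\BF_p)$ has order prime to $p$ and is a finite constant group, (R1) for the stack is equivalent to (R1) for the scheme $V:=U\times_{Y_{\CG,\mu}}\wt Y_{\CG,\mu}$. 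Over the generic fibre and over the locus mapping to the open torus $T_\CG\subset Y_{\CG,\mu}$, the map $V\to U$ is a $T_\CG(\BF_p)$-torsor, hence étale, and $U$ is normal, so regularity there is automatic. Thus only codimension one points of $V$ lying over the special fibre matter, and these lie over the generic points $\eta_{\bar\mu'}$ of the components $Z_{\bar\mu'}$ (the map $V\to U$ is finite, and I will check it is surjective and equidimensional, so codimension one points of $V$ map to codimension one points of $U$).

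Next I localize at $\eta_{\bar\mu'}$. The local ring $A=\CO_{U,\eta_{\bar\mu'}}$ (after base change to $O_{\breve E}$) is a DVR, with uniformizer $\varpi_E$ since $\Mloc_{\CG,\mu}$ is smooth over $O_E$ generically along $Z_{\bar\mu'}$ with reduced special fibre. By Proposition \ref{prop:deltastrata}(b), $\delta(\eta_{\bar\mu'})$ lies in the orbit $O(\rho^\CG_{\bar\mu'})$, i.e. at the generic point of the boundary divisor $D(\rho)$ for $\rho=\rho^\CG_{\bar\mu'}$. Moreover, by the multiplicity formula \eqref{divsum} (Theorem \ref{tameconj}/Conjecture \ref{divconj}), for each $\chi$ the function $\delta^*\chi$ has valuation $e\langle\bar\mu',\chi\rangle=\langle e\phi_\CG(\bar\mu'),\chi\rangle$ in $A$. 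Hence the induced map of DVRs $\CO_{Y,D(\rho)}\to A$ has ramification index equal to $m$, where $e\phi_\CG(\bar\mu')=m\lambda_\rho$ with $\lambda_\rho$ the primitive generator; the hypothesis says exactly $m=1$. So $\delta$ is ``transversal'' to $D(\rho)$ at $\eta_{\bar\mu'}$: a local equation $t$ of $D(\rho)$ pulls back to a uniformizer of $A$ up to a unit.

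Finally I analyze $\wt Y\to Y$ near the generic point of $D(\rho)$: by toric geometry (Lemma \ref{lemma:ram} and \cite[Prop. 4.1.1]{CLS}) it is, étale locally over the generic point of $D(\rho)$ after restricting to the open affine toric subvariety $Y_\rho\cong\BA^1\times\BG_m^{r-1}$, a product of a Kummer-type cover $t\mapsto s^{e_{\wt\rho/\rho}}\cdot(\text{unit})$ in the $\BA^1$-direction with an étale isogeny-type cover of tori of degree prime to $p$ (Lang map degree is prime to $p$). Pulling back along $\Spec A\to Y_\rho$, with $t\mapsto \varpi_E u$, gives rings of the form $A'[s]/(s^{e'}-\varpi_E u)$ with $A'$ finite étale over $A$ and $e'$ prime to $p$; such rings are DVRs (Eisenstein), hence regular. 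Together with the first paragraph this gives (R1). The main obstacle I expect is the second step: making precise that the valuation of $\delta^*$ along $D(\rho)$ is exactly $m$ requires identifying that the multiplicity formula evaluated on $\chi$ with $\langle\lambda_\rho,\chi\rangle=1$ yields $m$, and checking the Kummer description of the Lang cover on $Y_\rho$ holds integrally over $\breve\BZ_p$ (not just generically), for which I would use the explicit description $\wt Y_\rho=Y_{L_*^{-1}(\rho)}$ from Proposition \ref{prop:normalToric}.
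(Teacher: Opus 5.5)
Your proposal is correct and is essentially the paper's proof. Both localize at the generic point $\eta$ of $Z_{\bar\mu'}$, use Proposition \ref{prop:deltastrata}(b) to put $\delta(\eta)$ in $O(\rho^\CG_{\bar\mu'})$, use the divisor formula to see that a local equation $t$ of $\ov{O(\rho)}$ pulls back to $\varpi_E^{m}$ times a unit with $m=1$ exactly when $e\cdot\phi_\CG(\bar\mu')$ is primitive, and conclude from the étale-local Kummer form $T^{e(\wt\rho/\rho)}=t$ of the Lang cover that the fibre ring is a product of Eisenstein extensions of a DVR.

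The only slip is the phrase ``i.e.\ at the generic point of $D(\rho)$''. In fact $\delta(\eta)$ is a special-fibre point of $O(\rho)$, so there is no map of DVRs $\CO_{Y,D(\rho)}\to A$: $p$ is a unit in the source but not in $A$. The statement you actually use, that $t$ pulls back to a uniformizer of $A$ on the chart $Y_\rho$, is correct and is exactly what the paper uses.
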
 
 
 We postpone the proof to discuss the Iwahori case. Suppose that $\CG=\CI$ is Iwahori. Then $X_*(T)^I=X_*(T_\CI)$ and 
 \[
 e\cdot \phi_\CI(\bar\mu')={\rm Norm}_{\br E/\br\BQ_p}(\mu')=\sum_{\gamma\in \Gal(\br E/\br\BQ_p)} \gamma\cdot \mu'\in X_*(T)^I.
 \]
In this case, the condition in Proposition \ref{prop:R1a} is that,  for all $\bar\mu'\in \Lambda_{\{\mu\}}$, ${\rm Norm}_{\br E/\br\BQ_p}(\mu')$ is not divisible in the lattice $X_*(T)^I$, or equivalently in the lattice $X_*(T)$. 

If $\mu'$ is defined over $\br\BQ_p$, i.e. $e=1$, and $\mu'_\ad\neq 1$, then this holds since ${\rm Norm}_{\br E/\br\BQ_p}(\mu')=\mu'$ and $\mu'\in X_*(T)$ is minuscule. Indeed, unless $\mu'_\ad=1$, a minuscule coweight $\mu'$ is not divisible. We obtain:

 \begin{corollary}\label{cor:R1}
Suppose that  $E$ is unramified over $\BQ_p$ and let $\CG=\CI$ be Iwahori. Then the stack 
$\Mlocrooto_{\CG,\mu}$ is (R1). 
 \end{corollary}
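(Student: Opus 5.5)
Corollary \ref{cor:R1} follows from Proposition \ref{prop:R1a} once we verify its divisibility hypothesis, so the plan has two parts: reduce to the hypothesis, then justify the Proposition (whose proof the paper postpones).

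For the hypothesis: since $E/\BQ_p$ is unramified, $e=1$. Since $\CG=\CI$ is Iwahori, $X_*(T_\CI)=X_*(T)^I$, and $\phi_\CI(\bar\mu')=(\mu')^\diam$. For any $\mu'\in W_0\cdot\mu$, the field of definition $E_{\mu'}=E_\mu$ is unramified over $\BQ_p$, so $\mu'$ is defined over $\br\BQ_p$ and fixed by $I$. Hence $(\mu')^\diam=\mu'$, and $e\cdot\phi_\CI(\bar\mu')=\mu'\in X_*(T)^I$.

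We must show $\mu'$ is not divisible in $X_*(T)^I$. Since $X_*(T)^I\subset X_*(T)$ is saturated, it is enough to show $\mu'$ is not divisible in $X_*(T)$. Suppose $\mu'=m\nu$ with $m\ge 2$. Then $\langle\alpha,\mu'\rangle\in m\BZ$ for every root $\alpha$. Minuscule means these pairings lie in $\{-1,0,1\}$, so all vanish. Then $\mu'$ is central, so $\mu'_\ad=1$. This case has to be excluded, either because the paper's standing hypotheses force $\mu_\ad\ne1$, or by an implicit convention. Otherwise $\mu'$ could be a multiple of a central cocharacter, for example a torus with $\mu=2\chi$. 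Outside that case the hypothesis of Proposition \ref{prop:R1a} holds, and the Corollary follows.

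For Proposition \ref{prop:R1a}: (R1) is local and only concerns codimension-one points. In codimension one, the only points of $\Mloc^{\sqrt{\phantom{a}}}_{\CG,\mu}$ not lying over the regular generic fiber or the locus where $L$ is étale are the generic points over the components $Z_{\bar\mu'}$. Using the chart (\ref{DMrstack}), we work étale-locally at the generic point $\eta$ of $Z_{\bar\mu'}$. There $\Mloc_{\CG,\mu}$ is smooth over $O_E$, so $\CO_\eta$ is a dvr with uniformizer $\varpi_E$. By Proposition \ref{prop:deltastrata}(b), $\delta$ maps $\eta$ into the orbit of the ray $\rho=\rho^\CG_{\bar\mu'}$. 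Near that orbit $Y_S$ looks like $\BG_m^{r-1}\times\BA^1$, with coordinate $x_\rho$. By the divisor formula (\ref{divsum}), $\delta^*x_\rho$ has valuation $e\langle\bar\mu',\chi_\rho\rangle$.

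The cover $\wt Y\to Y$ is the Lang cover. Near $\eta$, the pullback is a disjoint union of Kummer-type extensions $\CO_\eta[u]/(u^{n}-\varpi_E^{a}\cdot\mathrm{unit})$, with $n=e_{\wt\rho/\rho}$ from Lemma \ref{lemma:ram}, tensored with étale data. Such an extension is a regular (dvr) ring exactly when $\gcd(n,a)=1$ after accounting for the étale part; the basic case is $a=1$. The main obstacle is making $a=1$ precise. When $S=S_{\CG,\mu}$ the rays of $\sigma$ and $S$ are dual, and we need some character $\chi\in S$ with $e\langle\bar\mu',\chi\rangle=1$, so that $\delta^*$ of a local coordinate is a uniformizer. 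Such a $\chi$ exists precisely when $e\,\phi_\CG(\bar\mu')$ is primitive in $X_*(T_\CG)$, since a primitive vector pairs to $1$ with some element of the dual lattice. We then need to adjust $\chi$ so that it lies in the facet-local semigroup. Once this holds, the local ring upstairs is regular, and (R1) follows.
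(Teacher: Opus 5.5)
Your main line is the paper's. With $e=1$ and $\CG=\CI$ you get $e\cdot\phi_\CI(\bar\mu')={\rm Norm}_{\br E/\br\BQ_p}(\mu')=\mu'$. A minuscule $\mu'$ with $\mu'_\ad\neq 1$ pairs to $\pm1$ with some root, so it is indivisible in $X_*(T)$ and hence in $X_*(T)^I$, and Proposition \ref{prop:R1a} applies. You do not need to re-prove that proposition.

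Your sketch of it does contain a false criterion. For a dvr $R'$ with uniformizer $\varpi$, a unit $v$, and $n\geq 2$, the ring $R'[u]/(u^n-\varpi^a v)$ is regular if and only if $a\leq 1$, not if and only if $\gcd(n,a)=1$; for example $u^2=\varpi^3$ gives a non-normal domain. This does no harm, since you only use $a=1$. No ``adjusting'' of $\chi$ is needed either. A local equation of $\ov{O(\rho)}$ is any $\chi$ with $\langle\lambda_\rho,\chi\rangle=1$, which automatically lies in $\rho^\vee\cap X^*(T_\CG)$, and the valuation of its pull-back is the paper's $n(\bar\mu')$.

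The real gap is the case $\mu_\ad=1$: the standing assumptions do \emph{not} force $\mu_\ad\neq 1$, so your proposed way out fails. What they force, and this is how the paper disposes of the case, is that $G$ is a torus with $X_*(T)^I$ of rank one. Indeed, if $\mu$ is central, then $\sigma_{\CG,\mu}$ is spanned by $\bar\mu\in X_*(Z^0)^I_\BR$. By \S\ref{blanket}(ii) this must span $X_*(T)^I_\BR$, which is impossible if $G_\der\neq 1$, since then $X_*(T_\der)^I_\BR\neq 0$ meets $X_*(Z^0)_\BR$ trivially.

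The paper then calls this case easy, but your suspicion is justified. Write $\mu=m\cdot g$ with $g$ a generator of $X_*(T)^I\simeq\BZ$ and $m\neq 0$. The root stack is then \'etale locally $\Spec\br\BZ_p[u]/(u^{n}-p^{|m|})$ with $n=e_{\wt\rho/\rho}$, so it is (R1) if and only if $|m|=1$ or $n=1$. Concretely, $G=\BG_m$, $\mu(z)=z^2$, $p>2$ satisfies every hypothesis of the corollary. Yet $\Mlocrooto_{\CG,\mu}$ is not (R1): $u^{p-1}-p^2$ factors, so the chart is not even a domain. This is exactly Proposition \ref{prop:R1b}.

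So this case cannot be settled by any argument. The statement needs the extra hypothesis $\mu_\ad\neq 1$ (or that $\mu$ be indivisible in $X_*(T)^I$), and with it your proof is complete.
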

 
 \begin{proof} The argument above gives this when $\mu_\ad\neq 1$. If $\mu_\ad=1$, then  our blanket assumptions \S \ref{blanket} (i) and (ii) imply  that $G$ is a torus, and then by Remark \ref{rem:toruscase} $G=\BG_m$, in which case the result is easy.
 \end{proof}

The condition in Proposition  \ref{prop:R1a} can easily fail if $\CG$ is not Iwahori, even if $G$ is split.
It can also fail in the Iwahori case when the reflex field is ramified, i.e. when $e>1$. This is displayed by the following two examples.
  
\begin{example}\label{ex:sstableGortz}
  Consider $G=\GL_n$, $n>2$, $\mu=(1^{(r)}, 0^{(n-r)})$ with $r>1$. Take $\CG$ to be the parahoric (not Iwahori) group scheme defined by the stabilizer of a lattice chain 
\[
p\Lambda_0 \subset \Lambda_1\subset\Lambda_0
\]
 in $\BQ_p^n$ with $\Lambda_0/\Lambda_1\simeq \BZ/p\BZ$. In this case $T=\BG_m^n\to T_\CG=\BG_m\times\BG_m$ is given by 
 \[
 (a_1,\ldots, a_n)\mapsto (a_1, a_2\cdots a_n).
 \]
 The map $\Lambda_{\{\mu\}}\to X_*(T_\CG)=\BZ^2$ obtained by $\phi_\CG: X_*(T)\to X_*(T_\CG)$ has image the two element set given by $(1, r-1)$ and $(0, r)$; the second element is divisible in $X_*(T_\CG)$.

 \end{example}

\begin{example}\label{ex:RamGLn}
Consider $G={\rm Res}_{F/\BQ_p}\GL_n$, where $F/\BQ_p$ is a cyclic totally ramified extension with Galois group $\Gamma=\Gal(F/\BQ_p)=\{1,\gamma,\gamma^2,\gamma^3\}$ of order $4$. Fix $\iota: F\to \ov\BQ_p$ and let $\phi_i=\iota\cdot \gamma^i $ be corresponding embeddings $\phi_i: F\to \bar\BQ_p$. Take $\mu=(\mu_{\phi_i})_{i=1,2,3,4}$ to be given by $(1,0^{(n-1)})$, $(1, 0^{(n-1)})$, $(0, 0^{(n-1)})$, $(0, 0^{(n-1)})$, and $\CG=\CI$ Iwahori. Then the reflex field of $\mu$ is $E=F$ and so $e=4$. As above, we have
\[
 e\cdot \phi_\CI(\bar\mu)={\rm Norm}_{E/\br\BQ_p}(\mu)=2\cdot \nu
\]
where $\nu=(\nu_{\phi_i})_{i=1,2,3,4}$ is the $\Gamma$-invariant coweight 
given by $(1,0^{(n-1)})$, $(1, 0^{(n-1)})$, $(1, 0^{(n-1)})$, $(1, 0^{(n-1)})$.
Hence, in this case, $e\cdot \phi_\CG(\bar\mu)$ is divisible.
\end{example}

We also have a partial converse to Proposition  \ref{prop:R1a}:

\begin{proposition}\label{prop:R1b}
Suppose that $p>2$, that the torus $T_\CG$ is split and that  there is  
$\bar\mu'\in \Lambda_{\{\mu\}}$ such that  $e_{E/\BQ_p}\cdot \phi_\CG(\bar\mu')$ is  divisible in the lattice $X_*(T_\CG)$. Then the stack $\Mlocrooto_{\CG,\mu}$ is not (R1).
\end{proposition}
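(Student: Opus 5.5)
We will work locally at the generic point $\eta$ of the component $Z_{\bar\mu'}$ of the special fiber and reduce (R1) there to a statement about a ramified extension of a discrete valuation ring. By Proposition \ref{prop:DM} and diagram \eqref{DMrstack}, étale-locally over a neighbourhood $U$ of $\eta$ on which $\RP_{\CG,\mu}$ is trivial, $\Mlocrooto_{\CG,\mu}$ is $[T_\CG(\BF_p)\bs (U\times_{Y_S}\wt Y_S)]$. (R1) for the stack is equivalent to (R1) for the scheme $U\times_{Y_S}\wt Y_S$ along the points over $\eta$. Since $T_\CG$ is split, Remark \ref{remark:splitLang} gives $\wt Y_S=Y_S$, and the Lang map is $x\mapsto x^{p-1}$ on the semigroup algebra. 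Hence the fiber product is $\Spec$ of
\[
\CO_{U}[u_1,\ldots,u_s]/(u_i^{p-1}-\delta^*(x_i)),
\]
where $x_1,\ldots,x_s$ is the Hilbert basis of $S=S_{\CG,\mu}$. So it suffices to show that the localization of this algebra at a point over $\eta$ is not regular, i.e. not a discrete valuation ring.

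Next we compute the valuations $v_\eta(\delta^*(x_i))$. Let $v_\eta$ be normalized so that $v_\eta(\varpi_E)=1$. The point $\eta$ lies on the smooth locus of $\Mloc_{\CG,\mu}$ over $O_E$ (as used in the proof of Proposition \ref{multiProp}), so $\CO_\eta$ is a DVR with uniformizer $\varpi_E$ (after base change to $O_{\br E}$). By Conjecture \ref{divconj}, proved here as Theorem \ref{tameconj} in the tame case, we have $v_\eta(\delta^*(\chi))=e\langle\bar\mu',\chi\rangle=\langle e\,\phi_\CG(\bar\mu'),\chi\rangle_{T_\CG}$ for $\chi\in S$. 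Write $e\,\phi_\CG(\bar\mu')=d\lambda$ with $\lambda\in X_*(T_\CG)$ primitive and $d\ge2$. Then every $v_\eta(\delta^*(x_i))$ is divisible by $d$.

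Now we analyze the extension $\CO_\eta\to \CO_\eta[u_i]/(u_i^{p-1}-\delta^*(x_i))$. Choose $\chi_0\in X^*(T_\CG)$ with $\langle\lambda,\chi_0\rangle=1$. Since $S$ generates $X^*(T_\CG)$, after multiplying by an element of $S$ vanishing to first order along the ray, we can rewrite each $\delta^*(x_i)$ as $w_i\,\varpi_E^{d n_i}$ with $w_i$ units. Extracting $(p-1)$-th roots of units is étale because $p\nmid p-1$, so adjoining roots of the $w_i$ does not affect regularity. The local ring at a point over $\eta$ is then obtained from a DVR $A$ (étale over $\CO_\eta$, uniformizer $\varpi$) by adjoining $(p-1)$-th roots of $\varpi^{dn_i}$. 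Equivalently, it is the semigroup-type ring $A[t_i]/(t_i^{p-1}-\varpi^{dn_i})$, with $\gcd(n_i)=1$ after using $\chi_0$-related combinations.

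\textbf{Main obstacle.} The key step is to show that this ring is not a DVR. The plan is to pass to its normalization, which is $A[\varpi^{1/m}]$ with $m=(p-1)/\gcd(p-1,d)$ (up to étale factors), and show that the ring itself is strictly smaller. The element $\varpi^{1/(p-1)}$ is integral: its $(p-1)$-th power $\varpi$ satisfies the relation formed by combining $t_i$'s with exponents $n_i$ summing to one. However, this element does not lie in the ring, because all monomials in the $t_i$ have exponents of $\varpi$ lying in $(d/(p-1))\BZ_{\ge0}$ modulo $A$. When $\gcd(d,p-1)>1$ the ring is not normal, hence not regular. If $\gcd(d,p-1)=1$, the ring is instead a product of branches and one must exhibit nonreducedness or non-normality directly. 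The first approach to try is the special-fiber computation: mod $\varpi$ the ring is $k(\eta)[t_i]/(t_i^{p-1})$, with embedding dimension $s\geq 2$ while the Krull dimension is $1$. This argument should be made through $\gcd(d,p-1)$ and $p>2$, which guarantees $p-1\ge2$. The delicate part is choosing coordinates so that the count of the embedding dimension versus Krull dimension is clean; this is where I expect most effort.
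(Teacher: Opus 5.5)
Your proposal has a genuine gap, and it is in two places. First, the ring you write down is not the fiber product. A morphism $\Spec C\to Y_S$ is a monoid homomorphism $S\to C$, so the images $u_i$ of the Hilbert basis elements $x_i$ must also satisfy the toric relations of $S$. The correct ring is $R[S]/(s^{p-1}-\delta^*(s))_{s\in S}$, as in \eqref{tildering}; this is your ring modulo the toric ideal of $S$, which is nonzero whenever $S$ is not free (e.g. Siegel with $g\ge 2$). Your ring is free of rank $(p-1)^s$ over $R$, whereas the true cover has generic degree $(p-1)^r$. Because the true ring is a proper quotient of yours, non-regularity of your ring says nothing about $\Mlocrooto_{\CG,\mu}$.

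Second, the decisive step is left open, and the sketch you give does not work as stated. For $\gcd(d,p-1)=1$ the ring $A[t]/(t^{p-1}-\varpi^d)$ is a domain, not a product of branches. Also, the special fiber is not $k(\eta)[t_i]/(t_i^{p-1})$: Hilbert basis elements with $\langle\lambda,x_i\rangle=0$ give units $\delta^*(x_i)$, so the corresponding $t_i$ are units, not nilpotents. This is why your embedding-dimension count does not come out cleanly.

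The missing idea is to localize on the toric side before computing. By Proposition \ref{prop:deltastrata}(b), or equivalently your own valuation computation, $\delta(\eta)$ lies in the codimension-one orbit $O(\rho)$ with $\rho=\rho^\CG_{\bar\mu'}$. Hence $\Spec R\to Y_S$ factors through the open affine $Y_\rho=\Spec\,\BZ_p[\rho^\vee\cap X^*(T_\CG)]$. Choose $\chi_1$ with $\langle\lambda,\chi_1\rangle=1$ together with a basis of $\rho^\perp\cap X^*(T_\CG)$; this gives $Y_\rho\simeq \BA^1\times\BG_m^{r-1}$. In the split case, $L_*$ is dilation by $p-1$, so $L$ preserves $Y_\rho$ and acts as the $(p-1)$-th power map on each coordinate.

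Now pass to the strict henselization $R'$ of $R$, where units have $(p-1)$-th roots since $p\nmid p-1$. The $\BG_m$-coordinates then split off, and the unit in $\delta^*(\chi_1)=w\varpi_E^d$ can be absorbed. So the fiber product becomes a disjoint union of copies of $R'[U]/(U^{p-1}-\varpi_E^{d})$, where $d\ge 2$ is your divisibility index. Since $p-1\ge 2$ and $d\ge 2$, the relation lies in $(\varpi_E,U)^2$. Therefore the local ring at the unique point over $\eta$ has embedding dimension $2$ and Krull dimension $1$, so it is not regular.

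This is exactly the paper's argument. There it is phrased through the ramification index $e(\wt\rho/\rho)=p-1$ of the Lang cover along the boundary (Lemma \ref{lemma:ram}, Remark \ref{rem:ramify}(a)) and the exponent $n(\bar\mu')=d$ from the proof of Proposition \ref{prop:R1a}.
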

 
Also, when the semi-group $S$ is strictly smaller than the maximal choice $S_{\CG,\mu}$, the (R1) property fails quite easily.
 
\begin{proposition}\label{prop:R1c}
Suppose that $p>2$, that the torus $T_\CG$ is split and that $S\neq S_{\CG,\mu}$. Then the stack 
$\Mlocroot_{\CG,\mu}$ is not (R1).
\end{proposition}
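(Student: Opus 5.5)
We must prove Proposition \ref{prop:R1c}: for $p>2$, $T_\CG$ split and $S\neq S_{\CG,\mu}$, the stack $\Mlocroot_{\CG,\mu}$ is not (R1). The plan is to find one codimension-one point of $\Mlocroot_{\CG,\mu}$ whose local ring is not regular. The natural candidates are the points lying over the generic points $\eta_{\bar\mu'}$ of the components $Z_{\bar\mu'}$ of the special fiber.

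\emph{Step 1 (combinatorics).} The cone $\tau$ with $S=\tau^\vee\cap X^*(T_\CG)$ strictly contains $\sigma_{\CG,\mu}$, because $S\neq S_{\CG,\mu}$ and both are saturated. The faces of $\sigma_{\CG,\mu}$ are $W_0$-symmetric under the extremal rays $\rho_{\bar\mu'}$, and $\tau$ is determined by $S$. Hence there is an extremal ray $\rho=\rho^\CG_{\bar\mu'}$ of $\sigma_{\CG,\mu}$ that is not an extremal ray of $\tau$; otherwise $\tau$, being the positive hull of its rays, would lie inside $\sigma_{\CG,\mu}$. Let $\tau_0$ be the smallest face of $\tau$ containing $\rho$. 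Then $\dim\tau_0\geq 2$, and $\rho$ meets the relative interior of $\tau_0$.

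\emph{Step 2 (image of the generic point).} By Proposition \ref{prop:deltastrata}(b), $\delta$ sends the stratum $S^1_{t_{\bar\mu'}}$ to the orbit $O(\rho)\subset Y_{\CG,\mu}$. Composing with the birational toric morphism $Y_{\CG,\mu}\to Y_S$, which sends $O(\rho)$ into $O(\tau_0)$ (the orbit of the minimal face of $\tau$ containing $\rho$), we get that $\eta_{\bar\mu'}$ maps to the orbit $O(\tau_0)$ of codimension $\dim\tau_0\geq2$ in $Y_S$.

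\emph{Step 3 (local structure).} Work étale locally near $\eta_{\bar\mu'}$, as in the proof of Proposition \ref{prop:DM}. There $\Mlocroot_{\CG,\mu}$ is $[T_\CG(\BF_p)\bs(U\times_{Y_S}\wt Y_S)]$, with $\wt Y_S=Y_S$ and $L$ given on $k[S]$ by $x\mapsto x^{p-1}$ (Remark \ref{remark:splitLang}). Write $A=\CO_{\Mloc,\eta_{\bar\mu'}}$, a DVR with uniformizer $\varpi$. The chart is $A\otimes_{k[S]}k[S]$, with $\chi\in S$ mapping to $u_\chi\varpi^{e\langle\bar\mu',\chi\rangle}$, $u_\chi$ a unit, by \eqref{divsum}. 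Localizing $Y_S$ at $O(\tau_0)$ splits off the torus factor, so the local ring is $A\otimes_{k[S_0]}k[S_0]$ for the pointed semigroup $S_0$ of $\tau_0$. Using $p-1$ dilation this becomes $A[S_0]/(\chi^{p-1}-u_\chi\varpi^{e\langle\bar\mu',\chi\rangle})$ after the base change that is étale along the torus.

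\emph{Step 4 (non-regularity).} We must show that some localization at a height-one prime is not a DVR. Pick two primitive elements $\chi_1,\chi_2$ of $S_0$ on distinct rays of $\tau_0^\vee$ but with $\langle\bar\mu',\chi_i\rangle>0$. Such elements exist because $\rho$ lies in the interior of $\tau_0$, which makes $\bar\mu'$ strictly positive on $\tau_0^\vee\setminus 0$. The extension of $A$ then has ramification data in which the fiber over the closed point is a non-reduced, non-domain ring $k(\eta)[S_0]/(\chi^{p-1})$ of length $(p-1)^{\operatorname{rank}}$ at a point. A generic rank-two computation, essentially $A[x,y,\ldots]/(x^{p-1}-a\varpi^m,\,y^{p-1}-b\varpi^n,\ldots)$ with the relations of non-free $S_0$, exhibits a singular point of codimension one. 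This uses $p>2$ so that $p-1\geq2$.

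\emph{Main obstacle.} The hardest part is Step 4: making precise that the codimension-one point above $\eta_{\bar\mu'}$ is singular in general, not just in the rank-two model case. First I would reduce to $\dim\tau_0=2$, choosing $\tau_0$ as a 2-face containing $\rho$ in its interior if possible. Then I would check directly, via the maximal ideal $(\varpi,\chi_1,\chi_2,\ldots)$, that its minimal number of generators exceeds one.
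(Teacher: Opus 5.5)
\textbf{Step 1 is a genuine gap, and it cannot be repaired as stated.} Your argument ``otherwise $\tau$, being the positive hull of its rays, would lie inside $\sigma_{\CG,\mu}$'' proves the wrong implication. It would require every extremal ray of $\tau$ to be some $\rho^\CG_{\bar\mu'}$. The negation of your claim only says that every $\rho^\CG_{\bar\mu'}$ stays extremal in $\tau$, and $\tau$ may have further rays.

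Here is a concrete counterexample. Take \S\ref{ss:Exam}, Example 5) with $g=2$: $\GSp_4$, Iwahori, $e=1$, and $T_\CG=\BG_m^3$ split. Then $\sigma_{\CG,\mu}$ is generated by $(0,0,1),(1,0,1),(0,1,1),(1,1,1)$. Let $\tau$ be generated by these four vectors together with $(3,1,2)$.
\begin{itemize}
\item $\tau\supsetneq\sigma_{\CG,\mu}$ is strictly convex and full-dimensional, so $S=\tau^\vee\cap X^*(T_\CG)$ satisfies the conditions of \S\ref{moregen} and $S\neq S_{\CG,\mu}$.
\item The height-one slice of $\tau$ is the convex pentagon with vertices $(0,0),(1,0),(3/2,1/2),(1,1),(0,1)$, so all four $\rho^\CG_{\bar\mu'}$ remain extremal in $\tau$.
\item The $W_0$-symmetry you invoke does not help. $\tau$ need not be $W_0$-stable, and a $W_0$-stable octagon around the square gives the same phenomenon.
\end{itemize}

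In this example every $\eta_{\bar\mu'}$ maps to a codimension-one orbit of $Y_S$ with the same primitive ray generator. So $n(\bar\mu')=1$, and the computation in the proof of Proposition \ref{prop:R1a} shows that the points above $\eta_{\bar\mu'}$ are regular. Moreover, $L$ is flat over the smooth locus of $Y_S$, and $T_\CG(\BF_p)$ acts transitively on the geometric fibres of $L$. Hence every irreducible component of a chart $U\times_{Y_S}\wt Y_S$ dominates $U$, so codimension-one points lie over the $\eta_{\bar\mu'}$ or over the generic fibre. One concludes that $\Mlocroot_{\CG,\mu}$ is actually (R1) here.

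So the statement needs an extra hypothesis that guarantees Step 1. One such hypothesis is that $S$ is free, which is the situation of Example $5'$) and the remark after Lemma \ref{rootback}. Then $\tau$ has only $\operatorname{rank}X^*(T_\CG)$ rays, while the full-dimensional $\sigma_{\CG,\mu}$ has at least that many, so if all of them were rays of $\tau$ we would get $\tau=\sigma_{\CG,\mu}$. Note that the paper's own proof asserts the existence of such a $\bar\mu'$ without argument, so it shares this gap.

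\textbf{Steps 2--4, once such a $\bar\mu'$ exists.} They follow the paper's route, but Step 4 is only sketched and contains an error.
\begin{itemize}
\item When $S_0$ is not free, the closed fibre $k(\eta)[S_0]/(s^{p-1})$ has length strictly larger than $(p-1)^{\operatorname{rank}}$. This is exactly the non-flatness in Proposition \ref{flatL}.
\item No reduction to $\dim\tau_0=2$ is needed. Since $\operatorname{codim}O(\tau_0)\geq 2$, choose $s,s'\in S$ vanishing at $\delta(\eta)$ with independent images in the cotangent space of $Y_S$ there.
\item In $\wt R=R[S]/(s^{p-1}-\delta^*(s))_{s\in S}$, take any maximal ideal $\mathfrak n$ and work modulo $\varpi$. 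The relations for these $s$ become $s^{p-1}=0$, which lie in $\mathfrak n^2$ because $p-1\geq 2$. The remaining relations only cut out étale torus directions.
\item Hence the $(p-1)$-st roots of $s,s'$ stay independent in $\mathfrak n/\mathfrak n^2$. So $\wt R_{\mathfrak n}$ has embedding dimension at least $2$, which exceeds $\dim\wt R_{\mathfrak n}\leq 1$, and $\wt R_{\mathfrak n}$ is not regular.
\end{itemize}
This cotangent-space count is precisely the paper's argument.
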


We will now give the proofs of Propositions \ref{prop:R1a}, \ref{prop:R1b} and \ref{prop:R1c}. We start with the set-up which is the same for all these proofs.

Let $R$ be the local ring of $\Mloc_{\CG,\mu}\otimes_{O_E}O_{\br E}$ at the generic point $\eta$ of the irreducible component of its special fiber which corresponds
to the coset of $\bar\mu'\in \Lambda_{\{\mu\}}\subset \wt W$; this is a dvr with uniformizer $\varpi_E$. Choose a trivialization of the pull-back of the  $T_\CG$-torsor ${\rm P}_{\CG,\mu}\to \Mloc_{\CG,\mu}$ over $R$,
which gives $\delta: \Spec(R)\to Y_S$. The stack $\Mlocroot_{\CG,\mu}$ is (R1) along the inverse image of $\eta$ along $\Mlocroot_{\CG,\mu}\to \Mloc_{\CG,\mu}$ if and only if  the semi-local ring $\wt R$ defined by the fiber product
\begin{equation}\label{fiberedrootstack}
\begin{aligned}
 \xymatrix{
      \Spec(\wt R) \ar[r] \ar[d]_L  &     \wt Y_S   \ar[d]^L \\
       \Spec(R)\ar[r]^\delta &   Y_S   
        }
        \end{aligned}
\end{equation}
is  (R1).

Suppose first that the image $\delta(\eta)$ lands on a codimension $1$ orbit $O(\rho)$ of the toric scheme $Y_S$ which corresponds to a ray $\rho$ in the cocharacter cone:

Note that $Y_S$ is smooth along $O(\rho)$ and so the divisor  $\ov{O(\rho)}$ is principal when restricted to some open neighborhood of $\delta(\eta)$ in $Y_S$. Using $T_\CG$-equivariance and Lemma \ref{lemma:ram}, we see that there is an  \'etale neighborhood $\Spec(\CO)\to Y_S$ of $\delta(\eta)$ such that the restriction of the cover $\wt Y_S\to Y_S$ is isomorphic to 
\[
\bigsqcup_{\wt\rho\mapsto \rho}\Spec(\CO[T]/(T^{e(\wt \rho/\rho)}-t)\to \Spec(\CO),
\]
where $t\in \CO$ cuts out the inverse image of $O(\rho)$ in $\Spec(\CO)$. (Note that $(e(\wt\rho/\rho), p)=1$.) By base changing the diagram (\ref{fiberedrootstack}) along $\Spec(\CO)\to Y_S$, we now see that $\wt R$ is \'etale locally  isomorphic to 
\[
\bigsqcup_{\wt\rho\mapsto \rho}\Spec(R'[U]/(U^{e(\wt \rho/\rho)}-\delta^*(t))\to \Spec(R'),
\]
where $\Spec(R')\to \Spec(R)$ is an \'etale neighbourhood  of $\eta$. We can assume that $R'$ is the strict henselization of the dvr $R$. 

Recall that the multiplicity of the zero of the function on $Y_S$ given by $\chi\in S\subset X^*(T_\CG)$ along $O(\rho)$ is given as the value of the pairing 
$\langle \lambda_\rho, \chi\rangle_\CG$, where $\lambda_\rho\in X_*(T_\CG)$ is a minimal generator of the ray $\rho$. 

\textit{Proof of Proposition \ref{prop:R1a}:}
In this we suppose  that $S=S_{\CG,\mu}$ and $Y=Y_{\CG,\mu}$ and let the $\eta\in \Mloc_{\CG,\mu}$ be the generic point of the irreducible component of the geometric special fiber which corresponds to the double coset of $\bar\mu'$
in the $\{\mu\}$-admissible set. Then, by Proposition \ref{prop:deltastrata} the image $\delta(\eta)$ indeed  lands on the codimension $1$ orbit $O(\rho^\CG_{\bar\mu'})$ of the toric scheme $Y$ which corresponds to the ray $ \rho^\CG_{\bar\mu'}=\BR_{>0}\cdot \phi_\CG(\bar\mu')$. The divisor formula (\ref{divsum}) with the above
now implies that the valuation of $\delta^*(t)$ in the dvr $R'$ is the ratio 
\[
n(\bar\mu'):=\frac{  e\cdot \phi_\CG(\bar\mu')}{\lambda_{\rho_{\bar\mu'}}}
\]
Hence, $\delta^*(\eta)=\varpi_E^{n(\bar\mu')}\cdot u$, $u\in (R')^*$. Since we take $R'$ to be the strict henselization of $R$ we conclude that the ring $\ti R$ is isomorphic to a finite direct sum of copies of 
\begin{equation}\label{ringcover}
R'[U]/(U^{e(\bar\mu')}-\varpi^{n(\bar\mu')}_E).
\end{equation}
In this, $e(\bar\mu'):=e(\ti\rho/\rho_{\bar\mu'})$  is prime to $p$ and is independent of the choice of $\wt\rho$ above $\rho_{\bar\mu'}$. (Note that the unit $u$ above has an 
$e(\bar\mu')$-th root  in $R'$; this is used to absorb
its appearance by a simple substitution.) 

Observe now that, since $R'$ is a dvr with uniformizer $\varpi_E$, the ring (\ref{ringcover}) is (R1), equivalently a dvr, in exactly two cases: When 
$n(\bar\mu')=1$ which amounts to $e\cdot \phi_\CG(\bar\mu')$ being a minimal generator of the ray $\rho_{\bar\mu'}$, or when $e(\bar\mu')=1$ which amounts to the Lang cover 
$L: \wt Y\to Y$ being unramified over $O(\rho^\CG_{\bar\mu'})$. Since  $e\cdot \phi_\CG(\bar\mu')$ is a minimal generator of the ray $\rho^\CG_{\bar\mu'}\subset X_*(T_\CG)_\BR$ if and only if it is indivisible in the lattice $X_*(T_\CG)$, the proof is complete. \qed
\smallskip

\textit{Proof of Proposition \ref{prop:R1b}:} Note that, when $T_\CG$ is split and $p>2$,
the Lang cover $\wt Y_{\CG,\mu}\to Y_{\CG,\mu}$  ramifies everywhere along the boundary of the torus embedding, cf.  Remark \ref{rem:ramify} (a). Hence, for all $\bar\mu'$, $e(\bar\mu')>1$.
As  in the proof above, if $e\cdot \phi_\CG(\bar\mu')$ is  divisible in the lattice $X_*(T_\CG)$, then $n(\bar\mu')>1$. Hence, by the above, if there is $\bar\mu'\in \Lambda_{\{\mu\}}$, with $e\cdot \phi_\CG(\bar\mu')$ divisible, then $\ti R$ and hence $\Mlocrooto_{\CG,\mu}$ is not (R1). This concludes the proof. \qed

\begin{remark}\label{rem:relax}
We see that the condition that $T_\CG$ is split in Proposition \ref{prop:R1b} can be relaxed to the following condition: \emph{The Lang cover $\wt Y_{\CG,\mu}\to Y_{\CG,\mu}$  ramifies everywhere along the boundary of the torus embedding}. Note that if $T_\CG$ is split, this  condition  is satisfied, cf.  Remark \ref{rem:ramify} (a).
\end{remark}
  
\textit{Proof of Proposition \ref{prop:R1c}:}
Since $S\neq S_{\CG,\mu}$, the cone $\tau$ which corresponds to $S$
is strictly larger than the cone $\sigma_{\CG,\mu}$. This implies that there exists $\bar\mu'$ such that  the extremal ray $\rho^\CG_{\bar\mu'}$ of $S_{\CG,\mu}$ fails to be extremal in $\tau$ and, instead, now lies in a higher dimensional face. 

Let $\eta$ be the generic point of the irreducible component of the geometric special fiber of $\Mloc_{\CG,\mu}$ corresponding to the coset of  $\bar\mu'$ and let $R$ be the local ring of $\Mloc_{\CG,\mu}\otimes_{O_E}O_{\br E}$ at $\eta$, as before. The  ring $\wt R$ of (\ref{fiberedrootstack}) is   isomorphic to 
\begin{equation}\label{tildering}
 R[S]/(s^{p-1}-\delta^*(s))_{s\in S}=(\br\BZ_p[S(1), S(2)]/
 (s(1)^{p-1}-s(2))_{s(1), s(2)\in S}\otimes_{\br\BZ_p[S(2)], \delta^*}R
\end{equation}
with $S(1)=S(2)=S$ two copies of the semigroup $S$ and $s(1)$, $s(2)$ labelled to distinguish the ``same'' element $s$ in the two copies.
 (Here we are using that $T_\CG$ is split, so $\wt Y_S=Y_S$ and the Lang cover $L: Y_S\to Y_S$ is obtained by taking $p-1$-th roots, cf. Remark \ref{remark:splitLang}.) 
 
 Since $\delta: \Spec(R)\to Y_S$ is the composition $\Spec(R)\to Y_{\CG,\mu}\to Y_S$, there are distinct extremal rays $r_1,\ldots, r_a$, $a\geq 2$, of $\tau$ which form a face $\tau'$ and such that $\delta(\eta)\in O(\tau')\subset \cap_{i=1}^a\ov{O(r_i)}$.  Now there are  $2$ elements $s$, $s'\in S$, which vanish on $O(\tau)$ and hence on $\delta(\eta)$, and 
 whose images  in the cotangent space of $Y_S$ at $\delta(\eta)$ are linearly independent.
  Hence, $\delta^*(s)$, $\delta^*(s')$    both belong to the maximal ideal of $R$. 
  We can now see, using (\ref{tildering})  that $s(1)\otimes 1$ and $s'(1)\otimes 1$ give two linearly independent elements in the cotangent space of $\wt R$ at each point above $\delta(\eta)$, provided that $p>2$. Therefore this space has dimension at least $2$. This implies that $\wt R$ is not (R1) and the proof is complete. \qed

\begin{example}
We take $G=\GL_n$, $n>2$, $\mu=(1^{(r)}, 0^{(n-r)})$ with $r>1$ and the parahoric $\CG$ as in Example \ref{ex:sstableGortz} and continue 
with the same notations.
We have 
 \[
 \sigma_{\CG,\mu}=\{(x, y)\in X^*(T_\CG)_\BR=\BR^2\ |\  yr\geq 0, x+y(r-1)\geq 0\},
 \] 
$S_{\CG,\mu}$ is free, generated by $(1,0)$ and $(-(r-1), 1)$, and the toric scheme $Y_{\CG,\mu}$ is $\BA^2_{\BZ_p}$
with the toric embedding $\BG_m^2\subset \BA^2$ given by $\BZ_p[u,v]=\BZ_p[t_1, t_2/t_1^{r-1}]\subset \BZ_p[t_1^{\pm1}, t_2^{\pm 1}]$.
By \cite[Prop. 12.1]{HPR}, an affine chart for the local model $\Mloc_{\CG,\mu}$ which covers all strata can be chosen to be 
\[
\Spec(\BZ_p[x,y, t_1,\ldots , t_{d-1}]/(xy-p)),
\]
where $d$ is the relative dimension of $\Mloc_{\CG,\mu}$. 
Furthermore, we can arrange so that the components $x=0$ and $y=0$ of the special fiber correspond to the cosets of $(1, r-1)$ and $(0, r)$ respectively, and the divisor morphism is defined and is equal to
\[
\Spec(\BZ_p[x,y, z_1,\ldots , z_{d-1}]/(xy-p))\to  \BA^2_{\BZ_p}=\Spec(\BZ_p[t_1, t_2/t_1^{r-1}]), 
\]
given by
\[
t_1\mapsto x,\quad  t_2/t_1^{r-1}\mapsto y^r.
\]
Over the generic point of the divisor of $\Mloc_{\CG,\mu}$  given by $y=0$, the stacky cover $\Mlocrooto_{\CG,\mu}$
is \'etale locally isomorphic to 
\[
[\BF_p^*\bs \Spec(\BZ_p[w, w', z_1,\ldots , z_{d-1}]/(w^{p-1}-p^r))]
\]
with $\BF_p^*$ acting on $w$ by Teichmuller multiplication. For $p>2$ and $r>1$, this is not (R1).
\end{example}

\subsubsection{Flatness of the cover $\Mlocroot_{\CG,\mu}\to \Mloc_{\CG,\mu}$}

Since 
 the Lang cover $L: \ti Y_S\to Y_S$ is finite, the pullback map $\Mlocroot_{\CG,\mu}\to \Mloc_{\CG,\mu}$  in (\ref{LMrootstack})  is also  finite. 
 For the next lemma, we only  assume that $(\CG,\{\mu\})$ is strictly convex. 
 
 \begin{lemma}\label{rootback}
The stacky cover $\Mlocroot_{\CG,\mu}\to \Mloc_{\CG,\mu}$ is flat if and only if the Lang cover $L: \ti Y_S\to Y_S$ is flat. 
 \end{lemma}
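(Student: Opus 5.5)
Flatness is local on the source for the smooth topology and stable under base change, and $\Mlocroot_{\CG,\mu}\to\Mloc_{\CG,\mu}$ is by definition the base change of $[T_\CG\bs\wt Y_S]\to[T_\CG\bs Y_S]$ along $\delta$. So I will compare both sides with the morphism $L:\wt Y_S\to Y_S$ via charts, exactly as in the proof of Proposition \ref{prop:DM}.

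\emph{Reduction to schemes.} For any $T_\CG$-equivariant morphism, flatness of $[T_\CG\bs\wt Y_S]\to[T_\CG\bs Y_S]$ is equivalent to flatness of its base change along the smooth cover $Y_S\to[T_\CG\bs Y_S]$. This base change is $Y_S\times_{[T_\CG\bs Y_S]}[T_\CG\bs\wt Y_S]\simeq[\und{T_\CG(\BF_p)}\bs\wt Y_S]\to Y_S$. Since $\wt Y_S\to[\und{T_\CG(\BF_p)}\bs\wt Y_S]$ is finite étale and surjective, this in turn is equivalent to flatness of $L:\wt Y_S\to Y_S$. In the same way, flatness of $\Mlocroot_{\CG,\mu}\to\Mloc_{\CG,\mu}$ can be tested on the cartesian squares \eqref{DMrstack}, i.e. on the scheme morphisms
\[
U\times_{Y_S}\wt Y_S\to U
\]
for Zariski opens $U\subset\Mloc_{\CG,\mu}$ trivializing ${\rm P}_{\CG,\mu}$, with $U\to Y_S$ given by $\delta$ composed with the chosen section.

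\emph{The easy direction.} If $L$ is flat, then every base change $U\times_{Y_S}\wt Y_S\to U$ is flat, so the stacky cover is flat.

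\emph{The converse (main obstacle).} Assume $\Mlocroot_{\CG,\mu}\to\Mloc_{\CG,\mu}$ is flat. The base change of a flat morphism is only known to be flat on the image of $\delta$, so I must show that flatness of $L$ follows from flatness over that image.

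First I use equivariance. Replacing $U$ by ${\rm P}_{\CG,\mu}|_U=T_\CG\times U$, the map $\delta:{\rm P}_{\CG,\mu}\to Y_S$ is $T_\CG$-equivariant, and ${\rm P}_{\CG,\mu}\times_{Y_S}\wt Y_S\to{\rm P}_{\CG,\mu}$ is flat, being a smooth base change of the flat stacky cover. The locus of $Y_S$ over which the finite morphism $L$ is flat is open and $T_\CG$-stable; call its complement $Z$, which is closed, $T_\CG$-stable and contained in the boundary.

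Next I use surjectivity on orbits. By Proposition \ref{prop:deltastrata}(c), the closed $\CG_k$-orbit of $\Mloc_{\CG,\mu}$ maps to the unique closed $T_\CG$-orbit of $Y_S$. Here I need the corresponding statement for $Y_S$ rather than $Y_{\CG,\mu}$: this follows because the minimal orbit of $Y_{\CG,\mu}$ maps to the minimal orbit of $Y_S$, since $Y_{\CG,\mu}\to Y_S$ maps the vertex of $\sigma_{\CG,\mu}$-orbits to the closed orbit, both cones having the same lineality by condition (1) of \S\ref{moregen}.

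Now take $y$ in the closed orbit and a point $x$ of ${\rm P}_{\CG,\mu}$ with $\delta(x)=y$. Local flatness of a finite morphism at a point of the target can be checked after a flat local base change that is surjective onto that point. The map $\CO_{Y_S,y}\to\CO_{{\rm P},x}$ is not flat, so instead I use the fibre-dimension/length criterion. $L$ is finite of constant generic degree $|T_\CG(\BF_p)|$, and the fibre $L^{-1}(y)$ pulls back isomorphically to the fibre over $x$. Flatness over $x$ forces the fibre length to equal the generic degree $|T_\CG(\BF_p)|$, and hence the fibre of $L$ over $y$ has length $|T_\CG(\BF_p)|$.

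Since $Y_S$ is normal, in particular reduced, and the fibre length of the finite morphism $L$ is upper semicontinuous, constant length over the points of the closed orbit propagates: every orbit contains the closed orbit in its closure, so every point specializes to a point of the closed orbit, and the fibre length is therefore $|T_\CG(\BF_p)|$ everywhere. Constant fibre rank of a finite morphism over a reduced base gives local freeness, so $L$ is flat. I expect this last step, turning pointwise fibre length at the image of $\delta$ into global flatness using $T_\CG$-equivariance and the closed-orbit hitting property, to need the most care.
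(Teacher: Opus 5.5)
Your proof follows the paper's: flatness passes to the base change for the easy direction, and for the converse you compute the fibre rank of $L_*\CO_{\wt Y_S}$ at the image of a point of the closed $\CG_k$-orbit, which lands in the closed orbit $O(\tau)$. You compare that rank with $\#T_\CG(\BF_p)$ using flatness of the pulled-back cover, and you propagate to all of $Y_S$ by upper semicontinuity, $T_\CG$-equivariance and integrality of $Y_S$.

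There are two small slips in the justifications.
\begin{itemize}
\item Getting from the closed orbit of $Y_{\CG,\mu}$ to $O(\tau)$ uses the condition $\tau^\perp=\sigma_{\CG,\mu}^\perp$ from \S\ref{moregen}, not equality of lineality spaces. Equal lineality alone would still allow $\sigma_{\CG,\mu}$ to lie in a proper face of $\tau$.
\item The propagation step should use that the fibre rank is constant along $T_\CG$-orbits and that the generic point of every orbit specializes into $O(\tau)$. It is not true that every point of $Y_S$ specializes into the closed orbit; closed points of the open torus, for example, do not.
\end{itemize}
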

 
 \begin{proof} The one direction is easy. To show the other implication, assume that $\Mlocroot_{\CG,\mu}\to \Mloc_{\CG,\mu}$ is flat. Since $L: \ti Y_S\to Y_S$ is finite, $T_\CG$-equivariant and $Y_S$ integral, to show that $L$ is flat is enough to check that the rank of the fiber of the $\CO_{Y_S}$-coherent sheaf $L_*\CO_{\wt Y_S}$
 over a point in the unique closed $T_\CG$-orbit $O(\tau)$ is equal to the generic rank, i.e. equal to $\# T_\CG(\BF_p)$. Here $\tau$ is the cone corresponding to $S$ and we have $\sigma_{\CG,\mu}\subset \tau\subset X_*(T_\CG)_\BR$.
 By Proposition \ref{prop:deltastrata} we see that the unique closed $\CG_k$-orbit of $\Mloc_{\CG,\mu}$ maps to $O(\tau)$. Let $s$ be a $k$-valued point of $\Mloc_{\CG,\mu}$ in this closed orbit. We have a fiber product
 \begin{equation}\label{fiberedClosed}
\begin{aligned}
 \xymatrix{
      \Spec(\wt A) \ar[r] \ar[d]_L  &     \wt Y_S   \ar[d]^L \\
       \Spec(k)\ar[r]^{\delta(s)} &  \, Y_S.   
        }
        \end{aligned}
\end{equation}
Since $\Mlocroot_{\CG,\mu}\to \Mloc_{\CG,\mu}$ is flat, ${\rm rank}_k(\wt A)=\# T_\CG(\BF_p)$.
Hence, the fiber of $L_*\CO_{\wt Y_S}$ at $\delta(s)$ has rank $\# T_\CG(\BF_p)$ and hence, by the above, $L:\wt Y_S\to Y_S$ is flat.
\end{proof}
 Flatness of the Lang cover $Y_S\to Y_S$ holds if  $S$ is free. By Proposition \ref{flatL}, assuming that $T_\CG$ is split,  the converse also holds. 
\begin{remark} If $E_0=\BQ_p$, there always exists $S$ such that $S$ is free. Indeed, $\sigma_{\CG, \mu}^\vee$ contains  a basis of $X^*(T_\CG)$ and we may take for $S$ the semi-group generated by it.  However, if $S\neq S_{\CG, \mu}$, the root stack $\Mlocroot_{\CG,\mu}$ is not (R1), at least if $T_\CG$ is split and $p>2$. 
\end{remark}

We now assume that $\CG$ is Iwahori and that, in addition, $T_{\CG,\mu}=T_\CG$, i.e \S \ref{blanket} (ii) is satisfied. Preliminary to the question whether $S_{\CG, \mu}$ is free, we first address the question when $\sigma_{\CG,\mu}$ is \emph{simplicial}, i.e., the number of its extremal rays is equal  to the dimension of the space
$X_*(T_\CG)_\BR$.  We identify $X_*(T_\CG)$ with $X_*(T)_I$. 
The number of extremal rays of $\sigma_{\CG, \mu}$ is equal to the cardinality  of the image of $\Lambda_{\{ \mu\}}$ in $X_*(T_\ad)_I$, i.e., the number of elements of $W/W_{\bar\mu_\ad}$, where $W=W_0$ is the relative Weyl group over $\br\BQ_p$, and where  $W_{\bar\mu_\ad}$ is the stabilizer of the image of some element $\bar\mu\in \Lambda_{\{\mu\}}$ in $X_*(T_\ad)_I$. 

We make the assumption that $G_\ad\otimes_{\BQ_p}\br\BQ_p$ is simple.  We also assume that $(G, \{\mu\})$ is ab-nondegenerate. Then $\dim (X_*(T)_I)_\BR={\rm rank}_{\br\BQ_p}(G_\ad\otimes_{\BQ_p}\br\BQ_p)+1$, cf. Proposition \ref{dimT1}.  We are thus led to the following estimate. It is the analogue for Weyl groups of the   relation between the dimension of a partial flag variety $G/P$ and the rank of a simple adjoint group $G$, cf. \cite[Prop. 1.4]{OR}. 
\begin{proposition}\label{ORforW}
Let $(V, R)$ be an irreducible root system, with Weyl group $W$. Let $W_J\subset W$ be a proper parabolic subgroup. Then there is the inequality
$$\# (W/W_J)\geq \dim V +1, $$
with equality if and only if $(V, R)$ is of type $A_n$ and $J=\{s_1, s_2,\ldots, s_{n-1}\}$ or $J=\{s_2, s_3,\ldots, s_{n}\}$.
\end{proposition}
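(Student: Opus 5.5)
The plan is to reduce to maximal parabolic subgroups and then compare orbit sizes with the rank type by type. Let $n=\dim V$, the rank of $R$. Since $W_J\subset W_{J'}$ for $J\subset J'$, we have $\#(W/W_J)\geq \#(W/W_{J'})$, with equality only when $J=J'$. As $J$ is proper, it is contained in some maximal proper subset $\Delta\setminus\{s_i\}$. So it suffices to prove $\#(W/W_{\Delta\setminus\{s_i\}})\geq n+1$ for every $i$, and to identify when equality holds. Equality for a non-maximal $J$ is then excluded, because the inequality $\#(W/W_J)>\#(W/W_{J'})$ is strict.

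For maximal $J=\Delta\setminus\{s_i\}$, the quotient $W/W_J$ is in bijection with the $W$-orbit of the fundamental coweight $\varpi_i^\vee$. I will use the following geometric argument, which fits with Lemma \ref{lim}. The orbit $W\varpi_i^\vee$ is a finite set of nonzero vectors, and by Lemma \ref{lim} its convex hull contains a neighbourhood of the origin. A polytope in $V$ with nonempty interior has at least $n+1$ vertices, which gives $\#W\varpi_i^\vee\geq n+1$. If equality holds, the polytope is an $n$-simplex and $W$ acts on it by permuting its $n+1$ vertices. This gives an injective homomorphism $W\hookrightarrow S_{n+1}$, injective because $W$ acts faithfully on $V$ and the vertices span $V$ affinely. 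The action is transitive on vertices, and $W_J$ is exactly the stabilizer of $\varpi_i^\vee$.

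The next step bounds the stabilizer. The stabilizer $W_J$ embeds into $S_n$, the permutations of the remaining vertices. Hence $\#W\le (n+1)\cdot n!=(n+1)!$. On the other hand, $W$ is generated by $n$ reflections, and each reflection fixes a hyperplane. A reflection acting on the vertices of the simplex fixes $n-1$ affinely independent points spanning its mirror, so as a permutation it is a transposition. Thus $W$ embeds in $S_{n+1}$ as a subgroup generated by transpositions and acting transitively on $n+1$ points. Such a subgroup is all of $S_{n+1}$, because transpositions generating a transitive group form a connected graph on the points. Therefore $W\cong S_{n+1}$, and comparing orders among irreducible Weyl groups of rank $n$ forces type $A_n$. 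In small ranks I would check this by direct comparison, for instance with the orders of $B_2$, $G_2$ and the other exceptional cases.

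Finally, in type $A_n$ we have $\#(W/W_{\Delta\setminus\{s_i\}})=\binom{n+1}{i}$, which equals $n+1$ exactly when $i=1$ or $i=n$. These are the two listed $J$. Conversely, these two choices do give $n+1$, which settles the equality case. The main obstacle I expect is the rigorous identification $W\cong S_{n+1}\Rightarrow$ type $A_n$, including the low-rank coincidences. A fallback that avoids the geometry is a direct check against the tables of minimal orbit sizes $\min_i\#(W/W_{\Delta\setminus\{s_i\}})$ for each type. These are $2n$ for $B_n$ and $C_n$, $2n$ for $D_n$, and $27,56,240,24,6$ for $E_6,E_7,E_8,F_4,G_2$, each visibly larger than $n+1$.
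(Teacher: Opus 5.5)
Your proof is correct. After the common first step it takes a genuinely different route from the paper; that step is the reduction to maximal parabolics, and your strictness remark correctly disposes of non-maximal $J$.

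The paper goes through the Coxeter graphs of all irreducible types and writes down $\#(W/W_J)$ for every removed node. These are binomial coefficients in type $A$, expressions like $2^{n-i}\binom{n}{i}$ in types $B$, $C$, $D$, and explicit numbers for $E_6,E_7,E_8,F_4,G_2$; each is compared with $n+1$.

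You instead identify $W/W_{\Delta\setminus\{s_i\}}$ with the orbit $W\varpi_i^\vee$. You then obtain the inequality with no case analysis, because the orbit affinely spans $V^*$ by Lemma~\ref{lim}. Irreducibility alone would also do: the orbit spans $V^*$ linearly, and its barycenter is $W$-fixed, hence $0$.

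The classification then enters only in the equality case, and only through the group orders $(n+1)!$, $2^n n!$, $2^{n-1}n!$, $12$, $1152,\dots$. There are no coincidences in the relevant ranks: $2^n=n+1$ only for $n=1$, and $2^{n-1}=n+1$ only for $n=3$, where $D_3=A_3$. So the low-rank worry you raise is harmless.

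The one step you should justify more carefully is that each reflection acts on the $n+1$ vertices as a transposition. Consider an affine map permuting the vertices of a simplex that spans $V^*$. Its fixed locus is the affine span of the barycenters of its orbits on the vertices, so it has dimension (number of orbits)$\,-1$. For a reflection this locus is the mirror, of dimension $n-1$. Hence the involution has exactly $n$ orbits on $n+1$ points, so it is a single transposition.

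Your approach gives a conceptual explanation of the equality case: the orbit polytope is a simplex and $W$ is its full symmetric group. This is in the spirit of the Orlik--Rapoport comparison in the remark following the proposition. The paper's case-by-case computation gives the complete list of values $\#(W/W_J)$ for all maximal $J$, which your argument does not produce.
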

\begin{proof}
It obviously suffices to prove the statement for a maximal parabolic subgroup $W_J$. These are the Weyl groups obtained by removing one vertex from the Coxeter graph, in the table of \cite[ch. VI, \S 4, 1]{Bou} (we do not need the information of the Dynkin diagram, the Coxeter graph is enough).
\begin{altitemize}
\item \emph{Type $A_n, n\geq 1$}: If the node $v_i$ is removed, the  quotient by the corresponding parabolic subgroup is $S_{n+1}/S_i\times S_{n+1-i}$ and has order $\binom{n+1}{i}$, which has order $\geq n+1$, with equality iff $i=1$ or $i=n$.
\item \emph{Type $B_n, n\geq 2$}: If the node $v_n$ is removed, the  quotient by the corresponding parabolic subgroup is $(\BZ/2\BZ)^n\rtimes S_{n}/S_n$ and has order $2^n>n+1$. If the node $v_{n-1}$ is removed, the  quotient by the corresponding parabolic subgroup is $(\BZ/2\BZ)^n\rtimes S_{n}/S_{n-1}\times S_2$ and has order $2^{n-1}n>n+1$. If the node $v_{n-i}$ is removed with $2\leq i\leq n-1$, the  quotient by the corresponding parabolic subgroup is $(\BZ/2\BZ)^n\rtimes S_{n}/S_{n-i}\times (\BZ/2\BZ)^{i}\rtimes S_{i}$ and has order $2^{n-i}\binom{n}{i}>n+1$. 
\item\emph{Type $C_n, n\geq 3$}: This is dual to type $B_n$.
\item\emph{Type $D_n, n\geq 4$}: If the node $v_n$ or $v_{n-1}$ is removed, the  quotient by the corresponding parabolic subgroup is $(\BZ/2\BZ)^{n-1}\rtimes S_{n}/S_{n}$ and has order $2^{n-1}>n+1$.  If the node $v_{n-2}$ is removed, the  quotient by the corresponding parabolic subgroup is $(\BZ/2\BZ)^{n-1}\rtimes S_{n}/S_{n-2}\times S_2\times S_2$ and has order $2^{n-3}n(n-1)>n+1$. If the node $v_{n-i}$ is removed with $2<i\leq n-1$, the  quotient by the corresponding parabolic subgroup is $(\BZ/2\BZ)^{n-1}\rtimes S_{n}/S_{n-i}\times (\BZ/2\BZ)^{i-1}\rtimes S_{i}$ and has order $2^{n-i}\binom{n}{i}>n+1$. 
\item\emph{Type $E_6$}: The orders of the quotients are
$
3^3, \, 2^3\cdot 3^3,\, 2^4\cdot 3^2\cdot 5,
$
all larger than $7$. 
\item\emph{Type $E_7$}: The orders of the quotients are
$$
2^3\cdot7, \quad 2^2\cdot 3^3\cdot  7,\quad 2^6\cdot 3^2\cdot 7, \quad 2^5\cdot 3^2\cdot 5\cdot 7, \quad 2^6\cdot 3^2, \quad 2^5\cdot 3^2\cdot 7, \quad 2\cdot 3^2\cdot 7,
$$
all larger than $8$. 
\item\emph{Type $E_8$}: The orders of the quotients are
$$
2^4\cdot 3\cdot 5, \quad 2^6\cdot 3\cdot 5\cdot 7, \quad 2^6\cdot 3^3\cdot 5\cdot 7,\quad 2^8\cdot 3^3\cdot 5\cdot 7, \quad 2^9\cdot 3^3\cdot 5\cdot 7,\quad
2^7\cdot 3^3\cdot 5, \quad 2^9\cdot 3^3\cdot 5, \quad 2^4\cdot 3^3\cdot 5, 
$$
all larger than $9$. 
\item\emph{Type $F_4$}: The orders of the quotients are
$
2^3\cdot 3,\, 2^5\cdot 3, 
$
both larger than $5$. 
\item\emph{Type $G_2$}: The orders of the quotients are all
$
2\cdot 3,
$
larger than $3$. \end{altitemize}
\end{proof}

\begin{remark}  There is a direct relation between the previous result and \cite[Prop. 1.4]{OR}. Indeed, let  $G$ be the simple adjoint group $G$ with root system $R$ and fix a Borel subgroup $B$ corresponding to a Weyl chamber $C$ in $V$. Choose $\mu\in \bar C$  such that the stabilizer of $\mu$ in $W$ is $W_J$, and let $\CV_{\mu}$ be the irreducible representation with highest weight $\mu$. 
 Let $P=P_J$ be the standard parabolic corresponding to $J$: this is also the stabilizer of the line of $\CV_\mu$ given by the highest weight space. We obtain a closed embedding $G/P\subset \BP(\CV_\mu)$. We therefore obtain the estimate
$$
\dim G/P\leq \dim \CV_\mu-1. 
$$
By \cite[Prop. 1.4]{OR} we have the inequality $r(G)\leq \dim G/P$, where $r(G)=\dim V$ denotes the rank of $G$, with equality iff $G$ is of type $A_n$ and $J=\{s_1, s_2,\ldots, s_{n-1}\}$ or $J=\{s_2, s_3,\ldots, s_{n}\}$. 

Assume now that $\mu$ is minuscule (in our application, this is satisfied if $G$ splits over $\br\BQ_p$). 
Then the set of weights of $\CV_\mu$ forms one orbit under $W$ and can be identified with $W/W_J$, and $\dim \CV_\mu=\# (W/W_J)$.
  Putting these inequalities together, we obtain 
$$
\# (W/W_J)\geq \dim V+1,
$$
with equality iff $G$ is of type $A_n$ and $J=\{s_1, s_2,\ldots, s_{n-1}\}$ or $J=\{s_2, s_3,\ldots, s_{n}\}$. 

Conversely, the assertion of \cite[Prop. 1.4]{OR} follows from Proposition \ref{ORforW} for such parabolics $P$ (these are called \emph{minuscule parabolics}, cf. \cite[\S 3]{BP}). 
\end{remark}
We deduce the following statement.
\begin{corollary}\label{classfree}
Let $(G, \CG, \mu)$, where $\CG$ is Iwahori,  such that  $G_\ad$ is absolutely simple.  We also assume that $(G, \{\mu\})$ is ab-nondegenerate
and that $T_{\CG,\mu}=T_\CG$.

\begin{altenumerate}
\item If the cone $\sigma_{\CG, \mu}$ is simplicial, then the pair $(G_\ad\otimes_{\BQ_p}\br\BQ_p, \mu)$ is isomorphic to either $(\PGL_n, \varpi^\vee_1)$ or $({\rm PU}_3, \mu_{2,1})$, where ${\rm PU}_3$ is the adjoint unitary group for a ramified quadratic extension $K$ of $\breve\BQ_p$---and conversely.

\item If $S_{\CG, \mu}$ is free, then the pair $(G_\ad\otimes_{\BQ_p}\br\BQ_p, \mu)$ is isomorphic to $(\PGL_n, \varpi^\vee_1)$  (the \emph{Drinfeld case}). 
\end{altenumerate}
\end{corollary}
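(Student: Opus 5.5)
The plan is to compare the number of extremal rays of $\sigma_{\CG,\mu}$ with the dimension of the ambient space, and then to apply Proposition \ref{ORforW}.

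\emph{Counting rays and dimension.} Since $T_{\CG,\mu}=T_\CG$ and $(G,\{\mu\})$ is ab-nondegenerate, Proposition \ref{dimT1} (Iwahori case, with equality) gives
\[
\dim X_*(T_\CG)_\BR=\operatorname{rank}_{\br\BQ_p}(G_\ad\otimes\br\BQ_p)+1 .
\]
Here $\mu_\ad$ is non-trivial, since otherwise $T_{\CG,\mu}$ would be one-dimensional. By the discussion preceding Proposition \ref{ORforW}, the number of extremal rays of $\sigma_{\CG,\mu}$ is $\#(W_0/W_{\bar\mu_\ad})$. Here $W_0$ is the Weyl group of the reduced relative root system $\Sigma$ of $G_\ad\otimes\br\BQ_p$ on $X_*(T)_I\otimes\BR$, and $W_{\bar\mu_\ad}$ is the stabilizer of the image of $\bar\mu$. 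We choose $\bar\mu$ in the closed chamber, so that its stabilizer is a standard parabolic subgroup $W_J$. Because $\bar\mu_\ad\neq 0$, it is proper.

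The point needing care is the number of simple factors. Since $G_\ad$ is absolutely simple, $G_\ad\otimes\br\BQ_p$ is simple, and $\Sigma$ is irreducible. Proposition \ref{ORforW} then gives
\[
\#(W_0/W_J)\geq \dim +1 .
\]
So $\sigma_{\CG,\mu}$ is simplicial if and only if equality holds. Equality forces $\Sigma$ to be of type $A_m$ and $\bar\mu_\ad$ to be conjugate to a multiple of $\varpi^\vee_1$ or $\varpi^\vee_m$.

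\emph{Identifying the pairs in (i).} It remains to list the quasi-split adjoint groups over $\br\BQ_p$ whose relative reduced root system is of type $A_m$, together with a minuscule $\mu$ whose averaged image $\bar\mu$ is proportional to $\varpi^\vee_1$. I expect this to be the main obstacle, since it requires going through the tables of relative root systems of quasi-split groups. The split case $\PGL_n$ gives $\mu=\varpi^\vee_1$ or $\varpi^\vee_{n-1}$, which are isomorphic after an outer automorphism.

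The non-split quasi-split groups over $\br\BQ_p$ have relative root systems of type $BC$, $B$, $C$, $F_4$ or $G_2$. The only way to land in type $A$ is rank one, with reduced system $A_1$. This happens for ramified ${\rm PU}_3$ (type $BC_1$) and ramified ${\rm PU}_4$ (type $C_2$ or $B_2$, which has rank $2$ and is excluded). For $A_1$, simpliciality requires exactly two rays in a two-dimensional space, which is automatic once $\bar\mu_\ad\neq0$. One therefore checks which rank-one ramified groups occur:
\begin{itemize}
\item ${\rm PU}_3$ with $\mu_{2,1}$, matching Example 9 for $n=3$, where two rays span $\BR^2$;
\item ${\rm PU}_2\simeq\PGL_2$ forms, which are again Drinfeld;
\item ramified Weil restrictions, which are excluded by absolute simplicity.
\end{itemize}
The converse in (i) is direct, by Example 1 and Example 9.

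\emph{Part (ii).} If $S_{\CG,\mu}$ is free, then $\sigma^\vee$, and hence $\sigma$, is simplicial, so we are in one of the two cases of (i). For ${\rm PU}_3$, Example 9 shows that $Y_{\CG,\mu}$ is $\Spec\BZ_p[x_1,x_2,x_3]/(x_1^2-x_2x_3)$. This is singular, so $S_{\CG,\mu}$ is not free, and that case is excluded. This leaves the Drinfeld case, where Example 1 shows that the semigroup is indeed free.
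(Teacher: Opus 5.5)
Part (i) is fine and follows the paper's route. You count the extremal rays of $\sigma_{\CG,\mu}$ as $\#(W_0/W_J)$, compare this with $\dim X_*(T_\CG)_\BR={\rm rank}+1$ via Proposition \ref{ORforW}, and read off which quasi-split adjoint groups over $\br\BQ_p$ have Weyl group of type $A$. Your counting argument, not Examples 1) and 9), is what gives the converse for an arbitrary $G$.

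The gap is in (ii). You exclude the ${\rm PU}_3$ case by quoting Example 9) of \S\ref{ss:Exam}, but that example computes $Y_{\CG,\mu}$ only for one group, the ramified unitary similitude group $\GU_3$. The corollary concerns every $G$ satisfying the hypotheses with $G_\ad\otimes\br\BQ_p\simeq{\rm PU}_3$, for instance $Z^0\times{\rm PU}_3$ or $(Z^0\times{\rm SU}_3)/H$, with various central components of $\mu$. The semigroup $S_{\CG,\mu}$ depends on the lattice $X_*(T)^I$ and on $\mu$, not only on $(G_\ad,\mu_\ad)$. Examples 1) and 3) of \S\ref{ss:Exam} (cf. Example \ref{DrversAnti}) have isomorphic adjoint pairs, yet one semigroup is free and the other is not. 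For the same reason, your closing claim that the semigroup is free in the Drinfeld case is false in general, although (ii) does not need it.

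The lattice structure genuinely matters here. Suppose $X_*(T_\CG)$ were the permutation module $\BZ[\BZ/2]$ for $W_0\simeq\BZ/2$. Then a $W_0$-orbit of two primitive ray generators could be a $\BZ$-basis, as happens for $\GL_2$ with $\varpi_1^\vee$, and $S_{\CG,\mu}$ would be free. So one example cannot settle the ${\rm PU}_3$ case.

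What is missing is a proof, valid for all such $G$, that $X_*(T_\CG)=X_*(T)^I\simeq\BZ^+\oplus\BZ^-$ as $W_0$-modules. The paper does this by cases on $G_\der$.
\begin{itemize}
\item If $G_\der={\rm PU}_3$, then $G=Z^0\times{\rm PU}_3$, so $X_*(T)^I=X_*(Z^0)^I\oplus X_*(T_\ad)^I$ with $X_*(T_\ad)^I\simeq\BZ^-$.
\item If $G_\der={\rm SU}_3$, write $1\to H\to Z^0\times T_\der\to T\to 1$ with $H\subset\ti\mu_3$. Inertia acts by $-1$ on the relevant groups of order $1$ or $3$, so their invariants vanish. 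This gives $X_*(T)^I=X_*(Z^0)^I\oplus X_*(T_\der)^I$ and $X_*(T_\der)^I=X_*(T_\ad)^I\simeq\BZ^-$.
\end{itemize}
With this in hand, the two extremal rays of $\sigma_{\CG,\mu}$ are $W_0$-conjugate, with primitive generators $(a,b)$ and $(a,-b)$. Ab-nondegeneracy gives $a\neq0$ and $\bar\mu_\ad\neq0$ gives $b\neq0$, so their determinant $-2ab$ is never $\pm1$. If the rank-two semigroup $S_{\CG,\mu}$ were free, its generators would be a basis of $X^*(T_\CG)$. The primitive ray generators of $\sigma_{\CG,\mu}$ would then be the dual basis, which is impossible.
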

\begin{proof} The first statement follows  from determining all cases when  the echellonage  root system is of type $A$ and satisfies the criterion in Proposition \ref{ORforW}. 

 For the second statement, the case $({\rm PU}_3, \mu_{2,1})$ needs to be excluded.
Assume $(G,\mu)$ satisfies our assumptions and $(G_\ad\otimes\br\BQ_p,\mu_\ad)\simeq ({\rm PU}_3, \mu_{2,1})$. 
Note that $X_*(T_\ad)^I\simeq \BZ^-$, where we denote by $\BZ^{\pm }$ the $W_0$-module with underlying group $\BZ$ where the generator of the Weyl group $W_0\simeq\BZ/2\BZ$ acts as multiplication by $\pm 1$.

We   write $G$ as the almost direct product of $G_\der$ and the central torus $Z^0=Z(G)^0$ so that there is an exact sequence
\[
1\to H\to Z^0\times G_\der\to G\to 1
\]
where $H\hookrightarrow Z^0\times G_\der\to G_\der$ identifies $H$ with a subgroup scheme of the center of $G_\der$. Restricting to maximal tori gives
\[
1\to H\to Z^0\times T_\der\to T\to 1.
\]
Our conditions (ab non-degeneracy and $T_{\CG,\mu}=T_\CG$) together with Proposition \ref{dimT1} imply that $X_*(T_\CG)=X_*(T)^I$   is free of rank $2$ so it is isomorphic to $\BZ^2$ (as a group).

There are two cases: 

a) $G_\der={\rm PU}_3$, then $G=Z^0\times {\rm PU}_3$ is a direct product.  Consider the maximal torus $T=Z^0\times T_\ad$. Then 
$X_*(Z^0)^I \simeq \BZ^+$ since $W_0$ acts trivially on the center. Hence, 
\[
X_*(T)^I\simeq \BZ^+\times\BZ^-.
\]

b) $G_\der={\rm SU}_3$.
The center of ${\rm SU}_3$ is a twisted form $\ti\mu_3$ of the group scheme $\mu_3$: We have $\ti\mu_3\otimes_{\br\BQ_p}K\simeq \mu_3\otimes_{\br\BQ_p}K$
and the action of $\langle \tau\rangle=\Gal(K/\br\BQ_p)$ on $X^*(\ti\mu_3)=\BZ/3\BZ$ is given by $\tau(a)=-a$. In this case, $H$ is   subgroup scheme of $\ti \mu_3$ and hence either $H=(1)$ or $H=\ti \mu_3$. We have
\[
0\to X_*(Z^0)\times X_*(T_\der)\to X_*(T)\to X^*(H)^*\to 0.
\]
Note that $X^*(H)^*$ has order $1$ or $3$ and so it is cohomologically trivial for the action of $\Gal(K/\br\BQ_p)$ and hence of $I$. 
By the above, $X^*(H)^*_I=(X^*(H)^*)^I=(0)$. Taking  invariants gives
\[
0\to X_*(Z^0)^I\times X_*(T_\der)^I\to X_*(T)^I\to  (X^*(H)^*)^I=0.
\]
Hence, $X_*(T)^I\simeq X_*(Z^0)^I\times X_*(T_\der)^I$ as Weyl modules. A similar argument as above starting with
\[
1\to \ti\mu_3\to T_\der\to T_\ad\to 1
\]
gives $X_*(T_\der)^I\simeq X_*(T_\ad)^I\simeq \BZ^-$. Hence,   we obtain again
\[
X_*(T)^I \simeq \BZ^+\times\BZ^-.
\]

We now consider the image $\mu^\diam$ of $\mu$ in $X_*(T)^I_\BQ$ under the averaging map
\[
 X_*(T)\to X_*(T)^I_\BQ\simeq (\BZ^+\oplus \BZ^-)\otimes_\BZ\BQ,\quad \lambda\mapsto \lambda^\diam.
\] 
A primitive element of $X_*(T)^I\simeq \BZ^+\times\BZ^-$ which is in the ray spanned by $\mu^\diam$ is of the form $(a,b)$ with $ab\neq 0$. Here, $a\neq 0$ because $(G,\mu)$ is ab non-degenerate, and $b\neq 0$ because $\bar\mu_\ad$ is not trivial. The generator of $W_0$ takes $(a,b)$ to $(a,-b)$. It follows that the cone $\sigma_{\CG,\mu}\subset X_*(T)^I_\BR\simeq \BR\times \BR$ has extremal rays with primitive elements $(a, b)$ and $(a, -b)$ in $X_*(T_\CG)=X_*(T)^I\simeq \BZ^+\times\BZ^-$. The corresponding semigroup $S_{\CG,\mu}$ is not free since, for example, the determinant of the corresponding matrix is always divisible by $2$.
\end{proof}
\begin{remark}
The converse to Corollary \ref{classfree}, (ii) does not hold, as is shown by Example 3) in \S \ref{ss:Exam}: Indeed, the pairs $(\GL_n, \{\mu_1\})$ and $(\GL_n, \{\mu_{n-1}\})$ become isomorphic after passing to adjoint groups, comp. also Example \ref{DrversAnti}.  For the first pair, the semi-group $S_{\CG, \mu}$ is free but for the second it is not. It is conceivable that given $G$, there is at most one conjugacy class $ \{\mu\}$ with  $(G_\ad, \{\mu_\ad\})\simeq (\PGL_n, \mu_1)$ such that $S_{\CG, \mu}$ is free. 
\end{remark}

From Lemma \ref{rootback} and Corollary \ref{classfree}, we deduce the following statement.
\begin{corollary}\label{cor:NonFlat}
Let $(G, \CG, \mu)$, where $\CG$ is Iwahori,  such that  $G_\ad$ is absolutely simple. We also assume that $(G, \{\mu\})$ is ab-nondegenerate and that $T_{\CG,\mu}=T_\CG$.  If $T_\CG(\BF_p)$ is trivial (i.e., $p=2$ and $T_\CG$ is split), then the map $\Mlocrooto_{\CG,\mu}\to \Mloc_{\CG,\mu}$ is  an isomorphism. If $T_\CG(\BF_p)$ is non-trivial, then the map $\Mlocrooto_{\CG,\mu}\to \Mloc_{\CG,\mu}$ is flat only if $(G_\ad\otimes_{\BQ_p}\br\BQ_p, \mu)$ is isomorphic to $(\PGL_n, \varpi^\vee_1)$ (the \emph{Drinfeld case}), provided that Conjecture \ref{conjLco} on the flatness of Lang coverings of tori is satisfied for $T_\CG$. In particular, this holds if $T_\CG$ is split. 
\end{corollary}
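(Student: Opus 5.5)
The plan is to combine Lemma \ref{rootback}, Conjecture \ref{conjLco} (or Proposition \ref{flatL} in the split case) and Corollary \ref{classfree}, applied to $S=S_{\CG,\mu}$.

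First dispose of the trivial case. If $T_\CG(\BF_p)=\{1\}$, then by the footnote argument preceding Conjecture \ref{conjLco} we have $p=2$ and $T_\CG$ split. The Lang isogeny $L\colon T_\CG\to T_\CG$ then has trivial kernel, so it is an isomorphism of tori. Equivalently, by Remark \ref{remark:splitLang}, $L^*$ is multiplication by $p-1=1$. Proposition \ref{prop:normalToric} then identifies $\wt Y_S\to Y_S$ with the identity of $Y_S$. Since $\Mlocrooto_{\CG,\mu}$ is defined as the fiber product \eqref{LMrootstack} along $[T_\CG\bs\wt Y_S]\to[T_\CG\bs Y_S]$, and this map is an isomorphism, the projection $\Mlocrooto_{\CG,\mu}\to\Mloc_{\CG,\mu}$ is an isomorphism.

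Now assume $T_\CG(\BF_p)\neq\{1\}$ and that $\Mlocrooto_{\CG,\mu}\to\Mloc_{\CG,\mu}$ is flat.

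1. Lemma \ref{rootback} applies, since $(\CG,\{\mu\})$ is strictly convex by Proposition \ref{propAo} and ab-nondegeneracy. It shows that the Lang cover $L\colon\wt Y_{S_{\CG,\mu}}\to Y_{S_{\CG,\mu}}$ is flat.

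2. Conjecture \ref{conjLco} for $T_\CG$ then gives that $Y_{\CG,\mu}$ is smooth. When $T_\CG$ is split this step is unconditional by Proposition \ref{flatL}, which is why the final assertion holds in that case.

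3. Because $T_{\CG,\mu}=T_\CG$, the affine toric scheme $Y_{\CG,\mu}$ has no torus factor. We need the standard fact that such a scheme is smooth if and only if its semigroup is free, i.e. $S_{\CG,\mu}\simeq\BN^r$ with $r=\operatorname{rank}X^*(T_\CG)$. We would cite \cite[Thm.~1.3.12]{CLS}, applied after base change to $\br\BZ_p$ and to a geometric fiber. Alternatively, the argument in the proof of Proposition \ref{flatL} shows the Hilbert basis must have exactly $r$ elements. Hence $S_{\CG,\mu}$ is free.

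4. Corollary \ref{classfree}(ii) now yields $(G_\ad\otimes_{\BQ_p}\br\BQ_p,\mu)\simeq(\PGL_n,\varpi_1^\vee)$.

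The only point needing care is step 3, the passage from smoothness of $Y_{\CG,\mu}$ over $O_{E_0}$ to freeness of the semigroup. The toric scheme is defined over a mixed-characteristic base and possibly only after Galois descent, so the field-theoretic criterion of \cite{CLS} must be applied on a geometric fiber. Smoothness there forces the cone $\sigma_{\CG,\mu}$ to be generated by part of a lattice basis. Full dimensionality of the cone (blanket assumption (ii)) then makes it a full basis, which gives freeness.
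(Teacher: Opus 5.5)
Your proposal is correct and follows the paper's own argument. Lemma \ref{rootback} reduces flatness of $\Mlocrooto_{\CG,\mu}\to\Mloc_{\CG,\mu}$ to flatness of the Lang cover; Conjecture \ref{conjLco} (unconditionally Proposition \ref{flatL} when $T_\CG$ is split) turns this into smoothness of $Y_{\CG,\mu}$; Corollary \ref{classfree}(ii) then finishes, and the trivial case is immediate because $L$ is the identity when $p=2$ and $T_\CG$ is split. The only step you spell out that the paper leaves implicit is that smoothness of $Y_{\CG,\mu}$, which has no torus factor since $T_{\CG,\mu}=T_\CG$, forces $S_{\CG,\mu}\simeq\mathbb{N}^r$, and your justification via a geometric fiber and \cite[Thm.~1.3.12]{CLS} is fine.
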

Indeed, the case when $T_\CG$ is split follows from Proposition \ref{flatL}.\qed

\section{Shimura varieties}\label{sec:SV}

\subsection{The conjecture}\label{ss:Conjecture}

In this section, we formulate the main conjecture of the paper and describe some consequences.

\subsubsection{Toric abelian covers of Shimura varieties}\label{sss:ToricCovers}

 We start by fixing  the notation. Let  $(\RG, \RX)=(\RG, \{h\})$ be
  a  Shimura datum in the sense of Deligne, with associated conjugacy class of cocharacters $\mu(z)=h_{\BC}(z,1)$ and with reflex field $\RE\subset \BC$. 
 
 Let $p$ be a rational prime. We  fix an  embedding $\nu$ of $\RE$ in an algebraic closure $\bar\BQ_p$. We set $G=\RG_{\BQ_p}$, and denote by $\{\mu\}$  the conjugacy class of cocharacters over $\bar\BQ_p$, and by $E=\RE_\nu$ its local reflex field. We  choose a parahoric subgroup $K_0$ of $G(\BQ_p)$ with corresponding group scheme $\CG$ over $\BZ_p$. 
 We also choose a (sufficiently small) compact open subgroup $K^p\subset G(\BA^p_f)$  and set $\RK_0=K_0\cdot K^p\subset G(\BA_f)$.
Next we consider  the kernel $K_1$ of the natural homomorphism
\[
 \CG(\BZ_p)\to \ov\CG(\BF_p)\to \ov\CG_{\red, \rm ab}(\BF_p)
\]
 and, as in \S \ref{ss:211}, set $T_\CG$ for the torus over $\BZ_p$ which lifts the torus $\ov\CG_{\red, \rm ab}$ over $\BF_p$.
 In particular, $T_\CG(\BF_p)=\ov\CG_{\red, \rm ab}(\BF_p)$.
 We also set  $\RK_1=K_1\cdot K^p\subset G(\BA_f)$.

  We now consider the Shimura varieties ${\rm Sh}_{\RK_1}(\RG, \RX)$ and ${\rm Sh}_{\RK_0}(\RG, \RX)$ which are defined over the reflex field $\RE$.
  We can also consider $ {\rm Sh}_{ K_1 }(\RG, \RX)$ and  ${\rm Sh}_{ K_0}(\RG, \RX)$ obtained by taking inverse limits over the prime to $p$ level subgroups $K^p$. The natural morphisms  give  unramified $T_\CG(\BF_p)$-covers
 \[
 \pi_{K^p}: {\rm Sh}_{K_1 }(\RG, \RX)\to {\rm Sh}_{K_0}(\RG, \RX),\quad  \pi: {\rm Sh}_{\RK_1}(\RG, \RX)\to {\rm Sh}_{\RK_0}(\RG, \RX)
 \]
  defined  over $\RE$.
  We denote by $ {\rm Sh}_{ K_1 }(\RG, \RX)_E$ and  ${\rm Sh}_{ K_0}(\RG, \RX)_E$ the base changes under $\RE\to E$.

  \subsubsection{Integral models}\label{sss:612}

  We have the corresponding local model pair $(\CG,\{\mu\})$ with local model $\Mloc_{\CG,\mu}$. 
 We  assume that there is an 
  integral model $\CS_{K_0}:=\CS_{K_0}(\RG, \RX)$ of ${\rm Sh}_{\RK_0}(\RG, \RX)_E$ over $O_E$ with $\RG({\BA}_f^p)$-action
  and a smooth morphism
 \begin{equation}\label{LMmap}
 \varphi: \CS_{K_0}\to \fkM_{\CG,\mu}:=[\CG\bs\Mloc_{\CG,\mu}].
 \end{equation}
 Such an integral model is known to exist when $p>2$ and $(\RG, \RX)$ is of abelian type, 
 under some mild additional technical restrictions, cf. \cite{KP}, \cite{KPZ}. (When these additional hypotheses are violated, a slightly weaker result is known.
 Since this is not pertinent to the discussion in this paper, we just refer the reader to  \cite[\S 7]{KPZ} for details.)
The morphism $\varphi$ is a stacky formulation of the local model diagram, and it induces an isomorphism for each $x\in \CS_{K_0}(k)$ of strictly henselian local rings
 \begin{equation}\label{eq:sh}
 \CO^{\rm sh}_{\Mloc_{\CG,\mu},\varphi(x)}\isoarrow \CO^{\rm sh}_{\CS_{K_0}, x}.
 \end{equation}
 
Let us assume, in addition, that $(\CG,\{\mu\})$ is strictly convex, see \S \ref{ss:nondegeneracy}.  
We also assume the validity of the divisor conjecture (Conjecture \ref{divconj}).
(For example, both these assumptions are  satisfied when  $(G,\{\mu\})$ is of Hodge type, and $G$ splits over a tamely ramified extension of $\BQ_p$, and $p$ does not divide 
the order of $|\pi_1(G_\der(\ov\BQ_p))|$, by the combination of Proposition \ref{propAo}, Corollary \ref{cor:Hodgetype}, and Theorem \ref{tameconj}.)

 Choose a semigroup $S\subset S_{\CG,\mu}$ as in \S \ref{ss:generalsemigroups}. This determines the toric scheme $Y_S$ and the Lang cover $L: \wt Y_S\to Y_S$.  Definition \ref{def:DivisorMap} gives the divisor  
 morphism $\Delta_S: \fkM_{\CG,\mu}\to [T_\CG\bs Y_S]$. We also have the stack $\fkM^{\sqrt{S}}_{\CG,\mu}$, which fits in a fiber product
 diagram
 \begin{equation}\label{CDrootstack}
\begin{aligned}
 \xymatrix{
      \fkM^{\sqrt{S}}_{\CG,\mu} \ar[r] \ar[d]  &   [T_\CG \bs \wt Y_S ] \ar[d]^L \\
       \fkM_{\CG,\mu}\ar[r]^{\Delta_S} &  [T_\CG \bs Y_S ]  .
        }
        \end{aligned}
\end{equation}

\subsubsection{The statement of the conjecture} 
\begin{conjecture}\label{globconj}
 There exists an $O_E$-integral model  of the Shimura variety 
 ${\rm Sh}_{K_1}(\RG, \RX)_E$, i.e. an $O_E$-scheme $\CS_{K_1, S}$ together with an isomorphism $\CS_{K_1, S}\otimes_{O_E}E\simeq {\rm Sh}_{K_1}(\RG, \RX)_E$,   which has the following properties:
 
 \begin{itemize}
\item[i)] There is a morphism \[\CS_{K_1, S}\to \mathcal \CS_{K_0}
\] which extends $\pi: {\rm Sh}_{K_1}(\RG, \RX)_E\to {\rm Sh}_{K_0}(\RG, \RX)_E$ on the generic fibers.
 
 \item[ii)] The action of $T_\CG(\BF_p)$ on ${\rm Sh}_{K_1 }(\RG, \RX)_E$ extends to $\CS_{K_1, S}$ and the morphism 
 $\CS_{K_1, S}\to \mathcal \CS_{K_0}$ of (i)
  identifies $\CS_{K_0}$ with the scheme quotient $T_\CG(\BF_p)\bs \CS_{K_1, S}$.
 
  \item[iii)] The scheme $\CS_{K_1, S}$ and the $T_\CG(\BF_p)$-cover $\CS_{K_1, S}\to \mathcal \CS_{K_0}$ are $\RG({\BA}_f^p)$-equivariant for a $\RG({\BA}_f^p)$-action that extends the natural action on the generic fiber.
  
  \item[iv)] There is a morphism $\varphi_{1, S}: \CS_{K_1, S}\to \fkM^{\sqrt{S}}_{\CG,\mu}$ which fits  in a $2$-commutative diagram
  \begin{equation}\label{CDconjecture}
\begin{aligned}
 \xymatrix{
     \CS_{K_1, S}\ \ar[r]^{\varphi_{1,S}} \ar[d]  &  \fkM^{\sqrt{S}}_{\CG,\mu} \ar[d] \\
       \CS_{K_0}\ar[r]^{\varphi}  & \fkM_{\CG,\mu},
        }
        \end{aligned}
\end{equation}
 and which induces
   an isomorphism of stacks 
 \[
 [T_\CG(\BF_p)\bs \CS_{K_1, S}]\xrightarrow{\sim}  \CS_{K_0}\times_{\fkM_{\CG,\mu}}\fkM^{\sqrt{S}}_{\CG,\mu }.
 \]
 \end{itemize}
 \end{conjecture} 
 
 \begin{remark}\label{rem:GenericIso}
 Note that, since the $T_\CG(\BF_p)$-action on the generic fiber $\CS_{K_1, S, E}={\rm Sh}_{K_1 }(\RG, \RX)_E$ is free with quotient $\CS_{K_0, E}={\rm Sh}_{K_0}(\RG, \RX)_E$, there is an isomorphism $[T_\CG(\BF_p)\bs \CS_{K_1, S, E}]\simeq \CS_{K_0, E}$. This is consistent with Conjecture \ref{globconj} since, by construction, the stacky cover $\fkM^{\sqrt{S}}_{\CG,\mu}\to \fkM_{\CG,\mu}$ is an isomorphism over the generic fibers. 
 \end{remark}

 \begin{remark}\label{rem:variant}
  A similar conjecture  can be  formulated for the covers 
  \[
  {\rm Sh}_{K}(\RG, \RX)\to {\rm Sh}_{K_0}(\RG, \RX)
  \]
  for intermediate $p$-level subgroups 
  $
  K_1\subset K\subset K_0 ,
 $
  where $K$ is the inverse image of $\ov Q(\BF_p)$ under the map $K_0=\CG(\BZ_p)\to \ov\CG_{\red, \rm ab}(\BF_p)$, for some subtorus $\ov Q \subset \ov\CG_{\red, \rm ab}$. In this case, we can use toric schemes and Lang covers for the quotient torus $T=T_\CG/Q$, where $Q\subset T_\CG$ is the subtorus over $\BZ_p$ which lifts $\ov Q\subset  \ov\CG_{\red, \rm ab}$. These  depend on the choice of a semigroup $S\subset X^*(T_\CG/Q)$
   satisfying conditions like in \S \ref{ss:generalsemigroups}. There are corresponding root stack local models
leading to a formulation of Conjecture \ref{globconj} for such $K$. In general, this does not seem to follow from Conjecture \ref{globconj} for $K_1$. In any case, we have decided to restrict
 to the case of $K_1$ to simplify the discussion and notation.
  \end{remark}

 \begin{remark}\label{rem:conj}
1)  Note that there is some freedom in selecting the semi-group $S$ and, hence, the toric scheme $Y_S$. When we use the maximal choice $S=S_{\CG,\mu}=X^*(T_\CG)\cap \sigma^\vee_{\CG,\mu}$, which gives the toric scheme $Y_{\CG,\mu}$, we omit $\sqrt{S}$ or $S$ and just write $\CS_{K_1}$, and  $\varphi_1$, and $\fkM^{\sqrt{\phantom{a}}}_{\CG,\mu}$. The choice $S=S_{\CG,\mu}$ can be thought of as the canonical one.

2) The crucial condition in the conjecture is (iv): This allows us to identify the ramification structure of the cover
$\CS_{K_1,S}\to \CS_{K_0}$ with that of the ``root stack" cover $\Mloc^{\sqrt{S}}_{\CG,\mu}\to \Mloc_{\CG,\mu}$. This identification follows from (iv) in view of (\ref{eq:sh}), and the definition of $\fkM^{\sqrt{S}}_{\CG,\mu }\to \fkM_{\CG,\mu }$ as the $\CG$-quotient of $\Mloc^{\sqrt{S}}_{\CG,\mu}\to \Mloc_{\CG,\mu}$.
In fact, condition (iv) and (\ref{eq:sh}) imply that,   \'etale locally on the base,   $[T_\CG(\BF_p)\bs \CS_{K_1,S }]\to \CS_{K_0}$ is isomorphic to $\Mloc^{\sqrt{S}}_{\CG,\mu}\to \Mloc_{\CG,\mu}$. In the next paragraph, we expand on this to give an explicit description of the \'etale local structure of $\CS_{K_1, S}$.

3) We warn the reader that, in many cases, $\Mloc^{\sqrt{S}}_{\CG,\mu }$ and hence by the above $\CS_{K_1,S}$, are not well-behaved schemes. For example,   sometimes $\Mloc^{\sqrt{S}}_{\CG,\mu }$ is not even flat over $O_E$, see Remark \ref{rem:badSiegel}. Then  $\CS_{K_1,S}$ as in the conjecture will also not be flat over $O_E$. In general, our treatment of the \'etale local properties of $\Mloc^{\sqrt{S}}_{\CG,\mu }$ in \S \ref{s:RootStack} becomes relevant
and predicts corresponding properties of $\CS_{K_1,S}$.

 4) The existence of the smooth morphism (\ref{LMmap}) implies that the integral model $\CS_{K_0}$ is flat over $O_E$. Since the morphism $\CS_{K_1, S}\to \mathcal \CS_{K_0}$ is finite surjective by ii), the integral model $\CS_{K_1, S}$ will be topologically flat over $O_E$. Therefore, even though, as remarked in 3), the model $\CS_{K_1, S}$ can have bad commutative algebra properties, it is well-suited for topological questions, such as the determination of sheaves of vanishing cycles.
\end{remark}

\subsubsection{A local model for $\CS_{K_1, S}$} 

Here, continuing with Remark \ref{rem:conj} (2), we give an explicit description of the strict henselizations  of $\CS_{K_1, S}$ which follows from Conjecture \ref{globconj}:  
 
Let $x\in \CS_{K_0}(k)$ and set $y=\varphi(x)\in \Mloc_{\CG,\mu}(k)$
 for a corresponding point on the local model (only the $\CG_k$-orbit of $y$ is well-defined). Pick an affine chart $U=\Spec(A)\subset \Mloc_{\CG,\mu}$ containing $y$ such that the restriction $({\rm P}_{\CG,\mu})_{|U}$ of the $T_\CG$-torsor ${\rm P}_{\CG,\mu}\to \Mloc_{\CG,\mu}$ to $U$ is trivial. We now argue as in the proof of Proposition \ref{prop:DM}: Choosing a section of $({\rm P}_{\CG,\mu})_{|U}$ gives $\delta: U=\Spec(A)\to Y_S$ and we obtain elements $\delta^*(s)\in A$, for each  $s$ in the semigroup $S$.  Assuming the conjecture, by (\ref{eq:sh})  and the argument in the proof of Proposition \ref{prop:DM}, we see that there is an isomorphism
 \begin{equation}\label{eq:sh2}
 \CS_{K_1, S}\times_{\CS_{K_0}}\Spec(\CO^{\rm sh}_{\CS_{K_0}, x})\simeq  \wt Y_S\times_{Y_S,\delta}\Spec(\CO^{\rm sh}_{\Mloc_{\CG,\mu}, y})
 \end{equation}
 which respects the $T_\CG(\BF_p)$-actions on both sides.  
 
 The right hand side of (\ref{eq:sh2}) can be written   more explicitly as follows: Suppose that the semigroup $S\subset S_{\CG,\mu}$ is associated to the cone $\tau\supset \sigma_{\CG,\mu}$ of $X_*(T_\CG)_\BR$. We consider the (saturated) semigroup $\wt S$ of $X^*(T_\CG)$ which is associated to the cone $L_*^{-1}(\tau)\subset X_*(T_\CG)_\BR$; then the injective homomorphism $L^*: X^*(T_\CG)\to X^*(T_\CG)$ given by the Lang isogeny restricts to a semigroup homomorphism $L^*: S\to \wt S$. (We have $\wt S=L^*(S)^{\rm sat}$ in the notations of Proposition \ref{prop:normalToric} and so $\wt Y_S=Y_{\wt S}$.)
 
 For simplicity of notation, set $R=\CO^{\rm sh}_{\Mloc_{\CG,\mu}, y}$. Then the RHS 
 of (\ref{eq:sh2}) is isomorphic to the spectrum of 
\[
R[\wt S]/(L^*(s)-\delta^*(s))_{s\in S}.
\]
 Here, the quotient is by the ideal of $R[\wt S]$ generated by $L^*(s)-\delta^*(s)$, where $s$ runs over all elements of the semigroup $S$.
 If $\{s_1,\ldots , s_n\}$ is a minimal generating set of the semigroup $S$, then the ring above is also
\begin{equation}\label{explicitLM}
R[\wt S]/(L^*(s_1)-\delta^*(s_1), \ldots , L^*(s_n)-\delta^*(s_n)).
\end{equation}
Hence,  (\ref{eq:sh2}) gives a $T_\CG(\BF_p)$-equivariant isomorphism
 \begin{equation}\label{eq:sh3}
 \CS_{K_1, S}\times_{\CS_{K_0}}\Spec(\CO^{\rm sh}_{\CS_{K_0}, x})\simeq \Spec(R[\wt S]/(L^*(s_1)-\delta^*(s_1), \ldots , L^*(s_n)-\delta^*(s_n))).
\end{equation}
On the RHS, the action of $T_\CG(\BF_p)$ is given by $t\cdot \ti s=\ti s(t)\ti s$, where $\ti s(t)\in \br\BZ_p^*$ is the image of $t\in T_\CG(\BF_p)$ under
\[
T_\CG(\BF_p)=\ker(L)(\BZ_p)\subset T_\CG(\BZ_p)\xrightarrow{\ti s} \br\BZ_p^*.
\]
(For $t\in \ker(L)$, $s\in S$, we have $L^*(s(t)s)=L^*(s(t))L^*(s)=L^*(s)$, where we also view $L^*$ as pull-back of regular functions by $L: T_\CG\to T_\CG$.
Hence, this action is well-defined.)

 Note that if the  torus  is split  this looks a bit simpler. Indeed,   suppose $T_\CG\simeq \BG_m^r$. Then $\wt S=S$, $\wt Y_S=Y_S$ and $L^*: X^*(T_\CG)\to X^*(T_\CG)$ is dilation by $p-1$.
 Therefore the ring above is
\[
R[S]/(s_1^{p-1}-\delta^*(s_1), \ldots , s_n^{p-1}-\delta^*(s_n)).
\]
On this,   $t\in T_\CG(\BF_p)$ acts by $t\cdot s=s(t) s$, where $s(t)$ is the image of $t$ under
\[
T_\CG(\BF_p) \subset T_\CG(\BZ_p)\xrightarrow{s} \BZ_p^*.
\]
(The inclusion is the Teichmuller lift.)

   \subsection{A general construction}\label{ss:generalconstruction}
   
Here we give a blueprint for constructing the schemes $\CS_{K_1,S}$ as in the conjecture.

Consider the exact sequence
 \begin{equation}\label{Exact6a}
 1\to \und{T_\CG(\BF_p)}\to T_\CG \xrightarrow{L} T_\CG \to 1
 \end{equation}
 of group schemes over $\BZ_p$. This induces the exact sequence of Galois modules
 \begin{equation}\label{Exact6b}
 0\to  X^*(T_\CG)\xrightarrow{L^*} X^*(T_\CG)\to X^*(T_\CG(\BF_p))\to 0,
 \end{equation}
and the Cartesian diagram of classifying stacks 
 \begin{equation}\label{PicBG}
\begin{aligned}
 \xymatrix{
      B\und{T_\CG(\BF_p)} \ar[r] \ar[d]  & BT_\CG \ar[d]_{L}  \\
      \{1\}\ar[r]  & BT_\CG.
        }
        \end{aligned}
\end{equation}
We now have:

 \begin{proposition}\label{prop:torsors}
 The Picard category of $T_\CG(\BF_p)$-torsors over a $\BZ_p$-scheme $U$ is equivalent to the Picard category of pairs 
 \[
 (Q, a: T_{\CG}\xrightarrow{\sim}  L_*(Q))
 \]
consisting of a $T_{\CG}$-torsor $Q$ over $U$, together with a section $a$ of its push-out by $L: T_{\CG}\to T_{\CG}$,
 \[
 L_*(Q):=T_{\CG}\times_{L, T_{\CG}}Q
 \]
The equivalence is given by pushing out a $T_\CG(\BF_p)$-torsor by $\und{T_\CG(\BF_p)}\to T_{\CG}$. 
\end{proposition}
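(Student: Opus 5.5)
The plan is to deduce the equivalence from the Cartesian square \eqref{PicBG} of classifying stacks, together with the fact that the exact sequence \eqref{Exact6a} is exact for the étale (hence fppf) topology (\S 4.2.1). A $T_\CG(\BF_p)$-torsor over $U$ is the same as a morphism $U\to B\und{T_\CG(\BF_p)}$. Since \eqref{PicBG} is Cartesian, such a morphism is equivalent to a morphism $U\to BT_\CG$, that is, a $T_\CG$-torsor $Q$, together with a $2$-isomorphism between the composite $U\to BT_\CG\xrightarrow{L}BT_\CG$ and the trivial map. Concretely, this $2$-isomorphism is an isomorphism of $T_\CG$-torsors $a\colon T_\CG\xrightarrow{\sim}L_*(Q)$. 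This describes the groupoids on both sides; the Picard (symmetric monoidal) structures match because all maps in \eqref{PicBG} are induced by homomorphisms of commutative group schemes, so contracted products are compatible with them.

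To keep the argument concrete, I would construct the functor and its quasi-inverse explicitly. The functor sends a $T_\CG(\BF_p)$-torsor $P$ to $Q:=T_\CG\times_{\und{T_\CG(\BF_p)}}P$. Since $L$ kills $\und{T_\CG(\BF_p)}$, we get $L_*(Q)=T_\CG\times_{L,\und{T_\CG(\BF_p)}}P=T_\CG\times_{1}P$, and this carries the canonical trivialization $t\mapsto[(t,x)]$, independent of $x\in P$; this gives $a$. The quasi-inverse sends $(Q,a)$ to the fibre product $P:=Q\times_{L_*(Q)}T_\CG$, which is the sheaf of $q\in Q$ whose image under $Q\to L_*(Q)$ equals $a(1)$. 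This $P$ is stable under $\ker L=\und{T_\CG(\BF_p)}$. It is a torsor under that group because $Q\to L_*(Q)$ is a $\ker L$-torsor, being the pushout of the $\ker L$-torsor $T_\CG\xrightarrow{L}T_\CG$. In particular it is étale-locally surjective, so $P$ has sections étale-locally.

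Next I would check that the two composites are naturally isomorphic to the identities. For $P\mapsto(Q,a)\mapsto P'$, the map $P\to Q$, $x\mapsto[(1,x)]$, lands in $P'$, and it is an isomorphism of torsors because any equivariant map of torsors under the same group is one. For $(Q,a)\mapsto P\mapsto(Q',a')$, the natural map $T_\CG\times_{\und{T_\CG(\BF_p)}}P\to Q$ is $T_\CG$-equivariant, hence an isomorphism, and it carries $a'$ to $a$ by construction. Compatibility with morphisms and with the monoidal structure (contracted product of torsors on one side; componentwise contracted product with $a_1\otimes a_2$ on the other) follows from functoriality of pushout along homomorphisms of commutative group schemes.

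The only step needing real care is the torsor claim for the fibre product $P$, namely its local triviality. This reduces to surjectivity of $L$ in the étale topology, which is a finite étale isogeny of degree prime to $p$ (\S 4.2.1). The rest is formal bookkeeping of $2$-morphisms, which I would avoid by working with the explicit sheaf-level constructions above.
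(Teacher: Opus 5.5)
Your proposal is correct and takes the same route as the paper, whose proof is just the one-line observation that the equivalence follows from the Cartesian square of classifying stacks $B\und{T_\CG(\BF_p)}\to BT_\CG\xrightarrow{L}BT_\CG$ over the trivial point. Your explicit functor $P\mapsto T_\CG\times_{\und{T_\CG(\BF_p)}}P$, with its quasi-inverse $(Q,a)\mapsto Q\times_{L_*(Q)}T_\CG$, simply unpacks that fibre product, and the \'etale surjectivity of $L$ supplies the local triviality you need.
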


\begin{proof}
Follows from the Cartesian diagram (\ref{PicBG}).
\end{proof}

We now apply this to the $T_\CG(\BF_p)$-torsor given by the unramified cover
  \[
 \pi: {\rm Sh}_{\RK_1 }(\RG, \RX)_E\to {\rm Sh}_{\RK_0}(\RG, \RX)_E
 \]
 of Shimura varieties. We will denote by $(Q_\pi, a_\pi)$ the pair over ${\rm Sh}_{\RK_0}(\RG, \RX)_E$ which is obtained  from the $T_\CG(\BF_p)$-torsor  $\pi$ by the above equivalence.  We assume Conjecture \ref{divconj} holds, in particular we have $(\RP_{\CG,\mu}, s_{\CG,\mu})$. (This is very often the case by Theorem \ref{tameconj}.)
  
 \begin{conjecture}\label{conj:torsors}
 There exists a $T_\CG$-torsor $P$ over $\CS_{K_0}$ together with an isomorphism of $T_\CG$-torsors over 
 $\CS_{K_0}[1/p]={\rm Sh}_{\RK_0}(\RG, \RX)_E$,
 \[
 \alpha: P[1/p]\xrightarrow{\sim} Q_\pi,
 \]
  and an isomorphism of pairs 
 \[
 \beta: \big(L_*(P), L_*(\alpha^{-1})\circ a_\pi: T_{\CG}[1/p]\xrightarrow{\sim} L_*(P)[1/p]\big)\xrightarrow{\sim} \varphi^*\big(\RP^{(-1)}_{\CG,\mu}, s_{\CG,\mu}\big),
 \]
 where the pair  $(\RP_{\CG,\mu}, s_{\CG,\mu})$ is as in Conjecture \ref{divconj} and the pull-back is by the local model diagram morphism $\varphi: \CS_{K_0}\to \fkM_{\CG,\mu}
 =[\CG\bs \Mloc_{\CG,\mu}]$.
 \end{conjecture}
 
We stress that the pairs involved in $\beta$ are pairs consisting of a $T_\CG$-torsor over $\CS_{K_0}$ and  a section of its restriction to the generic fiber $\CS_{K_0}[1/p]={\rm Sh}_{\RK_0}(\RG, \RX)_E$. 

\begin{remark}
Note that  the triple $(P,\alpha,\beta)$ as in the Conjecture is unique up to (unique) isomorphism, if it exists: Indeed, suppose that $(P',\alpha',\beta')$ is another such triple.
Then, we can consider the $T_\CG$-torsor $R:=P'\cdot P^{-1}$. Our data gives a trivialization of the push-out $L_*(R)$ and of the generic fiber $R[1/p]$ which are compatible. Hence, $R$ is obtained from a $T_\CG(\BF_p)$-torsor over $\CS_{K_0}$ and our data produce a trivialization of this $T_\CG(\BF_p)$-torsor over the generic fiber $\CS_{K_0}[1/p]$. By normality, the trivialization extends to a trivialization over $\CS_{K_0}$. In particular, $R:=P'\cdot P^{-1}$ is a trivial $T_\CG$-torsor and the rest follows.
 \end{remark}

\begin{remark}\label{rem:ConjInterp}
Suppose that $T_\CG$ is split over $\BZ_p$. A choice of an isomorphism $T_\CG\simeq \BG_m^r$ gives a basis $\eta_i$, $i=1,\ldots ,r$, of $X^*(T)$. Then giving the $T_\CG$-torsor $P$ over a $\BZ_p$-scheme $U$ amounts to giving the collection of $r$ line bundles $\CM_i=\CM_{\eta_i}$ which correspond to the $\BG_m$-torsors $(\eta_i)_*P$. If $\chi=\sum_i c_i\,\eta_i\in X^*(T)$, we   set
\[
 \CM_\chi:=\bigotimes_i \CM_i^{\otimes c_i}.
\]
 There are natural isomorphisms $\CM_{\chi\chi'}\simeq \CM_\chi\otimes\CM_{\chi'}$ and $\chi\mapsto \CM_\chi$ is the tensor functor which corresponds to the $T_\CG$-torsor $P$ by the Tannakian equivalence.  
 
 In general, we can first base change to $O=W(\BF_q)$, $q=p^d$, which splits $T_\CG$ and then choose $T_{\CG}\otimes_{\BZ_p}O\simeq \BG_{m,O}^r$ to obtain line bundles
 $\CM_i$ and $\CM_{\chi}$ over $U_O=U\otimes_{\BZ_p}O$. Let us denote by $F$ the fraction field of $O=W(\BF_q)$.
 Denote by $M_i$, $i=1,\ldots, r$, the line bundles over ${\rm Sh}_{\RK_0}(\RG, \RX)_{EF}$ with isomorphisms
 \[
 a_i: \CO_{{\rm Sh}_{K_0, F}}\xrightarrow{\sim} M_{L^*(\eta_i)} 
 \]
 which are obtained from the $T_{\CG}$-torsor $Q_\pi\otimes_EEF$.
 Then,  assuming Conjecture \ref{conj:torsors}, we have  line bundles $\CM_i$, $i=1,\ldots, r$, over $\CS_{K_0, O}$ together with isomorphisms
 \begin{align}\begin{split}\label{data1}
 \CM_i[1/p]\xrightarrow{\sim}&\ M_i,\\
 \big(\CM_{L^*(\eta_i)}, \CO_{{\rm Sh}_{K_0, F}}\xrightarrow{\sim} \CM_{L^*(\eta_i)}[1/p]\big)\xrightarrow{\sim} &\
\varphi^*\big(\CL^{-1}_{\eta_i}, s^{-1}_{\eta_i}: \CO_{\Mloc_{\CG,\mu, O}}[1/p]\xrightarrow{\sim} \CL^{-1}_{\eta_i}[1/p]\big).
 \end{split}
 \end{align}
 Here, the pair $(\CL_{\eta_i}, s_{\eta_i})$ over the base change $\Mloc_{\CG,\mu, O}:=\Mloc_{\CG,\mu}\otimes_{O_E}OO_E$
 is as in \S \ref{ss:BundlesLines}. In fact, the  data $\CM_i$ with isomorphisms (\ref{data1}) uniquely determine  the base change of $(P,\alpha, \beta)$ in Conjecture \ref{conj:torsors} to $O$; to determine $(P,\alpha, \beta)$, one needs to also consider suitable finite \'etale descent data for the extension $O/\BZ_p$.
 
 If $O=\BZ_p$, i.e. $T_\CG$ splits over $\BZ_p$, then $L_*(\eta_i)=\eta^{p-1}_i$. In this case, 
 the data in Conjecture \ref{conj:torsors} actually  amounts to 
 line bundles $\CM_i$, $i=1,\ldots, r$, over $\CS_{K_0}$ together with isomorphisms
 \begin{align}\begin{split}\label{data2}
 \CM_i[1/p]\xrightarrow{\sim}&\ M_i,\\
  (\CM_{i}^{\otimes(p-1)}, \CO_{{\rm Sh}_{K_0}}\xrightarrow{\sim} \CM_{i}^{\otimes(p-1)}[1/p])\xrightarrow{\sim}&\ 
 \varphi^* (\CL_{\eta_i}^{-1}, s_{\eta_i}^{-1}: \CO_{\Mloc_{\CG,\mu, O}}\xrightarrow{\sim} \CL^{-1}_{\eta_i}[1/p]).
 \end{split}\end{align}
We will use these, more explicit, interpretations of Conjecture \ref{conj:torsors} for the PEL examples.
 \end{remark}

\subsubsection{Conjecture  \ref{conj:torsors} implies Conjecture  \ref{globconj}}
\label{sss:ConjImplyConj} Assuming Conjecture \ref{conj:torsors} holds, we will now  construct $\CS_{K_1, S}$
which satisfies Conjecture \ref{globconj}. In particular, this shows that the one conjecture implies the other.

Proposition \ref{prop:torsors} gives an isomorphism  of stacks over $\Spec(\BZ_p)$,
 \[
 [T_\CG  \LBS T_\CG ]=[T_\CG  (\BF_p)\bs *]=\RB  \und{T_\CG(\BF_p)} .
\]
Here the first quotient is for the (left) action of $T_\CG $ on itself via the Lang isogeny, i.e. $t\cdot t'=L(t)t'$. 
It agrees with the quotient stack $ [T_\CG\,  \bs_{\vphantom{A^1}{\rm Fr}}\, T_\CG ]$ of   conjugation   by the Frobenius lift $t\cdot t'={\rm Fr}(t)t't^{-1}$.

The \'etale $T_\CG(\BF_p)$-cover 
\[
\pi: {\rm Sh}_{\RK_1}(\RG, X)_E\to {\rm Sh}_{\RK_0}(\RG, X)_E
\]
gives
\begin{equation}\label{gfiber1}
[\pi]: {\rm Sh}_{\RK_0}(\RG, X)_E\to  [T_\CG  \LBS T_\CG ]=\RB  \und{T_\CG(\BF_p)}.
\end{equation}
Recall now that we choose a semigroup $S\subset S_{\CG,\mu}$ as in \S \ref{ss:generalsemigroups} which determines the $T_\CG$-toric scheme $Y_S$. 
 The toric embedding $T_\CG\hookrightarrow Y_S$ defines a morphism
\[
\RB  \und{T_\CG (\BF_p)}= [T_\CG \LBS T_\CG ]\to  [T_\CG  \LBS Y_S ].
\]
(This can be viewed as a ``partial compactification" of the classifying stack of $T_\CG(\BF_p)$-torsors.)
On the other hand, the effectivity  implied by Conjecture \ref{divconj} gives that for all   $\chi \in S $ the trivialization
  \[
s_\chi: \CO_{\Mloc_{\CG,\mu, O}}[1/p]\xrightarrow{\sim} \CL_{\chi}[1/p]
\]
 extends to  
 \[
 s_\chi: \CO_{\Mloc_{\CG,\mu, O}}\to \CL_{\chi},
 \]
which is a $\CG$-equivariant global section of $\CL_\chi$ over $\Mloc_{\CG,\mu, O}$. In view 
of the isomorphisms in (\ref{data1}) above,  this implies that the trivialization given by $\pi$
\[
\CO_{{\rm Sh}_{K_0, F}}\xrightarrow{\sim} M^{-1}_{L^*(\chi)}\xrightarrow{\sim} \CM^{-1}_{L^*(\chi)}[1/p]
\]
 extends to 
\[
a_\chi: \CO_{\CS_{K_0,O}}\to \CM^{-1}_{L^*(\chi)} ,
\]
i.e., to a global (regular) section $a_\chi$ of $\CM^{-1}_{L^*(\chi)}$ over $\CS_{K_0,O}$.  Since $Y_{S,O}=\Spec(O[S])$, this together with descent along $OO_E/O_E$, implies that the $T_\CG$-torsor $P\to \CS_{K_0}$,
assumed to exist by Conjecture  \ref{conj:torsors}, supports a morphism $f$ occurring in the diagram 
\[
\begin{matrix}
P& \xrightarrow{\ f\ } & Y_S\\
\downarrow &&\\
\CS_{K_0}&&
\end{matrix}
\] 
which is $T_\CG$-equivariant via the Lang map, i.e. $f(t\cdot p)=L(t)f(p)$. The above diagram corresponds to a $1$-morphism
\begin{equation}\label{extend1}
\wt {[\pi]}: \CS_{K_0}\to  [T_\CG  \LBS Y_S ].
\end{equation}
We are going to use  the morphism $[T_\CG  \LBS Y_S ]\to [T_\CG\, \bs\, Y_S]$ induced by the identity on $Y$ and $L: T_\CG\to T_\CG$. Composing   $\wt{[\pi]}$ with $[T_\CG  \LBS Y_S ]\to [T_\CG\, \bs\, Y_S]$ gives the morphism $\CS_{K_0}\to  [T_\CG\,  \bs\, Y_S ]$ defined by the pair $(L_*(P), f')$ of the $T_\CG$-torsor $L_*(P)=T_\CG\times_{L, T_\CG} P$  and the $T_\CG$-equivariant morphism $f': L_*(P)\to Y_S$ given by $f'(t, p)=t\cdot f(p)$. The existence of the isomorphism $\beta$ in Conjecture \ref{conj:torsors} implies that this morphism is $2$-isomorphic to   the composition
  \[
 \CS_{K_0}\xrightarrow{\ \varphi\ } \fkM_{\CG,\mu}=[\CG\bs \Mloc_{\CG,\mu}]\xrightarrow{\ \Delta_S\ }  [T_\CG\, \bs\,  Y_{S}].
  \]
This, together with the existence of the isomorphism $\alpha$ 
 in Conjecture \ref{conj:torsors}, implies that we 
 have a $2$-commutative diagram
\begin{equation}\label{StackDia1}
\begin{aligned}
 \xymatrix{
    {\rm Sh}_{\RK_0}(\RG, X)_E \ar[r]^{\rm\ \ incl} \ar[d]_{[\pi]}  &     \CS_{K_0}\ar[r]^{\varphi} \ar[d]_{\wt{[\pi]}} & [\CG\bs \Mloc_{\CG,\mu}]\ar[d]_{\Delta_S}\\
      [T_\CG  \LBS T_\CG ] \ar[r]  & [T_\CG \LBS Y_S ]  \ar[r]  & [T_\CG\, \bs\, Y_S ].
        }
        \end{aligned}
\end{equation}
Here the two top horizontal morphisms are the inclusion of the generic fiber and the morphism obtained from the  local model diagram.
The two bottom horizontal morphisms are induced by the toric embedding $T_\CG\hookrightarrow Y_S$ as above and by  the 
Lang map $L: T_\CG \to T_\CG $ with the identity $Y_S\xrightarrow{=}Y_S$.
It is instructive to note here that the morphism $[T_\CG \LBS Y_S ]  \to [T_\CG\, \bs\, Y_S ]$ is an \'etale gerbe banded by $\und{T_\CG(\BF_p)}=\ker(L)$.

  We now use the normalization $L: \wt Y_S\to Y_S$ of the Lang isogeny $L: T_\CG\to T_\CG$. This gives  a morphism
$
  [T_\CG\, \bs\, \wt Y_S]\to [T_\CG  \LBS Y_S].
$
The action of $T(\BF_p)$ on $\wt Y_S$ gives an action on $[T_\CG\, \bs\, \wt Y_S]$ and this morphism is $T_\CG(\BF_p)$-equivariant with the trivial action on the target. 
The morphism $ L: [T_\CG\,\bs\, \wt Y_S]\to [T_\CG\, \bs\,  Y_S]$ of  \eqref{LMrootstack} factors as 
\begin{equation}\label{factorsT}
 [T_\CG\,\bs\, \wt Y_S]\to [T_\CG  \LBS Y_S]\to [T_\CG\, \bs\, Y_S].
\end{equation}
  \begin{lemma}\label{lem:representable}
  The morphism
  $
  [T_\CG\,\bs\, \wt Y_S]\to [T_\CG  \LBS Y_S] 
 $
  is relatively representable and finite.
  \end{lemma}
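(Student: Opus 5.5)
Since relative representability and finiteness can both be checked after a smooth (even fppf) base change of the target, I will pull back along the atlas $Y_S\to [T_\CG \LBS Y_S]$ and identify the fiber product explicitly. Here $[T_\CG\LBS Y_S]$ is the quotient of $Y_S$ by $T_\CG$ acting through $t\cdot y=L(t)y$. A $U$-point of $[T_\CG\LBS Y_S]$ is a $T_\CG$-torsor $P$ together with a map $f\colon P\to Y_S$ satisfying $f(tp)=L(t)f(p)$. The morphism from $[T_\CG\bs \wt Y_S]$ sends a pair $(P,\tilde f\colon P\to \wt Y_S)$, with $\tilde f$ $T_\CG$-equivariant, to $(P, L\circ \tilde f)$. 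Equivariance holds because $L\colon \wt Y_S\to Y_S$ satisfies $L(t\tilde y)=L(t)L(\tilde y)$: this is true on the dense open torus and both $\wt Y_S$ and $Y_S$ are separated and reduced, or one can read it off directly from Proposition \ref{prop:normalToric}, where the cover is the toric morphism induced by $L_*$.

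The key step is to compute the fiber product
\[
Y_S\times_{[T_\CG\LBS Y_S]}[T_\CG\bs \wt Y_S].
\]
For a $U$-point $y\colon U\to Y_S$, the corresponding object is the trivial torsor $P=T_\CG\times U$ with $f(t,u)=L(t)y(u)$. A $U$-point of the fiber product then consists of a $T_\CG$-torsor $P'$, an equivariant $\tilde f\colon P'\to \wt Y_S$, and an isomorphism of torsors $P'\cong T_\CG\times U$ intertwining $L\circ\tilde f$ with $f$. After using the isomorphism to trivialize $P'$, the map $\tilde f$ is determined by its restriction $\tilde y$ to the unit section, and the remaining condition becomes $L(\tilde y)=y$. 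Hence the fiber product is the scheme $\wt Y_S\times_{Y_S,y}U$; globally it is $\wt Y_S$ mapping to $Y_S$ via $L$. I will write this 2-categorical identification out carefully, checking that automorphisms match: an automorphism of $(T_\CG\times U,f)$ in $[T_\CG\LBS Y_S]$ is a $t$ with $L(t)y=y$, and these correspond to the action of $\ker L=\und{T_\CG(\BF_p)}$ on $\wt Y_S$.

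It then remains to show that $L\colon \wt Y_S\to Y_S$ is finite. This holds because $\wt Y_S$ is by definition the normalization of $Y_S$ in the finite cover $L$ of tori, and normalization of an excellent (finite-type over $O_{E_0}$) scheme is finite. Alternatively, in terms of semigroups, $O[\wt S]$ is integral over $O[L^*S]$: every $\tilde s\in\wt S$ has a multiple $m\tilde s\in L^*(S)$, since $\wt S=L^*(S)^{\rm sat}$ and $L^*$ has finite cokernel.

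I expect the main obstacle to be the bookkeeping in the stacky fiber product, in particular keeping the two different $T_\CG$-actions (the standard one and the one twisted by $L$) straight. I would handle this by working with torsor-and-equivariant-map descriptions throughout, rather than with presentations as groupoids.
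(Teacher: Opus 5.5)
Your proposal is correct and follows essentially the same route as the paper. The paper takes an arbitrary $Z\to[T_\CG\LBS Y_S]$ given by a pair $(W,f)$, reduces to the case where $W$ is trivial, and identifies the fiber product with $Z\times_{g,Y_S}\wt Y_S$; this is the local form of your universal computation along the atlas $Y_S\to[T_\CG\LBS Y_S]$, and in both versions finiteness comes from $L\colon\wt Y_S\to Y_S$. One point is worth making explicit: representability by \emph{schemes} (which is what is later used to conclude that $\CS_{K_1,S}$ is a scheme) descends from your smooth base change because the base-changed morphism is finite, hence affine, and affine morphisms satisfy effective fppf descent.
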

  
  \begin{proof} To ease notation, set $T=T_\CG$. Suppose $Z\to [T\LBS Y_S]$ is given by a $T$-torsor $W\to Z$ with a $L$-$T$-equivariant morphism $f: W\to Y_S$ and consider the fiber product
\[
Z\times_{[T  \LBS Y_S]}[T\,\bs\, \wt Y_S].
\]
 A $Z'$-valued point of $Z\times_{[T  \LBS Y_S]}[T\,\bs\, \wt Y_S]$ corresponds to a $T$-equivariant morphism
\[
f': W\times_Z Z'\to \wt Y_S
\]
such that  the following diagram is commutative,
\begin{equation} 
\begin{aligned}
 \xymatrix{
    W\times_Z Z' \ar[r]^{f'} \ar[d]  &   \wt  Y_S  \ar[d] \\
     W \ar[r]^{f}  & Y_S.
        }
        \end{aligned}
\end{equation}
 To prove relative representability we can reduce to the case that $Z$ is a local $O$-scheme. Then the $T$-torsor $W\to Z$ is trivial, i.e., $W=T\times Z$, and $f$ is determined by $g=f_{|1\times Z}: Z\to Y_S$ which, in turn, is  determined  by the monoid homomorphism $g^*: S\to \CO_Z(Z)$. The $T$-equivariant morphisms $f'$ with the above property, are determined by $g': Z'\to \wt Y_S$ such that
 \begin{equation} 
\begin{aligned}
 \xymatrix{
   Z' \ar[r]^{g'} \ar[d]  &   \wt  Y_S  \ar[d] \\
     Z \ar[r]^{g}  & Y_S.
        }
        \end{aligned}
\end{equation}
commutes. Hence, the fiber product is represented by $Z\times_{g,Y_S}\wt Y_S$ which is finite over $Z$.
\end{proof}

 \begin{definition}\label{def:generalconstruction2} We  define $\CS_{K_1, S}$ by the fiber product diagram
\begin{equation}\label{def:Fiberprod}
\begin{aligned}
 \xymatrix{
      \CS_{K_1,S } \ar[r] \ar[d]  &  [T_\CG \bs \wt Y_S ] \ar[d]  \\
       \CS_{K_0}\ar[r]  & [T_\CG \LBS Y_S ].
        }
        \end{aligned}
\end{equation}
The morphism $ \CS_{K_1,S }\to \CS_{K_0}$ supports an $T_\CG(\BF_p)$-action, trivial on the target.
 \end{definition}

\begin{proposition}\label{prop:Conj}
The  $\CS_{K_0}$-scheme  $\CS_{K_1, S}$  satisfies the conditions of Conjecture \ref{globconj}.
\end{proposition}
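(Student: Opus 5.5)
The plan is to verify the four conditions of Conjecture \ref{globconj} directly from the fiber product definition \eqref{def:Fiberprod}, using the factorization \eqref{factorsT} and the $2$-commutative diagram \eqref{StackDia1}. First, by Lemma \ref{lem:representable}, $\CS_{K_1,S}\to\CS_{K_0}$ is representable and finite, so $\CS_{K_1,S}$ is a scheme, finite over $\CS_{K_0}$. Concretely, étale locally on $\CS_{K_0}$, where $P$ is trivial, the proof of that lemma identifies $\CS_{K_1,S}$ with $Z\times_{g,Y_S}\wt Y_S$, where $g$ is given by the sections $a_\chi$. The $T_\CG(\BF_p)$-action comes from the action on $\wt Y_S$, which commutes with the $T_\CG$-action and lies over the identity of $[T_\CG\LBS Y_S]$.

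For (i) and the generic fiber, I will restrict the diagram over $E$. There $\Delta_S$ lands in the open substack $[T_\CG\bs T_\CG]$, and $\wt Y_S$ restricts to $T_\CG$. By the left square of \eqref{StackDia1}, the generic fiber of $\CS_{K_1,S}$ is ${\rm Sh}_{\RK_0}\times_{[\pi],[T_\CG\LBS T_\CG]}[T_\CG\bs T_\CG]$. Since $[T_\CG\bs T_\CG]=*$ maps to $\RB\und{T_\CG(\BF_p)}$ as the universal torsor, this fiber product is exactly the $T_\CG(\BF_p)$-torsor ${\rm Sh}_{\RK_1}$, compatibly with the group action. This gives (i) and the required generic isomorphism.

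For (ii), the question is étale local on $\CS_{K_0}$. There, the scheme quotient of $Z\times_{Y_S}\wt Y_S$ by $T_\CG(\BF_p)$ is $Z\times_{Y_S}(T_\CG(\BF_p)\bs\wt Y_S)=Z$. This uses that $Y_S=T_\CG(\BF_p)\bs\wt Y_S$ and that taking invariants commutes with flat base change. This part needs care: the base change $Z\to Y_S$ is not flat. However, the order of $T_\CG(\BF_p)$ is prime to $p$, so it is invertible on $\BZ_p$. Invariants are then an exact functor given by the averaging projector, and they commute with arbitrary base change. For (iii), the $\RG(\BA_f^p)$-action on $\CS_{K_0}$ lifts to $P$ and to $\wt{[\pi]}$. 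This follows from the uniqueness of $(P,\alpha,\beta)$ established in the remark after Conjecture \ref{conj:torsors}, together with the Hecke equivariance of $\pi$ and $\varphi$, so the fiber product inherits the action.

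The main step is (iv). I will compute $[T_\CG(\BF_p)\bs\CS_{K_1,S}]$ as $\CS_{K_0}\times_{[T_\CG\LBS Y_S]}[T_\CG(\BF_p)\bs[T_\CG\bs\wt Y_S]]$, and then show that
\[
[T_\CG(\BF_p)\bs[T_\CG\bs\wt Y_S]]\simeq [T_\CG\bs\wt Y_S]\times_{[T_\CG\bs Y_S]}[T_\CG\LBS Y_S].
\]
Both sides are finite over $[T_\CG\LBS Y_S]$, and the comparison map is induced by \eqref{factorsT}. To identify them, I will work over an atlas $Y_S\to[T_\CG\LBS Y_S]$. The left side becomes $[T_\CG(\BF_p)\bs\wt Y_S]$. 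The right side becomes $\wt Y_S\times_{Y_S}(Y_S\times_{[T_\CG\bs Y_S]}[T_\CG\LBS Y_S])$, and since $[T_\CG\LBS Y_S]\to[T_\CG\bs Y_S]$ is a gerbe banded by $\ker L$, this also becomes $[T_\CG(\BF_p)\bs\wt Y_S]$. This is the same computation as in the proof of Proposition \ref{prop:DM}.

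Given that identification, the right square of \eqref{StackDia1} yields
\[
[T_\CG(\BF_p)\bs\CS_{K_1,S}]\simeq\CS_{K_0}\times_{[T_\CG\bs Y_S]}[T_\CG\bs\wt Y_S]=\CS_{K_0}\times_{\fkM_{\CG,\mu}}\fkM^{\sqrt S}_{\CG,\mu},
\]
using \eqref{CDrootstack}. The morphism $\varphi_{1,S}$ is then the composite through this identification, and the square \eqref{CDconjecture} $2$-commutes by construction. I expect the gerbe and band bookkeeping in this identification to be the main obstacle.
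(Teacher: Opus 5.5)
Your proposal is correct and follows the paper's route. The paper's proof also uses Lemma \ref{lem:representable}, the factorization \eqref{factorsT} and the square \eqref{StackDia1} to obtain \eqref{StackDia2}, and then simply asserts that the remaining conditions follow. Your arguments for (i)--(iv) fill in exactly what it leaves implicit: restriction to the open substack $[T_\CG\LBS T_\CG]$, the averaging idempotent for the prime-to-$p$ group $T_\CG(\BF_p)$, uniqueness of $(P,\alpha,\beta)$, and the identification $[T_\CG(\BF_p)\bs[T_\CG\bs\wt Y_S]]\simeq[T_\CG\bs\wt Y_S]\times_{[T_\CG\bs Y_S]}[T_\CG\LBS Y_S]$.

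There is one slip in (iv). The pullback of the right-hand side along the atlas $Y_S\to[T_\CG\LBS Y_S]$ is $[T_\CG\bs\wt Y_S]\times_{[T_\CG\bs Y_S]}Y_S\simeq[\und{T_\CG(\BF_p)}\bs\wt Y_S]$, as in Proposition \ref{prop:DM}. The expression $\wt Y_S\times_{Y_S}(Y_S\times_{[T_\CG\bs Y_S]}[T_\CG\LBS Y_S])$ that you wrote is instead its pullback along $\wt Y_S\to[T_\CG\bs\wt Y_S]$, and it equals $\wt Y_S\times\RB\und{T_\CG(\BF_p)}$. With the correct base change, you should also check that the comparison map itself becomes the identity of $[\und{T_\CG(\BF_p)}\bs\wt Y_S]$, not merely that the two sides are abstractly isomorphic.
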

\begin{proof}
By Lemma \ref{lem:representable}, $\CS_{K_1, S}$ is a scheme. Recall that $L: [T_\CG\,\bs\, \wt Y_S]\to [T_\CG\,\bs\,  Y_S]$ factors 
 \[
 [T_\CG\,\bs\, \wt Y_S]\to [T_\CG  \LBS Y_S]\to [T_\CG\, \bs\, Y_S] ,
 \]
 as in \eqref{factorsT}. By composing the right vertical morphism in \eqref{def:Fiberprod}
 with $[T_\CG  \LBS Y_S]\to [T_\CG\, \bs\, Y_S]$ above, we obtain a commutative (not fiber product) diagram
 \begin{equation}\label{CDRootStack3}
\begin{aligned}
 \xymatrix{
      \CS_{K_1,S } \ar[r] \ar[d]  &  [T_\CG \bslash \wt Y_S ] \ar[d]  \\
       \CS_{K_0}\ar[r]  & [T_\CG \bslash Y_S ].
        }
        \end{aligned}
\end{equation}
Using \eqref{StackDia1} we see that, up to  $2$-morphisms,  this factors  
\begin{equation}\label{StackDia2}
\begin{aligned}
 \xymatrix{
     \CS_{K_1,S } \ar[r]^{\varphi_{1,S} } \ar[d]_{ }  &     \fkM^{\sqrt{S}}_{\CG,\mu}\ar[r]  \ar[d]  & [T_\CG\, \bs\, \wt Y_S]\ar[d]^{L}\\
     \CS_{K_0 } \ar[r]^{\varphi}  &  \fkM_{\CG,\mu}  \ar[r]^{\Delta_S}  & [T_\CG\, \bs\, Y_S ] ,
        }
        \end{aligned}
\end{equation}
where the right square is \eqref{CDrootstack}. The gives the diagram in (iv) of
Conjecture \ref{globconj} and the rest of the statements in Conjecture \ref{globconj} follow.
\end{proof}

 \begin{remark}\label{rem:NonNormalvariant}
 a)  The crucial point in the construction is the existence of the morphism 
  \[
  \wt{[\pi]}: \CS_{K_0}\to [T_\CG\LBS Y_S]
  \]
  fitting in the diagram \eqref{StackDia1}. 
  
 b)  Assume that   $L: Y'\to Y_S$ is a finite $T_\CG(\BF_p)$-cover which extends the Lang map $L: T_\CG\to T_\CG$. Assume further given $ \wt{[\pi]}: \CS_{K_0}\to [T_\CG\LBS Y']$ which extends $[\pi]$. 
   We can then define a finite $T_\CG(\BF_p)$-cover $\CS_{K_1, S, Y'}\to \CS_{K_0}$ as the fiber product $\CS_{K_0}\times_{[T_\CG\LBS Y_S]}[T_\CG\,\bs\, Y']$, and obtain in this way an integral model of ${\rm Sh}_{\RK_1}(\RG, X)_E$.

  Such  $Y'$ can be obtained from  a Galois stable non-saturated semigroup $L^*(S)\subset S'\subset X^*(T_\CG)$ which   generates $X^*(T_\CG)$; then $Y'$ is not normal.  
  We will encounter this variation of our construction in \S \ref{s:PELex}, see Remark \ref{rem:HilbertSiegel} (1).
 \end{remark}
 
We can give a direct functor description of the base change $\CS_{K_1, O}$ to $O$, in  line with the interpretation of Conjecture  \ref{conj:torsors}  via (\ref{data1}) and (\ref{data2}) as in the previous paragraph.  This will be very useful later and especially when we discuss examples in \ref{s:PELex}.  

Recall that  $S\subset X^*(T_\CG)$ gives $Y_{S,O}=\Spec(O[S])$ and we also have the semigroup $\wt S \subset X^*(T_\CG)$ defining the normalization $\wt Y_{S, O}=\Spec(O[\wt S])$ of the Lang cover. We have $L^*: S\hookrightarrow \wt S$ giving the cover $\wt Y_{S,O}\to Y_{S, O}$. 
 We assume Conjecture  \ref{conj:torsors} and use the notations of Remark \ref{rem:ConjInterp}.

 \begin{proposition}\label{def:generalconstruction}  The $\CS_{K_0}$-scheme $\CS_{K_1,S, O}$ of Definition \ref{def:generalconstruction2} represents the following functor on $({\rm Sch}/\CS_{K_0,O})$. Let $f: U\to \CS_{K_0, O}$ be an $U$-valued point of $\CS_{K_0,O}$. Then $\CS_{K_1,S, O}(U)$ is   the set   $(z_{\wt\chi}(U))_{\wt\chi\in \wt S}$
 of sections 
 \[
 z_{\wt \chi}(U)\in \Gamma(U,  \CM^{-1}_{\wt \chi})
 \] 
such that
  \begin{itemize}
  \item[i)] $z_{L^*(\chi)}=f^*(a_\chi)$, for all $\chi\in S$,
  \item[ii)] $ z_{\wt\chi\wt\chi'}=z_{\wt \chi}\otimes z_{\wt \chi'}$, for all $\wt \chi$, $\wt \chi'\in \wt S\subset X^*(T_\CG)$, via the identification given by the tensor functor $\psi\mapsto \CM^{-1}_{\psi}$, $\psi\in X^*(T_\CG)$.
\end{itemize}
The group $T_\CG(\BF_p)$ acts on $(z_{\wt\chi}(U))_{\wt\chi\in \wt S}$ by
\[
g\cdot z_{\wt\chi}(U)=\wt\chi(g)\cdot z_{\wt\chi}(U) 
\]
where   $\wt\chi(g)\in O^*$ is defined by evaluating $\wt\chi: T_{\CG}(\BZ_p)\to O^*$ at $g\in T_\CG(\BF_p)\subset T_\CG(\BZ_p)$.
 \end{proposition}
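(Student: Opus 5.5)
We unwind Definition \ref{def:generalconstruction2} after base change to $O$. Since representability is local on $\CS_{K_0,O}$ and both sides are Zariski sheaves, we may work on a $U$-valued point $f\colon U\to \CS_{K_0,O}$. By Lemma \ref{lem:representable}, $\CS_{K_1,S,O}(U)$ is the set of $T_\CG$-equivariant morphisms $f'\colon f^*P\to \wt Y_{S,O}$ lifting the $L$-equivariant morphism $f^*P\to Y_{S,O}$ that defines $\wt{[\pi]}$ in \eqref{extend1}. The plan is to translate both pieces of data into sections of line bundles, using the tensor functor $\psi\mapsto \CM_\psi$ attached to $P$ in Remark \ref{rem:ConjInterp}.

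The first step is to note that a $T_\CG$-equivariant morphism from a $T_\CG$-torsor $W=f^*P$ to $\wt Y_{S,O}=\Spec(O[\wt S])$ is the same as a monoid homomorphism $\wt S\to \Gamma(W,\CO_W)$ landing in the correct isotypic pieces. Here $\wt\chi$ must go to a function $z$ with $z(tw)=\wt\chi(t)^{\pm1}z(w)$; the sign is fixed by the inverse-action convention $\RP^{(-1)}$ used in \eqref{defq}. Descending along $W\to U$, such functions are exactly sections of $\CM^{-1}_{\wt\chi}$. Multiplicativity of the monoid homomorphism becomes condition (ii), via the canonical isomorphisms $\CM_{\chi\chi'}\simeq\CM_\chi\otimes\CM_{\chi'}$. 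I would verify this first Zariski locally, where $W$ is trivial and the statement reduces to $\Hom(O[\wt S],\CO(U))$, and then glue using the canonical nature of the identifications.

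The second step treats the compatibility square with $Y_S$. The morphism $f\colon P\to Y_S$ in \eqref{extend1} was built precisely from the extended sections $a_\chi\in\Gamma(\CS_{K_0,O},\CM^{-1}_{L^*(\chi)})$ for $\chi\in S$. By the same dictionary, an $L$-equivariant map $W\to Y_S$ corresponds to the family $f^*(a_\chi)$, with $\chi$ paired to $\CM^{-1}_{L^*(\chi)}$ because the equivariance is through $L$. Composition with $L\colon\wt Y_S\to Y_S$ is given on rings by $L^*\colon S\to\wt S$. Hence commutativity of the square is exactly condition (i), $z_{L^*(\chi)}=f^*(a_\chi)$. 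The main obstacle is bookkeeping: I must check that the identification $\CM_{L^*(\chi)}\simeq L_*$-pushout matches the $\beta$-isomorphism with $\varphi^*(\CL_\chi^{-1},s_\chi^{-1})$ from \eqref{data1}, and that the signs and inverses coming from $\RP^{(-1)}$ are consistent. I would handle this by first checking the split case, where $\wt S=S$ and $L^*$ is multiplication by $p-1$, and reading it off from \eqref{data2}.

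Finally, the $T_\CG(\BF_p)$-action on $\CS_{K_1,S}$ comes from the action of $\ker L$ on $\wt Y_S$ by translation. On the coordinate $\wt\chi\in O[\wt S]$, an element $g\in\ker L(O)=T_\CG(\BF_p)$ (via the Teichmüller embedding into $T_\CG(\BZ_p)$) acts by multiplication by $\wt\chi(g)$. Transporting this through the dictionary gives $g\cdot z_{\wt\chi}=\wt\chi(g)z_{\wt\chi}$. This action preserves (i), since $L^*(\chi)(g)=\chi(L(g))=1$, and it preserves (ii) by multiplicativity of characters, which completes the identification.
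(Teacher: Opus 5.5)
Your proposal is correct and takes essentially the same route as the paper, which argues as in the proof of Lemma \ref{lem:representable}: locally trivialize the torsor, so that morphisms to $Y_S$ and $\wt Y_S$ become semigroup homomorphisms out of $S$ and $\wt S$; commutativity of the square then gives (i), and multiplicativity gives (ii). The extra check you flag against the isomorphism $\beta$ of \eqref{data1} is not needed, because the map $f^*P\to Y_S$ defining $\wt{[\pi]}$ is built directly from the same sections $a_\chi$ that appear in condition (i).
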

 
 \begin{proof}
 Follows as in the proof of Lemma \ref{lem:representable} by remarking that $O$-scheme morphisms $V\to \wt Y_S$, resp. $V\to Y_S$, are determined by semigroup homomorphisms
 $S\to \CO_V(V)$, resp.  $\wt S\to \CO_V(V)$.
  \end{proof}
 
 \begin{remark}\label{rem:alternative} We can view the above as giving an alternative construction of $\CS_{K_1, S}$
 (always assuming Conjecture  \ref{conj:torsors}):
 It is clear that the functor $U\mapsto (z_{\wt\chi}(U))_{\wt\chi\in \wt S}$ of the statement is represented by a finite scheme over $\CS_{K_0, O}$ which supports
a $T_\CG(\BF_p)$-action. The line bundles $\CM_\psi$, for $\psi\in X^*(T)$, and sections $a_\chi$ of $\CM^{-1}_{L^*(\chi)}$, for $\chi\in S$, are equipped with finite \'etale descent data for the extension $O/\BZ_p$ 
and so the same is true for the functor $U\mapsto (z_{\wt\chi}(U))_{\wt\chi\in \wt S}$. Hence, the scheme above is the base change $\CS_{K_1, S, O}$ of a uniquely determined finite scheme 
$
\CS_{K_1, S}\to \CS_{K_0}
$
 with $T_\CG(\BF_p)$-action.
 \end{remark}

\section{PEL type Shimura varieties}\label{s:PEL}

Here we discuss PEL type Shimura varieties, and give our main result for these, under some additional conditions that we introduce.
The proof uses Oort--Tate/Raynaud theory of finite group schemes.

\subsection{PEL data and moduli}\label{ss:PELdata}

\subsubsection{PEL data}\label{sss:PELdata} In this section we will follow the set-up of \cite[Ch. 6]{RZbook}. Let $\RB$ be a semi-simple algebra over $\BQ$ and let $\ast$ be a positive
involution on $\RB$. Let $\RV$ be a finite-dimensional $\BQ$-vector space with
a nondegenerate alternating bilinear form $(\ ,\ )$ with values in $\BQ$.
We assume that $\RV$ is equipped with a $\RB$-module structure such that
$$
(bv,w)= (v,b^\ast w),\ \ v,w\in V,\ \ b\in \RB .
$$
 Let $\RG \subset
\GL_\RB(\RV)$ be the closed algebraic subgroup over $\BQ$ such that
 \begin{equation}\label{PELgroup}
 \RG (\BQ)=\{ g\in \GL_\RB(\RV)\ \vert\
(gv,gw)=c(g)(v,w),\ c(g)\in\BQ^\times\} .
\end{equation}
Note that the group $\RG $ is not always connected. We will denote by $\RG^\circ$ its neutral component.
 Let ${\BS}={\rm Res}_{\BC/\BR} \BG_m$ and
let $h:{\BS}\to \RG^\circ_\BR\subset \RG_\BR$ be a homomorphism that defines on $\RV_\BR$ a Hodge structure of type $\{(1, 0), (0,1)\}$ such that $(v, h(\sqrt{-1})w)$ is a symmetric positive-definite bilinear form on $\RV_\BR$, cf. loc. cit.. We have a corresponding Hodge
decomposition 
$$
\RV\otimes \BC = \RV_0\oplus \RV_1,
$$
where $\BS$ acts on $\RV_0$ via the character $\bar z$ and on $V_1$ via the character $z$. Let $\mu$ be the   corresponding
cocharacter $\mu$ of $\RG$ defined over $\BC$, i.e., $\mu(z)=h(z, 1)$ under the identification $\BS_\BC=\BG_m\times\BG_m$, via $z\mapsto (z, \bar z)$. We let $\RE\subset\overline\BQ$
be the corresponding reflex field. This is a subfield of the reflex field $\RE^\circ$ of the Shimura datum\footnote{The group $\RG$ is connected outside of type $\RD$ and $\RE\neq \RE^\circ$ is possible only for non-split groups of type $\RD$.} $(\RG^\circ, X)$, i.e. $\RE\subset \RE^\circ$.
We now fix a prime number $p$ and
choose an embedding $\overline\BQ\to\overline\BQ_p$. The corresponding
$v$-adic completion of $\RE$ will be denoted $E$. Let $K^p\subset
\RG({\BA}_f^p)$ be an open compact subgroup.

We consider an order $O_\RB$ of $\RB$ such that $ O_\RB\otimes\BZ_p$ is a maximal
order of $ \RB\otimes \BQ_p$. We assume that $O_\RB\otimes \BZ_p$ is invariant
under the involution. Let us   write 
\[
\RB\otimes\BQ_p=B_1\times\cdots \times B_m,
\]
a product of simple $\BQ_p$-algebras such that
\[
O_\RB\otimes\BZ_p=O_{B_1}\times\cdots \times O_{B_m},
\] 
a product of maximal orders.
Also write
\[
\RV\otimes\BQ_p=V_1\oplus\cdots \oplus V_m
\]
for the corresponding decomposition of $\RV\otimes\BQ_p$ into (left) $B_j$-modules. 

We also fix a selfdual periodic multichain ${\mathscr L}$ of $O_\RB\otimes \BZ_p$-lattices in $\RV\otimes\BQ_p$ with respect to the
alternating form $(\ ,\ )$ and $*$, see \cite[Def. 3.13]{RZbook}. By the definition of ``multichain''   in \cite[Def. 3.4]{RZbook}, there are periodic $O_{B_j}$-lattice chains $\mathscr L_j$ in $V_j$ such that the lattices $\Lambda$  in $\mathscr L$ are exactly all the  direct sums 
\[
\Lambda=\Lambda_1\oplus \cdots \oplus \Lambda_m,
\]
with $\Lambda_j$ ranging over all the lattices in the chain $\mathscr L_j$. Here, we also assume in addition that $\mathscr L$ is self-dual for $*$, i.e. that if $\Lambda\in \mathscr L$, then $\Lambda^*\in \mathscr L$, see loc. cit.

\subsubsection{PEL moduli problems}\label{sss:PELmoduli} 
We recall from loc. cit. the definition of a moduli problem ${\CA}_{K^p}$ over $(Sch/\Spec({O}_{E}))$. It associates to a
${O}_{E}$-scheme $S$ the following set of data up to isomorphism:
\begin{itemize}
\item[1.] An ${\mathscr L}$-set of abelian varieties $A=\{ A_\Lambda\}$.
\item[2.] A $\BQ$-homogeneous principal polarization $\overline\lambda$ of
the ${\mathscr L}$-set $A$.
\item[3.] A $K^p$-level structure
$$
\overline\eta :\RH_1(A,{\BA}_f^p)\xrightarrow{\sim} \RV\otimes_\BQ {\BA}_f^p\ {\rm mod}\
K^p ,
$$
 which respects the bilinear forms on both sides up to a
constant in $({\BA}_f^p)^\times$.
\end{itemize}

 We require an identity of characteristic polynomials\footnote{Here we deviate from the normalization in \cite{RZbook} and adopt  the normalization of \cite[footnotes p. 200, p. 230]{SWberkeley}. }, 
$$
 {\rm{det}}
(T\cdot I-b\ \vert\  \Lie(A_\Lambda))={\rm{det}} (T\cdot I-b\ \vert\
\RV_1),\ \ b\in O_\RB,\ \Lambda\in {\mathscr L} . 
$$
 For the definitions of the
terms employed here we refer to \cite[\S 6.3--6.8]{RZbook}. We only mention that
$A$ is a functor from the category ${\mathscr L}$ to the category of abelian
schemes over $S$ up to isogeny of order prime to $p$, with $O_\RB$-action,
and that a polarization $\lambda$ is a ${O}_\RB$-linear homomorphism
from $A$ to the dual ${\mathscr L}$-set $\tilde A$ (for which $\tilde
A_\Lambda=(A_{\Lambda^*})^\wedge)$.

The functor ${\CA}_{K^p}$ is representable by a quasi-projective scheme
over ${O}_{E }$, provided that $K^p$ is sufficiently small.
It follows from \cite[\S 3.26]{RZbook}, that there is a ``local model diagram morphism''
 \[
 \varphi^{\rm naive}: \CA_{K^p}\to [\CG\bs \Mloc^{\rm naive}_{\CG,\mu}]
 \]
 which is smooth. Here, $\Mloc^{\rm naive}_{\CG,\mu}$ is the ``naive" local model of loc. cit.,  
 which is not always flat over $O_E$.

We would like to compare our set-up so far to the set-up and notation of \S \ref{sss:612}: 
 There we discuss Shimura varieties for data $(\RG,X)$, with parahoric level $K_0=\CG(\BZ_p)$. We also have
  the ``canonical" integral model $\CS_{K_0}$, local model
 $\Mloc_{\CG,\mu}$, and local model diagram morphism
 \[
 \varphi : \CS_{K_0}\to [\CG\bs \Mloc_{\CG,\mu}].
 \]
This comparison is,  in general, somewhat complicated.   More specifically, the complications come from the following sources, see the discussion in \cite[\S 8.2]{PZ}:
 
 \begin{itemize}
 \item[1)] The group $\RG$ of (\ref{PELgroup}) is not always connected and so $(\RG, X)$ as in (\ref{PELgroup})  does not fit directly within the usual formalism of Shimura pairs.
 
 \item[2)] Due to the possible failure of the Hasse principle, the generic fiber $\CA_{K^p}[1/p]$ is, in general, just a union of Shimura varieties for groups related to $\RG$.
 
 \item[3)] Even if $G=\RG\otimes\BQ_p$ is connected, the stabilizer group scheme  $\CG={\rm Aut}_{O_B, (\ ,\ )}(\mathscr L)$, see below, is sometimes not connected. Then, the level $\CG(\BZ_p)$ is not a parahoric subgroup but just ``quasi-parahoric."  
 \end{itemize}
 
  When both the generic and special fibers of the group scheme $\CG$ over $\BZ_p$ are connected this comparison is more straightforward.

\subsubsection{The main theorem}\label{sss:MainPEL} We now set $V=\RV\otimes\BQ_p$, $B=\RB\otimes\BQ_p$ and $O_B=O_\RB\otimes\BZ_p$. 
We also consider $\CG={\rm Aut}_{O_B, (\ ,\ )}(\mathscr L)$, the group scheme of automorphisms of the \emph{polarized} self-dual 
multichain $\mathscr L$ up to common similitude which lies in $\BZ_p^*$. By \cite[Thm. 3.16, App. to Ch. 3]{RZbook} this is a smooth group scheme over $\BZ_p$ with generic fiber $G=\RG\otimes\BQ_p$, at least when $p\neq 2$.
Most of our results are under the following assumption:
\smallskip

(IW) \emph{The special fiber $\CG\otimes \BF_p$ is connected and $\CG^\circ$ is 
an Iwahori group scheme for $G^\circ$.} 
\smallskip
\begin{remark}
When the generic fiber $G=\CG\otimes \BQ_p$ is connected, then the condition that the special fiber $\CG\otimes \BF_p$ is also connected implies that
$\CG^\circ=\CG$. So, if $G$ is connected, condition (IW) just amounts to requiring that $\CG$ is Iwahori.
However,  it is possible 
 that the special fiber of $\CG={\rm Aut}_{O_B, (\ ,\ )}(\mathscr L)$ is connected while the generic fiber $G$ is \emph{not}. For example, this happens when $\mathscr L$ is obtained from a complete periodic self-dual lattice chain in an  even dimensional split orthogonal space, i.e. with a Witt basis, over $\BQ_p$, $p\neq 2$, see \cite[Lem. 4.3.2]{Smithling}.
\end{remark}

We can also consider $\mathscr L$ simply as a periodic multichain of $O_B$-lattices in $V$ by forgetting the additional structures. Let ${\rm Aut}_{O_B}({\mathscr L})$ be the  group scheme of $O_B$-linear automorphisms of $\mathscr L$ over $\BZ_p$, as defined in \cite[Ch. 3]{RZbook}. 
By \cite[Thm. 3.11]{RZbook} this is a smooth group scheme over $\BZ_p$ and, hence, it is a 
Bruhat-Tits parahoric group scheme for $\GL_B(V)$. By definition, there is a closed immersion of group schemes
\begin{equation}\label{groupClosedImm}
\CG\hookrightarrow \GL_{O_B}(\mathscr L).
\end{equation}
 For each $j=1,\ldots,m$, we write $B_j={\rm M}_{n_j}(D_j)$ and  $O_{B_j}={\rm M}_{n_j}(O_{D_j})$, where $D_j$ is a central division algebra over the local field $F_j$ and where $O_{D_j}$ is a maximal order. We also fix a prime element $\Pi_j$ of $O_{D_j}$ and denote by $k_{D_j}=O_{D_j}/\Pi_jO_{D_j}$ the residue field. By reindexing the lattice chains $\mathscr L_j=\{\Lambda_{j, i}\}_{i\in \BZ}$, we can assume that  
$\Pi_j\Lambda_{j,i}=\Lambda_{j, i-r_j}$ for some $r_j\geq 1$. In fact, we can arrange so that for each $j\in \{1,\ldots, m\}$ there is $j^*\in\{1,\ldots, m\}$
and $a_j$ which is $0$ or $1$ so that
\begin{equation}
\Lambda_{j, i}^*=\Lambda_{j^*, -i+a_j},\qquad \forall i\in \BZ.
\end{equation}
Note that $j\mapsto j^*$ is the involution which is induced by the action of $*$ on the center of the algebra $B=\RB\otimes\BQ_p$. 
Set
\[
Q_{j, i}=\Lambda_{j,i}/\Lambda_{j, i-1}
\]
which is a (left) $\RM_{n_j}(k_{D_j})$-module. Using Morita equivalence, we can write 
\[
Q_{j, i}=\RM_{n_j}(k_{D_j})\otimes_{k_{D_j}}W_{j,i}
\]
where $W_{j,i}$ is a $k_{D_j}$-vector space of rank $l_{j,i}$.

As in \cite[App. to Ch. 3]{RZbook}, (\ref{groupClosedImm})
is given by a closed immersion
\begin{equation}\label{CLimm}
\CG\hookrightarrow \GL_{O_B}(\mathscr L)\hookrightarrow  \prod_{j=1}^m\prod_{i=1}^{r_j} {\rm Aut}_{\RM_{n_j }(O_{D_j})}(\Lambda_{j,-i+1}).
\end{equation}

In  \S \ref{sss:proofImpliesIW} we will show:

\begin{proposition}\label{impliesIW}
Suppose  (IW), i.e. $\CG\otimes \BF_p$ is connected and $\CG^\circ$ is 
an Iwahori group scheme for $G^\circ$. Then ${\rm Aut}_{O_B}(\mathscr L)$ is an Iwahori group scheme for
$\GL_B(V)$. 
\end{proposition}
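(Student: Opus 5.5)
The plan is to detect the Iwahori property of a parahoric through its special fiber. A parahoric group scheme $\CH$ for a connected reductive group is Iwahori if and only if the maximal reductive quotient $(\CH\otimes\bar\BF_p)_{\red}$ is a torus, i.e. is solvable. By \cite[Thm. 3.11]{RZbook}, ${\rm Aut}_{O_B}(\mathscr L)$ is already a parahoric for $\GL_B(V)$. So it suffices to show that its reductive quotient over $\bar\BF_p$ is a torus. We will deduce this from the analogous fact for $\CG^\circ=\CG$; here we use that $\CG\otimes\BF_p$ is connected, so $\CG_{\BF_p}=\CG^\circ_{\BF_p}$.

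The first step is to compute both reductive quotients explicitly. Via (\ref{CLimm}), the reductive quotient of ${\rm Aut}_{O_B}(\mathscr L)\otimes\bar\BF_p$ is $\prod_j\prod_{i=1}^{r_j}{\rm Aut}_{k_{D_j}}(W_{j,i})$, after base change to $\bar\BF_p$. It is therefore a product of groups $\GL_{l_{j,i}}$, repeated over the embeddings of $k_{D_j}$. It is a torus exactly when every $l_{j,i}\le 1$.

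Next, the image of $\CG\otimes\BF_p$ in this product is the subgroup preserving the pairings induced by $(\ ,\ )$. Since $\Lambda_{j,i}^*=\Lambda_{j^*,-i+a_j}$, these pairings are perfect pairings between graded pieces $W_{j,i}$ and $W_{j^*,-i+a_j}$. There are two cases.
\begin{itemize}
\item When the two pieces are distinct, the pairing identifies ${\rm Aut}(W_{j,i})$ with the dual of the partner factor, up to the similitude.
\item When a piece is paired with itself, the factor becomes a similitude group of a symplectic, orthogonal or hermitian form on $W_{j,i}$.
\end{itemize}
Thus $(\CG\otimes\bar\BF_p)_{\red}$ is, up to isogeny and the similitude torus, a product of $\GL(W_{j,i})$ for the paired couples and (orthogonal, symplectic or unitary) similitude groups for the self-paired pieces.

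Now impose (IW): this reductive quotient is a torus. Paired couples then force $l_{j,i}\le1$. For self-paired pieces, a symplectic or unitary group on $W$ is non-solvable once $\dim W\ge 2$. An orthogonal group $\RO(W)$ has connected component $\mathrm{SO}(W)$, which is a torus (or trivial) exactly when $\dim W\le 2$. So the main obstacle is the self-dual orthogonal pieces with $\dim W_{j,i}=2$, where $\mathrm{SO}_2$ is a torus but $\GL(W_{j,i})=\GL_2$ is not.

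These pieces are exactly where the connectedness hypothesis enters. The group $\RO_2$ is disconnected, and if such a piece occurs it contributes a nontrivial $\pi_0$ to $\CG\otimes\BF_p$. The step still to carry out is that this component group survives in $\CG\otimes\BF_p$, using the Bruhat–Tits description of the component group of the special fiber as in \cite{Smithling}, \cite[App. to Ch. 3]{RZbook}. That would contradict the assumption that $\CG\otimes\BF_p$ is connected. This excludes the case and gives $l_{j,i}\le 1$ for all $j,i$, hence ${\rm Aut}_{O_B}(\mathscr L)$ is Iwahori.

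I expect the careful verification of this $\pi_0$ claim, across the unitary and orthogonal cases and with $p\neq2$, to be the main technical work.
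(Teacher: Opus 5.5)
Your outline is the same as the paper's. You compute $(\CG\otimes\BF_p)_{\red}$ inside the reductive quotient $\prod_{j,i}{\rm Aut}_{k_{D_j}}(W_{j,i})$ of $\GL_{O_B}(\mathscr L)\otimes\BF_p$. You use that it is a torus to force $l_{j,i}=1$, except possibly for a self-paired piece with an orthogonal form and $l_{j,i}=2$. You then kill that case with the connectedness of $\CG\otimes\BF_p$.

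\textbf{The gap.} The step you leave open is the crux, and it is hidden in an assertion you make without proof. You claim that the image of $\CG\otimes\BF_p$ in $(\GL_{O_B}(\mathscr L)\otimes\BF_p)_{\red}$ is the \emph{whole} subgroup preserving the induced pairings, and you only describe that group ``up to isogeny'', which is exactly what loses the component group.

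\begin{itemize}
\item \emph{If you had this equality on the nose,} no Bruhat--Tits theory of component groups would be needed. The image of the connected group $\CG\otimes\BF_p$ is connected. A factor $\RO_2(W_{j,i})$, with the common similitude, makes the full pairing-preserving group disconnected, since $g\mapsto \det(g_{j,i})/c(g)\in\{\pm 1\}$ surjects onto $\BZ/2$.
\item \emph{Without the equality,} your argument does not go through. The image could a priori miss the non-identity component of $\RO_2$. Your non-solvability arguments for the paired pieces and for the symplectic or unitary pieces also tacitly need the image to contain those groups.
\end{itemize}

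\textbf{The missing idea.} This is how the paper proceeds.
\begin{enumerate}
\item Write the similitude-one subgroup $\CG_1=\ker(c)$ as a fixed-point group, $\CG_1=\GL_{O_B}(\mathscr L)^{\tau=1}$. Here $\tau(g)=(g^\vee)^{-1}$ is the involution given by taking $(\ ,\ )$-adjoints.
\item Let $U$ be the kernel of $\GL_{O_B}(\mathscr L)\otimes\BF_p\to(\GL_{O_B}(\mathscr L)\otimes\BF_p)_{\red}$. Since $U$ is unipotent and $p\neq 2$, taking $\tau$-fixed points of this extension remains exact. (By d\'evissage to vector groups, an involution has no $H^1$ on $U$ when $2$ is invertible.) Hence $(\CG_1\otimes\BF_p)_{\red}=((\GL_{O_B}(\mathscr L)\otimes\BF_p)_{\red})^{\tau=1}$ exactly, not merely up to isogeny.
\item Combine this with the exact sequence $1\to(\CG_1\otimes\BF_p)_{\red}\to(\CG\otimes\BF_p)_{\red}\xrightarrow{c}\BG_m\to 1$. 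This gives the precise factor-by-factor description.
\item An orthogonal piece with $l=2$ then produces a disconnected $\GO_2^*$-factor of $(\CG\otimes\BF_p)_{\red}$. That contradicts (IW), because a quotient of the connected group $\CG\otimes\BF_p$ is connected.
\end{enumerate}

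You can drop the Bruhat--Tits/Kottwitz component-group route altogether. It is more work, and it would need extra care because $G$ itself may be disconnected here.
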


Note that the conclusion that
${\rm Aut}_{O_B}(\mathscr L)$ is an Iwahori group scheme for
$\GL_B(V)$ is equivalent to requiring that, for each $1\leq j\leq m$, the $O_{B_j}$-lattice chain $\mathscr L_j$ in $V_j$ is \emph{complete},
i.e., that for each $j=1,\ldots, m$, $i\in \BZ$, $\Lambda_{j,i}/\Lambda_{j, i-1}$ is a simple $\RM_{n_j}(k_{D_j})$-module.

\begin{remark}\label{rem:IW}
 Conversely, if ${\rm Aut}_{O_B}(\mathscr L)$ is Iwahori, the neutral component $\CG^\circ$ of $\CG$ is an Iwahori group scheme of $G^\circ$. Indeed,  it is enough to show that the maximal reductive quotient $(\CG^\circ\otimes_{\BZ_p}\BF_p)_{\red}$ of its reduction $\CG^\circ\otimes_{\BZ_p}\BF_p$ is a torus. But, by  the proof of Proposition \ref{impliesIW},  $(\CG^\circ\otimes_{\BZ_p}\BF_p)_{\red}$ is a subgroup scheme of the maximal reductive quotient of the reduction  ${\rm Aut}_{O_B}(\mathscr L)\otimes_{\BZ_p}\BF_p$ and this
 is a torus.  On the other hand, the property that  ${\rm Aut}_{O_B}(\mathscr L)$ is Iwahori does not imply that $\CG\otimes\BF_p$ is connected.  An example is given by the group $\RG$ of unitary similitudes of a $\RF/\RF_0$-hermitian vector space $\RV$ of odd dimension (with similitude factor in $\BQ^*$), where $\RF_0\otimes\BQ_p=\BQ_p^d$ with $d\geq 2$ and $\RF_p=\RF_0\otimes K$, where $K/\BQ_p$ is a ramified quadratic extension. There exists a complete selfdual periodic lattice multichain $\CL$ such that the successive quotients are all one-dimensional. Let $\U=\Res_{F_0/\BQ_p}(\U(V))$. Then $\pi_1(\U)_\Gamma=(\BZ/2\BZ)^d$, and the Kottwitz invariant induces a surjective map $\CG(\BZ_p)\cap \U(\BQ_p)\to  \pi_1(\U)_\Gamma$, comp. \cite[p. 132]{PRtw}. But then the image in $\pi_1(G)_\Gamma$ of $\CG$ under the Kottwitz invariant is also non-trivial, i.e., $\CG\otimes\BF_p$ is disconnected.

  Also, asking that the neutral component $\CG^\circ$ of $\CG$ is Iwahori for $G^\circ$ is not enough to guarantee that ${\rm Aut}_{O_B}(\mathscr L)$ is Iwahori.  
For example, in some unitary and orthogonal groups there are Iwahori group schemes which are given using complete periodic lattice chains, where the successive quotients have dimension larger than $1$.  In the unitary case, ${\rm Aut}_{O_B}(\mathscr L)$ Iwahori excludes only the case when the quadratic extension is ramified, and the hermitian space has even dimension $n=2m$ and is non-split (in this case, there is no $\pi$-modular lattice, i.e., no vertex lattice of type $m$), cf. \cite{Jac}. In the orthogonal case, things are less neat. When the quadratic space $V$ is split (i.e., there exists a Witt basis), then ${\rm Aut}_{O_B}(\mathscr L)$ is Iwahori;  but when there is a non-trivial anisotropic part, the condition ${\rm Aut}_{O_B}(\mathscr L)$ Iwahori can fail. For instance, if  $V$ is anisotropic of dimension $2$, then $V$ is of the form $(E, q)$, where $E/F$  is a quadratic field extension and $q= d \Nm_{E/F}$, for some $d\in F^*$, cf. \cite[\S 2.2.2]{HZ}.  If  $E/F$ is  unramified, then $V$ contains only one vertex lattice which is either of type $0$ or of type $2$, hence ${\rm Aut}_{O_B}(\mathscr L)$ is not Iwahori.  If $E/F$ is ramified, then again $V$ contains only one vertex lattice which is now of type $1$, hence ${\rm Aut}_{O_B}(\mathscr L)$ is  Iwahori.  See also the proof of Proposition \ref{impliesIW}.
 \end{remark}

We now assume that  ${\rm Aut}_{O_B}(\mathscr L)$ is an Iwahori group scheme for
$\GL_B(V)$. 
We write $I=\sqcup_{j=1}^m \{1,\ldots, r_j\}$ for the indexing set of the direct product \eqref{CLimm}, with $\sum_{j=1}^mr_j$ elements. For $s\in I$ we denote by $j(s)\in \{1,\ldots, m\}$ its projection and write $D_s=D_{j(s)}$. Set $d_s=[k_{D_s}:\BF_p]$.
 The composition of maps in \eqref{CLimm} induces a map on maximal torus quotients and hence gives a surjection of $\Gal(\ov\BF_p/\BF_p)$-modules
 \begin{equation}\label{galoismod1}
 \bigoplus_{s\in I}X^*(\Res_{k_{D_{s}}/\BF_p}\BG_m)=  \bigoplus_{s\in I} \BZ[\Gal(k_{D_{s}}/\BF_p)]\to X^*(T_{\CG^\circ}),
\end{equation}
where we recall that $\BZ[\Gal(k_{D_{s}}/\BF_p)]=\BZ[\Gal(\BF_{p^{d_s}}/\BF_p)]$.

Assuming ${\rm Aut}_{O_B}(\mathscr L)$ is Iwahori,  consider the following additional splitness hypothesis:
\smallskip

(S) \emph{There is a subset $J\subset I$ such that  the 
composition 
\[
 \bigoplus_{s\in J}\BZ[\Gal(k_{D_{s}}/\BF_p)]\subset \bigoplus_{s\in I}\BZ[\Gal(k_{D_{s }}/\BF_p)]\xrightarrow{(\ref{galoismod1})} 
 X^*(T_{\CG^\circ})
 \]
either gives an isomorphism
\begin{equation}\label{star1}
X^*(T_{\CG^\circ})= \bigoplus_{s\in J}\BZ[\Gal(k_{D_{s}}/\BF_p)],
\end{equation}
or a Galois equivariant direct summand of $\BZ\cdot c$ in $ X^*(T_{\CG^\circ})$ such that 
\begin{equation}\label{star2}
X^*(T_{\CG^\circ})= \BZ\cdot c\oplus \bigoplus_{s\in J}\BZ[\Gal(k_{D_{s}}/\BF_p)].
\end{equation}}
In the above, we denote by $c\in X^*(T_{\CG^\circ})$ the character obtained 
from the restriction $\CG^\circ\to \BG_m$ of the similitude $c: G\otimes\BQ_p\to \BG_m$. 
Note that (S) implies that $T_{\CG^\circ}$ is an induced torus.

When (S) is satisfied, it is useful to also consider the torus direct summand 
\begin{equation}\label{dsTorus}
T'_{\CG^\circ}:= \prod_{s\in J} \Res_{W(k_{D_{s}})/\BZ_p}\BG_m\subset T_{\CG^\circ}
\end{equation}
 of $T_{\CG^\circ}$ over $\BZ_p$ which
corresponds to   $\bigoplus_{s\in J}\BZ[\Gal(k_{D_{s}}/\BF_p)]$. Of course, in the first case  (\ref{star1}), we have $T'_{\CG^\circ}=T_{\CG^\circ}$.

Our main result in this section is the following:

 \begin{theorem}\label{thm:PEL}
 Assume that $(\RB,\RV,( \,,\,), *, h)$ are PEL data as in \S \ref{sss:PELdata}.  Choose an odd prime $p$, an order $O_\RB\subset \RB$, and a periodic self-dual $O_\RB\otimes {\BZ_p}$-lattice multichain $\mathscr L$ in $\RV\otimes\BQ_p$,  as in \S \ref{sss:PELdata} above. Assume the following hold.
 \begin{itemize}
 \item[1)]  
 the special fiber $\CG\otimes\BF_p$ of
 the group scheme $\CG={\rm Aut}_{O_B,(\, )}(\mathscr L)$   is connected
and the neutral component $\CG^\circ$ is an Iwahori group scheme for $G^\circ$,
 
 \item[2)]  the generic fiber $G=\CG\otimes_{\BZ_p}\BQ_p$ splits over a tamely ramified extension of $\BQ_p$,  
 
 \item[3)]   condition (S) is satisfied. 
 \end{itemize}

 Then Conjectures \ref{conj:torsors} and \ref{globconj}  for the 
 $T_{\CG^\circ}(\BF_p)$-covers of the corresponding PEL type Shimura varieties for the Shimura data $(\RG^\circ, X)$ with $\CG^\circ(\BZ_p)=\CG(\BZ_p)$-level at $p$ are true.
 \end{theorem}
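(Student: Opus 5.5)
By Proposition \ref{prop:Conj} (see \S\ref{sss:ConjImplyConj}), Conjecture \ref{conj:torsors} implies Conjecture \ref{globconj}. So the plan is to prove Conjecture \ref{conj:torsors} for the PEL integral model. Concretely, we construct a $T_{\CG^\circ}$-torsor $P$ over $\CS_{K_0}$ together with the isomorphisms $\alpha$ and $\beta$.

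Hypothesis 2) together with $p\neq 2$ puts us in the setting of Theorem \ref{tameconj}. The relevant fundamental groups are of type A/C/D, and $p\nmid|\pi_1|$ needs checking, which is essentially automatic for $p$ odd except for the $\mu_2$ issues. So we have the pair $(\RP_{\CG,\mu},s_{\CG,\mu})$. Following Remark \ref{rem:PEL} and the $\GL_n$ computation (\ref{isoDet}), we identify it explicitly on the (flat closure of the) naive local model. For each $s\in I$ the graded piece $\und\CE_i/\und\CE_{i-1}$ is a rank one module over $k_{D_s}\otimes R$ by Proposition \ref{impliesIW} (completeness of the chains). Its dual, decomposed along the $d_s$ embeddings of $k_{D_s}$, gives a $\Res_{W(k_{D_s})/\BZ_p}\BG_m$-torsor. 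Via (\ref{galoismod1}) these push forward to $\RP_{\CG,\mu}$, and the section $s_{\CG,\mu}$ is given by the generic isomorphisms $\CQ_{i-1}\simeq\CQ_i$.

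\emph{Construction of $P$.} Over $\CS_{K_0}$ the universal $\mathscr L$-set gives isogenies $A_{\Lambda_{j,i-1}}\to A_{\Lambda_{j,i}}$. Their kernels $H_s$ are finite flat $O_{D_s}/\Pi$-module schemes of rank $p^{d_s}$, i.e.\ $\BF_{p^{d_s}}$-vector space schemes of dimension one. By Raynaud's theorem (for $p$ odd and a flat base; $\CS_{K_0}$ is flat, normal) each $H_s$ corresponds to data $(\CL_{s,k},\gamma_{s,k},\delta_{s,k})_{k\in\BZ/d_s}$. We put $P_s=(\CL_{s,k})_k$ and let $P$ be the product over $s\in J$, i.e.\ a $T'_{\CG^\circ}$-torsor. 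In case (\ref{star2}) we also take the $\BG_m$-torsor $\BZ_p(1)$-twist attached to the similitude $c$, which is trivial with trivial Lang data. By condition (S) this is a $T_{\CG^\circ}$-torsor.

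\emph{Generic fiber and $\alpha$.} On ${\rm Sh}_{K_0}$ the cover $\pi$ parametrizes $\BF_{p^{d_s}}$-generators of the étale $H_s$ for $s\in J$ (compatible via $c$). The pair $(Q_\pi,a_\pi)$ of Proposition \ref{prop:torsors} is then read off from Raynaud theory. A generator gives a trivialization of $L_*(P_s)=(\CL_{s,k-1}^p\otimes\CL_{s,k}^{-1})$ by the $\delta$'s, which are invertible over $[1/p]$. This identifies $Q_\pi\cong P[1/p]$ compatibly with the Lang sections, giving $\alpha$. Here condition (S) is used to see that the data for $s\in J$ determine all of $K_0/K_1$.

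\emph{The isomorphism $\beta$.} This is the main obstacle: we must show that the line bundles $\CL_{s,k-1}^{p}\otimes\CL_{s,k}^{-1}$ with sections $\delta_{s,k-1}$ agree with $\varphi^*$ of $(\CL^{-1}_\chi,s^{-1}_\chi)$ for the corresponding characters. The plan is to use the Dieudonné/de Rham description of $H_s$. The Lie algebra of $H_s$ (or of its Cartier dual) equals the cokernel of $\Lie A_{\Lambda_{i-1}}\to\Lie A_{\Lambda_i}$ in the $k$-th eigenspace, and Raynaud's $\CL_{s,k}$ is identified (up to a fixed twist) with $\omega$ of that eigenspace. In turn $\omega$ is $\det\CQ_i\otimes\det\CQ_{i-1}^{-1}$ pulled back via the local model diagram, so the line bundles match by (\ref{isoDet}). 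For the sections, both sides are generically trivializations of the same line bundle. Since $\CS_{K_0}$ is normal and flat, it suffices to compare divisors at generic points of the special fiber. By Theorem \ref{tameconj} the divisor of $s_\chi$ is $D_\chi$. On the Raynaud side, the vanishing order of $\delta_{s,k}$ at a component is read from whether $H_s$ is étale, multiplicative, or local-local there, which is determined by the Kottwitz–Rapoport stratum $t_{\bar\mu'}$. One checks that this matches $\langle\bar\mu',\chi\rangle$. A subtle point is the normalization $\gamma\delta=w_p$, which must be used to fix the unit ambiguity and match the generic trivializations exactly, not just up to units.
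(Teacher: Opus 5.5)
Your overall architecture matches the paper's. You reduce to Conjecture~\ref{conj:torsors} via Proposition~\ref{prop:Conj}, build $P$ from the Raynaud data of the kernels of the universal isogenies (after cutting down by the idempotent $e_{11}$, which you omit), and get $\alpha$ from generators as in \S\ref{sss:Etale}. You then identify the local-model side with $\det\CQ_{i,k}\otimes\det\CQ_{i-1,k}^{-1}$ via $\Lie A_i\simeq\varphi^*\CQ_i$ and (\ref{isoDet}). The gap is in $\beta$, which you rightly single out as the main obstacle.

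First, $\CL_{s,k}$ cannot be identified with the $k$-eigenspace of $\omega$. For a Raynaud scheme $H$ one has $(\omega_H)_k=\CL_k/\delta_{k-1}(\CL_{k-1}^{\otimes p})$, which is a torsion sheaf supported on the special fibre, not a line bundle. For the same reason $\omega$ is not $\det\CQ_i\otimes\det\CQ_{i-1}^{-1}$; only statements about determinants of two-term complexes are true here.

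Second, your fallback of comparing divisors at generic points of the special fibre is a legitimate reduction. On the normal flat $\CS_{K_0}$, two line bundles with generic trivializations are isomorphic as pairs iff their divisors agree, so the unit normalization you worry about is irrelevant. But you cannot compute $\operatorname{ord}_\xi(\delta_{s,k})$ from the \'etale/multiplicative/local-local type of $H_s$. One only knows $\operatorname{ord}_\xi\delta_{s,k}+\operatorname{ord}_\xi\gamma_{s,k}=\operatorname{ord}_\xi(w)=e$, and the type only records which of the two vanish. Hypothesis 2) allows $e\geq 3$: for example case (I) with a tame, totally ramified $F_0$ and embedding-dependent signatures, as in Example~\ref{ex:RamGLn}, where $e=4$. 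Then the required value $e\langle\bar\mu',\chi\rangle$ is not determined by the type. Even for $e=1$, the stratum-by-stratum match is asserted rather than proved.

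The missing idea is Proposition~\ref{prop:Fitting}, which runs as follows.
\begin{itemize}
\item By \cite[Lem.~4.1]{SLiu}, the $k$-isotypic part of the co-Lie complex of $H_s$ is $[\CL_{s,k-1}^{\otimes p}\xrightarrow{\delta_{s,k-1}}\CL_{s,k}]$.
\item Over the flat base, $\delta_{s,k-1}$ is injective, so this complex resolves $(\omega_{H_s})_k$.
\item Write $\Omega_{i,k}$ for the $k$-isotypic part of $\omega_{A_i}e_{11}$. The isogeny $\phi_i\colon A_{i-1}\to A_i$ gives a second resolution $0\to\Omega_{i,k}\xrightarrow{\phi_i^*}\Omega_{i-1,k}\to(\omega_{H_s})_k\to 0$.
\item Taking determinants \cite{KnM} yields an isomorphism of pairs $(\CL_{s,k-1}^{\otimes -p}\otimes\CL_{s,k},\delta_{s,k-1})\simeq(\det\Omega_{i-1,k}\otimes\det\Omega_{i,k}^{-1},\det\phi_i^*)$.
\end{itemize}
This matches line bundles \emph{and} sections at once, with no multiplicity computation. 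Proposition~\ref{prop:LBLM}, which is your $\Lie A_i\simeq\varphi^*\CQ_i$ plus (\ref{isoDet}), then identifies the right-hand side with $\varphi^*(\CL_\chi,s_\chi)$. If you prefer to argue with divisors, it is exactly this fact that gives $\operatorname{ord}_\xi\delta_{s,k-1}=\operatorname{length}_\xi(\omega_{H_s})_k=\operatorname{ord}_\xi\det\phi_i^*$.

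Finally, the similitude factor does not carry trivial Lang data. The $c$-part of the generic cover is the torsor of $(p-1)$-st roots of $w_p$. Since $\langle\mu,c\rangle=1$, the section $s_c$ is $p$ times a unit. The paper therefore takes $\CM_c=\CO$ with $a_c=w_p=pu$ and matches it with $s_c=pv$ via multiplication by $v/u$. With trivial data, both $\alpha$ and $\beta$ fail in the $c$-direction.
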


We will give the proof in \S \ref{ss:ConjPEL}. A main tool in the proof is Oort--Tate/Raynaud theory for group schemes which we review in \S \ref{ss:OTR}.

\subsubsection{The proof of Proposition \ref{impliesIW}}\label{sss:proofImpliesIW}
 We continue with the above notations and start with some preliminaries:
Taking $(\ ,\ )$-duals gives an involution 
\[
\GL_{O_B}(\mathscr L)\to \GL_{O_B}(\mathscr L),\qquad g\mapsto g^\vee,
\]
and by definition
\[
\CG=\{g\in \GL_{O_B}(\mathscr L)\ |\ g^\vee\cdot g=c(g), \ c(g)\in \BZ_p^*\}.
\]
This gives an exact sequence
\[
0\to \CG_1\to \CG\xrightarrow{\ c\ } \BG_m\to 1
\]
of group schemes over $\BZ_p$. Here,
\begin{equation}\label{FixedPoints}
\CG_1=( \GL_{O_B}(\mathscr L))^{\tau=1}
\end{equation}
where $\tau(g)=(g^\vee)^{-1}$. This realizes $\CG_1$ as the fixed point group scheme for the $\{1, \tau\}$-group action
on $\GL_{O_B}(\mathscr L)$. 

\begin{remark}\label{rem:Edixhoven} Since $p$ is assumed odd, the $\tau$-action above is tame. This observation provides an alternative approach to  \cite[App. 3]{RZbook}. More precisely,
by \cite[Prop. A.4]{RZbook}, the group scheme $\GL_{O_B}(\mathscr L)$ is smooth. Therefore, the presentation \eqref{FixedPoints}, combined with Edixhoven's lemma \cite{Edix}, yields the smoothness of $\CG_1$. Similarly, by \cite[Prop. A.4]{RZbook}, the scheme of trivializations $\und{\rm Isom}_{O_B}(\{M_\Lambda \}, \{\Lambda \})$ is a torsor under  $\GL_{O_B}(\mathscr L)$. It follows that the fixed point  scheme $\und {\rm Triv}:=\und{\rm Isom}_{O_B}(\{M_\Lambda \}, \{\Lambda \})^{\tau=1}$  of trivializations  of a polarized multichain  of $O_B$-modules $\{M_\Lambda\}$ is  smooth and with free $\CG_1$-action, i.e., $\und {\rm Triv}$ is a ``pseudo-torsor'' for $\CG_1$. The main theorem  \cite[Thm. 3.16]{RZbook},  which gives a normal form for $\{M_\Lambda\}$,  is equivalent to the claim that $\und {\rm Triv}$ is actually a $\CG_1$-torsor. This is now easier to prove.  Indeed, we only have to  check that all  fibers of $\und {\rm Triv}$ are non-empty, and since when $p$ is invertible this is easy, we even may assume that the base is an $\ov\BF_p$-scheme. This problem has still to be handled as in  \cite[A.6]{RZbook}, but  the cases that have to be  checked are now simpler: for example, the quaternion algebras that appear in loc.~cit.  are now split and so are various symmetric/hermitian forms.
\end{remark}

We now consider the special fibers. Taking $\tau$-fixed points in the exact sequence 
\[
1\to U\to \GL_{O_B}(\mathscr L)\otimes\BF_p\to (\GL_{O_B}(\mathscr L)\otimes\BF_p)_\red\to 1
\]
remains exact, since the kernel $U$ is unipotent and $p\neq 2$.
This gives 
\[
(\CG_1\otimes\BF_p)_\red =((\GL_{O_B}(\mathscr L)\otimes\BF_p)_\red)^{\tau=1}.
\]
We have 
\begin{equation}\label{eq:GLred}
(\GL_{O_B}(\mathscr L)\otimes\BF_p)_\red=\prod_{j=1}^m\prod_{i=-r_j+1}^0{\rm Aut}_{\RM_{n_j}(k_{D_j})}(Q_{j,i})=\prod_{j=1}^m\prod_{i=-r_j+1}^0{\rm Aut}_{k_{D_j}}(W_{j,i}),
\end{equation}
and the fixed point scheme $((\GL_{O_B}(\mathscr L)\otimes\BF_p)_\red)^{\tau=1}$ 
decomposes as a product over the set of orbits of the involution $j\mapsto j^*$ on $[1,m]:=\{1,\ldots, m\}$.

A) First consider an orbit $\{j,j^*\}$, $j^*\neq j$, consisting of two elements. This contributes to the fixed point scheme the factor
\begin{equation}\label{Factor1}
\prod_{i=-r_j+1}^{0}{\rm Aut}_{\RM_{n_j }(k_{D_j})}(Q_{j,i}))={\rm Aut}_{k_{D_j}}(W_{j,i}),
\end{equation}

B) Next consider an orbit $\{j\}$, $j^*=j$ consisting of one element: This contributes to the fixed point scheme  a factor according to the following recipe
in which to simplify notation, we omit the subscript $j$:

1) Suppose $a=a_j=0$. 

1a) Case $r=2r'+1$ is odd. By using periodicity and Morita equivalence, the form $(\ ,\ )$, gives perfect pairings 
\[
W_{-r'}\times W_{-r'}\to \BF_p,
\]
and
\[
W_0\times W_{-r+1}\to \BF_p,\ \ldots,\ W_{-r'+1}\times W_{-r'-1}\to \BF_p.
\]
These induce the involution $\tau$ on \eqref{eq:GLred}. In particular, the first pairing 
induces an involution $\ov\tau$ on ${\rm Aut}_{ k_{D}}(W_{-r'})$.
The factor contributing to  the fixed point scheme is 
\begin{equation}\label{Factor1a}
{\rm Aut}_{ k_{D}}(W_{-r'})^{\ov\tau=1}\times \prod_{i=-r'+1}^0 {\rm Aut}_{ k_{D}}(W_{i}).
\end{equation}

1b)  $r=2r'$ is even. We  have perfect pairings 
\[
W_0\times W_{-r+1}\to \BF_p,\ \ldots,\ W_{-r'}\times W_{-r'+1}\to \BF_p
\]
inducing the involution. The factor contrubuting to the fixed point scheme  is 
\begin{equation}\label{Factor1b}
  \prod_{i=-r'}^0 {\rm Aut}_{k_{D}}(W_{i}).
\end{equation}

2) $a=a_j=1$. 

2a) $r=2r'+1$ is odd. We  have perfect pairings 
\[
W_{1}\times W_{1}\to \BF_p,
\]
and
\[
W_0\times W_{-r+2}\to \BF_p,\ \ldots,\ W_{-r'+1}\times W_{-r'}\to \BF_p.
\]
The first pairing induces an involution $\ov\tau$ on ${\rm Aut}_{ k_{D}}(W_{1})$.
The factor contributing to the fixed point scheme is 
\begin{equation}\label{Factor2a}
{\rm Aut}_{ k_{D}}(W_{1})^{\ov\tau=1}\times \prod_{i=-r'+1}^0 {\rm Aut}_{ k_{D}}(W_{i}).
\end{equation}

2b) $r=2r'$ is even. We  have perfect pairings
\[
W_{1}\times W_{1}\to \BF_p,\quad W_{-r'+1}\times W_{-r'+1}\to \BF_p,
\]
and
\[
W_0\times W_{-r+2}\to \BF_p,\ \ldots,\ W_{-r'+2}\times W_{-r'}\to \BF_p.
\]
The first two pairings induce  involutions $\ov\tau$ on ${\rm Aut}_{ k_{D}}(W_{1})$, resp.  ${\rm Aut}_{ k_{D}}(W_{-r'+1})$. 
The factor contributing  to the fixed point scheme is 
\begin{equation}\label{Factor2b}
{\rm Aut}_{ k_{D}}(W_{1})^{\ov\tau=1}\times {\rm Aut}_{ k_{D}}(W_{-r'+1})^{\ov\tau=1}\times \prod_{i=-r'+2}^0 {\rm Aut}_{ k_{D}}(W_{i}).
\end{equation}

We now proceed with the actual proof.
We have the exact sequence
\[
1\to (\CG_1\otimes\BF_p)_{\red}\to (\CG\otimes\BF_p)_{\red}\xrightarrow{\ c\ }\BG_m\otimes\BF_p\to 1,
\]
where $(\CG\otimes\BF_p)_{\red}$ is the quotient of $\CG\otimes\BF_p$ by its maximal normal unipotent subgroup scheme.
Under our assumption, $(\CG\otimes\BF_p)_{\red}$ is a torus and so $(\CG_1\otimes\BF_p)_{\red}$ is a diagonalizable group scheme. We now apply the description of 
$(\CG_1\otimes\BF_p)_{\red}$ as a product of factors (\ref{Factor1}) -- (\ref{Factor2b}) as above.
We conclude that for all $j$ with $j^*\neq j$, and all $i$, $W_{j,i}$ is $1$-dimensional over $k_{D_j}$ and hence $Q_{j,i}$ is a simple $\RM_{n_j}(k_{D_j})$-module. Indeed, by periodicity, it is enough to check the indices $i$ in the range $[-r_j+1, 0]$.

The same is true for all $j$ with $j^*=j$, with $i$ in the appropriate range according to our cases (1a), (1b), (2a), (2b).
(For example, for all $i$ in case (1b), etc.) We now consider
the only remaining factors for ``exceptional indices" $i$. These are of the form
\[ 
{\rm Aut}_{k_{D_j}}(W_{j,i} )^{\ov\tau=1}=\GL_{l_{j,i}}(k_{D_j})^{\eta_{j,i}=1}
\]
for $j=j^*$, with $\ov\tau=\eta_{j,i}$ a non-trivial involution of ${\rm End}_{k_{D_j}}(W_{j,i})\simeq \RM_{l_{j,i}}(k_{D_j})$. By our assumption, these are also diagonalizable group schemes.  To simplify notation, we fix a pair $j$, $i$, and omit it from the notation.

\begin{itemize}
\item If $\eta$ is an involution of the second type, 
then $\GL_{l}(k_{D})^{\eta=1}$ is the unitary group $\RU(l)$ over the fixed field $(k_{D})^{\eta=1}$ and we consider its Weil restriction of scalars to $\BF_p$; this is commutative only if $l=1$.

\item If $\eta$ is an involution of the first type, then $\GL_{l }(k_{D})^{\eta=1}$ is commutative only if $l=1$, or if $l=2$ and 
$\eta_j$ is an orthogonal involution. When $l=2$, $\GL_{l}(k_{D})^{\eta=1}=\RO_2(W, q)$ for some quadratic form $q$. The case $l=2$
will  be ruled out below.
\end{itemize}

We first consider the case that, in addition, the invariants $F_0=F^{*=1}$ of the involution $*$ on the center $F$ of $B$ form a field.
Then $m=1$   and  $j=1$ is the unique index which will be omitted from the notation. Suppose we are in case (1a). Then, from the above, we have
\[
(\CG_1\otimes\BF_p)_{\red}\simeq  (\prod_{i=-r'+1}^0 \Res_{k_D/\BF_p}\BG_m)\times \RO_2(W_{-r'}, q_{-r'}),
\]
\[
(\CG\otimes\BF_p)_{\red}\simeq   (\prod_{i=-r'+1}^0 \Res_{k_D/\BF_p}\BG_m)\times \GO^*_2(W_{-r'}, q_{-r'}),
\]
where $\GO^*_2(W_{-r'},q_{-r'})$ signifies the subgroup of elements in $\GO^*_2(W_{-r'}, q_{-r'})$ with similitude in $\BF_p^*$. However, the groups
$\GO_2(W_{-r'}, q_{-r'})$ and $\GO^*_2(W_{-r'}, q_{-r'})$ are not connected. A similar argument works in the other cases (2a), (2b) for which we have such indexes to consider.

This rules out $l_{j,i}=2$ when $F_0$ is a field.  If $F_0$ is not a field, and there are more than one $j$ with $j^*=j$ with exceptional indices $i$ such that $l_{j,i}=2$, there are more orthogonal factors of the same form and 
$(\CG\otimes\BF_p)_{\red}$ and hence $\CG\otimes\BF_p$ is still not connected.  This contradicts our assumption. Hence $l_{j,i}=2$ is ruled out again. 
It follows that $l_{j,i}=1$ in all cases. This concludes the proof of Proposition \ref{impliesIW}. \qed

\subsubsection{More on condition (S)}\label{sss:Sprime} Suppose now that $\CG\otimes\BF_p$ is connected and $\CG^\circ$ is Iwahori. Then by Proposition \ref{impliesIW}, we know that 
${\rm Aut}_{O_B}(\mathscr L)$ is Iwahori and it makes sense to consider assumption (S). In the context of the notation and set-up in the proof of Proposition \ref{impliesIW} we can see that the following assumption implies (S).
\smallskip

\begin{itemize}
\item[(S')] \emph{There is at most one index $j$ with $j^*=j$. In addition, this $j$ is not in case (2b), so it has only at most one exceptional index $i$.
In addition, when this exceptional index $i$ occurs, for this $j$ and $i$ we have $k_{D_j}^{\eta_{j,i}=1}=\BF_p$.}
\end{itemize}

To explain this, let us assume (S'). In the proof of Proposition \ref{impliesIW} we have determined $(\CG_1\otimes\BF_p)_\red$. Using this, we can now determine
$(\CG\otimes\BF_p)_\red=T_{\CG^\circ}\otimes\BF_p$. Under our assumptions, this is as follows:

 A) If there is no $j=j^*$, we have 
 \[
T_{\CG^\circ}\otimes\BF_p=(\CG\otimes\BF_p)_{\red}=(\prod_{\{j,j^*\}, j^*\neq j} \Res_{k_{D_j}/\BF_p}\BG^{r_j}_m)\times \BG_m.
\]

B) If there exists a (unique) $j_0=j^*_0$, we have:
\begin{itemize}
\item In case (1a) or (2a) with $k_{D_0}=\BF_p$,
\[
T_{\CG^\circ}\otimes\BF_p=(\CG\otimes\BF_p)_{\red} = (\prod_{\{j,j^*\}, j^*\neq j} \Res_{k_{D_j}/\BF_p}\BG^{r_j}_m)\times  \BG_m^{r'_0}\times \BG_m.
\]

\item In case (1a) or (2a) with $[k_{D_0}:\BF_p]=2$,
\[
T_{\CG^\circ}\otimes\BF_p=(\CG\otimes\BF_p)_{\red} = (\prod_{\{j,j^*\}, j^*\neq j} \Res_{k_{D_j}/\BF_p}\BG^{r_j}_m)\times  \Res_{k_{D_0}/\BF_p}\BG^{r_0'}_m\times \Res_{k_{D_0}/\BF_p}\BG_m.
\]

\item In case (1b),
\[
T_{\CG^\circ}\otimes\BF_p=(\CG\otimes\BF_p)_{\red} = (\prod_{\{j,j^*\}, j^*\neq j} \Res_{k_{D_j}/\BF_p}\BG^{r_j}_m)\times  (\Res_{k_{D_0}/\BF_p}\BG^{r_0'}_m)\times\BG_m.
\]
\end{itemize}

(In all of this, $r'_0$ is the number of non-exceptional factors.) 

In these presentations, the similitude 
\[
c: (\CG\otimes\BF_p)_{\red}\to \BG_m
\]
is the projection to the last factor $\BG_m$ when there is no $j=j^*$ or in case (1b) or in cases (1a), (2a) when $k_{D_0}=\BF_p$ (in these cases, the alternative \eqref{star2} holds).  In cases (1a) or (2a) with $[k_{D_0}:\BF_p]=2$, the similitude is the projection to the last factor followed by the norm (in these cases, the alternative \eqref{star1} holds). 
In all cases, (S) is satisfied. 

\begin{remark}
Condition (S'),  and also (S), can fail in cases (1a) and (2a), when the fields $k_{D_0}$ are ``large". 
\end{remark}

\subsection{Oort--Tate/Raynaud theory}\label{ss:OTR}
Let $\BF_q$, $q=p^d$, be a finite field and set $O=W(\BF_q)$. In this paragraph, we review some constructions related to Raynaud's description of certain rank $1$ $\BF_q$-vector space schemes (``Raynaud group schemes") over $O$-schemes. 

\subsubsection{Recollections of Raynaud theory} Let $S$ be an $O$-scheme and let $\pi: G\to S$ be a commutative finite flat finite presentation group scheme  which is killed by $p$ and is, in addition, a $\BF_q$-vector space scheme in the sense of \cite[Def. 1.2.1]{Ray}. (For $d=1$ this is no extra structure.) Consider the Teichmuller character
\[
\chi_0: \BF_q^*\to O^*,\quad \chi_0(a):=[a]
\]
 and its powers $\chi_i=\chi_0^{p^i}$, for $i\in \BZ/d\BZ$. Then $\chi_i$ are fundamental characters of $\BF_q^*$, see \cite[Def. 1.1]{Ray}. For simplicity, we write $\CO_G$ instead of $\pi_*\CO_G$ for the   ``Hopf algebra" of $G\to S$; this is  a coherent locally free  sheaf of $\CO_S$-algebras over $S$.
 Since $G$ is an $\BF_q$-vector space scheme, for each $a\in \BF_q$ there is an endomorphism $[a]^*_G: \CO_G\to \CO_G$ which preserves the augmentation ideal $\CI_G:=\ker(\epsilon^*: \CO_G\to \CO_S)$, with $\epsilon: S\to G$ denoting the identity section.
 We now decompose  into isotypical components: 
 \[
 \CI_G=\bigoplus_{\chi \in (\BF^*_q)^*}\CI_\chi.
 \]
Here, for each open $U\subset S$, $\Gamma(U,\CI_\chi)$ is the set of sections $s$ of $\CI_\chi$
 with $[a]^*_G\cdot s=\chi(a)s$, for all $a\in \BF_q^*$. 
 
 We say that $G$ is a \emph{Raynaud $\BF_q$-vector space scheme} 
 if all $\CI_\chi$ are invertible $\CO_S$-modules, i.e. they have rank $1$. Then, $G$ has rank $p^d$ over $S$.
(Note that if $d=1$ and  $G$ has rank $p$ then all $\CI_\chi$ are invertible. Then $G$ is an \emph{Oort--Tate group scheme} over $S$.)

Assume $G$ is a Raynaud $\BF_q$-vector space scheme over $S$. We have $\CO_S$-module homomorphisms
 \[
 d_i: \CI_{\chi_i}^{\otimes p}\to \CI_{\chi_{i+1}},\quad c_i: \CI_{\chi_{i+1}}\to \CI_{\chi_{i}}^{\otimes p}
 \]
 given by multiplication, resp. comultiplication in $\CO_G$. By \cite[1.4]{Ray}, $d_i\cdot c_i=w\cdot \id_{\CI_{\chi_{i+1}}}$,
 $c_i\cdot d_i=w\cdot \id_{\CI_{\chi_i}^{\otimes p}}$. Here, $w=w_q$ is a uniformizer of $O$ which is independent of $G$ and $i$.
 
\begin{theorem}\label{thm:ray} (\cite[Thm. 1.4.1]{Ray}). The construction 
 \[
 G\mapsto ( \CI_{\chi_i}, c_i, d_i)_{i\in \BZ/d\BZ}
 \]
  gives an isomorphism between  the stack of Raynaud $\BF_q$-vector space schemes over $O$
 and the stack over $O$ whose $S$-points are given by systems
 \[
 (\CL_i, \ga_i, \de_i)_{i\in \BZ/d\BZ},
 \]
 where the $\CL_i$,  are invertible $\CO_S$-modules, and where $\ga_i: \CL_{i+1}\to\CL_i^{\otimes p}$ and  $\de_i: \CL_i^{\otimes p}\to \CL_{i+1}$,
 are $\CO_S$-module homomorphisms such that $\de_i\cdot \ga_i=w\cdot \id_{\CL_{i+1}}$ for all $i\in \BZ/d\BZ$. 
 \end{theorem}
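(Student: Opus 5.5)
The plan is to prove that the functor $G\mapsto(\CI_{\chi_i},c_i,d_i)_{i\in\BZ/d\BZ}$ is an equivalence in three steps: show the data determine $\CO_G$ as a Hopf algebra, construct an inverse functor, and pin down the constant $w$. Throughout, work locally on $S$, where all the invertible sheaves can be trivialized.

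\emph{Step 1: the data determine $\CO_G$.} Every character of $\BF_q^*$ is uniquely of the form $\chi=\prod_i\chi_i^{a_i}$ with $0\le a_i\le p-1$, not all $a_i=0$. The trivial character appears in $\CI_G$ with all $a_i=p-1$. Multiplication in $\CO_G$ respects the $\BF_q^*$-grading. Using the rank-one hypothesis, I will show by induction on $\sum a_i$ that the multiplication map
\[
\bigotimes_i\CI_{\chi_i}^{\otimes a_i}\to\CI_\chi
\]
is an isomorphism. The idea is to examine the map over geometric points of $S$. There $G$ is either étale, or infinitesimal, or a mixture of the two, and one checks via the Frobenius and Verschiebung description that no intermediate product vanishes identically. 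Consequently $\CO_G$ is $\CO_S\oplus\bigoplus_\chi\bigotimes_i\CL_i^{\otimes a_i}$ with $\CL_i=\CI_{\chi_i}$. Its multiplication is determined by the maps $d_i$, which handle carries when an exponent reaches $p$. Its comultiplication is determined, by multiplicativity, from the maps $c_i$ together with the primitive parts on the $\CL_i$. It follows that the functor is faithful. It is also full, because a homomorphism of such $G$'s respects the gradings and is therefore determined by its restriction to the $\CL_i$.

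\emph{Step 2: inverse construction.} Given $(\CL_i,\ga_i,\de_i)$, I define an algebra $\CO_G$ by the formula from Step 1. Its multiplication uses $\de_i$ for carries, together with a fixed rule for the top degree. The comultiplication on each $\CL_i$ is a universal expression in the $\ga$'s and $\de$'s with coefficients in $O$.

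To verify the Hopf axioms, I reduce to the universal case. The stack of data is
\[
\bigl[\BG_m^d\bs\Spec O[x_i,y_i]/(x_iy_i-w)\bigr].
\]
Its coordinate ring is flat over $O$, so it suffices to check every identity after inverting $p$. Over $O[1/p]$, and after a finite étale base change, a Raynaud scheme becomes the constant scheme $\BF_q$. Its Hopf algebra is the ring of functions on $\BF_q$, decomposed via the characters of $\BF_q^*$. In that setting $\CL_i$ is spanned by $\sum_a\chi_i(a)e_a$, and every axiom reduces to an explicit identity between such sums.

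\emph{Step 3 (main obstacle): the constant $w$.} The composite $d_i\circ c_i$ is multiplication by a scalar that can be computed in the generic, étale situation. There it equals a ratio of Gauss sums $g(\chi_i)^p/g(\chi_{i+1})$, up to a factor $(q-1)$ or a unit. I must show this scalar lies in $O$, is independent of $i$ and of $G$, and is a uniformizer. Independence of $i$ follows because $g(\chi^p)=g(\chi)$ when the additive character is defined over $\BF_p$. That the scalar is a uniformizer follows from Stickelberger's congruence, which gives the $p$-adic valuation of $g(\chi_0)$ as $1/(p-1)$. This analysis pins down $w$, proves the relations $\de_i\ga_i=w\,\id$ and $\ga_i\de_i=w\,\id$, and makes the integrality of the comultiplication formulas in Step 2 visible.

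Together with Step 1, these three steps give an equivalence of stacks.
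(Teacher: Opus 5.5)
There is a genuine gap. It lies in passing from the universal flat family to an arbitrary Raynaud scheme $G$ over an arbitrary $O$-scheme $S$.

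Your fiberwise reduction is legitimate for the claim of Step 1, because whether a map of invertible sheaves is an isomorphism can be tested on fibres. The sketch ``via Frobenius and Verschiebung'' would still have to be written out. But the other two facts you need are \emph{identities of sections} over $S$:
\begin{itemize}
\item that $d_ic_i=w\cdot\id$;
\item that $m^*$ is given by your universal formula in $(\CL_i,\ga_i,\de_i)$.
\end{itemize}
Geometric points cannot detect such identities over a non-reduced base. Your flatness argument does not reach them either. Checking after inverting $p$ on $O[x_i,y_i]/(x_iy_i-w)$ is valid for the group scheme you \emph{construct} from the universal data. A given $G$ over an $\BF_p$-algebra is not yet known to be pulled back from that flat family; that is the content of the theorem.

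In Step 3 you list ``independent of $G$'' among the things to prove, but only argue independence of $i$ and the valuation. Without $d_ic_i=w$, the functor $G\mapsto(\CI_{\chi_i},c_i,d_i)$ does not even land in the target stack. Without the second identity you do not get fullness, i.e.\ that an isomorphism of data respects $m^*$; Step 1 only gives you an algebra isomorphism. Your argument ``determined by its restriction to the $\CL_i$'' proves only faithfulness. Also, $m^*$ is not determined ``by multiplicativity from the $c_i$ and the primitive parts''. Indeed $m^*(u_i)$ has a component in $\CI_{\chi'}\otimes\CI_{\chi''}$ for every pair with $\chi'\chi''=\chi_i$, and $c_i$ does not a priori control these.

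The missing idea is the $\BF_q$-linearity of the group law, which holds for every $G$ over every base: $[a+b]=[a]+[b]$, i.e.\ $[a+b]^*=\mathrm{mult}\circ([a]^*\otimes[b]^*)\circ m^*$, and likewise with $p$ summands.
\begin{itemize}
\item Apply this to a local generator of $\CI_\chi$. Use that the functions $(a,b)\mapsto\chi'(a)\chi''(b)$ on $(\BF_q^*)^2$ are linearly independent over any $O$-algebra, since $q-1\in O^\times$. You get that the $\CI_{\chi'}\otimes\CI_{\chi''}$-component of $m^*$, followed by multiplication, is multiplication by a universal Jacobi sum in $O$, independent of $G$ and $S$.
\item The $p$-fold version gives $d_ic_i=w\cdot\id$ for all $G$.
\item Stickelberger in its carry-counting form says the valuation equals the number of base-$p$ carries. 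You need this for every pair $(\chi',\chi'')$, which also settles the integrality claimed in Step 2. It makes multiplication on carry-free pairs an isomorphism, proving Step 1 over any base with no fibre analysis, and it gives $w\in pO^\times$.
\end{itemize}

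This still does not finish the comultiplication. Every pair with $\chi'\chi''=\chi_i$ involves at least one carry. So these relations fix the cross-terms of $m^*(u_i)$ only after multiplication by products of $\de$'s, which may be zero divisors: over $\BF_p$, $\mu_p$ and $\alpha_p$ have the same algebra and differ only in $m^*$.

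To express the cross-terms through the $\ga_i$ you need one more input, naturally Cartier duality:
\begin{itemize}
\item $G^\vee$ is again a Raynaud scheme, with $\CI_\chi(G^\vee)\cong\CI_\chi(G)^\vee$.
\item Its multiplication is the transpose of $m^*_G$, and its $\de_i$ are the transposes of the $c_i$ of $G$.
\end{itemize}
Step 1 for $G^\vee$, together with the duality pairing on monomials (computed by the same universal-constant argument), then determines $m^*_G$ in terms of the $\ga_i$.

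As it stands, your proposal gives faithfulness and the inverse construction on the universal family. It does not show that every $G$ is recovered from its own data.
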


 Note that we can also describe the stack of systems as above as the quotient stack
 \[
[ \BG_m^d\bs \Spec(O[x_i, y_i]_{i\in \BZ/d\BZ}/(x_iy_i-w)_{i\in \BZ/d\BZ})] 
 \]
 with $\und\lambda=(\lambda_i)_{i\in \BZ/d\BZ}$ in $\BG_m^d$ acting by 
 \[
\und\lambda\cdot x_i=\lambda_i^p\lambda_{i+1}^{-1} x_i,\quad \und\lambda\cdot y_i=\lambda_i^{-p}\lambda_{i+1} y_i.
 \]
 In the course of the proof of the above theorem, Raynaud gives the Hopf algebra of the group scheme which corresponds to $ (\CL_i, \ga_i, \de_i)_{i\in \BZ/d\BZ}$ as a quotient of the symmetric algebra ${\rm Symm}_{\CO_S}(\oplus_{i\in \BZ/d\BZ} \CL_i)$ by the ideal generated by
 \[
 (\de_i-\id)\CL^{\otimes p}_i,\quad  i\in \BZ/d\BZ.
 \]

\subsubsection{Generators of Raynaud group schemes}\label{sss:Generators} Suppose $S$ is an $O$-scheme and   $G$ is a Raynaud $\BF_q$-vector space scheme over $S$. 

By \cite[\S 5]{P95}, an \emph{$\BF_q$-generator} of $G$ over $S$   is an $S$-valued point $P\in G(S)$ with the property that the multiples $[a]_G(P)\in G(S)$, $a\in \BF_q$, form a full set of sections of $G\to S$ in the sense of Katz-Mazur, \cite{KatzMazur}.

By \cite[Prop. 5.1.5]{P95}, the functor of $\BF_q$-generators of $G$ is represented by a closed subscheme $G^*\hookrightarrow G$ which is finite locally free of rank $p^d-1$ over $S$. The subscheme $G^*$ has a simple description in terms of the system  $(\CL_i, \ga_i, \de_i)_{i\in \BZ/d\BZ}$: Note that by tensoring we have
\[
\otimes_{i\in \BZ/d\BZ} \de_i:  \bigotimes_{i\in \BZ/d\BZ}  \CL_i^{\otimes p}\to \bigotimes_{i\in \BZ/d\BZ}  \CL_{i+1}.
\]
This corresponds to a  $\CO_S$-homomorphism
\[
\delta_{\otimes} =\otimes_{i\in \BZ/d\BZ} \de_i: ( \bigotimes_{i\in \BZ/d\BZ}  \CL_i)^{\otimes (p-1)}\to   \CO_S.
\]
By loc. cit. $G^*\subset G$ is cut out by the locally principal ideal in $\CO_G$ which is generated by 
\[
(\delta_\otimes-\id )((\bigotimes_{i\in \BZ/d\BZ} \CL_i)^{\otimes(p-1)}).
\]

Suppose $S=\Spec(R)$ and that all $\CL_i$ are free. By picking a basis $u_i$ of $\CL_i$ we obtain $\ga_i$, $\de_i\in R$, with $\ga_i\cdot \de_i=w$. Then
\[
\CO_G=R[u_i]_{i\in \BZ/d\BZ}/((u_{i}^p-\de_{i}u_{i+1})_{i\in \BZ/d\BZ}), 
\]
\[
 \CO_{G^*}=R[u_i]_{i\in \BZ/d\BZ}/((u_{i}^p-\de_{i}u_{i+1})_{i\in \BZ/d\BZ}, (u_1\cdots u_d)^{p-1}-\de_1\cdots \de_d).
\]

\begin{remark}\label{rem:Generators}
1) The above notion of an $\BF_q$-generator of the Raynaud group scheme $G$ over $S$ agrees with the notion of a primitive element of $G$ over $S$ as defined by Kottwitz-Wake, see  \cite[\S 1.5]{KoWa}. Their notion of primitive element applies to all finite locally free commutative group schemes.

2) This notion of an $\BF_q$-generator differs from the corresponding notion of a ``Raynaud generator" in \cite[Def. 3.4]{SLiu}. Liu's generators are not given by points of the group scheme $G$ and their definition depends on a choice.
Consider $R=\BZ_{p^2}[\gamma_0,\gamma_1,\delta_0,\delta_1]/(\gamma_0\delta_0-w_p, \gamma_1\delta_1-w_p)$ and the ``universal" Raynaud $\BF_{p^2}$-vector space scheme 
\[
G=\Spec(R[u_0, u_1]/(u_0^p-\delta_0u_1, u_1^p-\delta_1u_0))
\] 
over $S=\Spec(R)$. Then the $S$-scheme of Raynaud generators of $G/S$, as defined in \cite[Def. 3.4]{SLiu}, is 
\[
\Spec(R[x ]/(x ^{p^2-1}-\delta_0\delta_1^p)).
\]
This is not R1 over the generic point of the divisor $\delta_1=\gamma_0=0$ and is not a subscheme of $G$.
\end{remark}

\subsubsection{The \'etale case}\label{sss:Etale}
 Suppose that  the Raynaud $\BF_q$-vector space scheme $G\to S$ is \'etale. This is equivalent to $\de_i:\CL_i^{\otimes p}\to \CL_{i+1}$ being isomorphisms
  for all $i\in \BZ/d\BZ$. In this case, $G^*\to S$ is also \'etale. In fact, then the closed subscheme $G^*$ is also open in $G$ and is given 
  as the complement of the zero section. It follows that $G^*\to S$ is a $\BF_q^*$-torsor with $a\in \BF_q^*$ acting by $[a]_G$. 
  
  Set $T=\Res_{O/\BZ_p}\BG_m$ so that $T(\BF_p)=\BF_q^*$ and we have an exact sequence of group schemes over $\Spec (\BZ_p)$,
  \[
  1\to \und \BF_q^* \to T\xrightarrow{L} T\to 1.
  \]
 In this case, we can apply the construction of Proposition \ref{prop:torsors} to the $\BF_q^*$-torsor $G^*\to S$. Recall that $S$ is an $O$-scheme. We choose the isomorphism $T_O=\BG_m^d$ so that the character $\eta_i: T_O\to \BG_m$ given by the $i$-th projection restricts on the subgroup 
 $\BF_q^*=T(\BF_p)\subset T(O)$ to $\chi_i: \BF_q^* \to O^*$ above. Then the Frobenius generator of the Galois group $\Gal(\BF_q/\BF_p)$ acts on $X^*(T)=\BZ^d$ by sending $\eta_i$ to $\eta_{i-1}$ and $L^*(\eta_i)=p\eta_{i-1}-\eta_{i}$.   The $T$-torsor $Q$ over $S$ is given by $(\CL_i)_i$.
 The push out  $T$-torsor $L_*(Q)$ over $S$ is given by  $L_*(Q)=(\CL_{i-1}^{\otimes p}\otimes\CL_{i}^{-1})_i$ and the trivialization of $L_*(Q)$ is given by $(\de_{i-1}: \CO_S\xrightarrow{\sim} \CL_{i-1}^{\otimes -p}\otimes\CL_{i})_i$. 
 
 \subsubsection{The corresponding $\Res_{O/\BZ_p}\BG_m$-torsor}\label{sss:RaynaudTorsor}
 
 Suppose now that $G\to S$ is a (not necessarily \'etale) Raynaud $\BF_q$-vector space scheme over the $O$-scheme $S$. We still have a $T$-torsor 
 over $S$ given by $(\CL_i)_i$, with the notation of the previous paragraph.  The push out  $T$-torsor $L_*(Q)$ over $S$ is given by  $(\CL_{i-1}^{\otimes p}\otimes\CL_{i}^{-1})_i$; the duals of these are equipped with sections $\de_{i-1}: \CO_S\to \CL_{i-1}^{\otimes -p}\otimes\CL_{i}$.  The sections give a trivialization of the $T$-torsor $L_*(Q)$ over the generic fiber $S[1/p]$,   where the group scheme is \'etale.

 \subsection{The conjectures   in Iwahori  PEL cases}\label{ss:ConjPEL}
 
 We return to the assumptions and notations of \S \ref{ss:PELdata}. We assume that (IW) is satisfied so we are in the Iwahori case. Later we will also assume (S) and eventually prove Theorem \ref{thm:PEL}. Our goal is the construction of line bundles and trivializations as in (\ref{data1}), under these assumptions. 
 
 \subsubsection{Some preliminaries}\label{sss:prelim} Let us denote by $F=F_1\times\cdots\times F_m$ the center of the simple algebra $B$ and by $F_0$ the part of $F$ fixed by the involution $*$. We will assume that $F_0$ is a field; the general case can be reduced to this case. We can then break up into considering various cases as in \cite[p. 135]{RZbook}. 
 \medskip
 
 (I) $F=F_0\times F_0$ and $*$ induces on $F$ the transposition. Then there is a central division algebra $D$ over $F_0$, with
 \[
 B=B_1\times B_2=\RM_{n\times n}(D)\times \RM_{n\times n}(D^{\rm opp}).
 \]
As in loc. cit., we can write $V=W\oplus \widetilde W\simeq W\oplus W^\vee$, with $W^\vee={\rm Hom}_{\BQ_p}(W,\BQ_p)$, and we have
 \[
G\simeq {\rm Aut}_{\RM_{n\times n}(D)}(W)\times \BG_m.
\]
 We can further write $W=D^n\otimes_D U$ for some $D$-module $U$; then by Morita equivalence
 \[
 G\simeq \GL_{D}(U)\times \BG_m.
 \]
 In this case, we see as in loc. cit. that the self-dual lattice multichain $\mathscr L$ is determined by a periodic $O_D$-lattice chain 
 $\{\Gamma_i\}_{i\in \BZ}$ in $U$, i.e., $\Lambda_i=O_D^n\otimes_{O_D}\Gamma_i$. Then the multichain $\mathscr L$ is given by $\Lambda_i\oplus \Lambda^\vee_{i'}$, with $\Lambda^\vee={\rm Hom}_{\BZ_p}(\Lambda, \BZ_p)$. 
 
 We have $\CG=\CG^\circ$. Condition (IW) is equivalent to $\{\Gamma_i\}_{i\in \BZ}$ being a complete chain and therefore to $\CG$ being an Iwahori group scheme of $G$. Assuming (IW) we can see that (\ref{star2}) and so (S) holds:
 Indeed, $c$ is given by projection to the $\BG_m$ factor above and we have
 \[
 X^*(T_\CG)=(\bigoplus_{i=1}^{\dim_D(U)} \BZ[\Gal(k_D/\BF_p)])\oplus \BZ\cdot c, 
 \]
where the summands $\BZ[\Gal(k_D/\BF_p)]$ are given by the characters of the automorphisms of the quotients $\Gamma_i/\Gamma_{i-1}$, $i=1,\ldots, \dim_D(U)$. Note that by (IW), $\Gamma_i/\Gamma_{i-1}\simeq k_D$, for all $i$.
\medskip

(II-IV)  In the rest of the cases, (II), (III), (IV) of \cite[p. 135]{RZbook}, the center $F$ is a field and  $B=B_1$ is central simple over $F$.
 We also write $D=D_1$, $d=[k_D:\BF_p]$, and, in general, we omit the subscript $j=1$. Then $\mathscr L=\{\Lambda_i\}_{i\in \BZ}$ is a periodic self-dual chain of $\RM_{n\times n}(O_D)$-lattices. By Morita, we can write $V=D^n\otimes_D W$ and $\Lambda_i=O_D^n\otimes_{O_D}\Gamma_i$, with $\mathscr G:=\{\Gamma_i\}_{i\in \BZ}$ a periodic chain of $O_D$-lattices in $W$.

If (\ref{star1}) holds, i.e.
  \[
X^*(T_{\CG^\circ})\simeq  \bigoplus_{i\in J}\BZ[\Gal(k_{D}/\BF_p)], 
\]
 then a basis of $X^*(T_{\CG^\circ})$ is given by $\eta_{i, a}$ with $i\in J\subset \{1,\ldots, r\}$ and $a\in \BZ/d\BZ$.
 In the other case (\ref{star2}), we have a basis given by $\eta_{i, a}$ with $i\in J$ and $a\in \BZ/d\BZ$, and by $c$, the similitude character.
 \medskip

 \subsubsection{The proof of Theorem \ref{thm:PEL}}\label{sss:pf712}
 \begin{proof}  We assume   that  the special fiber of  $\CG$ over $\BZ_p$ is connected and that $\CG^\circ$ is Iwahori for $G^\circ$ so that $\CG(\BZ_p)=\CG^\circ(\BZ_p)$ is an Iwahori subgroup of $G^\circ(\BQ_p)$. We also assume (S) and that the generic fiber $G^\circ=\CG^\circ\otimes_{\BZ_p}\BQ_p$ splits over a tamely ramified extension of $\BQ_p$ so we can apply Theorem \ref{tameconj}. This tameness   is also a blanket assumption in \cite{PZ}. 
 
 We will continue with the above notations. We first assume that  the invariants $F_0=F^{*=1}$ of the involution of the center of $B$ is a field, see \S\ref{sss:prelim}.

 In all cases, both (I) and (II-IV), we have $\RM_{n\times n}(O_D)$-lattice chains $\{\Lambda_i\}_{i\in \BZ}$ and corresponding 
 $O_D$-lattice chains $\{\Gamma_i\}_{i\in \BZ}$ such that $\Lambda_i=O_D^n\otimes_{O_D}\Gamma_i$.

 Recall $O=W(k_D)$. For an $S$-valued point of $\CA_{K^p,O}$, set $A_{i}=A_{\Lambda_{i}}$ for the corresponding abelian scheme.  We then have isogenies of abelian schemes
 \[
 0\to H(i)\to A_{i-1}\xrightarrow{\phi_i} A_i\to 0.
 \]
 Here $H(i)$ is a finite flat group scheme with $\RM_{n\times n}(O_D)$-action which is killed by $\Pi\in O_D$. Therefore, 
 it has $\RM_{n\times n}(k_D)$-action and it decomposes
 \[
 H(i)   =e_{11}H(i)\times\cdots \times e_{nn}H(i)=G(i)^n.
 \]
Here, $e_{jj}$ is the standard idempotent of the matrices and  $G(i)$ has $k_D$-action. Similarly we can set $B_i=e_{11}A_i[p^\infty]$ which is a $p$-divisible group with $O_D$-action. So, for each $S$-valued point of $\CA_{K^p,O}$, 
 there is a $\mathscr G$-system of $p$-divisible groups $B_i=B_{\Gamma_i}$,
  with $O_D$-action and $O_D$-linear isogenies
  \begin{equation}\label{isogp}
  0\to G(i)\to  B_{i-1}\xrightarrow{\psi_i} B_i\to 0 .
  \end{equation}
Since ${\rm Aut}_{O_B}(\sL)$ is Iwahori, the $k_D$-vector spaces $W_i$ are $1$-dimensional, and so the kernel $G(i)$  of $\psi_i$ is a $k_D$-vector space scheme.
 In fact,  when $S$ factors through the flat closure $\CA^{\rm flat}_{K^p,O}$ of $\CA_{K^p,O}$, then $G(i)$ is a Raynaud $k_D$-vector space scheme. Indeed, it is enough to verify the condition about the rank of the isotypic components  for the universal $G(i)^{\rm univ}$ over the flat $O$-scheme $\CA^{\rm flat}_{K^p,O}$. This can be checked on the generic fiber $\CA^{\rm flat}_{K^p,O}[1/p]=\CA_{K^p,O}[1/p]$ where it is automatic, see \cite[Prop. 1.2.2]{Ray}.
We let 
\[
(\CL(i)_a, \gamma_a(i), \delta_a(i))_{a\in \BZ/d\BZ}
\]
 be the system which corresponds by Theorem \ref{thm:ray} to the universal  
Raynaud $k_D$-vector space scheme  $G^{\rm univ}(i)$ over  $\CA^{\rm flat}_{K^p, O}$.
We now take  
\[
 \CM_{i,a}:=\CL(i)_a. 
\]
We obtain the $T'_{\CG}$-torsor given by $\CM=\oplus_{i, a}\CM_{i, a}$. Here we recall the notation $T'_{\CG}$ from
 (\ref{dsTorus}). Then $\CM_{i, a}=\CM_{\eta_{i, a}}$ (see the last line of \S \ref{sss:prelim} for the definition of $\eta_{i, a}$).
Then
 \[
  \CM^{-1}_{L^*(\eta_{i,a})} =\CL(i)_{a-1}^{\otimes -p}\otimes \CL(i)_{a}.
 \]
 There is a $T'_{\CG}(\BF_p)$-torsor  
 \begin{equation}\label{pi1A}
\pi':  \CA'_{K^p, 1, O}[1/p]\to \CA_{K^p,O}[1/p]
 \end{equation}
 which classifies choices of Raynaud generators for all $G(i)^{\rm univ}$, $i\in J$, over the generic fiber. 
 
By \S \ref{sss:Etale}, we see that the construction of Proposition \ref{prop:torsors} applied to this torsor gives the line bundles 
$\CM_{i,a}[1/p]$ with the trivializations
 \[
 \CO_{\CA_{K^p, O}}[1/p]\xrightarrow{\sim}  \CM^{-1}_{L^*(\eta_{i, a})}[1/p]
  \]
 which are given by  
 \[
 \delta(i)_{a-1}:  \CL(i)_{a-1}^{\otimes p}\to  \CL(i)_{a} ,
 \]
cf.  \S\ref{sss:RaynaudTorsor}. We will now produce the second isomorphism as in (\ref{data1}).

For an $S$-valued point of $\CA^{\rm flat}_{K^p, O}$, given by the abelian schemes $A_i/S$ with additional structures, we consider 
\[
\omega_{A_i/S}=e^*\Omega^1_{A_i/S}
\]
the corresponding Hodge bundle and 
\[
\Lie(A_i/S)=\omega_{A_i/S}^\vee
\]
its dual. These are modules (right and left, respectively) over 
\[
O_\RB\otimes O=O_B\otimes_{\BZ_p}O=\RM_{n\times n}(O_D)\otimes_{\BZ_p}O
\]
Hence, $\omega_{A_i/S}e_{11}$ is a right $O_D\otimes_{\BZ_p}O$-module.
Using $O\subset O_D$ and $O\otimes_{\BZ_p}O=\oplus_{a\in \BZ/d\BZ} O$ we decompose
\[
\omega_{A_i/S}e_{11}=\bigoplus_{a\in \BZ/d\BZ} \Omega_{i,a/S} 
\]
into isotypic components $\Omega_{i,a/S}:= (\omega_{A_i/S}e_{11})_a$. These are locally free 
over $S$ of the same rank. Denote by $\Omega_{i,a} $ the corresponding locally free coherent sheaves 
over $\CA^{\rm flat}_{K^p, O}$ and let $\Omega^\vee_{i,a} $ be their duals. We have
\[
\Omega^\vee_{i,a/S} =(e_{11}\Lie(A_i/S))_a.
\]

Pulling back by the isogeny $\phi_i: A_{i-1}\to A_i$ gives  
$
\phi^*_i: \omega_{A_{i}/S}\to \omega_{A_{i-1}/S}.
$
This respects the $O_\RB\otimes O$-module structure. Hence, the universal isogeny gives
\[
\phi^*_i: \Omega_{i,a} \to \Omega_{i-1,a}.
\]
 
 \begin{proposition}\label{prop:Fitting}
For each  $i$, $a$, there is an  isomorphism  
 \begin{equation*}\label{Fitt}
 \CM^{-1}_{L^*(\eta_{i,a})} =\CL(i)_{a-1}^{\otimes -p}\otimes \CL(i)_{a}\xrightarrow{\sim} \det(\Omega_{i-1,a})\otimes \det(\Omega_{i,a} )^{\otimes -1} 
  \end{equation*}
 of invertible sheaves over $\CA^{\rm flat}_{K^p, O}$, which takes the section $\delta(i)_{a-1}$ to the section given by $\det(\phi^*_i)$. 
\end{proposition}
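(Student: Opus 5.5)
We propose to compare two descriptions of the isogeny $\phi_i\colon A_{i-1}\to A_i$ with kernel $H(i)=G(i)^n$: one through the co-Lie complex of the finite flat group scheme $G(i)$, and one through Raynaud's explicit Hopf algebra. Throughout we work over the flat closure $\CA^{\rm flat}_{K^p,O}$, which is flat over $O$ and normal enough that an isomorphism of invertible sheaves respecting sections can be checked on the generic fiber together with codimension-one points.

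\emph{Step 1 (cotangent sequence).} The exact sequence $0\to H(i)\to A_{i-1}\to A_i\to 0$ gives the standard exact sequence
\[
0\to \omega_{A_i/S}\xrightarrow{\phi_i^*}\omega_{A_{i-1}/S}\to \omega_{H(i)/S}\to 0,
\]
where the left map is injective because $\phi_i$ is an isogeny and $S$ is flat (first check on $S[1/p]$). Taking $e_{11}$ and the $a$-isotypic components for the $O\otimes_{\BZ_p}O$-action, we get a two-term complex $\Omega_{i,a}\to\Omega_{i-1,a}$ whose cokernel is $(\omega_{G(i)/S})_a$. By the Knudsen--Mumford determinant (as in \eqref{isoDet}), $\det(\Omega_{i-1,a})\otimes\det(\Omega_{i,a})^{-1}$ is canonically identified with $\det$ of this cokernel, viewed as a perfect complex. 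Under this identification $\det(\phi_i^*)$ is the canonical section, i.e. the Fitting ideal section $\CO\to\det$.

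\emph{Step 2 (Raynaud side).} From Raynaud's presentation $\CO_{G}={\rm Symm}(\oplus_a\CL_a)/((\de_a-\id)\CL_a^{\otimes p})$, write $\CI_G/\CI_G^2=\omega_G$. Its $\chi_a$-isotypic part, which matches the $a$-component of the $O$-action after choosing the identification of characters as in \S\ref{sss:Etale}, is
\[
\CL_a/\de_{a-1}(\CL_{a-1}^{\otimes p}).
\]
Thus $(\omega_G)_a$ has the two-term resolution $\CL_{a-1}^{\otimes p}\xrightarrow{\de_{a-1}}\CL_a$. Its determinant is $\CL_{a-1}^{\otimes -p}\otimes\CL_a$, with canonical section $\de_{a-1}$.

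\emph{Step 3 (comparison).} Both two-term complexes are resolutions of the same torsion sheaf $(\omega_{G(i)})_a$. The terms are locally free, and the complexes are quasi-isomorphic on $\CA^{\rm flat}$; this holds because both are acyclic except in degree zero, where they are injective, using flatness and the generic isomorphism. The Knudsen--Mumford determinant of a perfect complex is independent of the resolution, so we get the asserted isomorphism of invertible sheaves carrying $\de(i)_{a-1}$ to $\det(\phi_i^*)$.

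\emph{Main obstacle.} The delicate point is justifying that the relevant maps are injective and that the complexes really are quasi-isomorphic, rather than merely having the same cokernel. We would first try to handle this by restricting to the open locus where $\CA^{\rm flat}$ is normal, or by using the fact that it is reduced and flat, which makes both maps injective. We would then match the two isomorphisms over the dense generic fiber; since the isomorphism is determined by the sections and $\CA^{\rm flat}$ is flat, agreement there gives the claim.

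A secondary issue is the compatibility of the $O_D$-isotypic indexing with Raynaud's characters $\chi_a$ (the shift $a$ versus $a-1$). We would settle this by a direct computation in the \'etale-locally trivial case.
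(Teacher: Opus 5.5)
Your proposal is correct and follows essentially the paper's route. Both arguments exhibit $[\CL(i)_{a-1}^{\otimes p}\xrightarrow{\delta_{a-1}}\CL(i)_a]$ and $[\Omega_{i,a}\xrightarrow{\phi_i^*}\Omega_{i-1,a}]$ as two-term locally free resolutions of $(\omega_{G(i)^{\rm univ}})_a$, obtain injectivity of both differentials from flatness of $\CA^{\rm flat}_{K^p,O}$ and the generic isomorphism, and then apply the Knudsen--Mumford determinant. The one difference is that you read off the Raynaud-side resolution directly from $\CI_G/\CI_G^2$ of Raynaud's Hopf algebra, whereas the paper quotes it as the co-Lie complex of \cite[Lem.~4.1]{SLiu}; also, flatness alone (schematic density of the generic fiber) suffices, so you do not need any normality of $\CA^{\rm flat}_{K^p,O}$.
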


\begin{proof} By \cite[Lem. 4.1]{SLiu}, there is the following expression for the  isotypical components of the co-Lie complex of $G(i)^{\rm univ}$ (this holds for any Raynaud group scheme)
\[
(\ell_{G(i)^{\rm univ}})_a=[\CL(i)_{a-1}^{\otimes p}\xrightarrow{\delta_{a-1}} \CL(i)_a].
\]
Recall that $\ell_{G(i)^{\rm univ}}$ is computed in the proof of \cite[Lem. 4.1]{SLiu} by constructing a specific regular (i.e. complete intersection) embedding $\iota: G=G(i)^{\rm univ}\hookrightarrow X$ over $S=\CA^{\rm flat}_{K^p, O}$. This gives a complex
\[
[e^*\CI/\CI^2\xrightarrow{\delta} e^*\iota^*\Omega^1_{X/S}]
\]
and an exact sequence
\[
e^*\CI/\CI^2\xrightarrow{\delta} e^*\iota^*\Omega^1_{X/S}\to e^*\Omega^1_{G/S}\to 0.
\]
In our situation, the exactness extends on the left, i.e. the first arrow $\delta$ is injective. Indeed, both $e^*\CI/\CI^2$ and
$e^*\iota^*\Omega^1_{X/S}$  are locally free over $S$ and $\delta$ is an isomorphism over the 
generic fiber $S[1/p]$ since $G[1/p]\to S[1/p]$ is finite \'etale. Since $S=\CA^{\rm flat}_{K^p, O}$ is flat over $\BZ_p$
the injectivity of $\delta$ follows. Hence, $\ell_{G(i)^{\rm univ}}$ is given by the sheaf $\omega_G:=e^*\Omega^1_{G/S}$ 
placed in degree $0$. By taking isotypic components, we obtain
the following isomorphism of objects in ${\rm D}^{[-1,0]}(\CO_S)$:
\[
[\CL(i)_{a-1}^{\otimes p}\xrightarrow{\delta_{a-1}} \CL(i)_a] \xrightarrow{\sim} (\omega_{G(i)^{\rm univ}} )_a.
\]
On the other hand,  we have the exact sequence 
\[
0\to \omega_{A^{\rm univ}_i/S}\xrightarrow{\phi^*}\omega_{A^{\rm univ}_{i-1}/S}\to \omega_{H(i)^{\rm univ}}\to 0
\]
obtained from the isogeny $\phi_i: A^{\rm univ}_{i-1}\to A_i^{\rm univ}$. The exactness of this sequence is clear everywhere except on the left where it follows by an argument as above
using the flatness of $S=\CA^{\rm flat}_{K^p, O}$. By applying idempotents and then taking isotypic components this gives the exact sequence 
\[
0\to \Omega_{i,a} \xrightarrow{\phi^*_i} \Omega_{i-1,a}\to (\omega_{G(i)^{\rm univ}})_a\to 0.
\]
 This yields another isomorphism of objects in ${\rm D}^{[-1,0]}(\CO_S)$:
\[
[\Omega_{i,a} \xrightarrow{\phi^*_i} \Omega_{i-1,a}] \xrightarrow{\sim} (\omega_{G(i)^{\rm univ}})_a.
\]
Composing gives
\[
[\CL(i)_{a-1}^{\otimes p}\xrightarrow{\delta_{a-1}} \CL(i)_a] \xrightarrow{\sim} [\Omega_{i,a} \xrightarrow{\phi^*_i} \Omega_{i-1,a}]. 
\]
Taking the determinant (\cite{KnM}, see also \cite[5A]{SLiu})  yields the isomorphism  of pairs consisting of a line bundle and a global section
\[
\big(\CL(i)_{a-1}^{\otimes{-p}}\otimes\CL(i)_a, \delta(i)_{a-1}\big)\xrightarrow{\sim} \big(\det(\Omega_{i-1,a})\otimes\det(\Omega_{i,a})^{\otimes{-1}}, \det(\phi^*_i)\big).
\]
This completes the proof.
\end{proof}

\begin{remark}\label{rem:SLiuDerived}
The above generalizes \cite[Cor. 5.5]{SLiu}. Note that the isomorphism   in the statement of Proposition \ref{prop:Fitting} is uniquely determined. 
\end{remark}

We can now consider the Shimura variety ${\rm Sh}_{\RK_0}(\RG^\circ, \RX)_{E^\circ}$. This is an  open and closed subscheme  of $\CA_{K^p}\otimes_{O_E}E^\circ$ (for the compatible choice of $K^p$ and the prime $v$ of $\RE$ below our implicit choice of prime of $E^\circ$).  
Note that, by our assumptions, $\CG^\circ$ is an Iwahori group scheme 
for $G^\circ$. There is a $\CG^\circ$-equivariant morphism 
\[
\Mloc_{\CG^\circ,\mu}\to \Mloc^{\rm naive}_{\CG,\mu}\otimes_{O_E}O_{E^\circ},
\]
see \cite[p. 215-217]{PZ}. By construction,  the \emph{canonical integral model} $\CS_{K_0K^p }$ in the sense of  \cite{PRg} is the normalization of the closure of ${\rm Sh}_{\RK_0}(\RG^\circ, \RX) $ in $\CA^{\rm flat}_{K^p }\otimes_{O_E}O_{E^\circ}$ and there is a commutative diagram
   \begin{equation}\label{2LM}
\begin{aligned}
 \xymatrix{
      \CS_{K_0K^p} \ar[r]^{\varphi} \ar[d]  &    [\CG^\circ\bs \Mloc_{\CG,\mu}]\ar[d]  \\
       \CA_{K^p}\ar[r]^{\varphi^{\rm naive}} &  [\CG\bs \Mloc^{\rm naive}_{\CG,\mu}],
        }
        \end{aligned}
\end{equation}
where the left vertical morphism factors through $\CA^{\rm flat}_{K^p }$. Under our assumptions,  $\varphi$ is smooth. The 
  $T'_{\CG}(\BF_p)$-torsor   
  \[
 \pi': {\rm Sh}_{\RK'_1 }(\RG^\circ , \RX)_{E^\circ}\to {\rm Sh}_{\RK_0}(\RG^\circ , \RX)_{E^\circ}
 \]
  is obtained by base changing the 
  $T'_{\CG }(\BF_p)$-torsor  $\CA'_{K^p, 1,  O}[1/p]\to \CA_{K^p,O}[1/p]$ of (\ref{pi1A}) above to that open and closed subscheme.
  Here we recall $T'_{\CG }(\BF_p)$ from \eqref{dsTorus} and write  $\RK'_1=K_1'K^p$ where  $K'_1\supset K_1$  is the kernel 
   \[
  K'_1:=\ker(K_0=\CG (\BZ_p)\to T'_{\CG }(\BF_p)).
  \]
 We obtain universal isogenies, group schemes $G(i)^{\rm univ}$   and data 
  $(\CL(i)_a, \gamma_a(i), \delta_a(i))_{a\in \BZ/d\BZ}$ over $\CS_{K_0, O}$ by pulling back along $\CS_{K_0,O}\to \CA^{\rm flat}_{K^p }$. (We do not change notation.) 
 After pulling back the isomorphism of  Proposition \ref{prop:Fitting} we obtain an isomorphism of line bundles over $\CS_{K_0,O}$
 \begin{equation}\label{Fitt2}
 \CM^{-1}_{L^*(\eta_{i,a})} =\CL(i)_{a-1}^{\otimes -p}\otimes \CL(i)_{a} \xrightarrow{\sim} \det(\Omega_{i-1,a})\otimes \det(\Omega_{i,a} )^{\otimes -1} ,
  \end{equation}
 which takes the section $\delta(i)_{a-1}$ to the section given by $\det(\phi^*_i)$.

Recall the smooth morphism $\varphi: \CS_{K_0,O}\to [\CG\bs \Mloc_{\CG,\mu}]$ and the invertible sheaf $\CL_{\eta_{i,a}}$ over the base change of the local model $\Mloc_{\CG,\mu}\otimes_{O_E}O_EO$. Recall also that our goal is to produce isomorphisms over $\CS_{K_0,O}$ 
  \[
  \big(\CM_{L^*(\eta_{i,a})}, \CO_{{\rm Sh}_{K_0, F}}\xrightarrow{\sim} \CM_{L^*(\eta_{i,a})}[1/p]\big)\xrightarrow{\sim} 
 \varphi^*\big(\CL^{-1}_{\eta_{i,a}}, s^{-1}_{\eta_{i,a}}: \CO_{\Mloc_{\CG,\mu, O}}[1/p]\xrightarrow{\sim} \CL^{-1}_{\eta_{i,a}}[1/p]\big), 
  \]
 as required in (\ref{data1}). In view of Proposition \ref{prop:Fitting}, it will be enough to show
 
\begin{proposition}\label{prop:LBLM}
There is an isomorphism 
  \begin{equation}\label{HodgeIso}
 \det(\Omega_{i-1,a})\otimes \det(\Omega_{i,a} )^{\otimes -1}\xrightarrow{\sim} \varphi^*\CL_{\eta_{i,a}}
 \end{equation}
of invertible sheaves over $\CS_{K_0,O}$, which maps the  section $\det(\phi^*_i)$   to the section $s_{\eta_{i,a}}$ of $\CL_{\eta_{i,a}}[1/p]$ given in  
 \S \ref{ss:BundlesLines}. 
 \end{proposition}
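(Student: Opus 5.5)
The plan is to compare both sides on the local model and then pull back along $\varphi$. Both $\det(\Omega_{i-1,a})\otimes\det(\Omega_{i,a})^{\otimes -1}$ with its section $\det(\phi_i^*)$, and $\varphi^*\CL_{\eta_{i,a}}$ with $s_{\eta_{i,a}}$, arise by pullback from $\CG$-equivariant pairs on $\Mloc_{\CG,\mu,O}$. So it suffices to construct a $\CG$-equivariant isomorphism of pairs (line bundle, section over the generic fiber) on $\Mloc_{\CG,\mu,O}$ and then pull it back through the diagram \eqref{2LM}.

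First, I will use the naive local model and the Rapoport--Zink description of $\varphi^{\rm naive}$. Over $\CA_{K^p}$, the local model diagram identifies the Hodge filtrations $\omega_{A_i/S}\subset \RH^1_{\rm dR}(A_i/S)$ with the universal filtrations $\CF_{\Lambda_i}\subset \Lambda_i\otimes\CO$, up to the $\CG$-torsor of trivializations of the de Rham multichain. Under this identification the maps $\phi_i^*$ go to the transition maps of the chain. After applying $e_{11}$ and taking $a$-isotypic parts, $\Omega_{i,a}$ corresponds to $\CQ_{i,a}$ (or to $\CF_{i,a}$, depending on the normalization fixed in \S\ref{sss:PELmoduli}). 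Thus $\det(\Omega_{i-1,a})\otimes\det(\Omega_{i,a})^{-1}$ with the section $\det(\phi_i^*)$ is the $\varphi^{\rm naive}$-pullback of the analogous pair built from the universal chain on $\Mloc^{\rm naive}_{\CG,\mu}$. That pair is $\CG$-equivariant because $\CG$ acts compatibly on all $\Lambda_i$.

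Second, I will identify this pair with $(\CL_{\eta_{i,a}},s_{\eta_{i,a}})$ on $\Mloc_{\CG^\circ,\mu}$, via the morphism $\Mloc_{\CG^\circ,\mu}\to\Mloc^{\rm naive}$ of \cite[\S 8]{PZ}. This generalizes the $\GL_n$ computation \eqref{isoDet} in \S\ref{sss:ExampleLoop}. In \cite{PZ} the embedding into ${\rm Gr}_{\und\CG}$ sends a point to the chain of $O[u]$-lattices $\und\CE_i\subset\und\Lambda_{i,R}$, obtained as preimages of the $\CF_i$ under $u\mapsto p$. The $T_\CG$-torsor $\RP_{\CG,\mu}$ is the push-out along $\tau$, so $\CL_{\eta_{i,a}}$ is the $a$-isotypic part of the $e_{11}$-component of $(\und\CE_i/\und\CE_{i-1})^\vee\otimes\und\Lambda_i/\und\Lambda_{i-1}$. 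The Knudsen--Mumford determinant applied to $0\to[\und\CE_{i-1}\to\und\CE_i]\to[\und\Lambda_{i-1}\to\und\Lambda_i]\to[\CQ_{i-1}\to\CQ_i]\to0$, taken isotypically, gives the isomorphism of line bundles. On the generic fiber, the section $s$ of \eqref{cansection} is evaluation at $u=0$ of the trivialization $\und\alpha$. I will check that it corresponds to the determinant of the generic isomorphism $\CQ_{i-1}\to\CQ_i$, which dually is $\det(\phi_i^*)$. Equivariance then holds generically by construction and extends by flatness. If the $\GL_n$-type argument gives an ambiguity, I will instead pin down the isomorphism by comparing divisors: both sections have divisor $D_{\eta_{i,a}}$, the first by Theorem \ref{tameconj}, the second by a direct check at generic points of components. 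An isomorphism of line bundles matching the meromorphic sections is then unique, since $\Mloc$ is normal.

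Third, I will pull back along the smooth morphism $\varphi$. Because \eqref{2LM} commutes, the isomorphism on $\Mloc$ yields \eqref{HodgeIso} on $\CS_{K_0,O}$, matching $\det(\phi_i^*)$ with $s_{\eta_{i,a}}$.

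I expect the main obstacle to be the second step: the bookkeeping showing that the torsor $\RP_{\CG,\mu}$ from Proposition \ref{mainTorsor}, restricted to the $\eta_{i,a}$-component, really is the isotypic determinant of the graded pieces in the general PEL case. Here the Morita reduction, the Weil restriction from $O$, the similitude factor and the duality pairings all intervene. The identification also needs the correct sign and duality convention, so that the section matches $\det(\phi_i^*)$ and not its inverse. I would try to reduce to the underlying $\GL_{O_B}(\mathscr L)$ chain using the closed immersion \eqref{CLimm} and functoriality of $\RP$ in the group, so that only the $\GL$-type computation \eqref{isoDet} is needed.
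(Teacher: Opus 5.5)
Your proposal is correct and is essentially the paper's proof. The paper pulls back along $\varphi^{\rm naive}$ from the naive local model of $\GL_{O_B}(\mathscr L)$, which coincides with the canonical one here, and rewrites the Hodge-bundle pair as the pull-back of $\det(\CQ_{i,a})\otimes\det(\CQ_{i-1,a})^{-1}$ with the section induced by $\CQ_{i-1,a}\to\CQ_{i,a}$. It then identifies this with $(\CL_{\eta_{i,a}},s_{\eta_{i,a}})$ by the isotypic extension of the determinant argument \eqref{isoDet} of \S\ref{sss:ExampleLoop}, and concludes by the commutativity of \eqref{2LM}. The normalization you left open is settled as in the paper: it is the Lie algebra $\Omega^\vee_{i,a}=(e_{11}\Lie(A_i/S))_a$, rather than $\Omega_{i,a}$ itself or $\CF_{i,a}$ with the same index, that is identified with $(\varphi^{\rm naive})^*\CQ_{i,a}$. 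The canonical isomorphism $\det(\Omega_{i-1,a})\otimes\det(\Omega_{i,a})^{-1}\simeq\det(\Omega^\vee_{i,a})\otimes\det(\Omega^\vee_{i-1,a})^{-1}$, which carries $\det(\phi_i^*)$ to $\det(\phi_{i,*})$, then fixes the direction, so your divisor-comparison fallback is not needed.
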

  
   \begin{proof}
   We start by discussing the ``naive" local models $\Mloc^{\rm naive}_{\GL_{O_B}(\mathscr L),\mu}$ for the group ${\rm Aut}_B(V)$ and an Iwahori (i.e. complete) lattice chain  $\mathscr  L=\{\Lambda_i\}_{i\in \BZ}$ of $O_B=\RM_{n\times n}(O_D)$-modules.  By \cite[Def. 3.27]{RZbook}, this is the scheme whose $S$-valued points parametrizes compatible periodic sequences $(\CF_i)_{i\in \BZ}$ of  $\RM_{n\times n}(O_D)\otimes \CO_S$-submodules
   \[
   \begin{matrix}
   \CF_{i-1}&\subset &\Lambda_{i-1}\otimes_{\BZ_p}\CO_S\\
   \downarrow && \downarrow\\
    \CF_{i}&\subset &\Lambda_{i}\otimes_{\BZ_p}\CO_S,
   \end{matrix}
   \]
   such that the $\CF_i$ are locally direct summands of $\Lambda_i\otimes_{\BZ_p}\CO_S$ as $\CO_S$-modules, have
    fixed rank and fixed 
    $\RM_{n\times n}(O_D)\otimes_{\BZ_p} \CO_S$-representation type,  depending on the choice of $\mu$. 
    Set 
    \[
    \CQ_i:=(\Lambda_i\otimes_{\BZ_p}\CO_S)/\CF_i
    \]
    for the quotient locally free $\CO_S$-modules.
    In this case, the naive local model is the canonical local model, i.e.,
     \[
    \Mloc^{\rm naive}_{\GL_{O_B}(\mathscr L),\mu}=\Mloc_{\GL_{O_B}(\mathscr L),\mu},
    \]
     by \cite[\S 7.2, \S 8.2]{PZ}; this uses a straightforward extension of the result of G\"ortz \cite{Goertz}.

    The definition of the local model diagram morphism $\varphi^{\rm naive}$ gives (essentially tautologically)  canonical $\RM_{n\times n}(O_D)\otimes \CO_S$-isomorphisms
    \[
   \Lie(A_i/S)= \omega^\vee_{A_i/S}\simeq (\varphi^{\rm naive})^*(\CQ_i).
    \]
    These take $\phi_{i,*}:  \Lie(A_{i-1}/S)\to  \Lie(A_{i}/S)$ to the pull-back by $\varphi^{\rm naive}$ of 
    $\CQ_{i-1}\to \CQ_i$ in the above diagram.  
    
    We can also consider the $O_D\otimes_{\BZ_p}\CO_S$-modules $ e_{11}\CQ_{i}$, and, if $S$ is an $O$-scheme, their isotypic components $\CQ_{i, a}:=(e_{11}\CQ_i)_a$, for each $a\in \BZ/d\BZ$. By taking idempotents and decomposing into isotypic components the above isomorphisms give
    \[
       \Omega_{i, a}^\vee = (e_{11}\Lie(A_i/S))_a\simeq (\varphi^{\rm naive})^* \CQ_{i,a}.
    \]
   By the above, we obtain canonical isomorphisms
   \[
  \det(\Omega_{i,a}^\vee)\otimes \det(\Omega^\vee_{i-1,a} )^{\otimes -1}  \simeq (\varphi^{\rm naive})^*\big(\det(\CQ_{i,a})\otimes \det(\CQ_{i-1,a})^{-1}\big).
   \]
   Note that there is a canonical isomorphism
   \[
 \det(\Omega_{i-1,a})\otimes \det(\Omega_{i,a} )^{\otimes -1}    \xrightarrow{\sim}   \det(\Omega_{i,a}^\vee)\otimes \det(\Omega^\vee_{i-1,a} )^{\otimes -1},
  \]
  taking $\det(\phi_i^*)$ to $\det(\phi_{i,*})$. Here, $\det(\phi_{i,*})$ is induced by $\phi_{i,*}:  \Lie(A_{i-1}/S)\to  \Lie(A_{i}/S)$
  after taking isotypic components and applying the determinant. By composing, we obtain 
     \[
 \det(\Omega_{i-1,a})\otimes \det(\Omega_{i,a} )^{\otimes -1}   \xrightarrow{\sim}  (\varphi^{\rm naive})^*\big(\det(\CQ_{i,a})\otimes \det(\CQ_{i-1,a})^{-1}\big).
   \]
This takes  $\det(\phi_i^*)$ to the pull-back by $(\varphi^{\rm naive})^*$ of the section of $\det(\CQ_{i,a})\otimes \det(\CQ_{i-1,a})^{-1}$ induced by  $\CQ_{i-1,a }\to \CQ_{i,a}$.
  
 Using the commutativity of (\ref{2LM}) we see that it is enough to show that there 
 is an isomorphism
 \[
 \CL_{\eta_{i,a}}\simeq \det(\CQ_{i,a})\otimes \det(\CQ_{i-1,a})^{-1}
 \]
 which takes the section $s_{\eta_{i,a}}$ of  $\CL_{\eta_{i,a}}[1/p]$  in \S \ref{ss:BundlesLines} to the   section of  $\det(\CQ_{i,a})\otimes \det(\CQ_{i-1,a})^{-1}$
  induced by  $\CQ_{i-1,a }\to \CQ_{i,a}$. Given the comparison between the naive local model $\Mloc^{\rm naive}_{\GL_{O_B}(\mathscr L),\mu}$ and the local model of \cite{PZ} discussed above, this follows by a straightforward extension of 
  the argument in Example \ref{sss:ExampleLoop}.
   \end{proof}

 The composition of (\ref{Fitt2}) and (\ref{HodgeIso})   gives the second isomorphism in (\ref{data1}). 
 
 This completes the construction in case (\ref{star1}). 
 
 In the case (\ref{star2}), we also have to make a construction for the similitude character $c$: For this we set $\CM_c=\CO_{\CS_{K_0}}$ to be the trivial line bundle with section $\CO_{\CS_{K_0}}\xrightarrow{} \CM_c^{\otimes (1-p)}=\CO_{\CS_{K_0}}$ 
 given by multiplication by $w_p=p\cdot u$, $u\in \BZ_p^*$. In this case, $\CL_c=\CO_{\Mloc_{\CG,\mu}}$ and since 
 $\langle \mu, c\rangle=1$, we have $s_c=p\cdot v$, $v\in \BZ_p^*$. The isomorphism $\CO_{\CS_{K_0}}=\CM_c^{\otimes (1-p)}\simeq \varphi^*\CL_c=\CO_{\CS_{K_0}}$ is given by multiplication by $v/u$.
 
We have now provided data as in (\ref{data1}) over the unramified Galois extension $O/\BZ_p$ which splits the torus $T_\CG$. The data (\ref{data1}) that we have constructed  support descent data for the finite unramified Galois extension $OO_E/O_E$ since 
the isogenies and group schemes are all defined over $O_E$.  This  shows Conjecture \ref{conj:torsors}. Hence, by Proposition \ref{prop:Conj}, Conjecture \ref{globconj} follows with the integral model $\CS_{K_1, S}$ as defined in Definition \ref{def:generalconstruction2}. This concludes the proof of Theorem \ref{thm:PEL} in the case that $F_0$ is a field.  The  case of a general $F_0$ can now be reduced to this case by taking products.
 \end{proof}

\section{Some examples}\label{s:PELex}

Here we describe more explicitly some cases of Theorem  \ref{thm:PEL} and a few variations. We relate the integral models $\CS_{K_1,S}$ to moduli schemes which parametrize Raynaud generators of the kernels of the universal isogenies.

   \subsection{The Siegel case} \label{ss:Siegel}
    Here we discuss the basic example of the Siegel Shimura variety with level the pro-unipotent radical of an Iwahori subgroup. This was considered in unpublished notes of Haines-Stroh, see also \cite{HLS}  and \cite{Shadrach}. 
    
  We take $\RB=\BQ$, $\RV=\BQ^{2g}=\oplus_{i=1}^{2g}\BQ\cdot e_i$ with alternating form $(\ ,\ )$ determined by $(e_i, e_{2g+1-j})=\delta_{ij}$, $*={\rm id}$. Then $\RG={\rm GSp}_{2g}(\BQ)$. Let $h$ be the standard Deligne cocharacter corresponding to the (upper and lower) Siegel domain $S_g^{\pm}$.  We take $\mathscr L$ to be the standard self-dual {\rm complete} periodic $\BZ_p$-lattice chain in $V=\RV\otimes\BQ_p$ 
  \[
\cdots \subset \Lambda_{-g}\subset\cdots \subset \Lambda_{-1}\subset  \Lambda_0\subset \Lambda_1\subset\cdots\subset \Lambda_g\subset\cdots
  \]
  determined by
  \[
  \Lambda_0=(e_1,\ldots, e_{2g}),\quad 
  \Lambda_{-i}=(pe_1,\ldots, pe_i, e_{i+1},\ldots, e_{2g}),\quad 1\leq i\leq g,
  \]
  \[
 \Lambda_{i} =\Lambda_{-i}^\vee =(e_1,\ldots, e_{2g-i}, p^{-1}e_{2g+1-i},\ldots,  p^{-1}e_{2g}), \quad 1\leq i\leq g.
  \]
   Then (IW) and (S) are satisfied: Condition (IW) is obvious and $K_0=\CG(\BZ_p)$ is the ``standard" Iwahori subgroup. 
   For   condition (S), recall that we have
   \[
   T_\CG=\{\diag(r_1,\ldots, r_g, cr_g^{-1},\ldots, cr^{-1}_1)\}\simeq \BG_m^{g+1}=\{(r_1,\ldots, r_g, c)\},
   \]
   with projection to the last coordinate $c$ giving the similitude character.
   Hence, (\ref{star2}) is satisfied with $J=\{-g+1,\ldots, 0\}\subset \{-g+1,\ldots, g\}$, i.e. from the direct summands given by the $2g$ graded pieces ${\rm Aut}_{\BF_p}(\Lambda_i/\Lambda_{i-1})=\BG_m$, $-g+1\leq i\leq g$, we pick the first half, see also \S \ref{sss:Sprime}. Note that here $T_\CG=\BG_m^{g+1}$ is split, so $O=\BZ_p$ and the Lang isogeny is given by raising all coordinates to the $(p-1)$-st power.

  Now we continue with notations of \S \ref{ss:Exam} (5). The semigroup $S=S_{\CG,\mu}\subset X^*(T_\CG)=\BZ^{g+1}$ is generated by $e_i=\eta_i$ and $f_i=c-\eta_i$, $1\leq i\leq g$, with relations $e_i+f_i=e_j+f_j$, for all $i$, $j$, where $c=\eta_{g+1}\in X^*(T_\CG)$ is the similitude character.
 This presentation of the semi-group allows us to give a more compact explanation and description of the functor describing $\CS_{K_1}$ by Proposition \ref{def:generalconstruction}, as follows.
 
As above, we let $G(i)^{\rm univ}$ be the kernel of $A^{\rm univ}_{i-1}\to A^{\rm univ}_{i}$, for $-g+1\leq i\leq 0$. For simplicity, we will sometimes omit the superscript. This is an Oort--Tate group scheme over the Iwahori level integral model $\CS_{K_0}=\CA_{K_0}$. We obtain data $(\CL(i), \gamma(i), \delta(i))$, where $\CL(i)$ is a line bundle and $\gamma(i): \CL(i)\to \CL(i)^{\otimes p}$ and  $\delta(i): \CL(i)^{\otimes p}\to \CL(i)$; we consider $\delta(i)$ as section of $\CL(i)^{\otimes(1-p)}$. We set  
\[
 \CM_{e_i}=\CM_{\eta_i}=\CL(i-g), \  a_{e_i}=\delta({i-g})^{-1},\ \hbox{\rm for $1\leq i\leq g$}, 
\]
\[
\CM_{c}=\CO_{\CS_{K_0}}, \ a_{c}=w_p.
\]
Then we also have
\[
\CM_{f_i}=\CM_{c-\eta_i}=\CM_{\eta_i}^{-1},\quad a_{f_i}=a_{c-\eta_i}=w_p\delta({i-g}), \ \hbox{\rm for $1\leq i\leq g$}.
\]
\begin{proposition} The $\CS_{K_0}$-scheme $\CS_{K_1}$ of Definition \ref{def:generalconstruction2}, parametrizes $2g+1$-tuples 
\[
(z_1,\ldots, z_g, z, z'_g, \ldots, z'_1),
\]
 with  $z_i\in \Gamma(\CS_{K_0}, \CM^{-1}_{e_i})$, $z'_i\in \Gamma(\CS_{K_0}, \CM_{e_i})$, 
for $1\leq i\leq g$, and $z=z_c\in  
\Gamma(\CS_{K_0}, \CO_{\CS_{K_0}})$, which satisfy
\[
z_{i}^{\otimes(p-1)}=a_{e_i},  \  \ z^{p-1}=w_p,  \  \ {z'_i}^{\otimes(p-1)}=w_pa_{e_i}^{-1}, \ \ z_iz'_i=z.
\]
\end{proposition}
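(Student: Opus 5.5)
This is a direct specialization of Proposition \ref{def:generalconstruction} to the Siegel case, where $T_\CG=\BG_m^{g+1}$ is split (so $O=\BZ_p$) and $S=S_{\CG,\mu}$. First I identify $\wt S$. Since $T_\CG$ is split, Remark \ref{remark:splitLang} gives $L^*=(p-1)\cdot$ and $\wt S=L^*(S)^{\rm sat}=S$. So $\CS_{K_1}(U)$ is the set of families $(z_{\wt\chi})_{\wt\chi\in S}$, with $z_{\wt\chi}\in\Gamma(U,\CM^{-1}_{\wt\chi})$, which are multiplicative (condition (ii)) and satisfy $z_{(p-1)\chi}=f^*(a_\chi)$ for $\chi\in S$ (condition (i)).

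Next I reduce to generators. By example (5) of \S\ref{ss:Exam}, $S$ has Hilbert basis $e_1,\dots,e_g,f_1,\dots,f_g$, with $f_i=c-e_i$. The element $c=e_i+f_i$ also lies in $S$. A multiplicative family on $S$ is then the same as a semigroup homomorphism $S\to\bigoplus_\psi\Gamma(U,\CM^{-1}_\psi)$ compatible with the tensor functor. I would present $S$ by the generators $e_i,f_i,c$ and the relations $e_i+f_i=c$ for $1\le i\le g$; these relations generate all relations $e_i+f_i=e_j+f_j$. I take $z_i:=z_{e_i}$, $z'_i:=z_{f_i}$ and $z:=z_c$. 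Because $\CM_{f_i}=\CM_{e_i}^{-1}$ and $\CM_c=\CO$, these land in $\Gamma(\CM^{-1}_{e_i})$, $\Gamma(\CM_{e_i})$ and $\Gamma(\CO)$ respectively. Multiplicativity is then exactly the relation $z_iz'_i=z$ for each $i$.

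To justify that these relations suffice, I must check that the toric ideal of $\BZ_p[S]$ is generated by $e_if_i-e_jf_j$. This is stated in example (5). Adding the auxiliary generator $c$ together with the relations $e_if_i=c$ gives the same algebra.

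For condition (i), it suffices to impose $z_\chi^{\otimes(p-1)}=a_\chi$ on the generators $\chi$. This works because $\chi\mapsto a_\chi$ is multiplicative (it comes from the global sections $s_\chi$ under the tensor functor) and $\chi\mapsto z_\chi^{\otimes(p-1)}$ is multiplicative. The values are $a_{e_i}$, $a_c=w_p$ and $a_{f_i}=w_pa_{e_i}^{-1}$, which gives the three stated equations. Here $a_{f_i}=a_c\otimes a_{e_i}^{-1}$ is a genuine regular section, namely $w_p\delta(i-g)$, by the relation $\gamma\delta=w_p$ of Theorem \ref{thm:ray}.

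The main obstacle is a bookkeeping one. I must confirm that the identifications $\CM_{e_i}=\CL(i-g)$ and $a_{e_i}=\delta(i-g)^{-1}$, with the inverse-dual convention, match Proposition \ref{def:generalconstruction}, where $z_{\wt\chi}$ lies in $\CM^{-1}_{\wt\chi}$. The construction of these data is taken from the proof of Theorem \ref{thm:PEL}, including the choice $\CM_c$ trivial with $a_c=w_p$. Once this is settled, the description follows formally.
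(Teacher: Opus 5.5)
Your proposal is correct and follows the paper's proof. Like the paper, you specialize Proposition \ref{def:generalconstruction}, with $\wt S=S$ because $T_\CG$ is split, and use the presentation of $S=S_{\CG,\mu}$ by the generators $e_i,f_i$ and relations $e_i+f_i=e_j+f_j$ to identify multiplicative families $(z_\chi)_{\chi\in S}$ satisfying condition (i) with tuples $(z_i,z,z'_i)$ satisfying the stated equations; you simply spell out details the paper leaves implicit, namely reducing condition (i) to generators via the multiplicativity of $\chi\mapsto a_\chi$ and translating the toric-ideal presentation into a semigroup presentation.
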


\begin{proof} We start by applying Proposition \ref{def:generalconstruction} which gives an explicit description of the functor of $\CS_{K_1}$.
Set $z_{e_i}=z_i$, $z_{f_i}=z'_i$, $z_c=z$. Given the equations $z_iz'_i=z=z_jz_j$, the presentation of the semigroup $S=S_{\CG,\mu}$ given above shows that we can define  $z_\chi\in \Gamma(\CS_{K_0}, \CM^{-1}_{\chi})$,  for all $\chi\in S$, extending these values on the generators. This implies that a $2g+1$-tuple as above determines and is determined by a unique point of $\CS_{K_1}$.
\end{proof}

The above proposition shows that   $\CS_{K_1}$ agrees with the integral model considered in Haines-Li-Stroh \cite{HLS}. 
It also implies that, in this case, $\CS_{K_1}$ has a rather simple moduli interpretation: 
 
\begin{corollary}\label{sieOT}
The $\CS_{K_0}$-scheme $\CS_{K_1}$ parametrizes Oort--Tate generators 
of the $2g$  group schemes $G(i)=\ker(A_{i-1}\to A_i)$, for $-g+1\leq i\leq g$, which are ``compatible with duality" (see below).
\end{corollary}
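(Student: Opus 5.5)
The plan is to deduce the corollary from the preceding Proposition by translating each piece of the data $(z_1,\ldots,z_g,z,z'_g,\ldots,z'_1)$ into Oort--Tate generators via \S\ref{sss:Generators}.

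\emph{Generators for $-g+1\le i\le 0$.} For $d=1$ the scheme of generators $G(i)^*\subset G(i)$ is, locally where $\CL(i)$ has a basis $u$, cut out by $u^{p}=\delta(i)u$ and $u^{p-1}=\delta(i)$. Globally, a generator of $G(i)$ amounts to a section $z_{i+g}$ of the line bundle $\CL(i)^{-1}=\CM^{-1}_{e_{i+g}}$ whose $(p-1)$-st tensor power equals $\delta(i)$ viewed as a section of $\CL(i)^{\otimes(1-p)}$. This is exactly the equation $z_{i+g}^{\otimes(p-1)}=a_{e_{i+g}}$, once the normalisation of $a_{e}$ versus $\delta$ is tracked. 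So the $z_i$, $1\le i\le g$, are precisely Oort--Tate generators of the first half $G(-g+1),\ldots,G(0)$.

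\emph{Generators for $1\le i\le g$.} The principal polarization identifies $A_i$ with the dual of $A_{-i}$ and $\phi_i$ with the dual of $\phi_{-i+1}$. Hence $G(i)$ is Cartier dual to $G(1-i)$. Under Oort--Tate theory, Cartier duality sends $(\CL,\gamma,\delta)$ to $(\CL^{-1},\delta,\gamma)$, and $\gamma\delta=w_p$. A generator of $G(i)$ is therefore a section $z'$ of $\CL(1-i)=\CM_{e_{g+1-i}}$ with $z'^{\otimes(p-1)}=\gamma(1-i)=w_p\,\delta(1-i)^{-1}$, which is the second set of equations $z_j'^{\otimes(p-1)}=w_pa_{e_j}^{-1}$ for $j=g+1-i$. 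The step I expect to require the most care is checking that this Cartier-duality dictionary is compatible with the chosen identifications of $\CL$'s induced by the polarization, including signs and units coming from $w_p$.

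\emph{Compatibility with duality.} The Cartier pairing $G(1-i)\times G(i)\to\mu_p$ applied to the generators $(z_{g+1-i},z'_{g+1-i})$ gives an element of $\mu_p^*$, the Oort--Tate generators of $\mu_p$. In the Oort--Tate description of $\mu_p$ (trivial $\CL$, $\delta=w_p$), these are sections $z$ with $z^{p-1}=w_p$. Under the identification $\CL\otimes\CL^{-1}=\CO$ this pairing value is the product $z_jz'_j$. The extra datum $z$ and the equations $z_jz'_j=z$ therefore say exactly that all the pairings give the \emph{same} generator of $\mu_p$, independent of $j$. This is the meaning of ``compatible with duality.'' Verifying that the Cartier pairing on generators is computed by the tensor product of sections is a local computation with Raynaud's explicit Hopf algebra, and I would carry it out over the universal Oort--Tate stack $[\BG_m\bs\Spec\BZ_p[x,y]/(xy-w_p)]$.

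Combining the three steps with the Proposition gives the asserted moduli description.
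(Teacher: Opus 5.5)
Your proposal is correct and follows essentially the same route as the paper. In both, you read off the generator equations for $z_i$ and $z'_i$ from the preceding Proposition, use the polarization to identify $G(g+1-i)$ with the Cartier dual of $G(i-g)$ so that $z'_i$ becomes a generator of $G(g+1-i)$, and read the equations $z_iz'_i=z$ as compatibility with duality.

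The only difference is in that last step: you phrase it through the Cartier pairing into $\mu_p$, with $z$ a generator of $\mu_p$, whereas the paper simply requires the product $z_iz'_i\in\CO_S$ to be independent of $i$. Your additional Hopf-algebra check is harmless. By $\BF_p^*$-equivariance, the pairing pulls back the eigen-coordinate of $\mu_p$ to a constant unit multiple of $z\otimes z'$, so independence of $i$ is unaffected.
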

 \begin{proof}

 Note that by definition, $z_i \in \Gamma(\CS_{K_0},  \CM^{-1}_{e_i} )$ with $z_i^{\otimes(p-1)}=a_{e_i}$ is an Oort--Tate generator of $G(i-g)$, for all $1\leq i\leq g$, see \S\ref{sss:Generators}.
Similarly $z'_i\in \Gamma(\CS_{K_0}, \CM_{e_i})$ with ${z'_i}^{\otimes(p-1)}=w_pa_{e_i}^{-1}$ is an Oort--Tate generator of the Cartier dual of $G(i-g)$, for all $1\leq i\leq g$. The polarization gives  isomorphisms between $G(1-i)$ and the Cartier dual $G(i)^\vee$ of $G(i)$, for each $-g+1\leq i\leq g$. Hence, we can 
consider  $z'_i$ as an Oort--Tate generator of   $G(1-(i-g))=G(g+1-i)$, for $1\leq i\leq g$. ``Compatibility with duality" means that the equations $z_iz'_i=z_jz'_j$ as sections of $\CO_S$  are satisfied  for all $1\leq i, j\leq g$. In terms of Oort--Tate generators, this means that the image of $(z_i, z_i')$ under the canonical map
\[
G(i-g)\times G(i-g)^\vee\to \CO_S
\]
is independent of $i$. 
\end{proof}

\begin{remark}\label{rem:badSiegel}
The morphism $\CS_{K_1}\to \CS_{K_0}$ is finite but not flat if $g>1$. This follows from Corollary \ref{cor:NonFlat}. In addition, the scheme $\CS_{K_1}$ is not flat over $\BZ_p$ and hence not normal. This  failure of flatness of $\CS_{K_1}$ over $\BZ_p$ was first observed   by computer calculations of the first named author and then shown by Marazza \cite{Marazza} in general. In addition, the following conjectural statements are supported by 
computer calculations (assume $g>1$): The special fiber of $\CS_{K_1}\to \Spec(\BZ_p)$ has embedded primes of codimension $1$. The special fiber of the flat closure $\CS_{K_1}^{\rm flat}$ of $\CS_{K_1}$ also has embedded primes of codimension $1$. In particular, $\CS_{K_1}^{\rm flat}$ is also not normal. Of course, all these statements are obtained by examining the corresponding root stacks $\Mloc_{\CG,\mu}^{\sqrt{S}}$ for $S=S_{\CG,\mu}$. Indeed, $\Mloc_{\CG,\mu}^{\sqrt{S}}$ give local models for $\CS_{K_1,S}$ by   (\ref{eq:sh2}). 
\end{remark}

    \subsection{The Hilbert-Siegel case}\label{ss:HilbertSiegel}

 Next we turn to a ``Hilbert-Siegel" case. 
 Let $\RF$ be a totally real field of degree $d$ with integers $O_\RF$ and let $\RV=\oplus_{i=1}^{2g}\RF e_i$ be a $2g$-dimensional $\RF$-vector space
 with the alternating $\RF$-bilinear form determined by $(e_i, e_{2g+1-i})=\delta_{ij}$. Let $\RG'={\rm Res}_{\RF/\BQ}{\rm GSp}(V, h)$ and let $\RG$ the subgroup of  $\RG'$ given by elements with similitude in $\BQ^*$. We take $X$ to be the product of $d$ copies of 
 the (upper and lower) Siegel domain $S^{\pm}_g$ on which $\RG(\BR)$ acts. Then $(\RG, X)$ is the Hilbert-Siegel Shimura datum with reflex field $\RE=\BQ$.  
 
 Let $p$ be a rational prime which is inert  in $\RF$ (remains prime), and write  $F=\RF_p$ for the completion. 
 We let $K_0$ be the standard Iwahori subgroup of $G(\BQ_p)$ stabilizing the $O_F$-lattice chain $\{\Lambda_i\}_{i\in \BZ}$ 
 given as in \S \ref{ss:Siegel} and $K_1$ its pro-unipotent radical. In this case, we have
 \[
 T_\CG=({\rm Res}_{O/\BZ_p}\BG_m)^g\times \BG_m,
 \]
 where $O=O_F=W(\BF_{p^d})$. 
 Choose $F\hookrightarrow \ov\BQ_p$ so we can identify the set $\{\phi\}$ of $\BQ_p$-embeddings $\phi: F\to \ov\BQ_p$ with $\Gal(F/\BQ_p)\simeq \Gal(\BF_{p^d}/\BF_p)\simeq \BZ/d\BZ$. 
 Then 
 \[
 X^*(T_\CG)=(\bigoplus_{a\in \BZ/d\BZ} \BZ)^g\oplus \BZ=\bigoplus_{i=1}^g \BZ[\Gal(\BF_{p^d}/\BF_p)]\oplus \BZ\cdot c,
 \]
 which has natural basis $\eta_{i, a}$, for $1\leq i\leq g$, $a\in \BZ/d\BZ$, and $c$ corresponding to the generator of the last factor coming from the similitude. 
 
 We can again see that (IW) and (S) are satisfied. In fact, here (\ref{star2}) holds.
 
 We now discuss the corresponding toric schemes: A calculation based on \S \ref{ss:Exam}, Case (5) shows that for $S=S_{\CG,\mu}$, the base change $Y_S\otimes_{\BZ_p}O$ is the spectrum of the quotient of the polynomial algebra in $2gd$ variables
\[ 
R=O[ e_{i, a},      f_{i,a}]_{ 1\leq i\leq g, a\in \BZ/d\BZ}
\]
by the ideal generated by  
\[
e_{i, a}f_{i, a}-e_{j, a}f_{j, a},\quad e_{i, a}f_{i, a}-e_{i, a'}f_{i, a'}, \quad \forall\ i, j  ,\ \forall\ a, a' .
\]
Here the generators $e_{i,a}$ in $S\subset X^*(T_\CG)$ correspond to the characters $\eta_{i,a}$ and 
the generators $f_{i,a}$ to $\eta'_{i, a}:= c- \eta_{i, a}$.

Hence, $Y_S\otimes_{\BZ_p}O$ is isomorphic to the base change to $O$ of the toric scheme for $\GSp_{2dg}$ and the Iwahori subgroup, described in \S \ref{ss:Exam}, Case 5).  

The definition of the scheme $\CS_{K_1,S, O}$ involves the semigroup $\wt S\subset X^*(T_\CG)$ giving the normalization $L_{O}: \wt Y_S\otimes_{\BZ_p}O\to Y_S\otimes_{\BZ_p}O$ via $\wt Y_S\otimes_{\BZ_p}O=\Spec(O[\wt S])$. In general, it seems difficult to obtain a simple   presentation for $\wt S$. We can check that the semigroup $\wt S$ giving  $\wt Y_S\otimes_{\BZ_p}O=\Spec(O[\wt S])$  contains  $e_{i, a}$ and $f_{i, a}= c\cdot e_{i, a}^{-1}$, for all $i$ and $a$. Indeed, the characters  $ \eta_{i, a} $ and $\eta'_{i, a} $ belong to the cone $L^*( \sigma^\vee_{\CG,\mu})\subset X^*(T_\CG)_\BR$.
(However, computer calculations indicate that $ \eta_{i, a} $, $\eta'_{i, a} $ and $L^*(S)$, are not enough to generate $\wt S$, unless $d=1$.) 
Hence, an $U$-valued point of $\CS_{K_1,S, O}$ over $S_{K_0,O}$ gives, in particular, sections $z_{i,a}:=z_{e_{i, a}}$,
 $z'_{i,a}:=z_{f_{i, a}}$, $z:=z_c$, of the pull-backs of  $\CM^{-1}_{e_{i, a}}$, $\CM^{-1}_{f_{i, a}}$, $\CM^{-1}_c=\CO_{\CS_{K_0}}$, by $U\to \CS_{K_0,O}$. Note that we have 
 global sections $a_{i,a}:=a_{e_{i,a}}$ of $\CM^{-1}_{L^*(e_{i,a})}$ given as $\delta(i)_{a-1}$ of the construction in the proof. 
 We also have $a'_{i,a}:=a_{f_{i,a}}=w a_{i,a}^{-1}$.
 Condition (i) in Proposition \ref{def:generalconstruction} for $\chi=\eta_{i, a}$ and $\chi=\sum_{a}\eta_{i,a}$  gives, respectively,
 \[
 z_{ i, a-1}^p=a_{i,a}z_{ i, a},\qquad 
 (\prod_{a\in \BZ/d\BZ}z_{i,a})^{p-1}=\prod_{a\in \BZ/d\BZ}a_{i,a}.
 \]
 Similarly, the condition (i) for $\chi=\eta'_{i, a}$ and $\chi=\sum_{a}\eta'_{i, a}$ gives
 \[
{ z'}_{ i, a-1}^p=a'_{i,a}z'_{ i, a}, \qquad (\prod_{a\in \BZ/d\BZ}z'_{i,a})^{p-1}=\prod_{a\in \BZ/d\BZ}a'_{i,a}.
 \]
 We also have 
 \[
 z_{i,a}z'_{i,a}=z,\quad z^{p-1}=w_p,
 \]
 for all $i$, $a$.
 
 These conditions imply that 
$
z_i:=(z_{i, a})_{a\in \BZ/d\BZ}
$
gives a   generator of the Raynaud group scheme $G(i-g)$ and that
$
z'_i:=(z'_{i, a})_{a\in \BZ/d\BZ}
$
gives a  generator of the Raynaud group scheme $G(g+1-i)$, for all $1\leq i\leq g$. The equations  $z_{i,a}z'_{i,a}=z_{j,a}z'_{j,a}$
and $z_{i,a}z'_{i,a}=z_{i,a'}z'_{i,a'}$ again reflect the duality relations. 

\begin{remark}\label{rem:HilbertSiegel}
1) In this case, we can also consider the $\CS_{K_0, O}$-scheme $\CS'_{K_1, O}$ parametrizing generators $z_i$ of all the Raynaud group schemes $G(i)$, for $-g+1\leq i\leq g$, which are compatible with duality in the above sense. This descends to $\CS'_{K_1}$ over $\BZ_p$ and our discussion shows that there is a natural morphism
\[
\CS_{K_1}\to \CS'_{K_1}
\]
over $\CS_{K_0}$. This is an isomorphism when $d=1$. When $d>1$, our calculations suggest that it is not an isomorphism, although the morphism induces an isomorphism over the generic fibers for all $d\geq 1$. The scheme $\CS'_{K_1}$ can also be viewed as obtained 
by the variant of our construction mentioned in Remark \ref{rem:NonNormalvariant}, which uses, instead of $\wt S$, the non-saturated sub-semigroup $\wt S'$ of $\wt S$ which is generated by $L^*(S)$ together with $\eta_{i,a}$ and $\eta'_{i,a}$. Such a variant involves using non-normal torus embeddings and falls outside our basic framework.

2) The morphisms $\wt Y_S\to Y_S$ and $\CS_{K_1, S}\to \CS_{K_0}$  are finite. They are never flat in the Hilbert-Siegel case, unless $g=d=1$.
This follows from work of Altmann \cite{Alt}, see Remark \ref{rem:Altmann}. Calculations show that the same failure of flatness appears to hold for $\CS'_{K_1}\to \CS_{K_0}$ as given in (1) above. In the Siegel case $d=1$,  $\CS_{K_1}= \CS'_{K_1} $ and this non-flatness follows from Corollary \ref{cor:NonFlat}. 

3) Contrary to what is stated in \cite[Rem. 3.10]{SLiu}, the morphism $Y_1\to Y_0$ is not flat (notations of loc. cit.) unless $g=1$.
This already happens for $\RF=\BQ$, i.e. for the Siegel moduli stack, see Remark \ref{rem:badSiegel}.  
 In fact, computer calculations show that the integral model $Y_1$ of \cite{SLiu} is also not flat over $O$. (For $d=1$ this non-flatness was also shown by Marazza \cite{Marazza}.) So Liu's notion of ``Raynaud generator" seems to give no improvement on flatness.
 \end{remark}

  \subsubsection{A larger level subgroup}\label{sss:HSsmaller}
 We continue to assume  that $p$ is inert in the totally real field $\RF$ of degree $d$. 
Instead of the pro-unipotent $K_1$ of the standard Iwahori $K_0=\CG(\BZ_p)$, let us consider the slightly larger subgroup $K$ of $K_0$ given by the inverse image of the subgroup
\[
\{(1,\ldots, 1,\lambda,\ldots, \lambda)\ |\ \lambda\in \BF_p^*\}
\]
of $T_\CG(\BF_p)$ under $K_0=\CG(\BZ_p)\to (\CG\otimes\BF_p)_{\red,\rm ab}=T_\CG(\BF_p)$.  The quotient $K_0/K$ is isomorphic to $\prod_{i=1}^g\BF_{p^d}^*=T(\BF_p)$, where $T=\Res_{O/\BZ_p}\BG_m^g$, while $K/K_1\simeq \BF_p^*$. Note that $T$ can be identified with the direct factor torus $T'_\CG$ of (\ref{dsTorus}).  

Now we can consider a mild variant of our constructions in which $T_\CG$ is replaced by the quotient torus $T$, as also explained 
in Remark \ref{rem:variant}: The image of the Weyl orbit of the Shimura cocharacter $\mu$ under $X_*(T_\CG)\to X_*(T)=\oplus_{i=1}^{dg}\BZ$,
is the set of the $2^{dg}$ cocharacters $x\mapsto (a_1,\ldots, a_{dg})$, where each $a_i$ is $0$ or $1$. The toric scheme $Y_S$ is $\Res_{O/\BZ_p}\BA^g_O$ and we have 
\[
Y_S\otimes O=\Spec(O[e_{i,a}]_{1\leq i\leq g, a\in \BZ/d\BZ}).
\]
An argument as in Example \ref{ex:ramify} shows that the normalization of the Lang cover $\wt Y_S\otimes O$ is given by the spectrum of 
\[
 O[e_{i,a}, u_{i,a}] /((u_{i,a-1}^p-e_{i, a}u_{i, a})_{i, a}, ((\prod_{a} u_{i,a})^{p-1}-\prod_{a} e_{i,a})_i).
\]
In this case,  $L: \wt Y_S\to Y_S$ is flat by miracle flatness.  The construction in the proof of Theorem \ref{thm:PEL} again applies to give a corresponding integral model $\CS_K$ over $\CS_{K_0}$ which satisfies Conjecture \ref{globconj} for the quotient $T$. Here,   $\CS_{K, O}$  
 is the moduli scheme classifying generators of the kernels of the universal $O_\RF$-linear isogenies $A_{i-1}\to A_i$, for $-g+1\leq i\leq 0$,
 over $\CS_{K_0, O}$; these kernels are Raynaud group schemes. In this case, $\CS_{K}\to \CS_{K_0}$ is finite and flat. Hence, $\CS_K$ is flat over 
 $\BZ_p$. 
 
 \begin{remark}\label{remarkHB}
 a) When $g=1$ (the Hilbert-Blumenthal case), the above moduli scheme and corresponding integral model $\CS_{K}$ was constructed in \cite{P95}.
 
 b) When $d=1$ (the Siegel case) and $g>1$, the above moduli scheme and corresponding integral model $\CS_{K}$ was constructed and discussed in \cite[\S 4.1]{Shadrach}. Shadrach proceeds to construct a resolution of $\CS_K$ when $g=2$.
\end{remark}

\subsection{Unitary groups: The unramified (quasi-split)  case}

Take $B=\RK\subset \BC$, an imaginary quadratic number field, $\RV=\oplus_{i=1}^n \RK\cdot e_i$, $*=$ complex conjugation. 
Fix an element $\alpha\in \RK$ such that $\bar\alpha=-\alpha$. Let $\psi: \RV\times \RV\to \RK$ be a non-degenerate hermitian form and take $(v, w)={\rm Tr}_{\RK/\BQ}(\alpha \psi(v,w))$ for the corresponding perfect alternating $\BQ$-bilinear form. Assume that the $\BC$-hermitian form $\psi\otimes_\RK\BC$ has signature $(r,s)$ with $1\leq r,s\leq n$ and fix a basis of $\RV\otimes_\RK\BC$ for which the matrix of $\psi\otimes_\RK\BC$ is $\diag( 1^{(r)}, (-1)^{(s)})$. Then $\RG$ is a unitary similitude group:
\[
\RG(R)=\{g\in \GL(\RV\otimes_\BQ R)\ |\ \psi(gv,gw)=c(g)\psi(v,w), c(g)\in R^*\},
\]
with $\RG_\BR\simeq {\rm GU}(r,s)$, and we  take the corresponding cocharacter $h: \BS\to {\rm GU}(r,s)$ as in \S \ref{sss:PELdata}  given by $h(\sqrt{-1})=\diag(\sqrt{-1}^{(r)}, -\sqrt{-1}^{(s)})$.  

Now suppose $p$ is an odd prime which is unramified in $\RK$ and assume $\alpha$ is a unit at $p$.

 \subsubsection{Split primes}\label{sss:split} Assume that $p$ splits in $\RK$ so that $\RK\otimes_\BQ\BQ_p=\BQ_p\times \BQ_p$, $V=\RV\otimes_\BQ\BQ_p=W\oplus W^\vee$. Then we are in case (I) of \S\ref{sss:prelim} and $G=\RG\otimes_\BQ\BQ_p=\GL(W) \times \BG_m$. Choosing a complete periodic $\BZ_p$-lattice chain in $W$ determines an Iwahori  subgroup of $G(\BQ_p)$. Since (IW) and (S) are satisfied Theorem \ref{thm:PEL} applies. The relevant Shimura varieties have level the 
pro-unipotent radical of an Iwahori of $G(\BQ_p)$, i.e. the level subgroup $K_1\times \{a\in \BZ_p^*\ |\ a\equiv 1\mod p\}\subset G(\BQ_p)$
where $K_1$ is the pro-unipotent radical of an Iwahori of $\GL_n(\BQ_p)$.  We obtain integral models related to toric stacks for 
$\GL_n\times\BG_m$. 

Note that in this situation $X^*(T_\CG)^{\mu, 0}\neq 0$ and so the toric schemes have a toric factor.
Although the corresponding local models are the same, the toric schemes for $\GL_n\times\BG_m$ are  different from the corresponding toric schemes for $\GL_n$;
they are obtained by inducing the ones for $\GL_n$. (For these, see Examples \S\ref{ss:Exam} (1)-(4)). The toric schemes for $\GL_n$ directly relate   to Shimura varieties with level subgroup $K=K_1\times \BZ_p^*$, where $K_1$ is the pro-unipotent radical of an Iwahori of $\GL_n(\BQ_p)$. This follows by a straightforward extension of Theorem \ref{thm:PEL} to this situation. (Instead of taking $T_\CG\simeq \BG^n_m\times\BG_m$ we consider the factor torus $T=T'_\CG=\BG^n_m$.) 
It is instructive to explain the case $r=1$. Then there is only one choice for the semigroup $S$ and the corresponding $T$-toric scheme is $Y=\BA^n_{\BZ_p}=\Spec(\BZ_p[t_1,\ldots, t_n])$. Here we have the well-known semi-stable local model for $\GL_n$, for Iwahori level and   coweight $\mu=(1^{(1)}, 0^{(n-1)})$. So $\Mloc_{\CG,\mu}$ is  the ``Deligne scheme" of linked projective spaces $\BP(\Lambda)$ for a complete $\BZ_p$-lattice chain $\{\Lambda\}$  in $\BQ_p^n$. We can pick an open 
\[
U=\BZ_p[x_1, \ldots , x_n]/(x_1 \cdots x_n-p)\subset \Mloc_{\CG,\mu} 
\]
and choose the coordinates such that the restriction to $U$ of the divisor morphism  is   defined by the ring homomorphism
\[
\BZ_p[t_1,\ldots, t_n ] \to \BZ_p[x_1, \ldots , x_d]/(x_1\cdots x_n-p); \quad t_i\mapsto x_i.
\]
The corresponding chart $U\times_Y\wt Y$ of the stacky local model $\Mloc_{\CG,\mu}^{\sqrt{}}$ for $\CS_{K,S}$ is given, as a $\Mloc_{\CG,\mu}$-scheme, as 
\begin{equation}\label{dr-eq}
 \Spec(\BZ_p[u_1,\ldots, u_n]/(u^{p-1}_1\cdots u^{p-1}_n -p)), \quad x_i\mapsto u^{p-1}_i, \quad i=1,\ldots, n.
\end{equation} 
 
When $r>1$ there is a choice for $S$. The integral models $\CS_{K, S}$ of these Shimura varieties, when we choose the free semigroup $S$ (as in Example \S\ref{ss:Exam}, Case 4), have been studied  in \cite{HR}, \cite{HLS} and by  Shadrach \cite{Shadrach} and Marazza \cite{Marazza}. For this level $K$ and semigroup $S$, the integral model $\CS_{K, S}$ can be given 
as a moduli scheme parametrizing generators of the kernels of the universal isogenies; these are Oort--Tate group schemes. 
Apparently, the models for the choice $S=S_{\CG,\mu}$ that we regard as canonical here have not been considered before.

\subsubsection{Inert primes} Assume that $p$ remains prime in $\RK$ and set $K=\RK\otimes_\BQ\BQ_p$. Suppose that there is a $K$-basis of $V$ such that $\psi(e_i, e_{n+1-j})=\delta_{ij}$, i.e. the hermitian form $\psi$ is split at $p$. Consider the
complete standard periodic $O=O_K$-lattice chain $\mathscr L=\{\Lambda_i\}_{i\in \BZ}$ in $V=\RV\otimes_\BQ\BQ_p$ determined by
\[
\Lambda_{-i}=(pe_1,\ldots, pe_i, e_{i+1},\ldots ,e_n),
\]
for $0\leq i\leq n$, and $p^k\Lambda_i=\Lambda_{i-kn}$. This chain is also self-dual for $\psi$ and the form $(\ ,\ )$.
Condition (IW) is obviously satisfied and the group scheme $\CG={\rm Aut}_{O_K,\psi}(\mathscr L)$ has connected fibers.
Set $k=O/(p)\simeq\BF_{p^2}$.
We split into two cases:
 \medskip

\noindent\emph{(A)\ $n=2m+1$ is odd.} Then we have
\[
T_\CG=\{(a_1,\ldots, a_m, b, b\bar b\bar a_m^{-1},\ldots, b\bar b\bar a_1^{-1})\ |\ a_i, b\in O^*\}\simeq \prod_{i=1}^m \Res_{O/\BZ_p}\BG_m\times \Res_{O/\BZ_p}\BG_m.
\]
In this case,
\[
T_\CG\otimes\BF_p\hookrightarrow {\rm Aut}_{k}(\Lambda_0/\Lambda_{-1})\times\cdots \times {\rm Aut}_{k}(\Lambda_{-n+1}/\Lambda_{-n})
\]
and projecting to the first $m+1$ factors  
gives an isomorphism
\[
T_\CG\otimes\BF_p\xrightarrow{\sim} {\rm Aut}_{k}(\Lambda_0/\Lambda_{-1})\times\cdots \times {\rm Aut}_{k}(\Lambda_{-m}/\Lambda_{-m-1}).
\]
So (\ref{star1}) and hence (S) is satisfied.

\medskip

\noindent\emph{(B)\ $n=2m$ is even.} 
Then we have
\[
T_\CG=\{(a_1,\ldots, a_m, c\bar a_m^{-1},\ldots, c\bar a_1^{-1})\ |\ a_i \in O^*, c\in \BZ_p^*\}\simeq \prod_{i=1}^m \Res_{O/\BZ_p}\BG_m\times  \BG_m.
\]
Again
\[
T_\CG\otimes\BF_p\hookrightarrow {\rm Aut}_{k}(\Lambda_0/\Lambda_{-1})\times\cdots \times {\rm Aut}_{k}(\Lambda_{-n+1}/\Lambda_{-n}).
\]
Now projecting to the first $m$ factors  and taking the similitude $c$
gives an isomorphism
\[
T_\CG\otimes\BF_p\xrightarrow{\sim} {\rm Aut}_{k}(\Lambda_0/\Lambda_{-1})\times\cdots \times {\rm Aut}_{k}(\Lambda_{-m+1}/\Lambda_{-m})\times \BG_m.
\]
So (\ref{star2}) and hence (S) is satisfied.
\medskip

\subsubsection{The case $n=3$} For a  concrete example, choose $n=3$ and $(r,s)=(1,2)$  in the above. Write $X_*(T_\CG)=\BZ[\tau]\oplus \BZ[\tau]$, with $\tau$ the Galois involution.
It is convenient to use the isomorphism 
\[
\iota: X_*(T_\CG)=\BZ[\tau]\times \BZ[\tau]\xrightarrow{\sim} \BZ^3\times \BZ
\]
 given by
$
\iota (a_1+a_2\tau, b_1+b_2\tau)= (a_1, b_1, b_1+b_2-a_2, b_1+b_2).
$

The Weyl orbit of $\mu$ in $X_*(T_\CG)=\BZ^3\times\BZ$ is $(1,0,0,1)$, $(0,1,0,1)$, $(0,0,1,1)$, spanning the cone $\sigma=\sigma_{\CG,\mu}$.
We now discuss the toric schemes $Y_S$ and $\wt Y_S$ for $S=S_{\CG,\mu}$. We have
\[
Y_S=\Spec(O[t_1, t_2, t_3, \frac{t}{t_1t_2t_3}, \frac{t_1t_2t_3}{t}])\supset \BG_m^3\times\BG_m,
\]
with $(t_1,t_2,t_3,t)$ the   coordinates on $\BG_m^3\times\BG_m$. Since $Y_S$ is regular, the Lang cover $L: \wt Y_S\to Y_S$
is flat but the normalization $ \wt Y_S$ is complicated:
 
The Weyl orbit of $\mu$ in the coordinates $X_*(T_\CG)=\BZ^2\times\BZ^2=\BZ^4$ is $(1,1,0,1)$, $(0,1,0,1)$, $ (0,0,0,1)$.
Now $L_*: X_*(T_\CG)=\BZ^4\to X_*(T_\CG)=\BZ^4$ is given by multiplication by the matrix
\[
A=\begin{pmatrix}
-1&p&0&0\\
p&-1&0&0\\
0&0&-1&p\\
0&0&p&-1
\end{pmatrix}.
\]
The cone $(L_*)^{-1}(\sigma)$ is spanned by $A^{-1}\cdot (1,0,0,1)^t$, $A^{-1}\cdot (0,1,0,1)^t$, $A^{-1}\cdot (0,0,1,1)^t$. So,
the spanning rays of $(L_*)^{-1}(\sigma)$ are the rows of 
\[
\begin{pmatrix}
p+1&p&0\\
p+1&1&0\\
p&1&p\\
1&p&1
\end{pmatrix}.
\]
We have $\wt S=((L_*)^{-1}(\sigma))^\vee\cap \BZ^4$ but an explicit presentation of $\wt S$ depends on $p$ and is hard to pin down. Nevertheless, we can easily see that the unit vectors $e_1=(1,0,0,0)$, $e_2=(0,1,0,0)$, $e_3=(0,0,1,0)$, $e_4=(0,0,0,1)$ in $X^*(T_\CG)=\BZ^4$ belong
 to the dual cone $((L_*)^{-1}(\sigma))^\vee$ and so they belong to $\wt S$. These $4$ elements provide
 the coordinates of the Raynaud generators of the kernels of the two universal isogenies corresponding to $\Lambda_{-2}\to \Lambda_{-1}$ and $\Lambda_{-1}\to \Lambda_0$. However, the semigroup $\wt S$  requires more generators than $L^*(S)\cup\{e_1,e_2,e_3,e_4\}$. So, again, we do not have a straightforward moduli scheme description in terms of generators of kernels of the universal isogenies.

 \subsection{``Fake" unitary groups and the Drinfeld case}\label{ss:fak}

 Let $\RB$ be a central division algebra of degree $d^2$ over  an imaginary quadratic field $\RK$ and $*$ a positive involution (of the second kind) on $\RB$. We fix embeddings $\RK\subset \ov\BQ\subset \BC$. We also fix a left $\RB$-module $\RV$ of rank $1$ and an alternating non-degenerate $\BQ$-bilinear form $(\ ,\ )$ on $\RV$, as in \S \ref{sss:PELdata}.  We choose an isomorphism of $\BC$-algebras $\RB\otimes_K\BC\simeq \RM_{d\times d}(\BC)$ such that the involution is written as $X\mapsto ^t\!\!\!\bar X$. We may write $\RV\otimes_K\BC=\BC^d\otimes_\BC W$, where the action of $\RB\otimes_K\BC$ is via the first factor. We assume that the form $(\,,\, )$ is given by
 \[
 (Z_1\otimes X_1, Z_2\otimes X_2)=\tr_{\BC/\BR}(^t\!\bar Z_1 Z_2\, h(W_1, W_2)), \quad Z_1, Z_2\in\BC^d, X_1, X_2\in W .
 \]
Here, $h$ is an anti-hermitian form  on $W$ which is given, after choosing an isomorphism $W\simeq \BC^d$, by
 \[
 h(W_1, W_2)=^t\bar W_1 H W_2,
 \]
 with
 \[
 H=\diag(\sqrt{-1}^{(1)},-\sqrt{-1}^{(d-1)}) .
 \] 
This defines the reductive group $\RG$ over $\BQ$  with Deligne cocharacter $h: \Res_{\BC/\BR}\BG_m\to \RG_\BR$, see \cite[\S 6.37]{RZbook}.
 In loc.~cit., $\RG$ is called a  ``fake unitary similitude group". The real group $\RG_\BR$ is a   group of unitary similitudes, 
\[
\RG_\BR={\rm GU}(1,d-1)=\{A\in \RM_{d\times d}(\BC)\ |\ {}^t\bar AHA=c(A)H,\quad c(A)\in \BR^*\}. 
\]

We are now in the situation of  \S \ref{sss:PELdata}. The reflex field of the  Shimura datum $(\RG, X)$ is $\RE=\RK$ if $d>2$ and $\RE=\BQ$ if $d=2$. 

As in \cite[\S 6.38-6.40]{RZbook}, we also choose an odd prime $p$ which splits 
 $pO_\RK=\fkp_1\fkp_2$ in $\RK$ and we  fix an embedding $\ov\BQ\subset \ov\BQ_p$.
 This last choice also picks a prime of $\RK$ over $p$ and we can assume this is $\fkp_1$. We assume that 
 \[
 \RB\otimes_{\BQ}\BQ_p=
 (\RB\otimes_\RK \RK_{\fkp_1})\times  (\RB\otimes_\RK \RK_{\fkp_2})=D\times D^{\rm opp},
 \]
 where $D$ is a division algebra over $\RK_{\fkp_1}=\BQ_p$ with Hasse invariant $1/d$ (then $D^{\rm opp}$ has Hasse invariant $(d-1)/d$).
Then
 \[
 G=\RG\otimes_{\BQ}\BQ_p\simeq D^*\times\BQ_p^*.
 \]
 Next, we choose an order $O_\RB$ which is stable under the involution $*$ and 
 which is maximal at $p$. This situation at $p$ falls under case (I) above with $\mathscr L$
  the (unique) periodic self-dual $O_\RB\otimes\BZ_p=O_{D}\times O_D^{\rm opp}$-lattice multichain determined by the $O_D$-lattice chain 
 given by $\{\Pi^iO_D\}_{i\in \BZ}$. The corresponding stabilizer group $\CG(\BZ_p)={\rm Aut}_{O_B,(\ ,\ )}(\mathscr L)(\BZ_p)$ corresponds to the unique Iwahori subgroup $K_0\simeq O_D^*\times\BZ_p^*$ of $D^*$. 
 
  Let us assume $d\geq 3$ so that $\RE=\RK$.  We are interested in integral models of the Shimura variety for $(\RG, X)$ over the prime $\fkp_1$ of the reflex field $\RE=\RK$. This is a situation in which, for the unique choice of maximal compact level subgroup at $p$ given by $K_0=\CG(\BZ_p)\simeq O_D^*\times\BZ_p^*$, we have $p$-adic uniformization of the corresponding rigid analytic variety by the integral model Drinfeld's $p$-adic symmetric domain for the field $\BQ_p$, $\wh \Omega^d=\wh \Omega^d_{\BQ_p}$. In this case, $K_1\simeq O^*_{D,1}\times \{a\in \BZ_p\ |\ a\equiv 1\mod p\}$, where $O_{D,1}^*$ is the subgroup of elements in $O_D$ with norm $1$.
 
The residue field $k_D=O_D/\Pi O_D$ has degree $d$ over $\BF_p$ and we set  $O=W(k_D)$ as before.
We can easily see   
 \[
 T_\CG=(\Res_{O/\BZ_p}\BG_m)\times\BG_m,\qquad T'_\CG=\Res_{O/\BZ_p}\BG_m,
 \]
 and that conditions (IW) and (S) are satisfied.
The Weyl orbit $\Lambda_{\{\mu\}}$   in $X_*(T_\CG)\simeq \BZ^d\times\BZ$ is given by the $d$ vectors $(1,0,\ldots, 0,1)$, $(0,1,\ldots, 0,1),\ldots, (0,0,\ldots, 1,1)$ and we can see that 
\[
Y\otimes_{\BZ_p} O=Y_{\CG,\mu}\otimes_{\BZ_p} O=\Spec(O[t_1,\ldots, t_d, \frac{t}{t_1\cdots t_d}, \frac{t_1\cdots t_d}{t}]).
\]
A small variation of the argument in \S \ref{ex:ramify} shows that the normalization $\wt Y\otimes_{\BZ_p} O$ of the Lang cover is the spectrum of the quotient of
\[
 O[(t_a)_{a\in \BZ/d\BZ},  (u_a)_{a\in \BZ/d\BZ}, \frac{t}{t_1\cdots t_d}, \frac{t_1\cdots t_d}{t}, 
 \frac{u}{u_1\cdots u_d}, \frac{u_1\cdots u_d}{u}]
\]
by the ideal generated by
\[
u_a^p-t_{a+1}u_{a+1}, \quad a\in \BZ/d\BZ,
\]
\[
(u_1\cdots u_d)^{p-1}-t_1\cdots t_d, 
\]
\[
   \left(\frac{u_1\cdots u_d}{u}\right)^{p-1}-\frac{t_1\cdots t_d}{t}, \qquad  \left(\frac{u}{u_1\cdots u_d}\right)^{p-1}-\frac{t}{t_1\cdots t_d}.
\]
Similarly, we see that $L: \wt Y\otimes_{\BZ_p} O\to Y\otimes_{\BZ_p}O$ is flat.

In this case, the local model $\Mloc_{\CG,\mu}$ has semi-stable reduction: 
Indeed, the base change $\Mloc_{\CG,\mu}\otimes_{\BZ_p}O$ agrees with the base change to $O$ of the semi-stable local model for $\GL_d$, Iwahori level and   coweight $\mu=(1^{(1)}, 0^{(d-1)})$ as in \S \ref{sss:split} above.
We can pick an open 
\[
U=O[x_1, \ldots , x_d]/(x_1\cdots x_d-p)\subset \Mloc_{\CG,\mu}\otimes_{\BZ_p}O
\]
over which the $T_\CG$-torsor $\RP_{\CG,\mu}$ is trivial, and choose the coordinates so that the restriction to $U$ of the divisor morphism  is given by
$U\to Y\otimes_{\BZ_p} O $ defined by the homomorphism
\[
O[t_1,\ldots, t_d, \frac{t}{t_1\cdots t_d}, \frac{t_1\cdots t_d}{t}] \to O[x_1, \ldots , x_d]/(x_1\cdots x_d-p); \quad t_i\mapsto x_i, \ t\mapsto p.
\]

Hence, the chart $U\times_Y \wt Y$ of the root stack $\Mloc_{\CG,\mu}^{\sqrt{}}\otimes_{\BZ_p} O$ is
given by the spectrum of
\[
 O[x_1, \ldots , x_d, u_1,\ldots, u_d, u ]/((u_a^p-x_{a+1}u_{a+1})_{a\in \BZ/d\BZ}, (u_1\cdots u_d)^{p-1}-p, \left(\frac{u}{u_1\cdots u_d}\right)^{p-1}-1).
\]
This decomposes as the direct product
\[
\prod_{\lambda\in \BF_p^*} O[x_1, \ldots , x_d, u_1,\ldots, u_d ]/((u_a^p-x_{a+1}u_{a+1})_{a\in \BZ/d\BZ}, (u_1\cdots u_d)^{p-1}-p).
\]
where the factors correspond to $\lambda\in \BF_p^*$ for which 
\[
\frac{u}{u_1\cdots u_d}=[\lambda],
\]
with $[\lambda]$ the Teichmuller lift.

Using specialization of the cover of \S \ref{ex:ramify} along the divisor $x_1\cdots x_d=p$, we see that the $k^*_D\times \BF_p^*$-cover
\[
U\times_Y \wt Y\to U
\]
obtained by pulling back the Lang cover $L: \wt Y\to Y$ is finite and flat.  Hence, $U\times_Y \wt Y$ is Cohen-Macaulay and then, by Serre's criterion, normal. It follows that $\Mloc_{\CG,\mu}^{\sqrt{}}\to \Mloc_{\CG,\mu}$ is finite and flat and $\Mloc_{\CG,\mu}^{\sqrt{}}$ is normal.

Theorem \ref{thm:PEL} applies to this case and provides an integral model $\CS_{K_1}$ with $\CS_{K_1}\to \CS_{K_0}$ which extends the tame 
Galois cover 
\[
{\rm Sh}_{\RK_1}(\RG, X)\to {\rm Sh}_{\RK_0}(\RG, X)
\]
 with covering group $k^*_D\times \BF_p^*\simeq \BF_{p^d}^*\times\BF_p^*$. 
 
We can see from the proof and the above description of $Y$ and $\wt Y$ that 
$\CS_{K_1,O}$ can be also given directly as the moduli scheme over $\CS_{K_0, O}$ which parametrizes:
\begin{itemize}
\item[1)] A  generator for the Raynaud group scheme
$A^{\rm univ}[\Pi]$, where $A^{\rm univ}$ is the universal $p$-divisible group with $O_D$-action over $\CS_{K_0,O}$.

\item[2)]  A $p-1$-st root of $w_p$.
\end{itemize}

By the above,  $\CS_{K_1}$ is normal and $\CS_{K_1}\to \CS_{K_0}$ is finite and flat.
 \begin{remark}\label{rem:vanH}
The fact that $A^{\rm univ}[\Pi]$ is a Raynaud group scheme is established here as a consequence of the condition (IW) using the flatness of $\CS_{K_0}$ over $O_K$, comp. \S \ref{sss:pf712}. A direct proof is in \cite[Prop. 1.2]{Van}. 
\end{remark}
\begin{remark}\label{rem:nomult}
A small variation of the above, cf. Remark \ref{rem:variant}, gives information for the cover $
{\rm Sh}_{\RK}(\RG, X)\to {\rm Sh}_{\RK_0}(\RG, X)$ where $\RK=K\cdot K^p$ with level subgroup $K$ at $p$ given by $O^*_{D,1}\times \BZ_p^*$.
Then $\CS_{K, O}$ is the moduli scheme which just parametrizes the datum (1) above, i.e. a generator for the Raynaud group scheme
$A^{\rm univ}[\Pi]$. A corresponding local model is given by
\[
 O[x_1, \ldots , x_d, u_1,\ldots, u_d ]/((u_a^p-x_{a+1}u_{a+1})_{a\in \BZ/d\BZ}, (u_1\cdots u_d)^{p-1}-p).
\]

\end{remark}

 \VE

\section{Local Shimura varieties}
In this section, we discuss an analogue of the global set-up for integral local Shimura varieties \cite{SWberkeley}, comp. also \cite{PRint}.

\subsection{ The conjecture}

Consider and fix a local Shimura datum $(G,\{\mu\}, b)$ over $\BQ_p$ as in \cite{SWberkeley}, \cite{PRint}. Let $\CG$ be a quasi-parahoric group scheme for $G$ in the sense of Scholze-Weinstein, cf. \cite[\S 2.2]{PRint}. 
Let $E=E(G,\{\mu\})$ be the reflex field, a finite extension of $\BQ_p$ contained in a fixed algebraic closure $\bar\BQ_p$ of $\BQ_p$, and denote by $O_E$ its ring of integers.

For any  compact open subgroup $K\subset G(\BQ_p)$, Scholze-Weinstein \cite[\S 24.1.3]{SWberkeley} define a  $v$-sheaf   ${\rm Sht}_{K}(G,\mu, b)$ over $\Spd(\br E)$ and prove that this is a \emph{diamond} which is representable by a uniquely determined smooth rigid analytic space  over $\br E$. We denote this rigid-analytic space by the same symbol. It comes with a Weil descent datum down to $E$, cf. \cite[\S 3.1, \S 3.2]{PRg}. 
By definition, ${\rm Sht}_{K}:={\rm Sht}_{K}(G,\mu, b)$  is the \emph{local Shimura variety} for $(G,\{\mu\}, b)$ and level $K$.

In  \cite[Def. 25.1.1]{SWberkeley} a  $v$-sheaf $\CM^{\rm int}_{\CG,\mu, b}$ over $\Spd(O_{\br E})$ is defined. When $\CG$ is parahoric (and not only quasi-parahoric), its   generic fiber 
  is ${\rm Sht}_{\CG(\BZ_p)}(G,\mu, b)$. Scholze conjectures that this $v$-sheaf is representable by a formal scheme over $\Spf(O_{\br E})$ which is normal and flat locally of finite type. We will assume this conjecture and denote by the same symbol $\CM^{\rm int}_{\CG,\mu, b}$ this formal scheme (it is uniquely determined).  The conjecture is known, e.g.,  if $p\neq 2$ and $(G, b, \mu)$ is of abelian type, cf. \cite[Thm. 2.5.4]{PRint}. In particular, if $(G, b, \mu, \CG)$ comes from (local) EL data $(B, V, \{\mu\},b, O_B, \CL)$ or PEL data $(B, V, (\, , \, ), *, \{\mu\}, b,  O_B, \CL)$ in the sense of \cite{RZbook}, then the formal scheme  $\CM^{\rm int}_{\CG,\mu, b}$ is given by the corresponding Rapoport-Zink formal scheme (in its non-naive version, obtained by flat closure from its naive version). When $\CG$ is parahoric, 
  the rigid generic fiber of $\CM^{\rm int}_{\CG,\mu, b}$ is ${\rm Sht}_{\CG(\BZ_p)}(G,\mu, b)$.
  
  It is also conjectured in general that there is a local model diagram. In order to simplify the notation, we denote by the same symbol $\Mloc_{\CG,\mu}$ the $p$-adic completion $\wh\Mloc_{\CG,\mu}$ of the local model, and by $[\CG\bs  \Mloc_{\CG,\mu}]$ the $p$-adic completion  $[\wh \CG\bs \wh \Mloc_{\CG,\mu}]$ of the stack quotient. Then  the local model map is a formally smooth map of formal algebraic stacks,
\begin{equation}
\CM^{\rm int}_{\CG,\mu, b}\xrightarrow{\ \varphi\ } [\CG\bs \Mloc_{\CG,\mu}].
\end{equation}
The existence of the local model map is known at least in the (P)EL-case, cf. \cite{RZbook}.  Let us assume its existence and, in addition, that $(\CG,\{\mu\})$ is strictly convex, see \S \ref{ss:nondegeneracy}.  
We also assume the validity of the divisor conjecture (Conjecture \ref{divconj}).

Now all the ingredients used in the global case are in place in the local case, and we can formulate the local analogue of Conjecture \ref{globconj}. We assume that $\CG$ is a  parahoric,  and set $K_0=\CG(\BZ_p)$. 
Let $K_1$ be  the kernel  of the composition of  homomorphisms
\[
 \CG(\BZ_p)\to \ov\CG(\BF_p)\to \ov\CG_{\red, \rm ab}(\BF_p)
\]
 and, as in \S \ref{ss:211}, set $T_\CG$ for the torus over $\BZ_p$ which lifts the torus $\ov\CG_{\red, \rm ab}$ over $\BF_p$.

\begin{conjecture}\label{localconj} Choose a semigroup $S\subset S_{\CG,\mu}\subset X^*(T_\CG)$ as in \S \ref{ss:generalsemigroups}.  
 There exists a formal scheme $\CM^{\rm int}_{K_1, S}$ over $\Spf(O_{\br E})$ with rigid analytic generic fiber  the local Shimura variety 
 ${\rm Sht}_{K_1}:={\rm Sht}_{K_1}(G,\mu, b)$ which has the following properties:
 
 \begin{itemize}
\item[i)] There is a morphism 
\[
\CM^{\rm int}_{K_1, S}\to \mathcal \CM^{\rm int}_{K_0}
\] 
which extends $\pi: {\rm Sht}_{K_1} \to {\rm Sht}_{K_0} $ in the generic fiber.
 
 \item[ii)] The action of $T_\CG(\BF_p)$ on ${\rm Sht}_{K_1}$ extends to $\CM^{\rm int}_{K_1, S}$ and the morphism 
 $\CM^{\rm int}_{K_1, S}\to \mathcal \CM^{\rm int}_{K_0}$ of (i)
  identifies $\CM^{\rm int}_{K_0}$ with the formal scheme quotient $T_\CG(\BF_p)\bs \CM^{\rm int}_{K_1, S}$.

  \item[iii)] There is a morphism $\varphi_{1, S}: \CM^{\rm int}_{K_1, S}\to [\CG\bs \Mloc^{\sqrt{S}}_{\CG,\mu}]$ which fits  into a $2$-commutative diagram
  \begin{equation}\label{CDlocalconj}
\begin{aligned}
 \xymatrix{
     \CM^{\rm int}_{K_1, S}\ \ar[r]^{\varphi_{1,S}} \ar[d]  &  [\CG\bs \Mloc^{\sqrt{S}}_{\CG,\mu}] \ar[d] \\
       \CM^{\rm int}_{K_0}\ar[r]^{\varphi}  & [\CG\bs \Mloc_{\CG,\mu}],
        }
        \end{aligned}
\end{equation}
 and which induces
   an isomorphism of formal stacks 
 \[
 [T_\CG(\BF_p)\bs \CM^{\rm int}_{K_1, S}]\xrightarrow{\sim}  \CM^{\rm int}_{K_0}\times_{[\CG\bs \Mloc_{\CG,\mu}]}[\CG\bs \Mloc^{\sqrt{S}}_{\CG,\mu}].
 \]
 \end{itemize}
 \end{conjecture}

 \begin{remark}
 \begin{altenumerate}
 \item As in the global case, there is a variant of this conjecture 
   for  covers 
  \[
 \pi: {\rm Sht}_{K} \to {\rm Sht}_{K_0} 
  \]
  for intermediate  subgroups 
  $
  K_1\subset K\subset K_0 ,
 $
  where $K$ is the inverse image of $\ov Q(\BF_p)$ under the map $K_0=\CG(\BZ_p)\to \ov\CG_{\red, \rm ab}(\BF_p)$, for some subtorus $\ov Q \subset \ov\CG_{\red, \rm ab}$, cf. Remark \ref{rem:variant}. 
  \item The Weil descent datum on $\CM^{\rm int}_{K_0}$ should lift to a Weil descent datum on $\CM^{\rm int}_{K_1, S}$.
  
  \item The theory of non-archimedean uniformization (Rapoport-Zink uniformization) should give a compatibility between the global Conjecture \ref{globconj} and the local Conjecture \ref{localconj}. 
 \end{altenumerate}
  \end{remark} 

\subsection{Examples}\label{ss:localex}
There is a local analogue of Theorem \ref{thm:PEL} in the (P)EL case. Rather than formulating this precisely, we end by discussing  three examples. In these examples,  the parahoric is an Iwahori and the reflex field is $\BQ_p$. It turns out that in all these examples, except the last one, $\CM^{\rm int}_{K_1, S_\mu}$ can be obtained from $\CM^{\rm int}_{K_0}$  by adding a moduli datum explicitly in terms of Oort--Tate--Raynaud theory. These three examples are essentially the only ones of (P)EL type when the formal scheme $\CM^{\rm int}_{\CG,\mu,b}$ (with its Weil descent datum) are explicitly known. 

\subsubsection{The Lubin-Tate case}
This is the EL case for $G=\GL_n$, and $ \{\mu\}=(1, 0, \ldots, 0)$, and where $b$ is a representative of the unique basic element of $B(G, \mu^{-1})$ and  $\CG$ is the standard Iwahori.  It is the RZ space of complete periodic chains of isogenies of degree $p$ of formal $p$-divisible groups of height $n$ and dimension $1$ over schemes $S$ over $\Spf(\br\BZ_p)$,
\[
X_0\to X_1\to\cdots\to X_{n-1}\to X_0 ,
\]
together with a framing (quasi-isogeny) $\rho\colon  \bar X_0\to \BX\otimes_{\Spec(\bar\BF_p)}\bar S$. Here $\BX$ denotes a one-dimensional isoclinic formal group of height $n$ over $\bar\BF_p$. Such a framing object $\BX$ is unique up to quasi-isogeny.

In this case, $\CM^{\rm int}_{\CG,\mu,b}$  is isomorphic  to 
\begin{equation}
\Spf(\breve\BZ_p[[t_1,\ldots,t_n]]/(t_1t_2\cdots t_n-p))\times\BZ,
\end{equation}
with its descent datum down to $\Spf(\BZ_p)$ given by the product of the natural descent datum on the first factor and the translation by $1$ on the second factor, cf. \cite[3.78]{RZbook}, \cite[I.1.4]{Farguesi}.

In this case $Y_{\CG, \mu}$ is smooth. The covering $\CM^{\rm int}_{K_1, S_\mu}$ represents the functor over $\CM^{\rm int}_{K_0}$, where a Oort--Tate generator of the group scheme $G_i=\ker(X_i\to X_{i+1})$ 
is added to the moduli data, for $i=0,\ldots, n-1$, comp. \S \ref{sss:split}. The moduli scheme $\CM^{\rm int}_{K_1, S_\mu}$ is isomorphic to 
\begin{equation}
\Spf(\breve\BZ_p[[u_1,\ldots,u_n]]/(u^{p-1}_1u^{p-1}_2\cdots u^{p-1}_n-p))\times\BZ,
\end{equation}
and the map $\CM^{\rm int}_{K_1, S_\mu}\to \CM^{\rm int}_{K_0}$ is given by $t_i\mapsto u^{p-1}_i$, comp. \eqref{dr-eq}. Hence $\CM^{\rm int}_{K_1, S_\mu}$ is regular and the map $\CM^{\rm int}_{K_1, S_\mu}\to \CM^{\rm int}_{K_0}$ is finite flat. 

\subsubsection{The Drinfeld case}
This is the EL case for $G=D^*$ (the multiplicative group of the central division algebra $D$ over $\BQ_p$ with invariant $1/n$), where $\{\mu\}=(1, 0, \ldots, 0)$,  and where $b$ is a representative of the unique  element of $B(G, \mu^{-1})$ (which is basic), and  $\CG$ is the unique  Iwahori  (then $\CG(\BZ_p)=O_D^*$). It is the RZ space of \emph{special formal} $O_D$-modules $(X, \iota:O_D\to \End(X))$  over schemes $S$ over $\Spf(\br\BZ_p)$, 
together with a framing  $\rho\colon  \bar X\to \BX\otimes_{\Spec(\bar\BF_p)}\bar S$. Here $\BX$ denotes a special formal $O_D$-module  over
 $\bar\BF_p$. Such a framing object $\BX$ is unique up to quasi-isogeny.

In this case, $\CM^{\rm int}_{\CG,\mu,b}$  is isomorphic  to 
\begin{equation}
\big(\wh\Omega^n_{\BQ_p}\times_{\Spf(\BZ_p)}\Spf(\breve\BZ_p)\big)\times\BZ, 
\end{equation}
with its descent datum down to $\Spf(\BZ_p)$ given by the product of the natural descent datum on the first factor and the translation by $1$ on the second factor. Here $\wh\Omega^n_{\BQ_p}$ denotes formal Drinfeld space of dimension $n$ relative to the local field $\BQ_p$, cf. \cite[Thm. 3.72]{RZbook}.

Again $Y_{\CG, \mu}$ is smooth. 
The covering $\CM^{\rm int}_{K_1, S_\mu}$ represents the functor over $\CM^{\rm int}_{K_0}$, where a generator of the Raynaud group scheme $G=\ker(\iota(\Pi): X\to X)$ 
is added to the moduli data, comp. the passage before Remark \ref{rem:vanH}. The moduli scheme $\CM^{\rm int}_{K_1, S_\mu}$ is normal and the map $\CM^{\rm int}_{K_1, S_\mu}\to \CM^{\rm int}_{K_0}$ is finite flat. A local description is given in Remark \ref{rem:nomult}. 
\begin{remark}
By the compatibility of the local model with unramified base change, the local models in the Lubin-Tate case and in the Drinfeld case are isomorphic after base change to $\Spec(\br\BZ_p)$. Hence the formal schemes $\CM^{\rm int}_{\CG,\mu,b}$ are \'etale-locally isomorphic. This  ceases to be true for the coverings (in the Lubin-Tate case, $\CM^{\rm int}_{K_1, S_\mu}$ is regular, which is not true in the Drinfeld case), comp. Remark \ref{rem:nomult}.
\end{remark}
\subsubsection{The case of a non-split binary  group of unitary similitudes}
Let $p\neq 2$ and let $F/\BQ_p$ be a quadratic extension. Let $V$ be a $F/\BQ_p$-hermitian space of dimension $2$ which is anisotropic. Let $G=\GU(V)$. Then $G\otimes_{\BQ_p} F\simeq \GL_{2, F}\times \BG_{m, F}$ and we take  $\{\mu\}=(1,0;1)$. Also, we let $b$ be a representative of the unique  element of $B(G, \mu^{-1})$ (which is basic), and  let $\CG$ be the unique  Iwahori. 

 If $F/\BQ_p$ is unramified, we have $\CG(\BZ_p)=c^{-1}(\BZ_p^*)$, where $c\colon G(\BQ_p)\to \BQ_p^*$ is the multiplier map; it is the unique maximal compact subgroup of $G(\BQ_p)$. If $F/\BQ_p$ is ramified, the inclusion  $\CG(\BZ_p)\subset c^{-1}(\BZ_p^*)$ has index $2$. More precisely, in both cases there exists a unique quasi-parahoric $\wt\CG$ such that $\wt\CG(\BZ_p)=c^{-1}(\BZ_p^*)$; then  $\CG=\wt\CG$ when $F/\BQ_p$ is unramified and $\CG\subset\wt\CG$ has index $2$ when $F/\BQ_p$ is ramified. In fact $\wt\CG=\Aut_{O_F, (\, , \, )}(\CL)$, where $\CL$ is the unique periodic selfdual lattice chain in $V$. Let us distinguish the case when $F/\BQ_p$ is unramified from the case when $F/\BQ_p$ is ramified.

 \smallskip

{\it Case: $F/\BQ_p$ unramified.} Then $\CM^{\rm int}_{\CG,\mu,b}$  is the RZ space of triples $(X, \iota, \lambda)$, where $X$ is a $p$-divisible group of dimension $2$ and height $4$, and $\iota:O_F\to\End(X)$ satisfies the Kottwitz condition
\begin{equation}\label{Kou}
\det(T\cdot I-\iota(a)\mid\Lie X)=T^2-{\rm Tr}_{F/\BQ_p}(a)\cdot T+\Nm_{F/\BQ_p}(a) , \quad a\in O_F,
\end{equation}
 and $\lambda\colon X\to X^\vee$ is a polarization  compatible with $\iota$ such that $\ker(\lambda)$ is a $\BF_{p^2}$-Raynaud group scheme contained in $X[p]$. Here we have identified $O_F\otimes\BF_p$ with $\BF_{p^2}$. In addition, a framing $\rho\colon  \bar X\to \BX\otimes_{\Spec(\bar\BF_p)}\bar S$ is given. Such a framing object $\BX$ is unique up to quasi-isogeny. 

 In this case, 
$\CM^{\rm int}_{\CG,\mu,b}$ is isomorphic to 
\begin{equation}
\big(\wh\Omega^2_{\BQ_p}\times_{\Spf(\BZ_p)}\Spf(\br\BZ_p)\big)\times\BZ 
\end{equation}
with its descent datum down to $\Spf (\BZ_p)$ given by the product of the natural descent datum on the first factor and the translation by $1$ on the second factor, cf. \cite[Thm. 1.2]{KRf} (for the descent, use that the framing object is a Drinfeld s.f. $O_D$-module).

In this case $T_\CG=\BG_m^2$ and $Y_{\CG, \mu}$ is smooth. The covering $\CM^{\rm int}_{K_1, S_\mu}$ represents the functor over $\CM^{\rm int}_{K_0}$, where a Raynaud generator of  $\ker(\lambda)$ is  
added to the moduli data. The moduli scheme $\CM^{\rm int}_{K_1, S_\mu}$ is normal and the map $\CM^{\rm int}_{K_1, S_\mu}\to \CM^{\rm int}_{K_0}$ is finite flat. 

\smallskip

{\it Case: $F/\BQ_p$ ramified.} In this case $\CM^{\rm int}_{\wt\CG,\mu,b}$  is the RZ space of triples $(X, \iota, \lambda)$, where $X$ is a $p$-divisible group of dimension $2$ and height $4$, and $\iota:O_F\to\End(X)$ satisfies the Kottwitz condition \eqref{Kou},
 and $\lambda\colon X\to X^\vee$ is a principal polarization   compatible with $\iota$. In addition, a framing $\rho\colon  \bar X\to \BX\otimes_{\Spec(\bar\BF_p)}\bar S$ is given. Here the potential framing object is not unique; we choose $\BX$ such that $\inv(\BX)=-1$, cf. \cite[\S 5, case c)]{KRf}.
 
    In this case, 
$\CM^{\rm int}_{\wt\CG,\mu,b}$ is isomorphic to 
\begin{equation}
\big(\wh\Omega^2_{\BQ_p}\times_{\Spf(\BZ_p)}\Spf(\br\BZ_p)\big)\times\BZ 
\end{equation}
with its descent datum down to $\Spf (\BZ_p)$ given by the product of the natural descent datum on the first factor and the translation by $1$ on the second factor, cf. \cite[Thm. 1.2]{KRf}.

Passing from $\wt\CG$ to $\CG$ is problematic and we have no explicit description of $\CM^{\rm int}_{\CG,\mu,b}$. In this case, the condition (IW) is violated. We cannot prove Conjecture \ref{localconj} in this case.


\bigskip

 

 \begin{thebibliography}{aaaaaaaa}
 
 \bibitem[Al]{Alt} K.~Altmann, \emph{Flatness for semigroups.} In preparation.  
 
 \bibitem[An]{Ana} S. Anantharaman,  
\emph{Sch\'emas en groupes, espaces homog\`enes et espaces alg\'ebriques sur une base de dimension 1.} pp. 5–79,
Supplément au Bull. Soc. Math. France, Tome 101,
SMF, Paris, 1973
  
 \bibitem[AGLR]{AGLR} J.~Ansch\"utz, I.~Gleason,  J.~N.~P.~Louren\c co, T.~Richarz, \textit{On the $p$-adic theory of local models}.  arXiv:2201.01234  
 
 \bibitem[AB]{AB} M.~Atiyah, R.~Bott, \textit{The Yang-Mills equations over Riemann surfaces}. Phil. Trans. R. Soc. Lond. A 308 (1982), 523--615.
 


\bibitem[BLR]{BLR} 
S. Bosch,   W.  Lütkebohmert, M. Raynaud, Michel, \emph{N\'eron models}.
Ergeb. Math. Grenzgeb. (3), 21 
Springer-Verlag, Berlin, 1990, x+ 325 pp.

\bibitem[Bou]{Bou} N. Bourbaki, \emph{Lie groups and Lie algebras. Chapters 4--6}. Elements of Mathematics. Springer-Verlag,
Berlin, 2002. xii+300 pp.

    \bibitem[BL]{BL} T.~Braden, V.~Lunts, \textit{Equivariant-constructible Koszul duality for dual toric varieties.}
Adv. Math. 201(2006), no.2, 408--453.
 
  \bibitem[BP]{BP} M. Brion, P. Polo, \textit{Generic singularities of certain Schubert varieties}. Math. Z. 231 (1999), 301--324.
 
\bibitem[BTII]{BTII} F.~Bruhat, J.~Tits, \emph{Groupes r\'eductifs sur un corps local. II. Schémas en groupes. Existence d'une donn\'ee radicielle valuée.} Publ. Math. IHES 60 (1984), 197--376.

\bibitem[Ca]{Cadman}  C.~Cadman, \textit{Using stacks to impose tangency conditions on curves}. American Journal of Math. 
 129, 2, (2007),  405--427.
 
 
 
 \bibitem[Co]{Conrad} B.~Conrad, \emph{Reductive group schemes.}
  in Autour des schémas en groupes. Vol. I, Panoramas \& Synthèses, vol. 42/43, Soci\'et\'e Math\'ematique de France, Paris, 2014, p. 93--444. 
 
 \bibitem[CLS]{CLS} D. Cox, J. Little, H. Schenck,  
\emph{Toric varieties.}
Grad. Stud. Math., 124
American Mathematical Society, Providence, RI, 2011. xxiv+841 pp.

\bibitem[Da]{Dabrowski} R. Dabrowski, \emph{On normality of the closure of a generic torus orbit in $G/P$.}
Pacific J. Math. 172 (1996), no. 2, 321--330.


\bibitem[Del79]{DeligneCorvallis}
P.~Deligne, \emph{Vari\'et\'es de {S}himura: interpr\'etation modulaire, et
  techniques de construction de mod\`eles canoniques}. Automorphic forms,
  representations and {$L$}-functions, {P}art 2, Proc. Sympos.
  Pure Math., XXXIII, Amer. Math. Soc., Providence, R.I., 1979, 247--289.
 
  
 
 

\bibitem[DR73]{DeligneRa}
P.~Deligne, M.~Rapoport, \emph{Les sch\'emas de modules de courbes
  elliptiques}. Modular functions of one variable, {II} ({P}roc. {I}nternat.
  {S}ummer {S}chool, {U}niv. {A}ntwerp, {A}ntwerp, 1972), Springer, Berlin,
  1973, pp.~143--316. Lecture Notes in Math., Vol. 349.  
  
 

 \bibitem[Ed]{Edix} B. Edixhoven, \textit{ N\'eron models and tame ramification.} Compositio Math. 81 (1992), no. 3, 291--306.

\bibitem[FHLR]{FHLR} N. Fakhruddin,  T. Haines,  J. Louren\c co,   T. Richarz,  
\emph{Singularities of local models.}
Math. Ann. 391 (2025), no. 4, 6205--6250.

 \bibitem[Fa]{Farguesi} L.~Fargues, \emph{ L'isomorphisme entre les tours de Lubin-Tate et de Drinfeld et applications cohomologiques}.
Progr. Math., 262
Birkh\"auser Verlag, Basel, 2008, 1--325.
  
 \bibitem[GT]{GenTi} A. Genestier, J. Tilouine, \emph{Syst\`emes de Taylor--Wiles pour  ${\rm GSp}_4$.} 
Ast\'erisque No. 302 (2005), 177--290.


\bibitem[GL]{GL}  I. Gleason, J. Louren\c co, \emph{Tubular neighborhoods of local models.}
Duke Math. J. 173 (2024), no. 4, 723--743.
 

\bibitem[Go]{Goertz} U.~G\"ortz, \emph{On the flatness of models of certain Shimura varieties of PEL-type.} Math. Ann. 321 (2001), no. 3, p. 689--727.  



\bibitem[Hai]{Hai} T.~Haines, \emph{Dualities for root systems with automorphisms and applications to non-split groups.} Representation
Theory of the American Mathematical Society, 22 (2018), 1--26. 

 \bibitem[HLS]{HLS} T.~Haines, Q.~Li, B.~Stroh, In preparation.
 
 \bibitem[HR]{HR} T.~Haines, M.~Rapoport, \textit{Shimura varieties with $\Gamma_1(p)$-level via Hecke algebra isomorphisms: the Drinfeld case.}
Ann. Sci. Ec. Norm. Sup. (4) 45 (2012), no. 5, 719--785.

\bibitem[HRi]{HRi} T.~Haines, T.~Richarz, \textit{Normality and Cohen-Macaulayness of parahoric local models.}
J. Eur. Math. Soc. (JEMS) 25 (2023), no. 2, 703--729.
 
 \bibitem[HS]{HS} T.~Haines, B.~Stroh, Unpublished notes.
 
 \bibitem[HT]{HT} M.~Harris,  R.~Taylor, \emph{Regular models of certain Shimura varieties.} Asian J. Math. 6 (2002), 61--94.
 
 \bibitem[HZ]{HZ} Q.~He, R.~Zhou,  \textit{On the basic locus of GSpin Shimura varieties with vertex stabilizer level}. arXiv:2505.08911

\bibitem[HPR]{HPR} X.~He, G.~Pappas, M.~Rapoport, \textit{Good and semi-stable reductions of Shimura varieties.}
J. \'Ec. polytech. Math. 7 (2020), 497--571.

\bibitem[HP]{HP} B.~Howard, G.~Pappas, {\sl Rapoport-Zink spaces for spinor groups.}
  Compositio Math. 153 (2017), no. 5, 1050--1118.
        
\bibitem[Jac]{Jac} R.~Jacobowitz, \textit{Hermitian forms over local fields.}
 American J.  Math. 84 (1962), 441--465.
 


      \bibitem[KaP]{KalethaPrasad} T.~Kaletha, G.~Prasad,  \emph{Bruhat-Tits theory--a new approach.}
New Math. Monogr., 44
Cambridge University Press, Cambridge, 2023, xxx+718 pp.

        
 \bibitem[KM]{KatzMazur}    N.~M.~ Katz, B.~Mazur,   \textit{Arithmetic moduli of elliptic curves.}
Ann. of Math. Stud., 108
Princeton University Press, Princeton, NJ, 1985, xiv+514 pp.

 \bibitem[TEI]{TEI}  G.~Kempf, F.~Knudsen, D.~Mumford, B.~Saint-Donat, \emph{Toroidal embeddings I.}
Lecture Notes in Math., Vol. 339
Springer-Verlag, Berlin-New York, 1973, viii+209 pp.


 \bibitem[KP]{KP} M.~Kisin, G.~Pappas, \emph{Integral models of Shimura varieties with parahoric level structure.}  Publ. Math. IHES 128 (2018), p. 121--218. 
 
 \bibitem[KPZ]{KPZ} M.~Kisin, G.~Pappas, R.~Zhou, {\sl Integral models of Shimura varieties with parahoric level structure, II.} 
Forum of Math., Pi, to appear. arXiv:2409.03689 
 
\bibitem[KnM]{KnM} F.~Knudsen, D.~Mumford, \textit{The projectivity of the moduli space of stable curves. I. Preliminaries on ``det'' and ``Div''.}
Math. Scand. 39 (1976), no. 1, 19--55.


 \bibitem[Ko]{KoIsoI} R.~Kottwitz, \emph{Isocrystals with additional structure.}
Compositio Math. 56 (1985), no. 2, 201--220.

 \bibitem[KoW]{KoWa} R.~Kottwitz, P.~Wake, \emph{Primitive elements for $p$-divisible groups.} 
 Res. Number Theory 3 (2017), Paper No. 20, 11 pp.

 \bibitem[KRf]{KRf} S.~Kudla, M.~Rapoport, \emph{ An alternative description of the Drinfeld $p$-adic half-plane.} Ann. de l'Inst. Fourier, 64  (2014), 1203--1228.

 \bibitem[Le]{Levin} B.~Levin, \textit{Local models for Weil-restricted groups.}
Compos. Math. 152 (2016), no. 12, 2563--2601.

\bibitem[Lim]{Lim}D.~G.~Lim, \textit{Nonemptiness of single affine Deligne-Lusztig varieties}. arXiv: 2302.04976 

\bibitem[Liu]{SLiu} S.~Liu,  
\textit{Modèle local des schémas de Hilbert-Siegel de niveau $\Gamma_1(p)$. }
Algebra Number Theory 15 (2021), no.7, 1655–1698.

\bibitem[Lo]{Lourenco} J. Louren\c co, \emph{Grassmanniennes affines tordues 
sur les entiers,}
Forum Math. Sigma 11 (2023), Paper No. e12, 65 pp.

\bibitem[Mar]{Marazza} G.~Marazza, 
\emph{On the geometry of integral models of Shimura varieties with $\Gamma_1(p)$-level structure.} arXiv:2509.21198
 
\bibitem[Max]{Maxwell} G.~Maxwell, \emph{Wythoff's construction for Coxeter groups.}
J. Algebra 123 (1989), no. 2, 351--377.

\bibitem[MO]{MO}  \textit{https://mathoverflow.net/questions/424489/pointless-groups-iii}

\bibitem[MoR]{MoR} P.-L.~Montagard,  A.~Rittatore, \emph{Fano generic torus orbit closures in  $G/P$.}
J. Algebraic Combin. 57 (2023), no. 2, 439--460.

\bibitem[O]{Ol} M. Olsson, \emph{Logarithmic geometry and algebraic stacks.}
Ann. Sci. \'Ecole Norm. Sup. (4) 36 (2003), no. 5, 747--791.

\bibitem[OR]{OR} S.~Orlik, M.~Rapoport,
\textit{Deligne-Lusztig varieties and period domains over finite fields.}
J. Algebra 320 (2008), no. 3, 1220--1234.

\bibitem[P]{P} L.~Pan, \emph{First covering of the Drinfeld upper half-plane and Banach representations of $\GL_2(\BQ_p)$.}
Algebra Number Theory 11 (2017), no. 2, 405--503.

 \bibitem[Pa]{P95} G.~Pappas, \emph{Arithmetic models for Hilbert modular varieties.}
Compositio Math. 98 (1995), no. 1, 43--76.

\bibitem[PRtw]{PRtw} G.~Pappas, M.~Rapoport, \textit{ Twisted loop groups and their affine flag varieties.}
Adv. Math. 219 (2008), 118--188.
 \bibitem[PR]{PRLM3} G.~Pappas, M.~Rapoport, \textit{Local models, III. Unitary groups.} Journal of the Institute of Mathematics of Jussieu, 8 (3) (2009), 507--564.
 
  \bibitem[PRg]{PRg} G.~Pappas, M.~Rapoport, \textit{$p$-adic shtukas and the theory of global and local Shimura varieties.} Cambridge Journal of Mathematics 12 (2024), 1--164.
  
 \bibitem[PRl]{PRint} G.~Pappas, M.~Rapoport, \textit{On integral local Shimura varieties.} Journal of the Institute of Mathematics of Jussieu, 25 (1) (2026), 375--443.
 
  \bibitem[PRS]{PRS} G.~Pappas, M.~Rapoport, B. Smithling, \emph{Local models of Shimura varieties, I. Geometry and Combinatorics.} Handbook of moduli. Vol. III, 135--217.
Adv. Lect. Math. (ALM), 26, International Press, Somerville, MA, 2013
 
   \bibitem[PZ]{PZ} G.~Pappas, X.~Zhu, \emph{Local models of Shimura varieties and a conjecture of Kottwitz.}
Invent. Math. 194 (2013), no. 1, 147--254.

\bibitem[PY]{PrasadYu} G.~Prasad, J.-K.~Yu, \emph{On quasi-reductive group schemes}.
J. Algebraic Geom. 15 (2006), no. 3, 507--549.

 \bibitem[RZ]{RZbook} M.~Rapoport, T.~Zink,  {Period spaces for $p$--divisible groups}. Ann.\ of Math. Studies {141}, Princeton University Press, Princeton, NJ, 1996. 
 
 \bibitem[Ra]{Ray} M.~Raynaud, \emph{Sch\'emas en groupes de type $(p,\ldots, p)$.} Bull. Soc. Math. France 102 (1974), 241--280.

\bibitem[Re]{Renner} L.~Renner, \emph{Descent systems for Bruhat posets}. Bulletin de S.M.F, 102 (1974), p. 241-280.
J. Algebraic Combin. 29 (2009), no. 4, 413--435.



\bibitem[SW]{SWberkeley} P.~Scholze, J.~Weinstein, \emph{Berkeley lectures on $p$-adic geometry}. Ann. of Math. Studies, 207, Princeton University Press, Princeton, 2020. 

\bibitem[Sha]{Shadrach} R.~Shadrach, \emph{Integral models of certain PEL Shimura varieties with $\Gamma_1(p)$-type level structure.}  Manuscripta Math. 151:1--2 (2016), 49--86. 

\bibitem[Slo]{Slodowy}  P. Slodowy,
\emph{On the geometry of Schubert varieties attached to Kac-Moody Lie algebras.} Proceedings of the 1984 Vancouver conference in algebraic geometry, 405--442.
CMS Conf. Proc., 6
Published by the American Mathematical Society, Providence, RI;  1986

\bibitem[Sm]{Smithling} B.~Smithling,  \emph{Topological flatness of orthogonal local models in the split, even case. I}
Math. Ann. 350 (2011), no. 2, 381--416.

\bibitem[Sta]{Stacks} The Stacks project authors,
 \emph{The Stacks project}.
 {\url{https://stacks.math.columbia.edu}},
  year         = {2025}
  
 \bibitem[TO]{OT} J.~Tate, F.~Oort, \emph{Group schemes of prime order.} Ann. Sci. \'Ecole Norm. Sup. 3 (1970), 1--21.
 
 \bibitem[Ta]{Tak} Y. Takaya,  \emph{On depth-zero integral models of local Shimura varieties.} arXiv:2505.10000

 
 \bibitem[Te]{T} J.~Teitelbaum, \emph{Geometry of an \'etale covering of the  $p$-adic upper half plane.} Ann. Inst. Fourier (Grenoble) 40 (1990), no. 1, 68--78.
   
  \bibitem[Van]{Van} A.~Vanhaecke, \emph{Le crystal de Dieudonn\'e des sch\'emas en $\BF$-vectoriels.} Bull. Soc. Math. France
148 (3), 2020, p. 439--465.

\bibitem[W]{W} H.~Wang, \emph{L'espace sym\'etrique de Drinfeld et correspondance de Langlands locale I.}
Math. Z. 278 (2014),  829--857.


   \bibitem[Yu]{Yu} Q.~Yu, Work in progress. 
   
   \bibitem[Z1]{ZhuCoh} X.~Zhu, \emph{On the coherence conjecture of Pappas and Rapoport.}
Ann. of Math. (2) 180 (2014), no. 1, 1--85.

   \bibitem[Z2]{ZhuIntro} X.~Zhu, \emph{An introduction to affine Grassmannians and the geometric Satake equivalence.}
IAS/Park City Math. Ser., 24
American Mathematical Society, Providence, RI, 2017, 59–154.
   
   
   \end{thebibliography}
  \end{document}